 \newtheorem{thm}{Theorem}[section]
 \newtheorem{cor}[thm]{Corollary}
 \newtheorem{lem}[thm]{Lemma}
 \theoremstyle{definition}
 \newtheorem{defn}[thm]{Definition}
 \theoremstyle{remark}
 \newtheorem{rem}[thm]{Remark}
 \numberwithin{equation}{section}
 \def\D{\mathcal{D}}
 \def\d{\mathrm{d}}
 \def\rn{ {\mathbb R ^n } }
 \def\M{\mathcal{M}}
\begin{document}

%
%
%
%
%
%
%
%
%

\title[matrix weighted Bourgain-Morrey  Triebel-Lizorkin spaces]
 {On matrix weighted Bourgain-Morrey  Triebel-Lizorkin spaces}

\author[Tengfei Bai]{Tengfei Bai}

\address{%
	Longkunnan 99,
	College of Mathematics and Statistics, Hainan Normal University, Haikou, Hainan 571158,
	China
}

\email{202311070100007@hainnu.edu.cn}

\thanks{The corresponding author	Jingshi Xu is supported by the National Natural Science Foundation of China (Grant No. 12161022) and the Science and Technology Project of Guangxi (Guike AD23023002).
	Pengfei Guo is supported by the National Natural Science Foundation of China (12561002).}
\author{Pengfei Guo}
\address{Longkunnan 99,
	College of Mathematics and Statistics, Hainan Normal University, Haikou, Hainan 571158,
	China}
\email{050116@hainnu.edu.cn}

\author{Jingshi Xu}
\address{Jinji 1,
	School of Mathematics and Computing Science, Guilin University of Electronic Technology, Guilin 541004, China\\
	Center for Applied Mathematics of Guangxi (GUET), Guilin 541004, China\\
	Guangxi Colleges and Universities Key Laboratory of Data Analysis and Computation, Guilin 541004, China
}
\email{jingshixu@126.com}

\subjclass{Primary 42B35; Secondary  46E30, 42B25, 42B20, 42C40, 35S05}

\keywords{Bourgain-Morrey space, Calder\'{o}n-Zygmund operator, matrix weight,  pseudo-differential operator, Triebel-Lizorkin space, wavelet}


\begin{abstract}
We introduce the homogeneous (inhomogeneous) matrix weighted Bourgain-Morrey  Triebel-Lizorkin
spaces and obtain their equivalent  norms. We also obtain
their characterizations by Peetre type maximal functions, Lusin-area
function, Littlewood-Paley $g_{\lambda}^{*}$-function, approximation,   wavelet and atom. As an application, we obtain boundedness of pseudo-differential
operators with symbols in the H\"{o}rmander classes and H\"{o}lder-Zygmund
classes on inhomogeneous matrix weighted Bourgain-Morrey Triebel-Lizorkin spaces.
\end{abstract}

\maketitle
\tableofcontents
\section{Introduction}
In the last three decades, the theory of matrix weighted function spaces
has been developed. Indeed, Muckenhoupt $\mathcal A_{2}$ matrix weights were
introduced by Treil and Volberg \cite{TV97} in 1997. For $p\in(1,\infty)$,
Goldberg \cite{Goldberg03} established the $\mathcal  A_{p}$ condition in
terms of matrix weights. He also showed that the matrix $\mathcal A_{p}$ condition
implies $L^{p}$-boundedness of the Hardy-Littlewood maximal operator.
In \cite{CMR16},  Cruz-Uribe et al.  studied degenerate Sobolev spaces where the degeneracy is controlled by a
matrix $\mathcal A_p$ weight. As applications, they obtained the weak solutions of degenerate $p$-Laplace equations and mappings of finite distortion.
In \cite{FrazierR04,FraRou19, R03,Roudenko04}, Frazier and Roudenko
introduced the matrix weight class $\mathcal A_{p}$ for $p\in(0,1]$ and matrix
weighted Besov and Triebel-Lizorkin spaces. For matrix weight $W\in \mathcal A_{p},$
they showed that homogeneous (and inhomogeneous) matrix weighted Triebel-Lizorkin
spaces and Besov spaces are equivalent with their discrete spaces
and the corresponding averaging spaces. For $p\in(1,\infty)$ and $W\in \mathcal  A_{p}$, they showed that
matrix weighted Lebesgue space $L^{p}(W)=\dot{F}_{p}^{0,2}(W)$ and
that the matrix weighted Sobolev space $L_{k}^{p}(W)=F_{p}^{k,2}(W)$. They also obtained the boundedness of Calder\'{o}n-Zygmund
operators.

After this, matrix weighted function spaces  received 
more and more attentions.
In \cite{WYZ22}, Wang, Yang and Zhang studied the characterizations
of homogeneous matrix weighted Triebel-Lizorkin spaces. 
In \cite{BYY23}, Bu, Yang and Yuan  introduced the homogeneous matrix-weighted Besov
spaces  on a  homogeneous space and showed  these spaces are  independent of the choices of both approximations of the identity. They established   characterizations of these spaces in terms of  wavelet, molecule. The boundedness of almost diagonal operators and Calder\'{o}n-Zygmund operators  was also proved.
In \cite{BHYY3, BHYY1,BHYY2},
Bu et al. introduced the matrix-weighted
Besov-type spaces $\dot{B}_{p,q}^{s,\tau}(W)$, Triebel-Lizorkin-type spaces $\dot{F}_{p,q}^{s,\tau}(W)$ and the related sequence spaces. They proposed a new concept of $\mathcal A_{p}$-dimension
of matrix weights. Then they obtained the boundedness of $\varphi$-transform,
pseudo-differential operators, trace operators and Calder\'{o}n--Zygmund
operators.
In \cite{LYY24}, Li, Yang and Yuan introduced the matrix-weighted Besov-Triebel-Lizorkin spaces with logarithmic smoothness. They obtained an equivalent characterization of these spaces in terms of the matrix-weighted Peetre-type maximal functions and the boundedness of some pointwise multipliers on these spaces.

In \cite{V97}, Volberg introduced  the original definition of  matrix-valued weight classes $\mathcal A_{p,\infty}$.
In \cite{BHYY23}, Bu et al.  obtained new characterizations of matrix weights classes $\mathcal A_{p,\infty}$. Next
in \cite{BHYY25}, Bu et al.  characterized inhomogeneous Besov-type and Triebel-Lizorkin-type spaces with matrix weight $W \in \mathcal A_{p,\infty}$ via the $\varphi$-transform, molecules, and wavelets. The boundedness of almost diagonal operators, pseudo-differential operators, trace operators, pointwise multipliers, and Calder\'on-Zygmund operators was also proved.
In \cite{BCYY25}, Bu et al. introduced the  matrix-weighted Hardy spaces and characterized them in terms of various  maximal functions and atoms.
In \cite{YYZ25}, Yang, Yuan and Zhang proved the boundedness of pseudo-differential, trace, and Calder\'on-Zygmund operators on generalized matrix-weighted Besov-Triebel-Lizorkin-type spaces with matrix  $\mathcal A_{p, \infty} $ weights.
In \cite{BYYZ25}, Bu et al. introduced generalized matrix-weighted Besov-Triebel-Lizorkin-type spaces with matrix  $\mathcal A_{p, \infty}$ weights and obtained characterizations in terms of the $\varphi$-transform, the Peetre-type maximal function,  the Littlewood-Paley functions,  molecule and  wavelet. Using  matrix  $\mathcal A_{p, \infty}$ weights, in \cite{YYZ252}, Yang et al. introduced the matrix-weighted variable Besov space and  established the $\varphi$-transform theorem  for these spaces.

In \cite{WGX25}, Wang, Guo and Xu studied the approximation characterization, embedding properties and  duality of matrix weighted modulation spaces which was introduced by  Nielsen in \cite{N25}.  The  criterion for precompactness in the  matrix weighted {Bourgain}-{Morrey} spaces and  matrix-weighted variable Lebesgue spaces can be founded in \cite{BX243,WGX24}.
Many boundednesses of pseudo-differential operators on Triebel-Lizorkin spaces and Besov spaces can be found in \cite{CO22,D97,Mar96, Mar91, md,P16,P19,P20,sa,S09}. We will  recall them in Section \ref{sec:Pseudo-differential-operators}. In \cite{BX242, BX24}, the first author and the third author of this paper obtained the boundedness of pseudo-differential operators  on matrix weighted Besov and Triebel-Lizorkin spaces.

In \cite{HNS23}, Hatano, Nogayama, Sawano and Hakim investigated
the Bourgain-Morrey function spaces $M_{p}^{t,r}$(Definition \ref{def:BMspace}). They considered the properties such as inclusion, dilation, translation, nontriviality, diversity, approximation and density of Bourgain-Morrey  spaces.
They  obtained the boundedness of operators such as the Hardy-Littlewood maximal operator, fractional integral operators, fractional maximal operators, and singular integral operators on Bourgain-Morrey spaces. The dual of Bourgain-Morrey spaces $M_{p}^{t,r}$ with $ 1<p <t<r <\infty$ is given in \cite{HNS23}.
Bourgain-Morrey spaces have many properties while Morrey spaces
have not; see \cite[Chapter 8]{SFH20}. The Bourgain-Morrey space
are useful to study the Strichartz estimate and nonlinear Schr\"{o}dinger
equations; see \cite{C64,MV98,MVV99}. 
The development of  Bourgain-Morrey type function spaces can be seen in \cite{BGX252, BGX253, BX25, HLY23, ZYZ24, ZSTYY23}.
Motivated by above literature, we will introduce matrix weighted Bourgain-Morrey Triebel-Lizorkin spaces and consider their characterizations and the boundedness of pseudo-differential operators on them.

This paper is organized as follows. In Section \ref{sec:preliminaries},
we recall definitions of matrix $\mathcal A_{p}$ weight and Bourgain-Morrey
spaces . In Section \ref{sec:Homogeneous-spaces}, we introduce four kinds of homogeneous
matrix weighted Bourgain-Morrey  Triebel-Lizorkin spaces
and prove they are equivalent. Calder\'{o}n-Zygmund operators
are also contained in Section \ref{sec:Homogeneous-spaces}.
The results about inhomogeneous spaces are given in  Section \ref{sec:Inhomogeneous-spaces}.
In Section \ref{sec:Characterizations},
we give some characterizations of matrix weighted Bourgain-Morrey
type Besov and Triebel-Lizorkin spaces, such as Peetre type maximal functions, Lusin-area
functions, Littlewood-Paley $g_{\lambda}^{*}$-functions, wavelets and atoms.
In Section \ref{sec:Pseudo-differential-operators}, by a approximation characterization of inhomogeneous matrix weighted Bourgain-Morrey type Besov spaces and Triebel-Lizorkin spaces, we obtain the boundedness of pseudo-differential operators with
symbols in  H\"{o}rmander classes and H\"{o}lder-Zygmund
classes  on them, respectively.

\section{Preliminaries}\label{sec:preliminaries}
We  first make some conventions on notation.
For any cube $Q$ of $\rn$, let $c_Q$ be its center and $\ell (Q)$ its edge
length. For any $\lambda >0  $ and any cube $Q $ of $\rn$, let $\lambda Q$ be the cube with the same center
of $Q$ and the edge length $\lambda \ell (Q)$.
For $j\in\mathbb{Z}$, $m\in\mathbb{Z}^{n}$, let $Q_{j,m}:=\prod_{i=1}^{n}[2^{-j}m_{i},2^{-j}(m_{i}+1))$.
We
denote by $\mathcal{D}:=\{Q_{j,m} :j\in\mathbb{Z}$, $m\in\mathbb{Z}^{n}  \} $ the  family of all dyadic cubes in $\mathbb{R}^{n}$,
while $\mathcal{D}_{j}$ is the set of all dyadic cubes with $\ell(Q)=2^{-j},j\in\mathbb{Z}$.
Let $\chi_{E}$ be the characteristic function of the set $E\subset\mathbb{R}^{n}$.
Let $\mathbb N := \{ 1,2,3, \cdots\} $ and $ \mathbb{N}_{0}:=\mathbb{N\cup}\{0\}$. Let $\mathscr{S}(\mathbb{R}^{n})$
denote the Schwartz space on $\rn$, and let $\mathscr{S}'(\mathbb{R}^{n})$
be its dual. Let $\mathcal{P}(\mathbb{R}^{n})$ be the class of the
polynomials on $\mathbb{R}^{n}$. Denote by $\mathscr{S}'/\mathcal{P}(\mathbb{R}^{n})$
the space of tempered distributions modulo polynomials. Let $\mathscr{S}_{0}(\mathbb{R}^{n}):=\{f\in\mathscr{S}(\mathbb{R}^{n}):\partial^{\alpha}\mathcal{F}(g)(0)=0$
for all multi-indices $\alpha$\}. Recall that $\mathscr{S}'/\mathcal{P}(\mathbb{R}^{n})$
is the dual of $\mathscr{S}_{0}(\mathbb{R}^{n})$. We use the symbol
$A\lesssim B$ to denote that there exists a positive constant $c$
such that $A\le cB$. If $A\lesssim B$ and $B\lesssim A$, then we
denote $A\approx B$. The letters $c,C$ will denote various positive
constants and may change in different lines.
For $p\in (0,\infty)$, we define $p'$ by  $ 1/p + 1/p' = 1$.

\subsection{Matrix weights}

Let $m \in\mathbb{N}$. A matrix weight $W$ is a map on $\mathbb{R}^{n}$
such that $W(x)$ is a non-negative definite $m\times m$ matrix for
each $x$ $\in\mathbb{R}^{n}$, where $W$ is almost everywhere invertible
and the entries of $W$ are measurable functions on $\mathbb{R}^{n}$.
The operator norm of a matrix $A$ is defined by
\[
\|A\|:=\sup_{|\vec{z}|=1} |A\vec{z}| ,
\]
where $\vec{z}\in\mathbb{C}^{m}$ and $|\vec{z}|=\big(\sum_{i=1}^{m}|z_{i}|^{2}\big)^{1/2}$.

\begin{defn}
	Let $0< p<\infty $ and $W$ be a matrix weight. Let $\Omega \subset \rn$.  The matrix-weighted 	Lebesgue space $L^p(W, \Omega)$ is defined to be the set of all the measurable vector-valued functions $\vec f :\Omega \to \mathbb C^m$ such that
	\begin{equation*}
		\| \vec f \|_{L^p (W, \Omega)} := \| \vec f \chi_\Omega \|_{L^p (W)} := \left(\int_\rn |W^{1/p} (x) \vec f (x) |^p \chi_\Omega (x) \d x   \right)^{1/p} <\infty.
	\end{equation*}
	
\end{defn}

For $p\in(1,\infty)$, a matrix weight $W\in \mathcal A_{p}(\mathbb{\mathbb{R}}^{n})$
if and only if
\[
\sup_{Q}\frac{1}{|Q|}\int_{Q}\left(\frac{1}{|Q|}\int_{Q}\|W^{1/p}(x)W^{-1/p}(y)\|^{p'}\mathrm{d}y\right)^{p/p'}\mathrm{d}x<\infty,
\]
where $p'=p/(p-1)$ is the conjugate index of $p$, and the supremum
is taken over all cubes $Q\subset\mathbb{R}^{n}$.

For $p\in(0,1]$, a matrix weight $W\in \mathcal A_{p}(\mathbb{\mathbb{R}}^{n})$
if and only if
\[
\underset{Q}{\sup}\;\underset{y\in Q}{\mathrm{ess}\;\sup} \frac{1}{|Q|}\int_{Q}\|W^{1/p}(x)W^{-1/p}(y)\|^{p}\mathrm{d}x<\infty.
\]

We write $\mathcal A_{p}:= \mathcal A_{p}(\mathbb{\mathbb{R}}^{n})$ for brevity.

Given any matrix weight $W$ and $p\in(0,\infty)$, there exists (see
e.g., \cite[Proposition 1.2]{Goldberg03}) for $p>1$ and \cite[p.1237]{FrazierR04}
for $0<p\le1)$ a sequence $\{A_{Q}\}_{Q\in\mathcal{D}}$ of positive
definite $m \times m$ matrices such that
\begin{equation*}
	c_{1}|A_{Q}\vec{y}|\le\Big(\frac{1}{|Q|}\int_{Q}|W^{1/p}(x)\vec{y}|^{p}\mathrm{d}x\Big)^{1/p}\le c_{2}|A_{Q}\vec{y}|,
\end{equation*}
with positive constants $c_{1}$, $c_{2}$ independent of $\vec{y}\in\mathbb{C}^{m}$
and $Q\in\mathcal{D}$. In this case, we call $\{A_{Q}\}_{Q\in\mathcal{D}}$
a sequence of reducing operators of order $p$ for $W$.

\begin{lem}[Lemmas 3.2, 3.3, \cite{FraRou19}]
	\label{lem:W AQ} Suppose that $0<p<\infty$,
	$W\in \mathcal A_{p}$, and $\{A_{Q}\}_{ Q \in \mathcal D}$ is a sequence of reducing operators
	of order $p$ for $W$. Then there exist $\delta_W,C_{v}>0$ such that
	\[
	\sup_{Q}\frac{1}{|Q|}\int_{Q}\|W^{1/p}(x)A_{Q}^{-1}\|^{v}\mathrm{d}x\le C_{v}\;\mathrm{for}\;v<p+\delta_W,
	\]
	and
	\[
	\sup_{Q}\frac{1}{|Q|}\int_{Q}\sup_{P\in\mathcal{D}:x\in P\subset Q}\|W^{1/p}(x)A_{P}^{-1}\|^{v}\mathrm{d}x\le C_{v}\;\mathrm{for}\;v<p+\delta_W.
	\]
	Furthermore, for $1<p<\infty$, $p'=p/(p-1)$, we have
	\begin{equation}
		\sup_{Q}\frac{1}{|Q|}\int_{Q}\|A_{Q}W^{-1/p}(x)\|^{v}\mathrm{d}x\le C_{v}\;\mathrm{for}\;v<p'+\delta_W; \label{eq:p>1 AQW ^-1/p}
	\end{equation}
	for $0<p\le1$, we have
	\begin{equation}
		\sup_{Q}\underset{x\in Q}{\mathrm{ess}\;\sup}\|A_{Q}W^{-1/p}(x)\|<\infty.\label{eq:p le 1 AQW}
	\end{equation}
\end{lem}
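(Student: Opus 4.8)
The plan is to reduce the whole statement to the classical reverse H\"older inequality for scalar Muckenhoupt weights, applied to the directional weights $w_{\vec y}(x)=|W^{1/p}(x)\vec y|^{p}$ of the (doubling) matrix weight $W$. \emph{Step 1: the $w_{\vec y}$ lie in a scalar class $A_{\max\{1,p\}}(\rn)$, uniformly in $\vec y$.} For $x,y$ in a cube $Q$ and $\vec y\in\mathbb C^m$ one has $w_{\vec y}(x)^{1/p}=|W^{1/p}(x)W^{-1/p}(y)W^{1/p}(y)\vec y|\le\|W^{1/p}(x)W^{-1/p}(y)\|\,w_{\vec y}(y)^{1/p}$, that is, $\|W^{1/p}(x)W^{-1/p}(y)\|\ge w_{\vec y}(x)^{1/p}w_{\vec y}(y)^{-1/p}$. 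Substituting this into the matrix $\mathcal A_p$ condition gives, for $0<p\le1$, $\frac1{|Q|}\int_Q w_{\vec y}\le\big(\frac1{|Q|}\int_Q\|W^{1/p}(x)W^{-1/p}(y)\|^{p}\,\d x\big)w_{\vec y}(y)\lesssim w_{\vec y}(y)$ for a.e.\ $y\in Q$, the scalar $A_1$ condition; and, for $p>1$, raising to $p'$, averaging in $y$, raising to $p/p'=p-1$ and averaging in $x$ yields $\frac1{|Q|}\int_Q w_{\vec y}\cdot\big(\frac1{|Q|}\int_Q w_{\vec y}^{-1/(p-1)}\big)^{p-1}\lesssim1$, the scalar $A_p$ condition. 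In both cases the scalar constant is dominated by the $\mathcal A_p$ constant of $W$ independently of $\vec y$, so the scalar reverse H\"older inequality supplies a single $\delta_W>0$, depending only on $n$, $p$ and the $\mathcal A_p$ constant of $W$, with
\[
\Big(\frac1{|Q|}\int_Q w_{\vec y}(x)^{1+\delta_W/p}\,\d x\Big)^{\frac{p}{p+\delta_W}}\lesssim\frac1{|Q|}\int_Q w_{\vec y}(x)\,\d x\qquad\text{for all cubes }Q\text{ and all }\vec y.
\]

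\emph{Step 2: the first inequality.} By a finite-dimensional reduction of the operator norm (testing against a fixed finite net $\mathcal N$ of the unit sphere of $\mathbb C^m$), $\|W^{1/p}(x)A_Q^{-1}\|^{v}\lesssim\sum_{\vec z\in\mathcal N}|W^{1/p}(x)A_Q^{-1}\vec z|^{v}=\sum_{\vec z\in\mathcal N}w_{A_Q^{-1}\vec z}(x)^{v/p}$. For $v<p+\delta_W$ we have $v/p<1+\delta_W/p$, so H\"older's (or Jensen's) inequality together with Step 1 gives $\frac1{|Q|}\int_Q w_{A_Q^{-1}\vec z}^{v/p}\lesssim\big(\frac1{|Q|}\int_Q w_{A_Q^{-1}\vec z}\big)^{v/p}\approx|A_QA_Q^{-1}\vec z|^{v}=|\vec z|^{v}=1$ by \eqref{eq:Aq W equ}; summing over $\mathcal N$ proves the first bound. \emph{The second inequality} is the delicate point. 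Using \eqref{eq:Aq W equ}, for $x\in P\subset Q$ one has $\|W^{1/p}(x)A_P^{-1}\|^{p}\approx\sup_{\vec y\ne0}w_{\vec y}(x)\big/\big(\tfrac1{|P|}\int_P w_{\vec y}\big)$; since $W$ is doubling the weights $w_{\vec y}$ cannot concentrate, and combining this with the reverse H\"older gain of Step 1 in a dyadic stopping-time argument (extracting, for each $x$, a subcube that essentially realizes the supremum over $P$, and summing the localized contributions) yields $\frac1{|Q|}\int_Q\big(\sup_{P:\,x\in P\subset Q}\|W^{1/p}(x)A_P^{-1}\|\big)^{v}\,\d x\lesssim1$ for $v<p+\delta_W$.

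\emph{Step 3: the estimates \eqref{eq:p>1 AQW ^-1/p} and \eqref{eq:p le 1 AQW}.} For $p>1$ pass to the dual weight $\widetilde W:=W^{-p'/p}\in\mathcal A_{p'}$, which satisfies $\widetilde W^{1/p'}=W^{-1/p}$; if $\{B_Q\}$ are reducing operators of order $p'$ for $\widetilde W$, then $B_Q\approx A_Q^{-1}$ by the standard duality of reducing operators, so applying Steps 1--2 to $\widetilde W$ gives $\frac1{|Q|}\int_Q\|W^{-1/p}(x)A_Q\|^{v}\,\d x\approx\frac1{|Q|}\int_Q\|\widetilde W^{1/p'}(x)B_Q^{-1}\|^{v}\,\d x\lesssim1$ for $v<p'+\delta_{\widetilde W}$; since $A_Q$ and $W^{-1/p}(x)$ are self-adjoint, $\|A_QW^{-1/p}(x)\|=\|W^{-1/p}(x)A_Q\|$, and \eqref{eq:p>1 AQW ^-1/p} follows after replacing $\delta_W$ by $\min\{\delta_W,\delta_{\widetilde W}\}$. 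For $0<p\le1$, \eqref{eq:p le 1 AQW} is immediate from \eqref{eq:Aq W equ}: for a.e.\ $y\in Q$ and any $\vec w$, $|A_QW^{-1/p}(y)\vec w|\lesssim\big(\frac1{|Q|}\int_Q|W^{1/p}(x)W^{-1/p}(y)\vec w|^{p}\,\d x\big)^{1/p}\le\big(\frac1{|Q|}\int_Q\|W^{1/p}(x)W^{-1/p}(y)\|^{p}\,\d x\big)^{1/p}|\vec w|\lesssim|\vec w|$ by the $\mathcal A_p$ condition for $p\le1$.

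The heart of the argument is Step 1 --- recognizing that the matrix $\mathcal A_p$ condition self-improves, direction by direction and uniformly in $\vec y$, to a scalar $A_{\max\{1,p\}}$ condition, which is precisely what makes a single reverse H\"older exponent $\delta_W$ available --- together with the stopping-time argument for the second inequality, which is where the doubling property is genuinely needed; everything else is bookkeeping and the routine duality of reducing operators invoked in Step 3.
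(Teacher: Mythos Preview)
The paper does not supply its own proof of this lemma; it is quoted directly from \cite[Lemmas~3.2, 3.3]{FraRou19}. Your outline is essentially the approach taken there: pass to the directional scalar weights $w_{\vec y}$, observe that the matrix $\mathcal A_p$ condition forces them into scalar $A_{\max\{1,p\}}$ with uniform constant, harvest a single reverse H\"older exponent $\delta_W$, and for \eqref{eq:p>1 AQW ^-1/p} dualize via $\widetilde W=W^{-p'/p}\in\mathcal A_{p'}$ and the equivalence $B_Q\approx A_Q^{-1}$ of reducing operators. Steps~1 and~3 and the first half of Step~2 are correct and match \cite{FraRou19} closely.

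The one genuine soft spot is your treatment of the second inequality. The phrase ``extracting, for each $x$, a subcube that essentially realizes the supremum over $P$'' is not how the argument actually runs, and as written it is not a proof: the supremum over dyadic $P\ni x$ need not be realized, and more importantly one cannot work pointwise in $x$ and then ``sum the localized contributions'' without saying what is being summed. In \cite{FraRou19} the argument is organized differently: one fixes a direction $\vec z$ from the finite net, runs a Calder\'on--Zygmund stopping-time decomposition of $Q$ for the scalar weight $w_{A_Q^{-1}\vec z}$ at a large multiple of its $Q$-average, and observes that for $x$ outside the union of stopping cubes the supremum over $P$ is automatically controlled, while the reverse H\"older gain forces the total measure of the stopping cubes to be a fixed fraction less than $|Q|$; iterating gives the bound. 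Your sketch has the right ingredients (stopping time, reverse H\"older, doubling) but the mechanism you describe is not the one that works. This is a gap in exposition rather than in strategy, but it is the one place where a reader could not reconstruct the argument from what you wrote.
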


\subsection{Bourgain-Morrey spaces}
In \cite{Bou91}, Bourgain considered the special case of Bourgain-Morrey
spaces. We use the notation in \cite{MS18} by Masaki and Segata.
\begin{defn}
	\label{def:BMspace} Let $0<p\le t<\infty$
	and $0<r\le\infty$. Define $M_{p}^{t,r} := M_{p}^{t,r}  (\rn)$ as the space of $f\in L_{\mathrm{loc}}^{p} (\rn)$
	such that
	\[
	\|f\|_{M_{p}^{t,r}} :=\bigg\|\Big\{|Q|^{1/t-1/p} \| f\chi_Q \|_{L^p}  \Big\}_{ Q\in \D}\bigg\|_{\ell^{r}}<\infty.
	\]
	
	For $0 <q \le \infty$, define $M_{p}^{t,r} (\ell^q)$ as the set of all sequences  $\{f _j \} _{j\in \mathbb Z}  \subset L^p_{\operatorname{loc}} (\rn)$ for which
	\begin{equation*}
		\|\{f _j \} _{j\in \mathbb Z} \|_{ M_{p}^{t,r} (\ell^q)  } := \left\|   \| \{f _j \}\|_{\ell^q}       \right\|_{M_{p}^{t,r}} <\infty .
	\end{equation*}
\end{defn}

Theorem 2.10 in \cite{HNS23} says 
$M_{p}^{t,r}\neq\{0\}$ if and only if $0<p<t<r<\infty$ or $0<p\le t<r=\infty$.
Thus we always assume $0<p<t<r<\infty$ or $0<p\le t<r=\infty$ below.

Let $f\in L_{{\rm loc}}^{1}(\mathbb{R}^{n})$. Then the standard Hardy-Littlewood
maximal function of $f$ is defined by
\[
\mathcal{M}f(x):=\sup_{B\ni x}\frac{1}{|B|}\int_{B}|f(y)|dy,\ \forall x\in\mathbb{R}^{n},
\]
where the supremum is taken over all balls containing $x$ in $\mathbb{R}^{n}$.

The following lemma coming from \cite[Lemma 4.1, Theorem 4.3]{HNS23} is the boundedness of Hardy-Littlewood maximal operator on Bourgain-Morrey spaces.

\begin{lem}
	\label{lem:hardy} Let $1<p<t<r<\infty$ or
	$1<p\le t<r=\infty$. Let $1<q\le\infty$. Then the Hardy-Littlewood maximal operator $\mathcal{M}$
	is bounded on $M_{p}^{t,r}$ and $M_{p}^{t,r}(\ell^{q})$.
\end{lem}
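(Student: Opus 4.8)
The plan is to argue directly from the defining quasi-norm of $M_{p}^{t,r}$: it suffices to bound $\big\|\big\{|Q|^{1/t-1/p}\|\mathcal{M}f\,\chi_Q\|_{L^p}\big\}_{Q\in\D}\big\|_{\ell^r}$ by $\|f\|_{M_{p}^{t,r}}$, and analogously for $\ell^q$-valued sequences. For each dyadic cube $Q$ I would split $f=f\chi_{3Q}+f\chi_{\rn\setminus 3Q}=:f_Q^{\mathrm{loc}}+f_Q^{\mathrm{tail}}$; since $\mathcal{M}f\le\mathcal{M}f_Q^{\mathrm{loc}}+\mathcal{M}f_Q^{\mathrm{tail}}$ pointwise, it is enough to estimate the two pieces separately in $\ell^r_Q$.

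\textbf{Local part.} Here I would use the boundedness of $\mathcal{M}$ on $L^{p}$ (available since $p>1$). Writing $3Q$ as a union of a number of dyadic cubes $Q'$ of the same generation as $Q$ that is bounded in terms of $n$, one gets $|Q|^{1/t-1/p}\|\mathcal{M}f_Q^{\mathrm{loc}}\chi_Q\|_{L^p}\lesssim|Q|^{1/t-1/p}\|f\chi_{3Q}\|_{L^p}\lesssim\sum_{Q'}|Q'|^{1/t-1/p}\|f\chi_{Q'}\|_{L^p}$, and summing in $\ell^r$ (or $\ell^\infty$) over $Q$, using that each dyadic cube is such a neighbour $Q'$ of only boundedly many $Q$ of its generation, yields $\lesssim\|f\|_{M_{p}^{t,r}}$. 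For the $\ell^q$-valued version the only change is to replace the scalar $L^p$-bound by the Fefferman--Stein vector-valued maximal inequality on $L^p$, valid for $1<p<\infty$ and $1<q\le\infty$.

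\textbf{Tail part.} For $x\in Q$, every ball containing $x$ and meeting $\rn\setminus 3Q$ has radius $\gtrsim\ell(Q)$, so a routine estimate gives $\mathcal{M}f_Q^{\mathrm{tail}}(x)\lesssim\sum_{k\ge1}|2^kQ|^{-1}\int_{C2^kQ}|f|$ with $C$ a fixed power of $2$; by Hölder this is $\lesssim\sum_{k\ge1}|2^kQ|^{-1/p}\|f\chi_{C2^kQ}\|_{L^p}$, uniformly in $x\in Q$. Hence, using $\|\mathcal{M}f_Q^{\mathrm{tail}}\chi_Q\|_{L^p}\le|Q|^{1/p}\sup_{x\in Q}\mathcal{M}f_Q^{\mathrm{tail}}(x)$, the identity $|Q|^{1/t}=2^{-kn/t}|2^kQ|^{1/t}$ in each summand, and a covering of $C2^kQ$ by $\lesssim1$ dyadic cubes $R$ with $\ell(R)=\ell(C2^kQ)$, I obtain
\[
|Q|^{1/t-1/p}\|\mathcal{M}f_Q^{\mathrm{tail}}\chi_Q\|_{L^p}\lesssim\sum_{k\ge1}2^{-kn/t}\sum_{R\in\mathcal{R}(Q,k)}a_R,\qquad a_R:=|R|^{1/t-1/p}\|f\chi_R\|_{L^p},
\]
where $\#\mathcal{R}(Q,k)\lesssim1$. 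Taking $\ell^r$ in $Q$ and applying Minkowski's inequality (legitimate since $r>t>1$), matters reduce to bounding, for each fixed $k$, the $\ell^r_Q$-norm of $\sum_{R\in\mathcal{R}(Q,k)}a_R$; since for fixed $Q$ this sum has $\lesssim1$ terms while a fixed $R$ lies in $\mathcal{R}(Q,k)$ for $\lesssim2^{kn}$ cubes $Q$, that norm is $\lesssim2^{kn/r}\|f\|_{M_{p}^{t,r}}$. Summing the geometric series $\sum_{k\ge1}2^{-kn/t}2^{kn/r}=\sum_{k\ge1}2^{-kn(1/t-1/r)}$, which converges exactly because $t<r$, completes the tail bound. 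For $r=\infty$ one runs the same argument with $\sup_Q$ in place of $\|\cdot\|_{\ell^r}$, needing only $t<\infty$; and the $\ell^q$-valued tail reduces to the scalar case, since Minkowski's inequality in $\ell^q$ and in the integral (both valid as $q>1$) give $\|\{\mathcal{M}(f_j\chi_{\rn\setminus 3Q})(x)\}_j\|_{\ell^q}\lesssim\sum_{k\ge1}|2^kQ|^{-1}\int_{C2^kQ}g$ with $g:=\|\{f_j\}_j\|_{\ell^q}$, so the displayed estimate holds with $|f|$ and $\|f\|_{M_{p}^{t,r}}$ replaced by $g$ and $\|\{f_j\}_j\|_{M_{p}^{t,r}(\ell^q)}$.

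\textbf{Main obstacle.} The one genuinely delicate step will be the tail estimate: one must organize the dilates $C2^kQ$ into dyadic cubes of the correct generation, count that a given dyadic cube is produced by about $2^{kn}$ cubes $Q$ at that scale, and observe that the resulting series $\sum_k 2^{kn(1/r-1/t)}$ converges precisely under the hypothesis $t<r$ — which is automatic on the range of parameters for which $M_{p}^{t,r}\neq\{0\}$. Everything else is routine once the (scalar or vector-valued) maximal inequality on $L^p$ is in hand.
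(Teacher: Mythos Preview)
The paper does not give its own proof of this lemma; it is quoted from \cite{HNS23} (their Lemma~4.1 and Theorem~4.3) without argument. Your proposal is correct and follows the standard local-plus-tail decomposition used for Morrey-type spaces (and, as far as one can tell, the method in the cited reference): the local piece is handled by the $L^p$ maximal inequality, respectively Fefferman--Stein in the vector-valued case, while the tail contributes the geometric series $\sum_{k\ge1}2^{-kn(1/t-1/r)}$, convergent exactly because $t<r$ (or $t<\infty$ when $r=\infty$).
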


Now we turn to  vector functions. Let
$\vec{f}=(f_{1},\ldots,f_{m})^{T}$ with $f_{i}$ is a measurable
function on $\mathbb{R}^{n}$ for $i=1,\ldots,m$ where $(f_{1},\ldots,f_{m})^{T}$
denotes the transpose of $(f_{1},\ldots,f_{m})$.
Throughout the paper, for a function space $X$, $\vec f \in X$ means that $f_i \in X$ with $i =1,2,\ldots, m$.
\begin{defn}
	Let $0<p<t<r<\infty$ or $0<p\le t<r=\infty$. Let $W$ be a matrix
	weight.
	The matrix weighted Bourgain-Morrey space is defined by
	\[
	M_{p}^{t,r}(W) :=\Big\{\vec{f}\in L_{\mathrm{loc}}^{p}(W):\|\vec{f}\|_{M_{p}^{t,r}(W)}<\infty\Big\},
	\]
	where
	\[
	\|\vec{f}\|_{M_{p}^{t,r}(W)}:=\Big\|\Big\{|Q|^{1/t-1/p} \| W^{1/p} \vec f \chi_Q \|_{L^p}  \Big\}_{Q\in\mathcal{D}}\Big\|_{\ell^{r}}.
	\]
	
	Denote by $\mathcal{F}\vec{f} (\xi): = \mathcal{F}(\vec{f})(\xi):=\int_{\mathbb{R}^{n}}\vec{f}(x)e^{-2\pi ix\cdot\xi}\mathrm{d}x$ 	the Fourier transform of $\vec{f}$ and let $\mathcal{F}^{-1} \vec{f} (x):= \mathcal{F}^{-1}(\vec{f})(x):=\int_{\mathbb{R}^{n}}\vec{f}(\xi)e^{2\pi ix\cdot\xi}\mathrm{d} \xi$
	stand for the inverse Fourier transform of $\vec{f}$. 
	The Fourier
	multiplier $\varphi(D)$ with symbol $\varphi$ is defined by
	\[
	\varphi(D)\vec{f}(x) : =\mathcal{F}^{-1}[\varphi\mathcal{F}(\vec{f})](x).
	\]
\end{defn}

\section{Homogeneous matrix weighted Bourgain-Morrey Triebel-Lizorkin spaces}\label{sec:Homogeneous-spaces}
Let $\varphi\in\mathscr{S}(\mathbb{R}^{n}).$ We say that $\varphi$
is admissible, and write $\varphi\in\mathcal{A}$, if
\[
\mathrm{supp}\;\mathcal{F}(\varphi)\subset\{\xi:1/2\le|\xi|\le2\},
\]
\[
|\mathcal{F}(\varphi)(\xi)|\ge c>0\quad\mathrm{if}\quad3/5\le|\xi|\le5/3.
\]
For $j\in\mathbb{Z}$, let $\varphi_{j}(x)=2^{jn}\varphi(2^{j}x)$.
Then $\mathcal{F}(\varphi_{j})(\xi)=\mathcal{F}(\varphi)(2^{-j}\xi)$.

For the following function spaces, we suppose that $s\in\mathbb{R},$
$0<q\le\infty$, $W$ is a matrix weight,  $0<p<t<r<\infty$ or
$0<p\le t<r=\infty$. 

(i) The matrix weighted Bourgain-Morrey Triebel-Lizorkin space $\dot{F}_{p,t,r}^{s,q}(W)$
is the set of all distributions $\vec{f}\in\mathscr{S}'/\mathcal{P}(\mathbb{R}^{n})$
such that
\[
\|\vec{f}\|_{\dot{F}_{p,t,r}^{s,q}(W)}:=
\bigg\|\bigg(\sum_{v=-\infty}^{\infty}2^{vsq}|W^{1/p}\varphi_{v}*\vec{f}|^{q}\bigg)^{1/q}\bigg\|_{M_{p}^{t,r}}<\infty.
\]

(ii) The discrete Bourgain-Morrey Triebel-Lizorkin space $\dot{f}_{p,t,r}^{s,q}(W)$
is the set of all sequences $\vec{s}=\{\vec{s}_{Q}\}_{Q\in\mathcal{D}}$
such that
\[
\|\vec{s}\|_{\dot{f}_{p,t,r}^{s,q}(W)}:=
\bigg\|\bigg(\sum_{Q\in\mathcal{D}}\big[|Q|^{-s/n-1/2}|W^{1/p}\vec{s}_{Q}|\chi_{Q}\big]^{q}\bigg)^{1/q}\bigg\|_{M_{p}^{t,r}}<\infty.
\]

Suppose that for each $Q\in\mathcal{D}$, $A_{Q}$ is a $m\times m$
non-negative definite matrix with $m \in \mathbb N$.

(iii) The \{$A_{Q}$\}-Bourgain-Morrey Triebel-Lizorkin space $\dot{F}_{p,t,r}^{s,q}(A_{Q})$
is the set of all distributions $\vec{f}\in\mathscr{S}'/\mathcal{P}(\mathbb{R}^{n})$
such that
\[
\|\vec{f}\|_{\dot{F}_{p,t,r}^{s,q}(A_{Q})}:=\bigg\|\bigg(\sum_{v=-\infty}^{\infty}\sum_{Q\in\mathcal{D}_{v}}2^{vsq}|A_{Q}\varphi_{v}*\vec{f}|^{q}\chi_{Q}\bigg)^{1/q}\bigg\|_{M_{p}^{t,r}}<\infty.
\]

(iv) The \{$A_{Q}$\}-discrete Bourgain-Morrey Triebel-Lizorkin space
$\dot{f}_{p,t,r}^{s,q}(A_{Q})$ is the set of all sequences $\vec{s}=\{\vec{s}_{Q}\}_{Q\in\mathcal{D}}$
such that
\[
\|\vec{s}\|_{\dot{f}_{p,t,r}^{s,q}(A_{Q})}:=\bigg\|\Big(\sum_{v=-\infty}^{\infty}\sum_{Q\in\mathcal{D}_{v}}\Big[|Q|^{-s/n-1/2}|A_{Q}\vec{s}_{Q}|\chi_{Q}\Big]^{q}\Big)^{1/q}\bigg\|_{M_{p}^{t,r}}<\infty.
\]

For $\varphi\in\mathcal{A},$ let
\[
\mathcal{F}\psi=\frac{\mathcal{F}\varphi}{\sum_{v\in\mathbb{Z}}|\mathcal{F}\varphi_{v}|^{2}}.
\]
Then $\psi\in\mathcal{A}$ and we have
\begin{equation*}
	\sum_{v\in\mathbb{Z}}\overline{\mathcal{F}\varphi_{v}(\xi)}  \mathcal F \psi_{v}(\xi)=1\mathrm{\;for\;all}\;\xi\neq 0 . 
\end{equation*}
For $Q=Q_{v,k}$, $v \in \mathbb Z, k \in \mathbb Z^n$, let
\begin{equation}
	\varphi_{Q}(x):=2^{vn/2}\varphi(2^{v}x-k)=|Q|^{-1/2}\varphi((x-x_{Q})/\ell(Q))\label{eq:varphi  Q}
\end{equation}
where $x_{Q}$ is the lower left corner of $Q$ ($x_Q = 2^{-v}k$). $\psi_{Q}$ is similar.
For a vector valued functions $\vec{f}$ and test function $g$, we
define $\langle\vec{f},g\rangle=(\langle f_{1},g\rangle,\ldots,\langle f_{d},g\rangle)^{T}$.
Then we have
\begin{equation}
	\vec{f}=\sum_{Q\in\mathcal{D}}\langle\vec{f},\varphi_{Q}\rangle\psi_{Q}\label{eq:converge}
\end{equation}
convergence in $\mathscr{S}'/\mathcal{P}(\mathbb{R}^{n})$. This is
the so called $\varphi$-transform; see \cite{FJW91,FrazierR04,FraRou19,R03}.

\subsection{Equivalence of quasi-norms}
First, we consider the relation of spaces $\dot{F}_{p,t,r}^{s,q}(W)$ and $\dot{F}_{p,t,r}^{s,q}(A_{Q})$.

\begin{thm}
	\label{thm:equivalence} Let $0<p<t<r<\infty$ or $0<p\le t<r=\infty$.
	Let $s\in\mathbb{R}$, $W\in \mathcal A_{p}$ and
	$\{A_{Q}\}_{Q\in\mathcal{D}}$ be a sequence of reducing operators of order p for $W$. 
	Let $ q \in (0, p + \delta_W) $ where $\delta_W >0$  is the same as in Lemma \ref{lem:W AQ}. 
	For $\vec{f}\in\mathscr{S}'/\mathcal{P}(\mathbb{R}^{n})$, let $\vec{s}=\{\vec{s}_{Q}\}_{Q\in\mathcal{D}}$,
	where $\vec{s}_{Q}=\langle\vec{f},\varphi_{Q}\rangle.$ Then
	\[
	\|\vec{f}\|_{\dot{F}_{p,t,r}^{s,q}(W)}\approx\|\vec{f}\|_{\dot{F}_{p,t,r}^{s,q}(A_{Q})}\approx\|\vec{s}\|_{\dot{f}_{p,t,r}^{s,q}(A_{Q})}\approx\|\vec{s}\|_{\dot{f}_{p,t,r}^{s,q}(W)}.
	\]
	In addition, function spaces $\dot{F}_{p,t,r}^{s,q}(W)$, $\dot{F}_{p,t,r}^{s,q}(A_{Q})$
	are independent
	of the choice of $\varphi\in\mathcal{A},$ in the sense that different
	choices yield equivalent quasi-norms.

\end{thm}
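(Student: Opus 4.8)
The plan is to prove the four equivalences in the chain
$\|\vec f\|_{\dot F_{p,t,r}^{s,q}(W)}\approx\|\vec f\|_{\dot F_{p,t,r}^{s,q}(A_Q)}\approx\|\vec s\|_{\dot f_{p,t,r}^{s,q}(A_Q)}\approx\|\vec s\|_{\dot f_{p,t,r}^{s,q}(W)}$
one at a time, reducing everything to pointwise comparisons between $W^{1/p}(x)$ and the reducing operators $A_Q$ together with the boundedness of $\mathcal M$ on $M_p^{t,r}(\ell^q)$ (Lemma \ref{lem:hardy}). The first and last equivalences, $\dot F(W)\approx\dot F(A_Q)$ and $\dot f(W)\approx\dot f(A_Q)$, are the ``easy'' ones: for a fixed dyadic cube $Q\in\mathcal D_v$ and $x\in Q$ one writes $W^{1/p}(x)\varphi_v*\vec f(x)=W^{1/p}(x)A_Q^{-1}\,A_Q\varphi_v*\vec f(x)$, so $|W^{1/p}(x)\varphi_v*\vec f(x)|\le\|W^{1/p}(x)A_Q^{-1}\|\,|A_Q\varphi_v*\vec f(x)|$ and, symmetrically, $|A_Q\varphi_v*\vec f(x)|\le\|A_QW^{-1/p}(x)\|\,|W^{1/p}(x)\varphi_v*\vec f(x)|$. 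One direction of each is immediate from the pointwise bound $\sup_Q\operatorname{ess\,sup}_{x\in Q}$-type estimates; the other direction uses the averaged estimates in Lemma \ref{lem:W AQ} (the bounds on $\frac1{|Q|}\int_Q\|W^{1/p}(x)A_Q^{-1}\|^v\,dx$ and $\frac1{|Q|}\int_Q\|A_QW^{-1/p}(x)\|^v\,dx$) to pass from a pointwise weight to an averaged one, which is exactly where the condition $q<p+\delta_W$ (and, for the $p>1$ reducing-operator estimate, $q<p'+\delta_W$, automatic here) is consumed: one needs the exponent in the maximal-function / Hölder argument to stay below the integrability threshold $p+\delta_W$.

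Next I would handle the middle equivalence $\|\vec f\|_{\dot F_{p,t,r}^{s,q}(A_Q)}\approx\|\vec s\|_{\dot f_{p,t,r}^{s,q}(A_Q)}$, which is the genuinely substantive step and amounts to a Bourgain--Morrey version of the $\varphi$-transform (almost-diagonal) estimates of Frazier--Jawerth/Frazier--Roudenko. Writing $\vec s_Q=\langle\vec f,\varphi_Q\rangle$, one side is the standard discretization: $|A_Q\varphi_v*\vec f(x)|$ for $x\in Q\in\mathcal D_v$ is, up to harmless constants, $\approx|Q|^{-1/2}|A_Q\vec s_Q|$ plus tails, and conversely $A_Q\varphi_v*\vec f$ can be reconstructed from the coefficients via \eqref{eq:converge}. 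The key analytic device is the scalar inequality: for $x\in Q\in\mathcal D_v$,
\[
|A_Q\varphi_v*\vec f(x)|\lesssim\sum_{\mu\ge0}2^{-\mu L}\,2^{(v-\mu)n}\!\!\sum_{R\in\mathcal D_{v-\mu}}\!\!\frac{|A_Q A_R^{-1}|\,|A_R\vec s_R|}{(1+2^{v-\mu}|x-x_R|)^M}+(\text{finer-scale terms}),
\]
and one controls $\|A_QA_R^{-1}\|$ via the doubling/$\mathcal A_p$ comparison of reducing operators on nested and nearby cubes (a $2^{\#(\text{generations})}$-type growth absorbed by choosing $L,M$ large). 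After that, each level is dominated by a Hardy--Littlewood maximal function of the sequence $\{|A_R\vec s_R|\chi_R\}$, the $\mu$-sum is summed by the geometric factor $2^{-\mu L}$, and one applies the vector-valued maximal inequality on $M_p^{t,r}(\ell^q)$ (Lemma \ref{lem:hardy}, valid once $p>1$; for $p\le1$ one uses the standard $r_0<p$ trick, replacing $\mathcal M$ by $\big(\mathcal M(\cdot)^{r_0}\big)^{1/r_0}$ and re-summing). This is the step I expect to be the main obstacle, because it requires assembling the almost-diagonality estimate, the comparison of reducing operators across scales, and the Fefferman--Stein inequality in the Bourgain--Morrey $\ell^q$ setting all at once.

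Finally, the $\varphi$-independence follows formally once the discrete characterization is in place: given two admissible $\varphi,\tilde\varphi\in\mathcal A$ with associated $\psi,\tilde\psi$, one writes $\vec f=\sum_Q\langle\vec f,\varphi_Q\rangle\psi_Q$ and expands $\tilde\varphi_\mu*\vec f$ against this, producing an almost-diagonal operator on $\dot f_{p,t,r}^{s,q}(A_Q)$ whose boundedness is exactly the estimate proved in the middle step; hence $\|\vec f\|_{\dot F_{p,t,r}^{s,q}(A_Q),\varphi}\approx\|\vec s\|_{\dot f_{p,t,r}^{s,q}(A_Q)}\approx\|\vec f\|_{\dot F_{p,t,r}^{s,q}(A_Q),\tilde\varphi}$, and likewise for $\dot F_{p,t,r}^{s,q}(W)$ via the two outer equivalences. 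Throughout, the only place the Bourgain--Morrey structure (as opposed to a plain $L^p$ or Morrey norm) enters is through invoking Lemma \ref{lem:hardy}; every other estimate is pointwise in $x$ and dyadic-cube-by-dyadic-cube, so it transfers verbatim from the classical matrix-weighted Triebel--Lizorkin theory of Frazier--Roudenko.
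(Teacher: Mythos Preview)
Your overall architecture matches the paper's: the middle equivalence $\|\vec f\|_{\dot F_{p,t,r}^{s,q}(A_Q)}\approx\|\vec s\|_{\dot f_{p,t,r}^{s,q}(A_Q)}$ is indeed obtained via the $\varphi$-transform/almost-diagonal machinery (Theorems~\ref{thm:sup A_Q LE}, \ref{thm:unweighted discrete almost }, \ref{thm:almost dia bounded on discrete}, \ref{thm:varphi transform}), and $\varphi$-independence follows from it as you describe. The gap is in your treatment of the ``outer'' equivalences $\dot F(W)\approx\dot F(A_Q)$ and $\dot f(W)\approx\dot f(A_Q)$, which you call the easy ones.

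First, for $p>1$ there is \emph{no} pointwise $\operatorname{ess\,sup}_{x\in Q}\|A_QW^{-1/p}(x)\|<\infty$ estimate; \eqref{eq:p le 1 AQW} is only available for $p\le1$. The paper handles $\|\vec s\|_{\dot f(A_Q)}\lesssim\|\vec s\|_{\dot f(W)}$ for $p>1$ by a Chebychev/good-set argument (Theorem~\ref{thm:A_Q le W discrete}): one passes to $E_Q=\{x\in Q:\|A_QW^{-1/p}(x)\|\le 2C_1\}$, shows $|E_Q|\ge|Q|/2$, and invokes Theorem~\ref{thm:EQ}. This is not the pointwise step you sketch.

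Second, and more seriously, the direction $\|\cdot\|_W\lesssim\|\cdot\|_{A_Q}$ does \emph{not} transfer verbatim from Frazier--Roudenko by a direct H\"older/maximal-function argument. The paper reduces it to showing (Corollary~\ref{cor:gamma le}, via Theorem~\ref{thm:gamma j}) that $\|\{\gamma_jE_j(f_j)\}\|_{M_p^{t,r}(\ell^q)}\lesssim\|\{E_j(f_j)\}\|_{M_p^{t,r}(\ell^q)}$ with $\gamma_j=\sum_{Q\in\mathcal D_j}\|W^{1/p}A_Q^{-1}\|\chi_Q$. The proof of Theorem~\ref{thm:gamma j} is where the Bourgain--Morrey structure genuinely bites: one dualizes against the block space $\mathcal H_{(p/q)'}^{(t/q)',(r/q)'}$ (or its $\ell^{(q/A)'}$-valued version when $p<q<p+\delta_W$), applies H\"older on each $Q$ using the $L^{p+\delta}$ bound on $\gamma_j$, and then needs the boundedness of the powered maximal operator $\mathcal M_\eta$ on those block spaces (Lemmas~\ref{HL r<infty scalar} and~\ref{M eta r < infty = infty}). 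This duality step is the new ingredient specific to the $M_p^{t,r}$ setting and is precisely what your ``maximal-function / H\"older argument'' description omits; a direct H\"older estimate inside $\|\cdot\|_{M_p^{t,r}}$ does not work, since the outer $\ell^r$ over dyadic cubes prevents you from separating $\gamma_j$ from $E_j(f_j)$ without passing to the predual.
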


To prove Theorem \ref{thm:equivalence}, we need 
many properties of matrix weights. 
We first recall  the concepts of the strongly and weakly doubling matrix which are from \cite[Definition 2.1]{FraRou19}
and \cite[Definition 1.3]{Roudenko04}. 
To fit this paper, we make some changes.

\begin{defn}
	Let $\{A_{Q}\}_{Q\in\mathcal{D}}$ be a sequence of positive definite
	matrices and let $p , \Delta \in(0,\infty)$, $d,\tilde d \in [0,\infty) $. We say that $\{A_{Q}\}_{Q\in\mathcal{D}}$
	is strongly doubling of order $(d,\tilde d, \Delta, p)$ if there exists $c>0$
	such that
	\[
	\|A_{Q}A_{R}^{-1}\| \le c\max \left\{   \left( \frac{\ell (R)}{ \ell (Q) } \right)^{d/p},  \left( \frac{\ell (Q)}{ \ell (R) } \right)^{\tilde d /p' }   \right\} \left( 1+ \frac{|c_Q - c_R| }{ \max \{ \ell(Q) , \ell (R) \}   }  \right) ^\Delta
	\]
	for all dyadic cubes $R,Q\in\mathcal{D}$. We say that $\{A_{Q}\}_{Q\in\mathcal{D}}$
	is weakly doubling of order $\Delta > 0$ if there exists $c>0$ such
	that
	\[
	\|A_{Q_{j,k}}A_{Q_{j,l}}^{-1}\|\le c(1+|k-l|)^{\Delta}
	\]
	for all $k,l\in\mathbb{Z}^{n}$ and all $j\in\mathbb{Z}$.
	
	Note that a strongly doubling sequence of order $(d,\tilde d, \Delta,p)$ is weakly
	doubling of order $\Delta$. 
\end{defn}

In \cite{BHYY1}, Bu, Hyt\"onen, Yang and Yuan introduced the concept of the $\mathcal A_p$-dimension of matrix weights and improved   \cite[Lemma 2.2]{FraRou19}.
We first recall the definition of $\mathcal A_p$-dimension $d$ from \cite[Definition 2.22]{BHYY1}.
\begin{defn}
	Let $p \in (0,\infty)$, $d \in \mathbb R$, and $W$  be a matrix weight. Then $W$ is said
	to have the $\mathcal A_p$-dimension $d$, denoted by $W \in\mathbb D_{p,d}:= \mathbb D_{p,d} (\mathbb R^n, \mathbb C ^m)$, if there exists a  positive constant $C$ such that, for any cube $Q \subset \rn$  and any $i \in \mathbb{N}_{0}$, when $p \in (0,1]$,
	\begin{equation*}
		\mathop{\mathrm{ess\,sup}}\limits_{y \in 2^i Q}   \frac{1}{|Q|} \int_Q \left\| W^{1/p} (x) W^{-1/p} (y) \right\|^p \d x \le C 2^{id}
	\end{equation*}
	or, when $p \in (1,\infty),$
	\begin{equation*}
		\frac{1}{|Q|} \int_Q \left( \frac{1}{|Q|} \int_{2^i Q}  \left\| W^{1/p} (x) W^{-1/p} (y) \right\| ^{ p'} \d y  \right)^{p /p'} \d x \le C 2^{id},
	\end{equation*}
	where $1/p +1/p' =1$.
\end{defn}	
The next lemma says that if $W \in  \mathcal A_p$, then $W$  naturally belongs to  $\mathbb D_{p,d}$ for some $d \in [0,n)$.

\begin{lem}
	[Proposition 2.26,  \cite{BHYY1}]
	Let $ p \in (0,\infty)$ and $W \in \mathcal A_p$. Then there exists $d \in [0,n)$  such that $W$ has the $ \mathcal A_p$-dimension $d$.
\end{lem}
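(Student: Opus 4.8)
The plan is to reduce the claim to known scalar weight results by testing the matrix $\mathcal A_p$ condition on constant vectors. Fix $\vec y \in \mathbb C^m$ with $|\vec y| = 1$ and consider the scalar weight $w_{\vec y}(x) := |W^{1/p}(x) \vec y|^p$. The first step is to observe that $w_{\vec y}$ is a scalar Muckenhoupt weight, uniformly in $\vec y$: indeed, this is essentially \cite[Lemma 2.2]{FraRou19} or the discussion around Definition \ref{def:doubling matrix}, since membership of $W$ in $\mathcal A_p$ forces each $w_{\vec y}$ to lie in the scalar class $A_p$ with a constant independent of $\vec y$. A classical scalar fact (the self-improving property of $A_\infty$, equivalently the reverse H\"older inequality, or Gehring's lemma) then gives a uniform bound: there exists $\epsilon > 0$ and $C > 0$, depending only on the $A_\infty$ constant of the family $\{w_{\vec y}\}$, such that for every cube $Q$ and every $i \in \mathbb Z_+$,
\[
\frac{1}{|2^i Q|} \int_{2^i Q} w_{\vec y}(x)\, \d x \le C\, 2^{\,i n (1-\epsilon)} \left( \frac{1}{|Q|} \int_Q w_{\vec y}(x)\, \d x \right),
\]
which is precisely the doubling-type inequality underlying the $\mathcal A_p$-dimension estimate, with $d = n(1-\epsilon) < n$.

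The second step is to convert this scalar estimate into the two matrix inequalities in the definition of $\mathbb D_{p,d}$. For $p \in (0,1]$ one uses \eqref{eq:Aq W equ} and the definition of the reducing operators: for $y \in 2^i Q$, the quantity $\frac{1}{|Q|}\int_Q \|W^{1/p}(x) W^{-1/p}(y)\|^p \, \d x$ is controlled, via the operator-norm/pointwise comparison for $W^{-1/p}(y)$ and Lemma \ref{lem:W AQ} (the $p \le 1$ estimate \eqref{eq:p le 1 AQW}), by an average of $w_{\vec z}(x)$ over $x \in Q$ optimized over unit vectors $\vec z$, against a reducing operator attached to a cube containing $y$; the scalar reverse-H\"older bound then yields the factor $2^{id}$. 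For $p \in (1,\infty)$ the argument is the dual one: one expands $\|W^{1/p}(x) W^{-1/p}(y)\|^{p'}$ using \eqref{eq:p>1 AQW ^-1/p} from Lemma \ref{lem:W AQ} and a duality between the averaging over $y \in 2^iQ$ and the reducing operators of order $p$, again reducing to the scalar weight $w_{\vec y}$ and its reverse H\"older inequality. In both cases the key point is that all constants in Lemma \ref{lem:W AQ} and in \eqref{eq:Aq W equ} are uniform in the cube and in $\vec y$, so the resulting $d$ depends only on $n$, $p$, and the $\mathcal A_p$ constant of $W$.

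The main obstacle I expect is bookkeeping the passage from the scalar reverse-H\"older constant to the exponent $d$ and, more delicately, verifying that $d$ can be taken in the half-open interval $[0,n)$ rather than merely in $[0,n]$. The lower bound $d \ge 0$ is immediate since the left-hand sides are bounded below when $i = 0$ (they are comparable to $1$ by \eqref{eq:Aq W equ}), so the inequality forces the exponent to be nonnegative. The strict upper bound $d < n$ is exactly where the $A_\infty$ self-improvement is essential: a trivial doubling bound gives only $d \le n$ (from $|2^iQ| = 2^{in}|Q|$ together with the doubling property of $W$), and one must genuinely exploit that a scalar $A_p$ weight satisfies a reverse H\"older inequality with some exponent strictly greater than $1$, which upgrades the growth rate from $2^{in}$ to $2^{in(1-\epsilon)}$. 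One should also be slightly careful that the reverse H\"older exponent $\epsilon$ can be chosen uniformly over the family $\{w_{\vec y}\}_{|\vec y|=1}$; this follows because the $A_p$ (hence $A_\infty$) constants of these scalar weights are bounded by the matrix $\mathcal A_p$ constant of $W$, and the quantitative reverse H\"older inequality depends only on that constant and on $n$.
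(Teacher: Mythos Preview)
The paper does not prove this lemma; it is quoted verbatim from \cite[Proposition~2.26]{BHYY1} and used as a black box, so there is no proof in the paper to compare your attempt against.

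That said, your sketch has a concrete error that would derail the argument. The displayed scalar inequality is in the wrong direction. What the $\mathcal A_p$-dimension condition (say for $p\le 1$) actually requires, after passing to reducing operators, is a bound on $\|A_Q A_{2^iQ}^{-1}\|^p\approx\sup_{\vec z}\dfrac{\fint_Q w_{\vec z}}{\fint_{2^iQ} w_{\vec z}}$, i.e.\ the average over the \emph{small} cube controlled by $2^{id}$ times the average over the \emph{big} cube. This follows from the $A_\infty$ inequality $w(E)/w(R)\le C(|E|/|R|)^\delta$ applied with $E=Q$, $R=2^iQ$, giving $d=n(1-\delta)<n$. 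What you wrote is the reverse estimate $\fint_{2^iQ}w\le C\,2^{in(1-\epsilon)}\fint_Q w$; this does \emph{not} follow from reverse H\"older alone and is in fact false for $A_p$ weights with large $p$ (take $w(x)=|x|^\alpha$ on $\mathbb R$ with $\alpha$ close to $p-1$: the doubling exponent $\beta=\alpha+1$ exceeds $2n=2$, so no $\epsilon>0$ works). Your second step, converting the scalar estimate back to the matrix inequalities defining $\mathbb D_{p,d}$, is also only a sketch: for $p>1$ the definition involves an $L^{p'}$-average of $W^{-1/p}(y)$ over $y\in 2^iQ$ normalized by $|Q|$, which brings in an extra $2^{in(p-1)}$ factor and forces you to use the dual weight $\widetilde W=W^{-1/(p-1)}$ and its reducing operators, not just the scalar weights $w_{\vec y}$ attached to $W$. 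The overall strategy of reducing to uniform scalar $A_\infty$ bounds is correct and is indeed how \cite{BHYY1} proceeds, but the details you have suppressed are exactly where the work lies.
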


The following result says that if  $W \in \mathcal A_p $ for  $p \in (0,\infty)$, then $\{A_Q\}_{\operatorname{cube} Q}$, the  family of reducing operators of order $p$ for $W$, is strongly doubling of order $(d,\tilde d, \Delta, p)$.

\begin{lem}[Lemma 2.28, \cite{BHYY1}] \label{AQ AR improved}
	Let $p \in (0,\infty)$, let $W \in \mathcal A_p $ have the $\mathcal A_p$-dimension $d \in [0,n)$, and let $\{A_Q\}_{\operatorname{cube} Q}$  be a family of reducing operators of order $p$ for $W$. If $p \in (1,\infty)$, let further $\widetilde W := W^{-1/(p-1)}$ (which belongs to $\mathcal A _{p'}$)  have the $ \mathcal A_{p'}$-dimension $\tilde d$, while, if $p \in (0,1]$, let $\tilde d =0$. 
	Let 
	\begin{equation} \label{Delta}
		\Delta := \frac{d}{p} + \frac{ \tilde{d} }{p'} .
	\end{equation} 
	Then there exists a constant $C>0$ such that, for any cubes $Q$ and $R$ of $\rn$,
	\begin{equation*}
		\left\| A_Q A_R^{-1} \right\| \le C \max \left\{   \left( \frac{\ell (R)}{ \ell (Q) } \right)^{d/p},  \left( \frac{\ell (Q)}{ \ell (R) } \right)^{\tilde d /p' }   \right\} \left( 1+ \frac{|c_Q - c_R| }{ \max \{ \ell(Q) , \ell (R) \}   }  \right) ^\Delta.
	\end{equation*} 
\end{lem}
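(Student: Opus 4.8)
The plan is to reduce the bound to two estimates for \emph{nested} cubes and then obtain the general case by passing through a single intermediate cube. Throughout I use the defining comparison \eqref{eq:Aq W equ}, which gives, for every cube $Q$ and every $\vec y\in\mathbb C^m$,
\[
|A_Q\vec y|\approx\Big(\frac{1}{|Q|}\int_Q|W^{1/p}(x)\vec y|^p\,\d x\Big)^{1/p},
\]
so that $\|A_QA_R^{-1}\|=\sup_{\vec y\ne\vec 0}|A_Q\vec y|/|A_R\vec y|$ (which, since $A_Q,A_R$ are positive definite, also equals $\|A_R^{-1}A_Q\|$) is controlled once the corresponding $L^p$-averages of $x\mapsto|W^{1/p}(x)\vec y|$ over the two cubes are compared. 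The two building blocks are
\[
\text{(I)}\quad Q\subseteq R\ \Longrightarrow\ \|A_QA_R^{-1}\|\lesssim\Big(\tfrac{\ell(R)}{\ell(Q)}\Big)^{d/p};\qquad
\text{(II)}\quad R\subseteq S\ \Longrightarrow\ \|A_SA_R^{-1}\|\lesssim\Big(\tfrac{\ell(S)}{\ell(R)}\Big)^{\tilde d/p'}.
\]
Granting these, fix arbitrary cubes $Q,R$, put $L:=\max\{\ell(Q),\ell(R)\}$, and choose a cube $S\supseteq Q\cup R$ with $\ell(S)\approx(1+|c_Q-c_R|/L)\,L$ (for instance the cube centered at $c_Q$ of side length $2|c_Q-c_R|+\sqrt n\,(\ell(Q)+\ell(R))$). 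Then $Q\subseteq S$ and $R\subseteq S$, so by submultiplicativity of the operator norm together with (I), (II) and $\Delta=d/p+\tilde d/p'$ (recall \eqref{Delta}),
\[
\|A_QA_R^{-1}\|\le\|A_QA_S^{-1}\|\,\|A_SA_R^{-1}\|\lesssim\Big(\tfrac{\ell(S)}{\ell(Q)}\Big)^{d/p}\Big(\tfrac{\ell(S)}{\ell(R)}\Big)^{\tilde d/p'}\lesssim\max\Big\{\Big(\tfrac{\ell(R)}{\ell(Q)}\Big)^{d/p},\Big(\tfrac{\ell(Q)}{\ell(R)}\Big)^{\tilde d/p'}\Big\}\Big(1+\tfrac{|c_Q-c_R|}{L}\Big)^{\Delta},
\]
the last step using $\ell(S)/\ell(Q)\approx(1+|c_Q-c_R|/L)\,L/\ell(Q)$, $\ell(S)/\ell(R)\approx(1+|c_Q-c_R|/L)\,L/\ell(R)$, and the elementary fact that, since $d,\tilde d\ge0$ and one of $L/\ell(Q),L/\ell(R)$ equals $1$, the product $(L/\ell(Q))^{d/p}(L/\ell(R))^{\tilde d/p'}$ coincides with $\max\{(\ell(R)/\ell(Q))^{d/p},(\ell(Q)/\ell(R))^{\tilde d/p'}\}$. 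This is the asserted inequality, so it remains to prove (I) and (II).

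For (I), write $c:=\ell(R)/\ell(Q)\ge1$ and note that $R$, having $Q$ as a subcube and larger side length, lies in a dilate of $Q$ by a factor $\lesssim c$. For $\vec y\in\mathbb C^m$ and a.e.\ $z$, invertibility of $W$ yields the pointwise bound $|W^{1/p}(x)\vec y|\le\|W^{1/p}(x)W^{-1/p}(z)\|\,|W^{1/p}(z)\vec y|$. If $p\le1$, raise this to the $p$-th power and average in $x$ over $Q$: for a.e.\ $z\in R$,
\[
\frac{1}{|Q|}\int_Q|W^{1/p}(x)\vec y|^p\,\d x\le\Big(\frac{1}{|Q|}\int_Q\|W^{1/p}(x)W^{-1/p}(z)\|^p\,\d x\Big)|W^{1/p}(z)\vec y|^p\lesssim c^{\,d}\,|W^{1/p}(z)\vec y|^p
\]
by the $p\le1$ form of the $\mathcal A_p$-dimension condition (dilation parameter $\approx\log_2 c$); averaging in $z$ over $R$ and invoking \eqref{eq:Aq W equ} on both sides gives $|A_Q\vec y|^p\lesssim c^{\,d}|A_R\vec y|^p$. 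If $p>1$, instead average the pointwise bound in $z$ over $R$, apply H\"older with exponents $p',p$, and use \eqref{eq:Aq W equ} for the resulting average of $|W^{1/p}(z)\vec y|^p$ to obtain
\[
|W^{1/p}(x)\vec y|\lesssim\Big(\frac{1}{|R|}\int_R\|W^{1/p}(x)W^{-1/p}(z)\|^{p'}\,\d z\Big)^{1/p'}|A_R\vec y|;
\]
raising to the $p$-th power and averaging in $x$ over $Q$, the right-hand integral is, up to constants, the one appearing in the $p>1$ form of the $\mathcal A_p$-dimension condition on the cube $Q$ — crucially, $x$ runs over the \emph{small} cube $Q$, so no loss is incurred — whence again $|A_Q\vec y|^p\lesssim c^{\,d}|A_R\vec y|^p$. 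Taking the supremum over $\vec y$ proves (I).

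For (II), if $p\le1$ then $\tilde d=0$ and the claim reduces to $\|A_SA_R^{-1}\|\lesssim1$, which follows by the computation of (I) with the two cubes interchanged, using only the plain $\mathcal A_p$ condition (the $i=0$ case) on the cube $S$ to get $\frac{1}{|S|}\int_S\|W^{1/p}(x)W^{-1/p}(z)\|^p\,\d x\lesssim1$ for a.e.\ $z\in S\supseteq R$. If $p>1$, set $\widetilde W:=W^{-1/(p-1)}\in\mathcal A_{p'}$, let $\{B_Q\}$ be a family of reducing operators of order $p'$ for $\widetilde W$ (note $\widetilde W^{1/p'}=W^{-1/p}$), and recall the standard consequence of the matrix $\mathcal A_p$ condition that $|A_Q^{-1}\vec v|\approx|B_Q\vec v|$ uniformly in $Q$ and $\vec v$ (see, e.g., \cite{FraRou19,BHYY1}); hence $\|A_SA_R^{-1}\|=\|A_R^{-1}A_S\|\approx\|B_RB_S^{-1}\|$. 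Now apply (I) — already established for an arbitrary matrix weight — to $\widetilde W$, exponent $p'$, reducing operators $\{B_Q\}$, and $\mathcal A_{p'}$-dimension $\tilde d$, with the nested pair $R\subseteq S$: this gives $\|B_RB_S^{-1}\|\lesssim(\ell(S)/\ell(R))^{\tilde d/p'}$, i.e.\ (II). The delicate point of the whole argument is the sharpness of the exponent $d/p$ in (I): it forces the use of the precise $\mathcal A_p$-dimension estimate (rather than a crude doubling bound) and, when $p>1$, the observation that the H\"older step is lossless only because the outer average is taken over the small cube; the inverse comparison $A_Q^{-1}\approx B_Q$ between reducing operators of $W$ and of $\widetilde W$ is the second essential ingredient, as it is what reduces the $\tilde d/p'$ branch, by duality, to the already-proven estimate for $W$.
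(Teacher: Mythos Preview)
The paper does not give its own proof of this lemma: it is quoted as Lemma~2.28 of \cite{BHYY1} and used as a black box. So there is no in-paper argument to compare against.

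Your proof is correct and is essentially the standard argument one finds in the source \cite{BHYY1}: reduce to the two nested-cube estimates (I) and (II), handle (I) directly from the $\mathcal A_p$-dimension definition, obtain (II) for $p>1$ by passing to the dual weight $\widetilde W=W^{-1/(p-1)}\in\mathcal A_{p'}$ via the known equivalence $|A_Q^{-1}\vec v|\approx|B_Q\vec v|$ between the inverse reducing operators of $W$ and the reducing operators of $\widetilde W$, and then combine through a common super-cube $S$. The bookkeeping is clean; in particular, the observation that one of $L/\ell(Q),\,L/\ell(R)$ equals $1$ is exactly what collapses the product $(\ell(S)/\ell(Q))^{d/p}(\ell(S)/\ell(R))^{\tilde d/p'}$ to the stated maximum times $(1+|c_Q-c_R|/L)^{\Delta}$. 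One small remark: in the $p>1$ branch of (I), the inner average $\frac{1}{|R|}\int_R$ compared to the $\frac{1}{|Q|}\int_{2^iQ}$ in the definition of $\mathcal A_p$-dimension actually gains an extra factor $c^{-np/p'}$, so your bound is even a bit better than $c^{d}$; this does not affect the argument.
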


Throughout the  paper, we fix some notations in Lemma \ref{AQ AR improved} since we will use it frequently. That is,
for $W \in \mathcal A_p$, the meanings of $d, \tilde d, \Delta$ is the same as in Lemma \ref{AQ AR improved}. 

\begin{rem}
	Let $W \in \mathcal A_p $ for $p \in (0,\infty)$ and $\{A_Q\}_{\operatorname{cube} Q}$  be a family of reducing operators of order $p$ for $W$.
	From Lemma \ref{AQ AR improved}, we get that $\{A_Q\}_{\operatorname{cube} Q}$ is a  weakly doubling sequence
	of order $\Delta$ of positive definite matrices where $\Delta$ is the same as in (\ref{Delta}).
\end{rem}

\begin{lem}[(2.8), \cite{FraRou19}]
	\label{lem:A inequality} Let $\varphi\in\mathcal{A}$
	and $\varphi_{j}(x)=2^{jn}\varphi(2^{j}x)$, $j\in\mathbb{Z}$. Suppose
	that $\{A_{Q}\}_{Q\in\mathcal{D}}$ is a weakly doubling sequence
	of order $\alpha > 0$ of positive definite matrices. Then, for any
	$A\in(0,1]$ and $R\in(0,\infty)$, there exists a positive constant
	c, depending on $\{A_{Q}\}_{Q\in\mathcal{D}}$, $A$ and $R$, such that for any $j\in\mathbb{Z}$,
	$k\in\mathbb{Z}^{n}$, and $\vec{f}\in\mathscr{S}'(\mathbb{R}^{n})$,
	\[
	\sup_{x\in Q_{j,k}}|A_{Q_{j,k}}\varphi_{j} * \vec{f}(x)|^{A}\le c\sum_{l\in\mathbb{Z}^{n}}(1+|k-l|)^{-A(R-\alpha)}2^{jn}\int_{Q_{j,l}}|A_{Q_{j,l}}\varphi_{j} * \vec{f}(z)|^{A}\mathrm{d}z.
	\]
\end{lem}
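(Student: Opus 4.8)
The plan is to reduce the matrix‑weighted estimate to a Plancherel--P\'olya type inequality for a single band‑limited $\mathbb C^{m}$‑valued function and then to prove that inequality by a reproducing formula together with the classical absorption argument for exponents $A\le1$.

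First I would use dilation invariance: the substitution $x\mapsto2^{-j}x$ sends $Q_{j,l}$ to $Q_{0,l}$, sends $\varphi_{j}*\vec f$ to a function of the form $\varphi*\vec g$ with $\mathcal F(\varphi*\vec g)$ supported in $\{1/2\le|\xi|\le2\}$, and transforms both sides of the asserted inequality consistently (the factors $2^{jn}$ being exactly cancelled by the change of variables in the integrals); so we may assume $j=0$. Relabelling, write $g:=\varphi*\vec f$ --- a $C^{\infty}$ function of polynomial growth, by Paley--Wiener--Schwartz, with $\mathcal F g$ supported in $\{1/2\le|\xi|\le2\}$ --- and keep the notation $\{A_{Q_{0,l}}\}_{l\in\mathbb Z^{n}}$ for the weakly doubling (order $\alpha$) family. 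Fix $k\in\mathbb Z^{n}$ and put $h:=A_{Q_{0,k}}g$. Since $\|A_{Q_{0,k}}A_{Q_{0,l}}^{-1}\|\le c(1+|k-l|)^{\alpha}$, we have $|h(z)|\le c(1+|k-l|)^{\alpha}|A_{Q_{0,l}}g(z)|$ for every $z$; raising to the power $A$ and integrating over $Q_{0,l}$ shows that it suffices to prove, for each $k$ and each sufficiently large $\mathcal R$ and with $C$ independent of $h$ and $k$,
\[
\sup_{x\in Q_{0,k}}|h(x)|^{A}\le C\sum_{l\in\mathbb Z^{n}}(1+|k-l|)^{-A\mathcal R}\int_{Q_{0,l}}|h(z)|^{A}\,\mathrm{d}z ,
\]
an inequality I will denote by $(\ast)$. (The gained power $\alpha$ is precisely the $R-\alpha$ of the statement; and as the right‑hand side of the lemma decreases in $R$, it is enough to treat large $R$, taken here as $\mathcal R$.)

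Next I would establish $(\ast)$ when $A=1$. Choosing $\Phi\in\mathscr S(\mathbb R^{n})$ with $\mathcal F\Phi\in C_{c}^{\infty}(\mathbb R^{n})$ and $\mathcal F\Phi\equiv1$ on $\{1/2\le|\xi|\le2\}$, one has $h=h*\Phi$ because $\mathcal F\Phi\equiv1$ on $\operatorname{supp}\mathcal F h$. Writing $h(x)=\sum_{l}\int_{Q_{0,l}}h(y)\Phi(x-y)\,\mathrm{d}y$, using $|\Phi(w)|\le C_{N}(1+|w|)^{-N}$ for any $N$, and using $1+|x-y|\gtrsim1+|k'-l|$ for $x\in Q_{0,k'}$, $y\in Q_{0,l}$, one gets, for any preassigned large $N$ and all $k'$,
\[
H_{k'}:=\sup_{x\in Q_{0,k'}}|h(x)|\le C_{N}\sum_{l\in\mathbb Z^{n}}(1+|k'-l|)^{-N}\int_{Q_{0,l}}|h(y)|\,\mathrm{d}y ,
\]
which I will call $(\ast\ast)$.

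The main obstacle is the passage from $(\ast\ast)$ to $(\ast)$ for $0<A<1$. Set $\beta_{l}:=\int_{Q_{0,l}}|h|^{A}$; on $Q_{0,l}$, $|h|=|h|^{A}|h|^{1-A}\le|h|^{A}H_{l}^{1-A}$, so $(\ast\ast)$ at center $k'$ becomes $H_{k'}\le C_{N}\sum_{l}(1+|k'-l|)^{-N}H_{l}^{1-A}\beta_{l}$. I would then introduce the Peetre‑type quantity $M:=\sup_{k'}(1+|k-k'|)^{-\mathcal R}H_{k'}$ (finite --- see below), bound $H_{l}^{1-A}\le(1+|k-l|)^{\mathcal R(1-A)}M^{1-A}$, and use $1+|k-l|\le(1+|k-k'|)(1+|k'-l|)$ to obtain
\[
H_{k'}\le C_{N}M^{1-A}(1+|k-k'|)^{\mathcal R(1-A)}\sum_{l}(1+|k'-l|)^{-N+\mathcal R(1-A)}\beta_{l} .
\]
Multiplying by $(1+|k-k'|)^{-\mathcal R}$, taking the supremum over $k'$, and cancelling the positive finite factor $M^{1-A}$ gives $M^{A}\le C_{N}\sup_{k'}\bigl[(1+|k-k'|)^{-A\mathcal R}\sum_{l}(1+|k'-l|)^{-N+\mathcal R(1-A)}\beta_{l}\bigr]$; splitting each inner sum according to whether $2|k'-l|\ge|k-l|$ (then $1+|k'-l|\gtrsim1+|k-l|$) or $2|k'-l|<|k-l|$ (then $1+|k-k'|\gtrsim1+|k-l|$), and taking $N$ large enough that $-N+\mathcal R(1-A)\le-A\mathcal R$, bounds that supremum by $C\sum_{l}(1+|k-l|)^{-A\mathcal R}\beta_{l}$ uniformly in $k'$; since $M\ge H_{k}$, this is $(\ast)$. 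The one delicate step is the finiteness of $M$ needed for the cancellation: it is immediate if $\mathcal R$ exceeds the polynomial growth exponent of $h$, and in general I would handle it in the usual way --- either by restricting the supremum defining $M$ to $|k-k'|\le\rho$, carrying out the argument with constants independent of $\rho$ (for $N$ large the tail over $|k-l|>\rho$ is negligible because $N$ beats the polynomial growth of $h$), and letting $\rho\to\infty$; or by first treating rapidly decreasing $h$ and then approximating $g$ by the Schwartz functions $\mathcal F^{-1}\bigl(\mathcal F\varphi\cdot(\mathcal F\vec f*\omega_{\varepsilon})\bigr)$ in $\mathscr S'$, as in \cite{FraRou19}. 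Together with the reduction above, this proves Lemma~\ref{lem:A inequality}.
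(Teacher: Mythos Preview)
Your argument is correct and is essentially the standard Plancherel--P\'olya/Peetre absorption proof that underlies (2.8) in \cite{FraRou19}: reduce to a single scale by dilation, strip off the matrices via weak doubling to get a scalar sub-mean-value inequality, prove the latter for $A=1$ through a reproducing kernel, and then pass to $A<1$ by bounding $|h|\le|h|^{A}H_{l}^{1-A}$ and absorbing the resulting $M^{1-A}$. The paper does not give its own proof of this lemma (it simply quotes the estimate from \cite{FraRou19}), and your write-up matches the approach of that reference, including the correct handling of the finiteness of $M$ by truncation/approximation.
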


\begin{lem}[Lemma 3.7, \cite{WYZ22}]
	\label{lem:Littlewood max} Let $\eta>n$.
	Then there exists a positive constant c such that, for any $j\in\mathbb{Z}$
	and any complex-valued measurable function $g$ on $\mathbb{R}^{n}$,
	\[
	\sum_{k\in\mathbb{Z}^{n}}\sum_{l\in\mathbb{Z}^{n}}(1+|k-l|)^{-\eta}2^{jn}\int_{Q_{j,l}}|g(s)|\mathrm{d}s\chi_{Q_{j,k}}\le c\mathcal{M}(g).
	\]
\end{lem}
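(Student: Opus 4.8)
The plan is to prove the estimate pointwise. Fix $x \in \mathbb{R}^n$ and $j \in \mathbb{Z}$. Since the cubes of $\mathcal{D}_j$ partition $\mathbb{R}^n$, there is a unique $k = k(x,j) \in \mathbb{Z}^n$ with $x \in Q_{j,k}$, so that $\chi_{Q_{j,k'}}(x) = 1$ holds precisely when $k' = k$; hence the left-hand side, evaluated at $x$, collapses to
\[
S(x) := \sum_{l \in \mathbb{Z}^n} (1+|k-l|)^{-\eta}\, 2^{jn}\int_{Q_{j,l}} |g(s)|\,\mathrm{d}s ,
\]
and it suffices to produce a constant $c = c(n,\eta)$ with $S(x) \le c\,\mathcal{M}g(x)$.

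The main step is a dyadic shell decomposition of the index set about $k$: for $i \in \mathbb{N}_0$ put $A_i := \{l \in \mathbb{Z}^n : 2^i \le 1+|k-l| < 2^{i+1}\}$, so that $\mathbb{Z}^n = \bigcup_{i \ge 0} A_i$, $A_0 = \{k\}$, and $(1+|k-l|)^{-\eta} \le 2^{-i\eta}$ for every $l \in A_i$. The geometric core is a containment: because $x \in Q_{j,k}$ and all cubes in $\mathcal{D}_j$ have side length $2^{-j}$, any $y \in Q_{j,l}$ satisfies $|y - x| \le 2^{-j}(|k-l| + \sqrt{n})$, so with $\rho_i := 2(1+\sqrt{n})\,2^{-j}2^i$ we get $Q_{j,l} \subset B(x,\rho_i)$ for all $l \in A_i$. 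Since the cubes $\{Q_{j,l}\}_l$ are pairwise disjoint and $x \in B(x,\rho_i)$,
\[
\sum_{l \in A_i} 2^{jn}\int_{Q_{j,l}} |g| \;\le\; 2^{jn}\int_{B(x,\rho_i)} |g| \;=\; 2^{jn}\,|B(x,\rho_i)|\cdot\frac{1}{|B(x,\rho_i)|}\int_{B(x,\rho_i)}|g| \;\le\; c_n\,2^{in}\,\mathcal{M}g(x),
\]
where I use $|B(x,\rho_i)| = c_n'\,\rho_i^{\,n} \approx 2^{-jn}2^{in}$, so that the prefactor $2^{jn}$ cancels the $2^{-jn}$ and only $2^{in}$ (times a dimensional constant) survives.

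Summing the shells and invoking $\eta > n$ then finishes the argument:
\[
S(x) \;=\; \sum_{i \ge 0} \sum_{l \in A_i} (1+|k-l|)^{-\eta}\,2^{jn}\int_{Q_{j,l}}|g| \;\le\; \sum_{i \ge 0} 2^{-i\eta}\sum_{l \in A_i} 2^{jn}\int_{Q_{j,l}}|g| \;\le\; c_n\,\mathcal{M}g(x)\sum_{i \ge 0} 2^{-i(\eta - n)},
\]
and the last series converges, with sum depending only on $\eta - n > 0$. All constants above are invariant under the rescaling $y \mapsto 2^{-j}y$, so the final $c = c(n,\eta)$ is independent of $j$ (and of $g$).

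I do not expect a genuine obstacle here; this is the classical maximal-function domination scheme used throughout the theory of Triebel--Lizorkin spaces. The only point that must be handled with care is the shell decomposition: one has to see that the exponential decay $2^{-i\eta}$ overcomes the volume growth $|B(x,\rho_i)| \sim 2^{in}$ (equivalently, the cardinality bound $\# A_i \lesssim 2^{in}$), which is exactly where $\eta > n$ enters, and one must keep the geometric constants — the $\sqrt{n}$ coming from the cube's diameter and the dilation factor $2(1+\sqrt{n})$ — explicit so that it is manifest that the resulting constant does not depend on the scale $j$.
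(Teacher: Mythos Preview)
Your proof is correct and is precisely the standard argument. Note, however, that the paper does not supply its own proof of this lemma: it is quoted verbatim from \cite{WYZ22} (Lemma~3.7) without argument, so there is nothing in the present paper to compare against. Your dyadic-shell decomposition together with the containment $\bigcup_{l\in A_i} Q_{j,l}\subset B(x,\rho_i)$ and the volume bound $2^{jn}|B(x,\rho_i)|\lesssim 2^{in}$ is exactly the expected route, and the use of $\eta>n$ to sum the geometric series $\sum_{i\ge 0}2^{-i(\eta-n)}$ is the right place for that hypothesis.
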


The following result is one direction of the equivalence of $\dot{F}_{p,t,r}^{s,q}(A_{Q})$  and $\dot{f}_{p,t,r}^{s,q}(A_{Q}) $.

\begin{thm}
	\label{thm:sup A_Q LE} Let $\varphi\in\mathcal{A}$ and $\tilde{\varphi}(x)=\overline{\varphi(-x)}$.
	Suppose that $\{A_{Q}\}_{Q\in\mathcal{D}}$ is a weakly doubling sequence
	(of any order $\Delta > 0$) of non-negative definite matrices. Let
	$0<p<t<r<\infty$ or $0<p\le t<r=\infty$. Let $s\in\mathbb{R}$,
	$0<q\le\infty$. Then there exists a constant $c>0$ depending only on $s,p,t,r,q,\Delta,\varphi,n$
	such that for all $\vec{f}\in\mathscr{S}'/\mathcal{P}(\mathbb{R}^{n})$,
	\begin{equation}
		\bigg\|\bigg(\sum_{j\in\mathbb{Z}}\sum_{Q\in\mathcal{D}_{j}}\bigg( 2^{js}\sup_{x\in Q}|A_{Q}\varphi_{j}*\vec{f}(x)|\chi_{Q}(x)\bigg)^{q}\bigg)^{1/q}\bigg\|_{M_{p}^{t,r}}\le c\|\vec{f}\|_{\dot{F}_{p,t,r}^{s,q}(A_{Q})},\label{eq:sup Aq le discrete}
	\end{equation}
	and
	\begin{equation}
		\|\{\langle\vec{f},\tilde{\varphi}_{Q}\rangle\}_{Q\in\mathcal{D}}\|_{\dot{f}_{p,t,r}^{s,q}(A_{Q})}\le c\|\vec{f}\|_{\dot{F}_{p,t,r}^{s,q}(A_{Q})}.\label{eq:discrete Aq le F}
	\end{equation}	
\end{thm}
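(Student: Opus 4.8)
The plan is to reduce the theorem to the unweighted Bourgain--Morrey Triebel--Lizorkin estimate by exploiting the weak doubling of $\{A_Q\}_{Q\in\mathcal D}$ together with Lemmas \ref{lem:A inequality} and \ref{lem:Littlewood max}. First I would fix $A\in(0,1]$ with $A<q$ and $A<p$ (this is where the freedom in choosing $A$ small is used), and choose $R>0$ large enough that $A(R-\Delta)>n$. Applying Lemma \ref{lem:A inequality} with $\alpha=\Delta$ gives, for each $j\in\mathbb Z$ and $k\in\mathbb Z^n$,
\[
\sup_{x\in Q_{j,k}}|A_{Q_{j,k}}\varphi_j*\vec f(x)|^A\le c\sum_{l\in\mathbb Z^n}(1+|k-l|)^{-A(R-\Delta)}2^{jn}\int_{Q_{j,l}}|A_{Q_{j,l}}\varphi_j*\vec f(z)|^A\,\d z.
\]
Multiplying by $2^{jsA}$, summing against $\chi_{Q_{j,k}}$ over $k$, and applying Lemma \ref{lem:Littlewood max} with $\eta=A(R-\Delta)>n$ to the function $g_j(z):=2^{jsA}|A_{Q_{j,l}}\varphi_j*\vec f(z)|^A\chi_{Q_{j,l}}(z)$ (summed appropriately over $l$, noting $Q_{j,l}$ ranges over $\mathcal D_j$) yields the pointwise bound
\[
\sum_{k\in\mathbb Z^n}\Big(2^{js}\sup_{x\in Q_{j,k}}|A_{Q_{j,k}}\varphi_j*\vec f(x)|\chi_{Q_{j,k}}(x)\Big)^A\le c\,\mathcal M\!\Big(\sum_{Q\in\mathcal D_j}2^{jsA}|A_Q\varphi_j*\vec f|^A\chi_Q\Big)(x).
\]

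Next I would take the $\ell^{q/A}$-(quasi-)norm in $j$ on both sides. Writing the right-hand side as $\mathcal M(h_j)$ with $h_j:=\sum_{Q\in\mathcal D_j}2^{jsA}|A_Q\varphi_j*\vec f|^A\chi_Q\ge0$, the estimate becomes
\[
\Big\|\Big\{\sum_{k}\big(2^{js}\sup_{x\in Q_{j,k}}|A_{Q_{j,k}}\varphi_j*\vec f|\chi_{Q_{j,k}}\big)^A\Big\}_j\Big\|_{\ell^{q/A}}\le c\,\big\|\{\mathcal M h_j\}_j\big\|_{\ell^{q/A}}.
\]
Since $q/A>1$, I would invoke the Fefferman--Stein vector-valued maximal inequality on $M_p^{t,r}$ — more precisely, its $M_p^{t,r}(\ell^{q/A})$ form, which follows from Lemma \ref{lem:hardy} after rescaling the exponents by $A$ (i.e.\ $p/A>1$, $q/A>1$, and $p/A<t/A<r/A$ or $=\infty$, all legitimate since $A<p\le t\le r$) — to move $\mathcal M$ outside. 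Taking the $A$-th root, raising to the Bourgain--Morrey quasi-norm, and unravelling the definitions of $\|\cdot\|_{M_p^{t,r}(\ell^{q})}$ versus $\|\cdot\|_{M_p^{t,r}(\ell^{q/A})}$ with the $A$-th power identification $\|F^A\|_{M_{p/A}^{t/A,r/A}}=\|F\|^A_{M_p^{t,r}}$, one recovers exactly $c\|\vec f\|_{\dot F_{p,t,r}^{s,q}(A_Q)}$ on the right. This proves \eqref{eq:sup Aq le discrete}.

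For \eqref{eq:discrete Aq le F} I would observe that $\langle\vec f,\tilde\varphi_Q\rangle=|Q|^{1/2}(\varphi_j*\vec f)(x_Q)$ for $Q=Q_{j,k}$ (a direct computation from \eqref{eq:varphi Q} and $\tilde\varphi(x)=\overline{\varphi(-x)}$), so that
\[
|Q|^{-s/n-1/2}|A_Q\langle\vec f,\tilde\varphi_Q\rangle|=2^{js}|A_Q\varphi_j*\vec f(x_Q)|\le 2^{js}\sup_{x\in Q}|A_Q\varphi_j*\vec f(x)|.
\]
Hence the discrete quasi-norm $\|\{\langle\vec f,\tilde\varphi_Q\rangle\}\|_{\dot f_{p,t,r}^{s,q}(A_Q)}$ is dominated termwise by the left-hand side of \eqref{eq:sup Aq le discrete}, and \eqref{eq:discrete Aq le F} follows from what was just proved. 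The main obstacle I anticipate is the first step: justifying the passage from the $A$-th power sums to a clean application of the vector-valued maximal inequality on $M_p^{t,r}$ requires care in (a) checking that the Bourgain--Morrey scale is stable under the rescaling $p\mapsto p/A$, $t\mapsto t/A$, $r\mapsto r/A$ so that Lemma \ref{lem:hardy} applies — including the borderline case $r=\infty$ — and (b) handling the sup over $x\in Q_{j,k}$ correctly, since $\sup_{x\in Q}|A_Q\varphi_j*\vec f(x)|$ must be controlled by an average over neighbouring cubes before the maximal function can be introduced; this is precisely what Lemma \ref{lem:A inequality} is for, but one must verify its hypotheses (weak doubling of order $\Delta$, which holds by the Remark following Lemma \ref{AQ AR improved}, and the choice of $A,R$) are met simultaneously with the summability constraint $A(R-\Delta)>n$.
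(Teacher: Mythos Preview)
Your proposal is correct and follows essentially the same approach as the paper's proof: choose $A<\min\{1,p,q\}$ with $A(R-\Delta)>n$, apply Lemma~\ref{lem:A inequality} followed by Lemma~\ref{lem:Littlewood max} to get the pointwise maximal-function bound, then invoke the vector-valued maximal inequality on $M_p^{t,r}$ (Lemma~\ref{lem:hardy}) after rescaling exponents by $A$; the second inequality follows from the identity $|Q|^{-1/2}\langle\vec f,\tilde\varphi_Q\rangle=\varphi_j*\vec f(x_Q)$ exactly as you indicate. One small note: weak doubling of order $\Delta$ is a direct hypothesis of the theorem, so you need not appeal to the remark after Lemma~\ref{AQ AR improved}.
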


\begin{proof}
	
	Let $0< A < \min\{1,p,q\}$ and $A(R- \Delta )>n$ where $\Delta$ is the same as in (\ref{Delta}).
	By Lemmas \ref{lem:A inequality}
	and \ref{lem:Littlewood max},  we have
	\begin{align*}
		& \sum_{Q\in\mathcal{D}_{j}}\Big( 2^{js}\sup_{x\in Q}|A_{Q}\varphi_{j}*\vec{f}(x)|\chi_{Q}(x)\Big)^{q}\\
		& \lesssim \bigg|\sum_{k\in\mathbb{Z}^{n}}\sum_{\ell\in\mathbb{Z}^{n}}(1+|k-l|)^{-A(R-\Delta)}2^{jn}\int_{Q_{j, \ell}}|2^{js}A_{Q_{j, \ell}}\varphi_{j}*\vec{f}(s)|^{A}\mathrm{d}s\chi_{Q_{j, k}} (x) \bigg|^{q/A}\\
		& \lesssim \mathcal{M}\bigg(\sum_{Q\in\mathcal{D}_{j}}\big(2^{js}|A_{Q}\varphi_{j}*\vec{f}|\chi_{Q}\big)^{A} \bigg)^{q/A} (x).
	\end{align*}
	Using Lemma \ref{lem:hardy}, we obtain (\ref{eq:sup Aq le discrete}).
	
	Since $|Q_{j, k}|^{-1/2}\langle\vec{f},\tilde{\varphi}_{Q_{j, k}}\rangle=\varphi_{j}*\vec{f}(x_{Q_{j, k}})$,
	(\ref{eq:sup Aq le discrete}) implies (\ref{eq:discrete Aq le F}).
\end{proof}
To estimate the converse of (\ref{eq:discrete Aq le F}), we recall
almost diagonal matrices (see, for instance, \cite[page 53]{FJ90}). 
\begin{defn}
	Let $0<p<\infty$, $0<q\le\infty$, $s\in\mathbb{R}$. A matrix $B=\{b_{QP}\}_{Q,P\in\mathcal{D}}$
	is almost diagonal, written $B\in\mathbf{ad}_{p}^{s,q}$, if there
	exist $\epsilon,c>0$ such that $|b_{QP}|\le c\omega_{QP}$ for all
	$Q,P\in\mathcal{D}$, where
	\begin{align}
		\omega_{QP} & : =\Big(\frac{\ell(Q)}{\ell(P)}\Big)^{s}\min\bigg\{\Big(\frac{\ell(P)}{\ell(Q)}\Big)^{(n+\epsilon)/2+n/\min(1,q,p)-n},\Big(\frac{\ell(Q)}{\ell(P)}\Big)^{(n+\epsilon)/2}\bigg\}\nonumber \\
		&
		\quad \times\bigg(1+\frac{|x_{Q}-x_{P}|}{\max\{\ell(Q),\ell(P)\}}\bigg)^{-\frac{n}{\min(1,q,p)}-\epsilon}.\label{eq:omega QP}
	\end{align}
\end{defn}

The following Lemma is  \cite[Remark A.3]{FJ90}.
\begin{lem}\label{lem:discrete HL general} 
	Suppose that $0<a\le r<\infty,$
	$\lambda>nr/a.$  
	For each dyadic cube $Q$ with side length $\ell (Q) = 2^{-\nu}$  and each $x \in Q$,
	\begin{align*}
	&	\bigg(\sum_{\ell(P)=2^{-\mu}}\frac{|s_{P}|^{r}}{(1+  \frac{|x_{P}-x_{Q}| }{ \max\{ \ell(P) , \ell (Q)\}} )^{\lambda}}\bigg)^{1/r} \\
		& \le C 2^{ (\mu - \nu)_+  n/a } \bigg\{\mathcal{M}\Big(\sum_{\ell(P)=2^{-\mu}}|s_{P}|^{a}\chi_{P}\Big)(x)\bigg\}^{1/a},
	\end{align*}
	where $C$ depends only on $n$  and $ \lambda - nr /a$; here $(\mu - \nu)_+ := \max (\mu - \nu, 0  ) $.
\end{lem}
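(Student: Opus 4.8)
The plan is to run the standard dyadic-shell decomposition of Frazier and Jawerth. Fix the dyadic cube $Q$ with $\ell(Q)=2^{-\nu}$ and a point $x\in Q$, and abbreviate $m:=\max\{2^{-\mu},2^{-\nu}\}$, so that $m=2^{-\nu}$ when $\mu\ge\nu$ and $m=2^{-\mu}$ when $\mu<\nu$; note that $2^{-\mu}\le m$ in both cases and $|x-x_Q|\le\sqrt n\,\ell(Q)\le\sqrt n\,m$, so $1+|x_P-x_Q|/m\approx 1+|x_P-x|/m$. First I would split the cubes $P\in\mathcal{D}$ with $\ell(P)=2^{-\mu}$ into the shells $A_0:=\{P:\,|x_P-x_Q|\le m\}$ and, for $i\ge 1$, $A_i:=\{P:\,2^{i-1}m<|x_P-x_Q|\le 2^i m\}$, on which the kernel factor obeys $(1+|x_P-x_Q|/m)^{-\lambda}\approx 2^{-i\lambda}$.

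Next I would control $\sum_{P\in A_i}|s_P|^a$ by a maximal average. Each such $P$ has $|P|=2^{-\mu n}$ and, because $2^{-\mu}\le m$, the set $\bigcup_{P\in A_i}P$ lies in a ball $B_i$ with $x\in B_i$, radius comparable to $2^i m$, and $|B_i|\approx(2^i m)^n$; hence, with $g:=\sum_{\ell(P)=2^{-\mu}}|s_P|^a\chi_P$,
\[
\sum_{P\in A_i}|s_P|^a=2^{\mu n}\int\sum_{P\in A_i}|s_P|^a\chi_P(y)\,\d y\le 2^{\mu n}\int_{B_i}g(y)\,\d y\le 2^{\mu n}|B_i|\,\mathcal{M}g(x)\lesssim 2^{\mu n}(2^i m)^n\,\mathcal{M}g(x).
\]
By the definition of $m$ one has $2^{\mu n}m^n=2^{(\mu-\nu)_+ n}$, so this reads $\sum_{P\in A_i}|s_P|^a\lesssim 2^{(\mu-\nu)_+ n}2^{in}\,\mathcal{M}g(x)$.

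Then I would assemble the bound. Using $a\le r$ and the embedding $\ell^a\hookrightarrow\ell^r$ for sequences, $\sum_{P\in A_i}|s_P|^r\le\big(\sum_{P\in A_i}|s_P|^a\big)^{r/a}\lesssim\big(2^{(\mu-\nu)_+ n}2^{in}\,\mathcal{M}g(x)\big)^{r/a}$, whence
\[
\sum_{\ell(P)=2^{-\mu}}\frac{|s_P|^r}{(1+|x_P-x_Q|/m)^{\lambda}}\lesssim\sum_{i\ge 0}2^{-i\lambda}\big(2^{(\mu-\nu)_+ n}2^{in}\,\mathcal{M}g(x)\big)^{r/a}=2^{(\mu-\nu)_+ nr/a}\,(\mathcal{M}g(x))^{r/a}\sum_{i\ge 0}2^{i(nr/a-\lambda)}.
\]
The geometric series converges exactly because $\lambda>nr/a$, with sum bounded by a constant depending only on $\lambda-nr/a$; raising to the power $1/r$ yields the asserted inequality with $C=C(n,\lambda-nr/a)$.

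The argument is essentially bookkeeping, and the only point that needs care is the case split hidden in $m=\max\{\ell(P),\ell(Q)\}$: in each case one must check that the cubes in a given shell fit inside a ball that contains $x$ and has radius $\approx 2^i m$, so that $\mathcal{M}$ (taken over balls containing $x$, as in our definition) dominates the local average, and that the scaling factor $2^{\mu n}m^n$ collapses precisely to $2^{(\mu-\nu)_+ n}$.
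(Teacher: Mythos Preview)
Your argument is correct and is exactly the standard Frazier--Jawerth dyadic-shell proof; the paper does not supply its own proof of this lemma but simply cites \cite[Remark A.3]{FJ90}, whose argument you have faithfully reproduced.
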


A matrix \textbf{$B=\{b_{QP}\}_{Q,P\in\mathcal{D}}$ }acts on a sequence
$\vec{s}=\{\vec{s}_{Q}\}_{Q\in\mathcal{D}}$ by matrix multiplication
in each component: $B\vec{s}=\vec{t}=\{\vec{t}_{Q}\}_{Q\in\mathcal{D}}$,
where $\vec{t}_{Q}=\sum_{P\in\mathcal{D}}b_{QP}\vec{s}_{P}$.
\begin{thm}
	\label{thm:unweighted discrete almost} Let $0<p<t<r<\infty$ or
	$0<p\le t<r=\infty$. Let $0<q\le\infty$, $s\in\mathbb{R}$ and $B\in\mathbf{ad}_{p}^{s,q}$. Then $B$ defines a bounded operator
	on $\dot{f}_{p,t,r}^{s,q}$, where   $\dot{f}_{p,t,r}^{s,q}$ is   the space  $\dot{f}_{p,t,r}^{s,q}(W)$ with $W \equiv 1$.
\end{thm}

\begin{proof}
	We use the idea from \cite[Theorem 3.3]{FJ90}.		
	We assume $s=0$, since this case implies the general case. 
	Let $J=\min(1,q,p)-\delta$. Let $\delta>0$ be sufficiently small
	such that (\ref{eq:omega QP}) is still satisfied with $\min(1,q,p)$
	replaced by $J$. We write
	\[
	\vec{t}_{Q}=\sum_{\ell(P)\le\ell(Q)}b_{QP}\vec{s}_{P}+\sum_{\ell(P)>\ell(Q)}b_{QP}\vec{s}_{P}:=(B_{0}\vec{s})_{Q}+(B_{1}\vec{s})_{Q}.
	\]
	If $\ell(Q)=2^{-v}$ and $x\in Q$, use Lemma \ref{lem:discrete HL general} and we obtain
	\begin{align*}
		\sum_{\ell(P)>\ell(Q)} | b_{QP}\vec{s}_{P} | & \le\sum_{\mu<v}\sum_{P\in\mathcal{D}_{\mu}}2^{(\mu-v)(n+\epsilon)/2}(1+2^{\mu}|x_{Q}-x_{P}|)^{-n/J-\epsilon} | \vec{s}_{P} |\\
		& \le\sum_{\mu<v}2^{(\mu-v)(n+\epsilon)/2}\Big\{\mathcal{M}\Big(\sum_{P\in\mathcal{D}_{\mu}}|\vec{s}_{P}|^{J}\chi_{P}\Big)(x)\Big\}^{1/J}.
	\end{align*}
	Hence by Lemma \ref{lem:hardy} (since $J<\min\{1,p,q\}$), we have
	\begin{align*}
	&	\|B_{1}\vec{s}\|_{\dot{f}_{p,t,r}^{0,q}} \\
		& \le\bigg\|\bigg(\sum_{v=-\infty}^{\infty}\bigg( 2^{vn/2}\sum_{\mu<v}2^{(\mu-v)(n+\epsilon)/2}\bigg\{\mathcal{M}\bigg(\sum_{P\in\mathcal{D}_{\mu}}|\vec{s}_{P}|^{J}\chi_{P}\big)(x)\bigg\}^{1/J}\bigg)^{q}\bigg)^{1/q}\bigg\|_{M_{p}^{t,r}}\\
		& \lesssim\bigg\|\bigg(\sum_{\mu=-\infty}^{\infty}\sum_{P\in\mathcal{D}_{\mu}}\bigg(|P|^{-1/2}|\vec{s_{P}}|\chi_{P}\bigg)^{q}\bigg)^{1/q}\bigg\|_{M_{p}^{t,r}}.
	\end{align*}
	
	Now we estimate $B_{0}\vec{s} $.
	For $x\in Q$, by Lemma \ref{lem:discrete HL general}, we have
	\begin{align*}
	&	\sum_{\ell(P)\le\ell(Q)} | b_{QP}\vec{s}_{P} | \\
	& \le\sum_{\mu\ge v}\sum_{P\in\mathcal{D}_{\mu}}2^{(v-\mu)((n+\epsilon)/2+n/J-n)}(1+2^{v}|x_{Q}-x_{P}|)^{-n/J-\epsilon} |\vec{s}_{P} | \\
		& \lesssim \sum_{\mu\ge v}  2^{(v-\mu)((n+\epsilon)/2-n)}  \left(    \mathcal M \left(  \sum_{P \in \mathcal D _{\mu}}  |\vec s _P|^J \chi_P \right)  \right)^{1/J}.
	\end{align*}
	Hence 
	\begin{align*}
		& \bigg(\sum_{v=-\infty}^{\infty}\sum_{Q\in\mathcal{D}_{v}}\Big(|Q|^{-1/2}|(B_{0}\vec{s})_{Q}|\chi_{Q}\Big)^{q}\bigg)^{1/q} \\
		& \lesssim    \left(  \sum_{ \mu = -\infty}^\infty \sum_{v =-\infty  }^\mu  \left(           2^{vn/2} 2^{(v-\mu)((n+\epsilon)/2-n)} 
		\left(    \mathcal M \left(  \sum_{P \in \mathcal D _{\mu}}  |\vec s _P|^J \chi_P \right)  \right)^{1/J} 
		\right) ^q \right)^{1/q}\\
		& \lesssim \left(  \sum_{ \mu = -\infty}^\infty 2^{ \mu n q /2 } \left(    \mathcal M \left(  \sum_{P \in \mathcal D _{\mu}}  |\vec s _P|^J \chi_P \right)  \right)^{q/J} \right)^{1/q} .
	\end{align*}
	Using Lemma \ref{lem:hardy}, we obtain
	\begin{align*}
		\|B_{0}\vec{s}\|_{\dot{f}_{p,t,r}^{0,q}}
		& \lesssim \bigg\|\left(  \sum_{ \mu = -\infty}^\infty 2^{ \mu n q /2 } \left(    \mathcal M \left(  \sum_{P \in \mathcal D _{\mu}}  |\vec s _P|^J \chi_P \right)  \right)^{q/J} \right)^{1/q} \bigg\|_{M_{p}^{t,r}}\\
		& \lesssim\bigg\|\bigg(\sum_{\mu=-\infty}^{\infty}\sum_{P\in\mathcal{D}_{\mu}}\Big(|P|^{-1/2}|\vec{s_{P}}|\chi_{P}\Big)^{q}\bigg)^{1/q}\bigg\|_{M_{p}^{t,r}}. 
	\end{align*}
	Thus the proof is finished.
\end{proof}

\begin{defn} \label{almost diag d delta}
	Let $0<p<\infty$, $0<q\le\infty$, $s\in\mathbb{R}$. 
	Let $d, \tilde d, \Delta$  be the same as in Lemma \ref{AQ AR improved}.
	A matrix $B=\{b_{QP}\}_{Q,P\in\mathcal{D}}$ is almost diagonal, written
	$B\in\mathbf{ad}_{p}^{s,q}(d, \tilde d, \Delta  )$, if there exists $c>0$ such that
	$|b_{QP}|\le c\omega_{QP}$ for all $Q,P\in\mathcal{D}$, where
	\begin{align}
		\nonumber
		\omega_{QP} &: =  \Big(\frac{\ell(Q)}{\ell(P)}\Big)^{s}\min\bigg\{\Big(\frac{\ell(P)}{\ell(Q)}\Big)^{(n+\epsilon)/2+n/\min(1,q,p)-n +\tilde d / p'},\Big(\frac{\ell(Q)}{\ell(P)}\Big)^{(n+\epsilon)/2 +  d/p}\bigg\}\\
		& \quad \times  \bigg(1+\frac{|x_{Q}-x_{P}|}{\max\{\ell(Q),\ell(P)\}}\bigg)^{-\frac{n}{\min(1,q,p)}-\epsilon - \Delta}. \label{eq:omega QP-1}
	\end{align}
\end{defn}

\begin{thm}
	\label{thm:almost dia bounded on discrete} Let $0<p<t<r<\infty$
	or $0<p\le t<r=\infty$. Let $0<q\le\infty$, $s\in\mathbb{R}$.
	Let $W \in \mathcal A_p$ and  
	$\{A_{Q}\}_{Q\in\mathcal{D}}$ be a family of reducing operators of order $p$ for $W$. Let $d, \tilde d, \Delta$  be the same as in Lemma \ref{AQ AR improved}.
	Suppose that $B\in\mathbf{ad}_{p}^{s,q}(d, \tilde d, \Delta )$.
	Then $B$ defines a bounded operator on $\dot{f}_{p,t,r}^{s,q}(A_{Q})$.
\end{thm}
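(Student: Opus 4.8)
The plan is to reduce the weighted statement to the unweighted one (Theorem \ref{thm:unweighted discrete almost }) by conjugating the almost diagonal matrix $B$ with the reducing operators $\{A_Q\}$. Given $B=\{b_{QP}\}\in\mathbf{ad}_p^{s,q}(d,\tilde d,\Delta)$ and a sequence $\vec s=\{\vec s_Q\}$, set $\vec t=B\vec s$, so $\vec t_Q=\sum_P b_{QP}\vec s_P$. The quantity controlling $\|\vec t\|_{\dot f_{p,t,r}^{s,q}(A_Q)}$ involves $|A_Q\vec t_Q|=|A_Q\sum_P b_{QP}\vec s_P|$. Inserting $A_P^{-1}A_P$ and using the operator norm bound, $|A_Q\vec t_Q|\le\sum_P|b_{QP}|\,\|A_QA_P^{-1}\|\,|A_P\vec s_P|$. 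So if I define a new scalar matrix $\widetilde B=\{\widetilde b_{QP}\}$ by $\widetilde b_{QP}:=|b_{QP}|\,\|A_QA_P^{-1}\|$ and a new scalar sequence $u=\{u_Q\}$ by $u_Q:=|A_Q\vec s_Q|$, then $|A_Q\vec t_Q|\le(\widetilde B u)_Q$ pointwise (in the index $Q$), and moreover $\|\vec s\|_{\dot f_{p,t,r}^{s,q}(A_Q)}=\|u\|_{\dot f_{p,t,r}^{s,q}}$ (the unweighted space with $W\equiv 1$) by definition, since $|A_Q\vec s_Q|\chi_Q=u_Q\chi_Q$. The same identity holds for $\vec t$, so it suffices to show $\|\widetilde B u\|_{\dot f_{p,t,r}^{s,q}}\lesssim\|u\|_{\dot f_{p,t,r}^{s,q}}$.

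By Theorem \ref{thm:unweighted discrete almost }, this reduces to checking that $\widetilde B\in\mathbf{ad}_p^{s,q}$, i.e.\ that $\widetilde b_{QP}=|b_{QP}|\,\|A_QA_P^{-1}\|\lesssim\omega_{QP}$ with $\omega_{QP}$ as in \eqref{eq:omega QP}. Here I would invoke Lemma \ref{AQ AR improved}: since $W\in\mathcal A_p$ has $\mathcal A_p$-dimension $d$ (and $\widetilde W$ has $\mathcal A_{p'}$-dimension $\tilde d$ when $p>1$, $\tilde d=0$ otherwise), the reducing operators satisfy
\[
\|A_QA_P^{-1}\|\le C\max\!\left\{\Big(\tfrac{\ell(P)}{\ell(Q)}\Big)^{d/p},\Big(\tfrac{\ell(Q)}{\ell(P)}\Big)^{\tilde d/p'}\right\}\Big(1+\tfrac{|c_Q-c_P|}{\max\{\ell(Q),\ell(P)\}}\Big)^{\Delta}.
\]
Now multiply this bound by the defining estimate $|b_{QP}|\lesssim\omega_{QP}^{(d,\tilde d,\Delta)}$ from \eqref{eq:omega QP-1}: the extra factor $\big(\ell(P)/\ell(Q)\big)^{d/p}$ is absorbed by the term $\big(\ell(Q)/\ell(P)\big)^{(n+\epsilon)/2+d/p}$ inside the $\min$, the extra factor $\big(\ell(Q)/\ell(P)\big)^{\tilde d/p'}$ is absorbed by $\big(\ell(P)/\ell(Q)\big)^{(n+\epsilon)/2+n/\min(1,q,p)-n+\tilde d/p'}$, and the extra decay $\big(1+|c_Q-c_P|/\max\{\ell(Q),\ell(P)\}\big)^{\Delta}$ exactly cancels the surplus exponent $-\Delta$ in the third factor of \eqref{eq:omega QP-1}. (One should also note $|c_Q-c_P|$ and $|x_Q-x_P|$ are comparable up to $\max\{\ell(Q),\ell(P)\}$, so the two forms of the confluence factor are interchangeable.) This yields $\widetilde b_{QP}\lesssim\omega_{QP}$ with the unweighted $\omega$, i.e.\ $\widetilde B\in\mathbf{ad}_p^{s,q}$ with the same $\epsilon$.

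Combining: $\|\vec t\|_{\dot f_{p,t,r}^{s,q}(A_Q)}=\|\{|A_Q\vec t_Q|\}\|_{\dot f_{p,t,r}^{s,q}}\le\|\widetilde B u\|_{\dot f_{p,t,r}^{s,q}}\lesssim\|u\|_{\dot f_{p,t,r}^{s,q}}=\|\vec s\|_{\dot f_{p,t,r}^{s,q}(A_Q)}$, which is the claim. The only genuinely delicate point is the bookkeeping in the previous paragraph — verifying that each of the three perturbation factors coming from $\|A_QA_P^{-1}\|$ is matched by the corresponding built-in slack in Definition \ref{almost diag d delta} — but this is precisely why the exponents $d/p$, $\tilde d/p'$ and $\Delta$ were inserted into \eqref{eq:omega QP-1} in the first place, so it goes through mechanically. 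Everything else is either a definitional unwinding or a direct appeal to the already-established unweighted Theorem \ref{thm:unweighted discrete almost } and to Lemma \ref{AQ AR improved}.
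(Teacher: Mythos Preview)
Your proposal is correct and follows essentially the same approach as the paper: reduce to the unweighted case by setting $u_Q=|A_Q\vec s_Q|$, bound $|A_Q\vec t_Q|$ by the action of the scalar matrix $\{|b_{QP}|\,\|A_QA_P^{-1}\|\}$ on $u$, and then use Lemma~\ref{AQ AR improved} to verify this matrix lies in $\mathbf{ad}_p^{s,q}$ so that Theorem~\ref{thm:unweighted discrete almost } applies. The paper's proof is identical in structure, differing only cosmetically in that it works with $\gamma_{QP}=\omega_{QP}\|A_QA_P^{-1}\|$ rather than $|b_{QP}|\,\|A_QA_P^{-1}\|$.
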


\begin{proof}
	Let $\vec{s} \in \dot{f}_{p,t,r}^{s,q}(A_{Q})$.
	Suppose that $B\vec{s}=\vec{t}=\{\vec{t}_{Q}\}_{Q\in\mathcal{D}}$,
	where $\vec{t}_{Q}=\sum_{P\in\mathcal{D}}b_{QP}\vec{s}_{P}$. Define
	a scalar sequence $t_{A}=\{t_{A,Q}\}_{Q\in\mathcal{D}},$ where $t_{A,Q}=|A_{Q}\vec{t}_{Q}|$
	and $s_{A}=\{s_{A,Q}\}_{Q\in\mathcal{D}},$ where $s_{A,Q}=|A_{Q}\vec{s}_{Q}|$.
	Then
	\[
	\|\vec{t}\|_{\dot{f}_{p,t,r}^{s,q}(A_{Q})}=\|t_{A}\|_{\dot{f}_{p,t,r}^{s,q}},\;\|\vec{s}\|_{\dot{f}_{p,t,r}^{s,q}(A_{Q})}=\|s_{A}\|_{\dot{f}_{p,t,r}^{s,q}}.
	\]
	Let $\gamma_{QP}=\omega_{QP}\|A_{Q}A_{P}^{-1}\|$. Then
	\[
	t_{A,Q}\le c\sum_{P\in\mathcal{D}}\gamma_{QP}s_{A,P},
	\]
	where $c$ comes from Definition \ref{almost diag d delta}.
	By Lemma \ref{AQ AR improved}, we obtain 
	\begin{align*}
		\gamma_{QP}  & \lesssim  
		\Big(\frac{\ell(Q)}{\ell(P)}\Big)^{s}\min\bigg\{\Big(\frac{\ell(P)}{\ell(Q)}\Big)^{(n+\epsilon)/2+n/\min(1,q,p)-n + \tilde d / p' },\Big(\frac{\ell(Q)}{\ell(P)}\Big)^{(n+\epsilon)/2 + d/p}\bigg\}\\
		& \quad  \times  \bigg(1+\frac{|x_{Q}-x_{P}|}{\max\{\ell(Q),\ell(P)\}}\bigg)^{-\frac{n}{\min(1,q,p)}-\epsilon - \Delta}\\
		&  \quad\times  \max \left\{   \left( \frac{\ell (P)}{ \ell (Q) } \right)^{d/p},  \left( \frac{\ell (Q)}{ \ell (P) } \right)^{\tilde d /p' }   \right\} \left( 1+ \frac{|c_Q - c_P| }{ \max \{ \ell(Q) , \ell (P) \}   }  \right) ^\Delta \\
		& \lesssim \Big(\frac{\ell(Q)}{\ell(P)}\Big)^{s}\min\bigg\{\Big(\frac{\ell(P)}{\ell(Q)}\Big)^{(n+\epsilon)/2+n/\min(1,q,p)-n},\Big(\frac{\ell(Q)}{\ell(P)}\Big)^{(n+\epsilon)/2}\bigg\}\\
		& \quad \times\bigg(1+\frac{|x_{Q}-x_{P}|}{\max\{\ell(Q),\ell(P)\}}\bigg)^{-\frac{n}{\min(1,q,p)}-\epsilon} .
	\end{align*}
	Thus $\gamma_{QP}$ satisfies the almost diagonality condition
	(\ref{eq:omega QP}).
	Hence Theorem \ref{thm:unweighted discrete almost}
	implies the boundedness on $\dot{f}_{p,t,r}^{s,q}(A_{Q})$.
\end{proof}
We recall the notation of smooth molecules, as in \cite{FJ90}.
\begin{defn}
	\label{def:molecules} Let $0<\delta\le1$, $M>0$ and $N,K\in\mathbb{Z}$.
	We say that $\{m_{Q}\}_{Q\in\mathcal{D}}$ is a family of smooth $(N,K,M,\delta)$-molecules
	if there exist $\epsilon,c>0$ such that, for all $Q\in\mathcal{D}$,
	\begin{align*}
		(\mathrm{M}1) & \int x^{\gamma}m_{Q}(x)\mathrm{d}x=0,\;\mathrm{for}\;|\gamma|\le N,\\
		(\mathrm{M}2) & \;|m_{Q}(x)|\le c|Q|^{-1/2}\Big(1+\frac{|x-x_{Q}|}{\ell(Q)}\Big)^{-\max(M,N+1+n+\epsilon)},\\
		(\mathrm{M}3) & \;|\partial^{\gamma}m_{Q}(x)|\le c|Q|^{-1/2-|\gamma|/n}\Big(1+\frac{|x-x_{Q}|}{\ell(Q)}\Big)^{-M},\;\mathrm{if}\;|\gamma|\le K,\\
		(\mathrm{M}4) & \;|\partial^{\gamma}m_{Q}(x)-\partial^{\gamma}m_{Q}(y)|\le c|Q|^{-1/2-|\gamma|/n-\delta/n}|x-y|^{\delta}\\
		& \times\sup_{|z|\le|x-y|}\Big(1+\frac{|x-z-x_{Q}|}{\ell(Q)}\Big)^{-M},\;\mathrm{if}\;|\gamma|=K.
	\end{align*}	
	It is understood that (M1) is void if $N<0$ and (M3), (M4) are void
	if $K<0$.
\end{defn}

\begin{lem}
	[Appendix B, \cite{FJ90}] \label{lem:mQ} Suppose that $\varphi\in\mathcal{A}$
	and $\{m_{Q}\}_{Q\in\mathcal{D}}$ is a family of smooth $(N,K,M,\delta)$-molecules.
	Then there exists a positive constant $c$ such that
		
	$(\mathrm{i})$ for all $P\in\mathcal{D}$ with $\ell(P)=2^{-k}\ge2^{-j}$,
	we have
	\begin{equation*}
		|\varphi_{j}*m_{P}(x)|\le c2^{kn/2}2^{-(j-k)(K+\delta)}(1+2^{k}|x-x_{P}|)^{-M},
	\end{equation*}
	and
	
	$(\mathrm{ii})$ for all $P\in\mathcal{D}$ with $\ell(P)=2^{-k}\le2^{-j}$,
	we have
	\begin{equation*}
		|\varphi_{j}*m_{P}(x)|\le c2^{kn/2}2^{-(k-j)(N+1+n)}(1+2^{j}|x-x_{P}|)^{-M}.
	\end{equation*}
\end{lem}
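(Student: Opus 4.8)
The plan is to deduce everything from two Taylor-expansion arguments run in opposite directions, according to which of the two scales $2^{-k}=\ell(P)$ and $2^{-j}$ is finer. The one preliminary observation I need is that, since $\varphi\in\mathcal A$, the function $\mathcal F(\varphi)$ is supported away from the origin and hence vanishes to infinite order there; consequently $\int x^\gamma\varphi_j(x)\,\d x=0$ for \emph{every} multi-index $\gamma$, and of course $\varphi_j$ together with all its derivatives decays faster than any polynomial. Throughout I write $|P|=2^{-kn}$ and set $L:=\max(M,N+1+n+\epsilon)\ge M$.

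\textbf{Case (i): $\ell(P)=2^{-k}\ge 2^{-j}$.} Here the gain comes from the smoothness of the molecule. Using the infinite vanishing moments of $\varphi_j$ I would subtract from $m_P$ its degree-$K$ Taylor polynomial centred at the point $x$, so that
\[
\varphi_j*m_P(x)=\int\varphi_j(x-y)\Big(m_P(y)-\sum_{|\gamma|\le K}\frac{\partial^\gamma m_P(x)}{\gamma!}(y-x)^\gamma\Big)\,\d y,
\]
and bound the bracket, via the integral form of the Taylor remainder together with (M4), by $c\,|P|^{-1/2-K/n-\delta/n}|y-x|^{K+\delta}\sup_{|z|\le|y-x|}(1+|x-z-x_P|/\ell(P))^{-M}$. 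I would then split the $y$-integral into $|y-x|\le\frac12|x-x_P|$, where the shifted molecular decay is comparable to $(1+2^k|x-x_P|)^{-M}$, and its complement, which is controlled purely by the rapid decay of $\varphi_j$ (one can afford any power of $(1+2^j|x-y|)^{-1}$ there). Counting powers, $|P|^{-1/2-K/n-\delta/n}=2^{k(n/2+K+\delta)}$ while $\int|\varphi_j(x-y)|\,|y-x|^{K+\delta}\,\d y\approx 2^{-j(K+\delta)}$, whose product is precisely $2^{kn/2}2^{-(j-k)(K+\delta)}$, giving the first estimate.

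\textbf{Case (ii): $\ell(P)=2^{-k}\le 2^{-j}$.} Now the gain comes from the moment conditions (M1) on $m_P$, so I reverse the roles and Taylor-expand $\varphi_j$ instead: using $\int(x_P-y)^\gamma m_P(y)\,\d y=0$ for $|\gamma|\le N$,
\[
\varphi_j*m_P(x)=\int\Big(\varphi_j(x-y)-\sum_{|\gamma|\le N}\frac{\partial^\gamma\varphi_j(x-x_P)}{\gamma!}(x_P-y)^\gamma\Big)m_P(y)\,\d y.
\]
The remainder is bounded by $|x_P-y|^{N+1}\sup_{|\gamma|=N+1}\sup_{|z|\le|x_P-y|}|\partial^\gamma\varphi_j(x-x_P+z)|$, and $\partial^\gamma\varphi_j$ carries the scaling $2^{j(n+N+1)}$ times a rapidly decaying function of $2^j(\cdot)$. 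Inserting (M2) with exponent $L\ge M$ and splitting the $y$-integral at $|y-x_P|\sim\ell(P)=2^{-k}$, the contribution $\int|x_P-y|^{N+1}(1+2^k|y-x_P|)^{-L}\,\d y\approx 2^{-k(N+1+n)}$ (convergent since $L>N+1+n$); multiplying by $|P|^{-1/2}=2^{kn/2}$ and $2^{j(n+N+1)}$ and extracting the residual decay $(1+2^j|x-x_P|)^{-M}$ from $\varphi$ yields $2^{kn/2}2^{-(k-j)(N+1+n)}(1+2^j|x-x_P|)^{-M}$.

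\textbf{Main obstacle.} The arithmetic of the exponents is entirely routine; the delicate part in both cases is transferring the spatial decay through the convolution, i.e.\ showing that after the Taylor subtraction the localisation around $x$ (in (i)) or around $x_P$ (in (ii)) can legitimately be replaced by the single decay weight appearing in the statement. This rests on the dyadic splitting of the domain of integration, on the interchangeability of the two centres $x$ and $x_P$ (up to constants) in the ``near'' regime, and on one of the two rapidly decaying factors dominating everything in the ``far'' regime; in case (ii) one must also absorb the nested supremum $\sup_{|z|\le|x_P-y|}$ from the Taylor remainder of $\varphi_j$ (and similarly the $\sup_{|z|\le|x-y|}$ in (M4) for case (i)), which is harmless because $\varphi$ is Schwartz but has to be tracked carefully.
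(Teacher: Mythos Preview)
The paper does not prove this lemma; it is quoted verbatim from Appendix~B of Frazier--Jawerth~\cite{FJ90} and used as a black box. Your proposal reproduces exactly the argument given there: in case~(i) you exploit the infinite vanishing moments of $\varphi_j$ (available because $\mathcal F\varphi$ vanishes near the origin) to subtract the degree-$K$ Taylor polynomial of $m_P$ and invoke~(M4), and in case~(ii) you reverse the roles, using~(M1) to subtract the degree-$N$ Taylor polynomial of $\varphi_j$ and invoking~(M2). The power-counting and the near/far splitting you describe for transferring the decay weight are precisely what Frazier--Jawerth do, so your outline is correct and matches the original source the paper relies on.
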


\begin{thm}
	\label{thm:varphi transform} Let $0<p<t<r<\infty$ or $0<p\le t<r=\infty$.
	Let $0<q\le\infty$, $s\in\mathbb{R}$. 
	Let $W \in \mathcal A_p$ and  
	$\{A_{Q}\}_{Q\in\mathcal{D}}$ be a family of reducing operators of order $p$ for $W$. Let $d, \tilde d, \Delta$  be the same as in Lemma \ref{AQ AR improved}.
	Suppose that $N,K\in\mathbb{Z}$,
	$M>0$ and $\delta\in(0,1]$ satisfy $ N > -s+ n/\min(1,q,p) + \tilde d / p'  - 1- n$,
	$K +\delta > s  + d/p $, and $ M >  n / \min(1,q,p)+  \Delta$. 
	Suppose that $\{m_{Q}\}_{Q\in\mathcal{D}}$
	is a family of smooth $(N,K,M,\delta)$-molecules. Let $\vec{s}\in\dot{f}_{p,t,r}^{s,q}(A_{Q})$
	. Then $\vec{f}=\sum_{Q\in\mathcal{D}} m_{Q} \vec{s}_{Q}\in\dot{F}_{p,t,r}^{s,q}(A_{Q})$,
	\begin{equation*}
		\|\vec{f}\|_{\dot{F}_{p,t,r}^{s,q}(A_{Q})}\le c\|\vec{s}\|_{\dot{f}_{p,t,r}^{s,q}(A_{Q})}
	\end{equation*}
	where the constant $c$ independent of $\vec{s}$. In
	particular, for $\varphi\in\mathcal{A},$ we have
	\begin{equation}
		\|\vec{f}\|_{\dot{F}_{p,t,r}^{s,q}(A_{Q})}\le c\|\{\langle\vec{f},\varphi_{Q}\}_{Q\in\mathcal{D}}\|_{\dot{f}_{p,t,r}^{s,q}(A_{Q})} \label{eq:F le f}
	\end{equation}
	where the constant $c$ independent of $\vec{f}$.
\end{thm}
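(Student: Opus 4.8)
The plan is to follow the classical Frazier--Jawerth strategy for the $\varphi$-transform inequality, adapting it to the $\{A_Q\}$-weighted Bourgain--Morrey setting. The key observation is that the map $\vec{s} \mapsto \vec{f} = \sum_{Q} m_Q \vec{s}_Q$, followed by $\vec{f} \mapsto \{\langle \vec{f}, \tilde\varphi_R\rangle\}_R$ (equivalently, $\vec{f} \mapsto \{\varphi_j * \vec{f}\}_j$), factors through a matrix acting on sequences. Concretely, writing $t_{R} := \langle \vec{f}, \tilde\varphi_R\rangle = \sum_Q \langle m_Q, \tilde\varphi_R\rangle \vec{s}_Q$, we get a matrix $B = \{b_{RQ}\}_{R,Q\in\mathcal D}$ with $b_{RQ} = \langle m_Q, \tilde\varphi_R\rangle = |R|^{1/2}\, \varphi_j * m_Q(x_R)$ when $\ell(R) = 2^{-j}$. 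The first step is therefore to estimate $|b_{RQ}|$ using Lemma \ref{lem:mQ}: case (i) when $\ell(Q) \ge \ell(R)$ and case (ii) when $\ell(Q) \le \ell(R)$. A direct substitution shows that $|b_{RQ}| \lesssim \omega_{RQ}$ where $\omega_{RQ}$ is exactly the almost-diagonality weight from \eqref{eq:omega QP-1}, provided the parameters satisfy $N > -s + n/\min(1,q,p) + \tilde d/p' - 1 - n$, $K + \delta > s + d/p$, and $M > n/\min(1,q,p) + \Delta$ --- which are precisely the hypotheses imposed. Thus $B \in \mathbf{ad}_p^{s,q}(d,\tilde d, \Delta)$.

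Next I would invoke Theorem \ref{thm:almost dia bounded on discrete}: since $B \in \mathbf{ad}_p^{s,q}(d,\tilde d,\Delta)$ and $\{A_Q\}$ is a family of reducing operators of order $p$ for $W \in \mathcal A_p$, the operator $B$ is bounded on $\dot{f}_{p,t,r}^{s,q}(A_Q)$. This gives
\[
\|\{\langle \vec{f}, \tilde\varphi_R\rangle\}_R\|_{\dot{f}_{p,t,r}^{s,q}(A_Q)} = \|B\vec{s}\|_{\dot{f}_{p,t,r}^{s,q}(A_Q)} \lesssim \|\vec{s}\|_{\dot{f}_{p,t,r}^{s,q}(A_Q)}.
\]
To pass from the sequence norm of $\{\langle \vec{f},\tilde\varphi_R\rangle\}$ to $\|\vec{f}\|_{\dot{F}_{p,t,r}^{s,q}(A_Q)}$, I would use that $\varphi_j * \vec{f}(x) = \sum_{Q} (\varphi_j * m_Q)(x) \vec{s}_Q$ and run essentially the same molecular estimate pointwise rather than at grid points: for $x \in R$ with $\ell(R) = 2^{-j}$, bound $|A_R \varphi_j * \vec{f}(x)| \le \sum_Q |\varphi_j * m_Q(x)|\, \|A_R A_Q^{-1}\|\, |A_Q \vec{s}_Q|$, apply Lemma \ref{lem:mQ} and Lemma \ref{AQ AR improved} to recognize the resulting kernel as almost diagonal, and then use Theorem \ref{thm:almost dia bounded on discrete} (via Theorem \ref{thm:unweighted discrete almost }) together with Lemma \ref{lem:discrete HL general} and the maximal-function bound of Lemma \ref{lem:hardy} on $M_p^{t,r}(\ell^q)$. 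This directly yields $\|\vec{f}\|_{\dot{F}_{p,t,r}^{s,q}(A_Q)} \lesssim \|\vec{s}\|_{\dot{f}_{p,t,r}^{s,q}(A_Q)}$; applying it with $m_Q = \psi_Q$ (which are molecules of every order) and $\vec{s}_Q = \langle \vec{f},\varphi_Q\rangle$ gives \eqref{eq:F le f} via the reproducing formula \eqref{eq:converge}. One must also check that the sum $\sum_Q m_Q \vec{s}_Q$ converges in $\mathscr{S}'/\mathcal P(\mathbb R^n)$, which follows from the molecular decay and the fact that $\vec{s} \in \dot{f}_{p,t,r}^{s,q}(A_Q)$ forces polynomial growth control on the coefficients (standard, as in \cite{FJ90}).

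The main obstacle is bookkeeping the exponents in the molecular estimates so that they match \eqref{eq:omega QP-1} exactly, including the extra $\tilde d/p'$, $d/p$, and $\Delta$ shifts that distinguish $\mathbf{ad}_p^{s,q}(d,\tilde d,\Delta)$ from the unweighted $\mathbf{ad}_p^{s,q}$. In case (i) of Lemma \ref{lem:mQ} the decay $2^{-(j-k)(K+\delta)}$ must beat the growth $s + d/p$ coming from the weight factor $\|A_R A_Q^{-1}\| \lesssim (\ell(Q)/\ell(R))^{\tilde d/p'}$-type terms after normalization, and in case (ii) the decay $2^{-(k-j)(N+1+n)}$ must absorb $n/\min(1,q,p) + \tilde d/p'$; the condition $M > n/\min(1,q,p) + \Delta$ is exactly what is needed for the spatial $(1 + |x_R - x_Q|/\max\{\ell(R),\ell(Q)\})^{-M}$ factor to survive multiplication by the $(1 + \cdots)^{\Delta}$ from Lemma \ref{AQ AR improved}. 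Once this verification is done carefully, the rest is a direct citation of Theorems \ref{thm:unweighted discrete almost } and \ref{thm:almost dia bounded on discrete}. The convergence and the reduction to $s = 0$ are routine and handled exactly as in the unweighted theory.
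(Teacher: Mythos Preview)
Your proposal is correct and the core argument --- the pointwise bound $|A_R\varphi_j*\vec f(x)| \le \sum_P |\varphi_j*m_P(x)|\,\|A_R A_P^{-1}\|\,|A_P\vec s_P|$ for $x\in R$, the molecular estimates from Lemma~\ref{lem:mQ} to show $|Q|^{1/2}|\varphi_j*m_P(x)|\lesssim\omega_{QP}$, and then the reduction to the scalar almost-diagonal bound as in Theorem~\ref{thm:almost dia bounded on discrete} --- is exactly what the paper does. Your initial detour through the discrete coefficients $\langle\vec f,\tilde\varphi_R\rangle$ and the matrix $B$ is unnecessary (and would require the not-yet-proved direction $\|\vec f\|_{\dot F}\lesssim\|\{\langle\vec f,\tilde\varphi_R\rangle\}\|_{\dot f}$ to close), but you correctly recognize this and switch to the pointwise argument, which is the paper's route.
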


\begin{proof}
	For $Q\in\mathcal{D}_{j}$, let $g_{Q}=|Q|^{1/2}|A_{Q}\varphi_{j}*\sum_{P\in\mathcal{D}} m_{P} \vec{s}_{P}|$. Then
	\[
	\|\vec{f}\|_{\dot{F}_{p,t,r}^{s,q}(A_{Q})}=\bigg\|\bigg(\sum_{j\in\mathbb{Z}}\sum_{Q\in\mathcal{D}_{j}}\Big(|Q|^{-s/n-1/2}g_{Q}\chi_{Q}\Big)^{q}\bigg)^{1/q}\bigg\|_{M_{p}^{t,r}}.
	\]
	For any $P,Q\in\mathcal{D}$ and $x\in Q$, we have
	\[
	1+\frac{|x-x_{P}|}{\max\{\ell(P),\ell(Q)\}}\approx1+\frac{|x_{Q}-x_{P}|}{\max\{\ell(P),\ell(Q)\}}.
	\]
	Hence, by Lemma \ref{lem:mQ}, 
	if $\ell(P) = 2^{-k} \ge 2^{-j} = \ell (Q)$, then
	\begin{align*}
		& |Q|^{1/2}|\varphi_{j}*m_{P}(x)| \\
		& \lesssim  |Q|^{1/2} 2^{kn/2}2^{-(j-k)(K+\delta)}(1+2^{k}|x-x_{P}|)^{-M} \\
		&\le  \min\bigg\{\Big(\frac{\ell(P)}{\ell(Q)}\Big)^{(n+\epsilon)/2+n/\min(1,q,p)-n +  \tilde d / p' },\Big(\frac{\ell(Q)}{\ell(P)}\Big)^{(n+\epsilon)/2 + d/p }\bigg\}\\
		& \quad  \times \Big(\frac{\ell(Q)}{\ell(P)}\Big)^{s} \bigg(1+\frac{|x_{Q}-x_{P}|}{\max\{\ell(Q),\ell(P)\}}\bigg)^{-\frac{n}{\min(1,q,p)}-\epsilon - \Delta} : =  \omega_{QP} .
	\end{align*}
	If  $\ell(P)=2^{-k}\le2^{-j} = \ell (Q)$, then by  Lemma \ref{lem:mQ}, we have
	\begin{align*}
	&	|Q|^{1/2}|\varphi_{j}*m_{P}(x)|\\
	 & \lesssim |Q|^{1/2} 2^{kn/2}2^{-(k-j)(N+1+n)}(1+2^{j}|x-x_{P}|)^{-M} \\
		&\le  \min\bigg\{\Big(\frac{\ell(P)}{\ell(Q)}\Big)^{(n+\epsilon)/2+n/\min(1,q,p)-n + \tilde d / p' },\Big(\frac{\ell(Q)}{\ell(P)}\Big)^{(n+\epsilon)/2 + d/p }\bigg\}\\
		& \quad  \times  \Big(\frac{\ell(Q)}{\ell(P)}\Big)^{s} \bigg(1+\frac{|x_{Q}-x_{P}|}{\max\{\ell(Q),\ell(P)\}}\bigg)^{-\frac{n}{\min(1,q,p)}-\epsilon - \Delta} : =  \omega_{QP} .
	\end{align*}
	Hence we obtain 
	$|Q|^{1/2}|\varphi_{j}*m_{P}(x)|\le c\omega_{QP}$
	for all $x\in Q$, where $\omega_{QP}$ is the same as (\ref{eq:omega QP-1}).
	Therefore,
	\[
	g_{Q}\chi_{Q}\le\sum_{P\in\mathcal{D}}|Q|^{1/2}|\varphi_{j}*m_{P}\|A_{Q}\vec{s}_{\text{P }}|\chi_{Q}\le c\sum_{P\in\mathcal{D}}\omega_{QP}\|A_{Q}A_{P}^{-1}\||A_{P}\vec{s}_{\text{P }}|\chi_{Q}.
	\]
	Define $G=\{\omega_{QP}\|A_{Q}A_{P}^{-1}\|\}_{Q,P\in\mathcal{D}}$
	and $s_{A}=\{|A_{P}\vec{s}_{\text{P }}|\}_{P\in\mathcal{D}}$. 
	Then we have
	\begin{equation*}
		g_{Q}\chi_{Q}\le c \left( G(s_{A}) \right)_Q \chi_{Q}.
	\end{equation*}
	Similar
	to the proof of Theorem \ref{thm:almost dia bounded on discrete}, we have
	\[
	\|\vec{f}\|_{\dot{F}_{p,t,r}^{s,q}(A_{Q})}\le c\|G(s_{A})\|_{\dot{f}_{p,t,r}^{s,q}}\le c\|s_{A}\|_{\dot{f}_{p,t,r}^{s,q}}=c\|\vec{s}\|_{\dot{f}_{p,t,r}^{s,q}(A_{Q})}.
	\]
	Then (\ref{eq:F le f}) follows since $\vec{f}=\sum_{Q\in\mathcal{D}}\langle\vec{f},\varphi_{Q}\rangle\psi_{Q}$
	by (\ref{eq:converge}), and $\{\psi_{Q}\}_{Q\in\mathcal{D}}$ is
	a family of smooth $(N,K,M,\delta)$ molecules for all possible $N,K,M,\delta$.
	This finishes the proof.
\end{proof}
Next we obtain the ``$\varphi$-transform'' in spaces $\dot{F}_{p,t,r}^{s,q}(A_{Q})$ and show that $\dot{F}_{p,t,r}^{s,q}(A_{Q})$ are independent of the choice of $\varphi\in\mathcal{A}$.

\begin{thm}
	Let $0<p<t<r<\infty$ or $0<p\le t<r=\infty$.
	Suppose that $s\in\mathbb{R},$ $0<q\le\infty$,   $\varphi \in \mathcal A$.
	Let $W \in \mathcal A_p$ and  
	$\{A_{Q}\}_{Q\in\mathcal{D}}$ be a family of reducing operators of order $p$ for $W$. 
	For $\vec{f}\in\mathscr{S}'/\mathcal{P}(\mathbb{R}^{n})$, let $\vec{s}=\{\vec{s}_{Q}\}_{Q\in\mathcal{D}}$,
	where $\vec{s}_{Q}=\langle\vec{f},\varphi_{Q}\rangle.$ Then
	\begin{equation}\label{F Aq app f}
		\|\vec{f}\|_{\dot{F}_{p,t,r}^{s,q}(A_{Q})}\approx\|\vec{s}\|_{\dot{f}_{p,t,r}^{s,q}(A_{Q})}.
	\end{equation}
	In addition, function spaces  $\dot{F}_{p,t,r}^{s,q}(A_{Q})$
	are independent
	of the choice of $\varphi\in\mathcal{A},$ in the sense that different
	choices yield equivalent quasi-norms.
\end{thm}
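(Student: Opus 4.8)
The plan is to combine the two one-sided estimates already at our disposal. By Theorem \ref{thm:sup A_Q LE}, applied with the sequence of reducing operators $\{A_Q\}_{Q\in\mathcal D}$ (which is weakly doubling of order $\Delta$ by the Remark following Lemma \ref{AQ AR improved}), we immediately obtain the estimate
\[
\|\vec s\|_{\dot f_{p,t,r}^{s,q}(A_Q)}=\|\{\langle\vec f,\tilde\varphi_Q\rangle\}_{Q\in\mathcal D}\|_{\dot f_{p,t,r}^{s,q}(A_Q)}\lesssim\|\vec f\|_{\dot F_{p,t,r}^{s,q}(A_Q)},
\]
once we note that the admissible family may be chosen so that $\tilde\varphi(x)=\overline{\varphi(-x)}$ is admissible as well and that $\langle\vec f,\varphi_Q\rangle$ and $\langle\vec f,\tilde\varphi_Q\rangle$ play interchangeable roles here (alternatively, one reruns the short argument of Theorem \ref{thm:sup A_Q LE} with $\varphi$ in place of $\tilde\varphi$). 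For the reverse inequality I would invoke Theorem \ref{thm:varphi transform}: since $\{\psi_Q\}_{Q\in\mathcal D}$ is a family of smooth $(N,K,M,\delta)$-molecules for \emph{every} choice of the parameters $N,K,M,\delta$, the hypotheses $N>-s+n/\min(1,q,p)+\tilde d/p'-1-n$, $K+\delta>s+d/p$, and $M>n/\min(1,q,p)+\Delta$ can all be met, and the reproducing identity $\vec f=\sum_{Q\in\mathcal D}\langle\vec f,\varphi_Q\rangle\psi_Q$ from \eqref{eq:converge} yields
\[
\|\vec f\|_{\dot F_{p,t,r}^{s,q}(A_Q)}\lesssim\|\{\langle\vec f,\varphi_Q\rangle\}_{Q\in\mathcal D}\|_{\dot f_{p,t,r}^{s,q}(A_Q)}=\|\vec s\|_{\dot f_{p,t,r}^{s,q}(A_Q)}.
\]
Chaining the two displays gives \eqref{F Aq app f}.

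For the independence of the choice of $\varphi\in\mathcal A$, I would argue as follows. Let $\varphi,\Phi\in\mathcal A$ be two admissible functions and let $\psi,\Psi$ be the associated dual functions, so that $\vec f=\sum_Q\langle\vec f,\Phi_Q\rangle\Psi_Q$ in $\mathscr S'/\mathcal P$. Writing $\vec s^{\,\varphi}_Q=\langle\vec f,\varphi_Q\rangle$ and $\vec s^{\,\Phi}_Q=\langle\vec f,\Phi_Q\rangle$, expand one against the reproducing formula of the other: $\vec s^{\,\varphi}_Q=\sum_P\langle\Psi_P,\varphi_Q\rangle\,\vec s^{\,\Phi}_P$, so that $\vec s^{\,\varphi}=B\,\vec s^{\,\Phi}$ with $B=\{b_{QP}\}_{Q,P\in\mathcal D}$, $b_{QP}=\langle\Psi_P,\varphi_Q\rangle$. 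The standard almost-orthogonality estimates between dilates of Schwartz functions with cancellation (see \cite{FJ90,FrazierR04,FraRou19}) show that $B\in\mathbf{ad}_p^{s,q}(d,\tilde d,\Delta)$ for the given $d,\tilde d,\Delta$ — indeed one can make $b_{QP}$ satisfy the molecule-type decay in Lemma \ref{lem:mQ} with $N,K,M$ as large as desired since $\varphi,\Psi\in\mathscr S$ have all moments vanishing. Theorem \ref{thm:almost dia bounded on discrete} then gives $\|\vec s^{\,\varphi}\|_{\dot f_{p,t,r}^{s,q}(A_Q)}\lesssim\|\vec s^{\,\Phi}\|_{\dot f_{p,t,r}^{s,q}(A_Q)}$, and by symmetry the two discrete quasi-norms are equivalent. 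Combining with \eqref{F Aq app f} for each of $\varphi$ and $\Phi$ shows that $\|\vec f\|_{\dot F_{p,t,r}^{s,q}(A_Q)}$ computed with $\varphi$ is comparable to the one computed with $\Phi$.

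The main technical obstacle is the verification that the change-of-frame matrix $B=\{\langle\Psi_P,\varphi_Q\rangle\}_{Q,P}$ lands in the weighted almost-diagonal class $\mathbf{ad}_p^{s,q}(d,\tilde d,\Delta)$ of Definition \ref{almost diag d delta} rather than merely the unweighted class; but this is not really an issue, because the extra exponents $\tilde d/p'$, $d/p$ in the small-scale factors and the extra $-\Delta$ in the translation factor only make the required decay \emph{stronger}, and $\langle\Psi_P,\varphi_Q\rangle$ decays faster than any prescribed power in both $\ell(P)/\ell(Q)\wedge\ell(Q)/\ell(P)$ and $1+|x_P-x_Q|/\max\{\ell(P),\ell(Q)\}$. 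A secondary point is purely bookkeeping: one must make sure the parameter constraints in Theorem \ref{thm:varphi transform} are simultaneously satisfiable, which holds because $N$ and $K+\delta$ may be taken arbitrarily large while $\delta\in(0,1]$ is fixed, and $M$ likewise. Everything else is a direct citation of the theorems proved earlier in this section.
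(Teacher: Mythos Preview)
Your approach uses the same ingredients as the paper's proof (Theorems \ref{thm:sup A_Q LE} and \ref{thm:varphi transform} for the two one-sided estimates, and almost-diagonal boundedness for the change of frame). However, there is a subtle circularity in the order you have chosen. The displayed equality $\|\vec s\|_{\dot f_{p,t,r}^{s,q}(A_Q)}=\|\{\langle\vec f,\tilde\varphi_Q\rangle\}_{Q}\|_{\dot f_{p,t,r}^{s,q}(A_Q)}$ is literally false, since $\vec s_Q=\langle\vec f,\varphi_Q\rangle$, not $\langle\vec f,\tilde\varphi_Q\rangle$. Your suggested fix---rerunning Theorem \ref{thm:sup A_Q LE} with $\varphi$ in place of $\tilde\varphi$---would yield $\|\vec s\|\lesssim\|\vec f\|_{\dot F_{p,t,r}^{s,q}(A_Q,\tilde\varphi)}$, i.e.\ a bound in terms of the $\tilde\varphi$-norm rather than the $\varphi$-norm, because the underlying identity is $|Q|^{-1/2}\langle\vec f,\varphi_Q\rangle=\tilde\varphi_j*\vec f(x_Q)$. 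To close the loop you then need the $\varphi\leftrightarrow\tilde\varphi$ case of independence; but your independence argument, as written, invokes \eqref{F Aq app f} at the end to pass from the discrete comparison back to the continuous one.

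The paper breaks this circle by proving independence \emph{first}, via the chain
\[
\|\vec f\|_{\dot F,\kappa}\lesssim\|\{\langle\vec f,\kappa_Q\rangle\}\|\lesssim\|\{\langle\vec f,\tilde\varphi_P\rangle\}\|\lesssim\|\vec f\|_{\dot F,\varphi},
\]
where the first inequality is Theorem \ref{thm:varphi transform} (applied with $\kappa$), the middle one comes from the almost-diagonal matrix $b_{QP}=\langle\kappa_Q,\tilde\psi_P\rangle$, and the last is Theorem \ref{thm:sup A_Q LE} exactly as stated. Note that neither one-sided estimate is used with the same $\varphi$ on both sides, so no circularity arises. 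Once independence is in hand, \eqref{F Aq app f} follows: Theorem \ref{thm:sup A_Q LE} applied with $\tilde\varphi$ gives $\|\vec s\|\lesssim\|\vec f\|_{\dot F,\tilde\varphi}\approx\|\vec f\|_{\dot F,\varphi}$, and Theorem \ref{thm:varphi transform} gives the reverse. Your argument contains all the right pieces; you just need to reorder them so that independence is established before, not after, the norm equivalence.
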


\begin{proof}
	We first prove the spaces are independent of the choice of the test
	functions $\varphi\in\mathcal{A}$. 
	Suppose that $\varphi,\kappa\in\mathcal{A}$. We label spaces defined
	by $\varphi$ and $\kappa$ as $\dot{F}_{p,t,r}^{s,q}(A_{Q},\varphi)$
	and $\dot{F}_{p,t,r}^{s,q}(A_{Q},\kappa)$, respectively. Then we
	can select $\psi,\lambda\in\mathcal{A}$ such that
	\[
	\sum_{v\in\mathbb{Z}}\overline{\mathcal{F}\varphi_{v}(\xi)}\mathcal{F}\psi_{v}(\xi)=\sum_{v\in\mathbb{Z}}\overline{\mathcal{F}\kappa_{v}(\xi)}\mathcal{F}\lambda_{v}(\xi)=1
	\]
	for all $\xi\neq0$. Define $\vec{\tilde{s}}=\{\vec{\tilde{s}}_{Q}\}_{Q\in\mathcal{D}}$
	by $\vec{\tilde{s}}_{Q}=\langle\vec{f},\tilde{\varphi}_{Q}\rangle$
	and $\vec{t}=\{\vec{t}_{Q}\}_{Q\in\mathcal{D}}$ by $\vec{t}_{Q}=\langle\vec{f},\kappa_{Q}\rangle$.
	From Theorem \ref{thm:varphi transform}, we have
	\[
	\|\vec{f}\|_{\dot{F}_{p,t,r}^{s,q}(A_{Q},\kappa)}\le c\|\vec{t}\|_{\dot{f}_{p,t,r}^{s,q}(A_{Q})}.
	\]
	Applying (\ref{eq:converge}) with $\varphi,\psi$, and $\vec{f}$
	replaced by $\tilde{\psi},\tilde{\varphi}$, and $\kappa_{Q}$, respectively,
	we have
	\[
	\kappa_{Q}=\sum_{P\in\mathcal{D}}\langle\kappa_{Q},\tilde{\psi}_{P}\rangle\tilde{\varphi}_{P}.
	\]
	Note that $\kappa_{Q}\in\mathscr{S}_{0}(\mathbb{R}^{n})$, hence $\sum_{P\in\mathcal{D}}\langle\kappa_{Q},\tilde{\psi}_{P}\rangle\tilde{\varphi}_{P}$
	converges in $\mathscr{S}(\mathbb{R}^{n})$. Therefore, since $\vec{f}\in\mathscr{S}'/\mathcal{P}(\mathbb{R}^{n})$,
	\[
	\vec{t}_{Q}=\langle\vec{f},\kappa_{Q}\rangle=\bigg\langle\vec{f},\sum_{P\in\mathcal{D}}\langle\kappa_{Q},\tilde{\psi}_{P}\rangle\tilde{\varphi}_{P}\bigg\rangle=\sum_{P\in\mathcal{D}}\langle\kappa_{Q},\tilde{\psi}_{P}\rangle\vec{\tilde{s}}_{Q}.
	\]	
	Notice that, for $\ell(Q)=2^{-j},$ $\langle\kappa_{Q},\tilde{\psi}_{P}\rangle=|Q|^{1/2}\kappa_{j}*\psi_{P}(x_{Q})$.
	Since $\{\psi_{Q}\}_{ Q \in \mathcal D }$ is a family of smooth ($N,K,M,\delta$)-molecules
	for all possible $N,K,M,\delta$, Lemma \ref{lem:mQ} implies that
	the matrix $B=\{b_{QP}\}_{Q,P \in \mathcal{D} }$ defined by $b_{QP}=\langle\kappa_{Q},\tilde{\psi}_{P}\rangle$
	is almost diagonal, that is $B\in\mathbf{ad}_{p}^{s,q}(d, \tilde d, \Delta )$, for
	all possible $s,q,p,$ and $d, \tilde d, \Delta$  where  $d, \tilde d, \Delta$  are the same as in Lemma \ref{AQ AR improved}. From Theorem \ref{thm:almost dia bounded on discrete},
	$B$ is bounded on $\dot{f}_{p,t,r}^{s,q}(A_{Q})$. Hence
	\[
	\|\vec{t}\|_{\dot{f}_{p,t,r}^{s,q}(A_{Q})}=\|B\vec{\tilde{s}}\|_{\dot{f}_{p,t,r}^{s,q}(A_{Q})}\le c\|\vec{\tilde{s}}\|_{\dot{f}_{p,t,r}^{s,q}(A_{Q})}\le c\|\vec{f}\|_{\dot{F}_{p,t,r}^{s,q}(A_{Q},\varphi)}
	\]
	where the last step is by Theorem \ref{thm:sup A_Q LE}. 
	Hence, we prove 
	\begin{equation*}
		\|\vec{f}\|_{\dot{F}_{p,t,r}^{s,q}(A_{Q},\kappa)} \le c \|\vec{f}\|_{\dot{F}_{p,t,r}^{s,q}(A_{Q},\varphi)} .
	\end{equation*}
	Thus we show
	the independence of the choice of admissible functions by interchanging
	$\varphi$ and $\kappa$. 
	
	Next we prove (\ref{F Aq app f}). 
	Since  $\tilde \varphi \in \mathcal A$ for  $\varphi \in \mathcal A$,
	applying Theorem \ref{thm:sup A_Q LE} with $\varphi$ replaced by $\tilde \varphi$, we  obtain
	\begin{equation*}
		\|\vec{s}\|_{\dot{f}_{p,t,r}^{s,q}(A_{Q})}  =	\|\{\langle\vec{f}, {\varphi}_{Q}\rangle\}_{Q\in\mathcal{D}}\|_{\dot{f}_{p,t,r}^{s,q}(A_{Q})}\le c\|\vec{f}\|_{\dot{F}_{p,t,r}^{s,q}(A_{Q}, \tilde \varphi )} .
	\end{equation*}
	Then by the independence of the choice of the test
	functions $\varphi$ and $ \tilde \varphi $  and Theorem \ref{thm:varphi transform},  we obtain
	\begin{equation*}
		\|\vec{f}\|_{\dot{F}_{p,t,r}^{s,q}(A_{Q}, \tilde \varphi )} \approx \|\vec{f}\|_{\dot{F}_{p,t,r}^{s,q}(A_{Q},  \varphi )}  \lesssim 	\|\{\langle\vec{f}, {\varphi}_{Q}\rangle\}_{Q\in\mathcal{D}}\|_{\dot{f}_{p,t,r}^{s,q}(A_{Q})} = \|\vec{s}\|_{\dot{f}_{p,t,r}^{s,q}(A_{Q})} .
	\end{equation*}
	Thus we prove (\ref{F Aq app f}) and the proof is finished.
\end{proof}

Similar to  \cite[Proposition 2.7]{FJ90},
the following result gives a characterization of the $\dot{f}_{p,t,r}^{s,q}$-norm via sequences of sparse sets.
\begin{thm}
	\label{thm:EQ}  Suppose that for each $Q\in\mathcal{D}$,
	$E_{Q}\subset Q$ is a set such that $|E_{Q}|/|Q|\ge\epsilon  >0$. Then
	\[
	\|\{s_{Q}\}_{Q\in\mathcal{D}}\|_{\dot{f}_{p,t,r}^{s,q}}\approx\bigg\|\Big(\sum_{Q\in\mathcal{D}}(|Q|^{-s/n-1/2}|s_{Q}|\chi_{E_{Q}})^{q}\Big)^{1/q}\bigg\|_{M_{p}^{t,r}} .
	\]
	
\end{thm}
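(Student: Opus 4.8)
The plan is to establish the two inequalities separately; only the direction ``$\lesssim$'' carries content. Since $E_{Q}\subset Q$ for every $Q\in\mathcal{D}$, we have $\chi_{E_{Q}}\le\chi_{Q}$ pointwise, hence
\[
\Big(\sum_{Q\in\mathcal{D}}\big(|Q|^{-s/n-1/2}|s_{Q}|\chi_{E_{Q}}\big)^{q}\Big)^{1/q}\le\Big(\sum_{Q\in\mathcal{D}}\big(|Q|^{-s/n-1/2}|s_{Q}|\chi_{Q}\big)^{q}\Big)^{1/q},
\]
and applying $\|\cdot\|_{M_{p}^{t,r}}$ yields the direction ``$\gtrsim$''. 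For the reverse inequality, abbreviate $a_{Q}:=|Q|^{-s/n-1/2}|s_{Q}|$, fix an exponent $\lambda$ with $0<\lambda<\min(1,p,q)$ (just $0<\lambda<\min(1,p)$ when $q=\infty$), and set $g_{\nu}:=\sum_{Q\in\mathcal{D}_{\nu}}a_{Q}^{\lambda}\chi_{E_{Q}}$ for $\nu\in\mathbb{Z}$.

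The crucial pointwise estimate is as follows. Fix $\nu\in\mathbb{Z}$ and $x\in\mathbb{R}^{n}$, and let $Q_{0}$ be the unique cube of $\mathcal{D}_{\nu}$ containing $x$. Since the sets $\{E_{Q}:Q\in\mathcal{D}_{\nu}\}$ are pairwise disjoint and $E_{Q_{0}}\subset Q_{0}$,
\[
\mathcal{M}(g_{\nu})(x)\ge\frac{1}{|Q_{0}|}\int_{Q_{0}}a_{Q_{0}}^{\lambda}\chi_{E_{Q_{0}}}=a_{Q_{0}}^{\lambda}\frac{|E_{Q_{0}}|}{|Q_{0}|}\ge\epsilon\,a_{Q_{0}}^{\lambda},
\]
so that $\sum_{Q\in\mathcal{D}_{\nu}}a_{Q}\chi_{Q}(x)=a_{Q_{0}}\le\epsilon^{-1/\lambda}[\mathcal{M}(g_{\nu})(x)]^{1/\lambda}$. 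Taking the $\ell^{q}$-norm in $\nu$ (the supremum when $q=\infty$) and then $\|\cdot\|_{M_{p}^{t,r}}$ gives
\[
\|\{s_{Q}\}\|_{\dot{f}_{p,t,r}^{s,q}}\le\epsilon^{-1/\lambda}\Big\|\Big(\sum_{\nu\in\mathbb{Z}}[\mathcal{M}(g_{\nu})]^{q/\lambda}\Big)^{1/q}\Big\|_{M_{p}^{t,r}}.
\]
Invoking the homogeneity identity $\|F\|_{M_{p}^{t,r}}=\|F^{\lambda}\|_{M_{p/\lambda}^{t/\lambda,r/\lambda}}^{1/\lambda}$, which is legitimate since $\lambda<p\le t<r$ forces $1<p/\lambda\le t/\lambda<r/\lambda\le\infty$, turns the last display into $\epsilon^{-1/\lambda}\|\{\mathcal{M}(g_{\nu})\}_{\nu}\|_{M_{p/\lambda}^{t/\lambda,r/\lambda}(\ell^{q/\lambda})}^{1/\lambda}$. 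Because $1<p/\lambda$ and $1<q/\lambda\le\infty$, the vector-valued maximal inequality of Lemma \ref{lem:hardy} bounds this, up to a constant, by $\epsilon^{-1/\lambda}\|\{g_{\nu}\}_{\nu}\|_{M_{p/\lambda}^{t/\lambda,r/\lambda}(\ell^{q/\lambda})}^{1/\lambda}$. Finally, for fixed $\nu$ the disjointness of $\{E_{Q}:Q\in\mathcal{D}_{\nu}\}$ gives $g_{\nu}^{q/\lambda}=\sum_{Q\in\mathcal{D}_{\nu}}a_{Q}^{q}\chi_{E_{Q}}$, whence $\sum_{\nu}g_{\nu}^{q/\lambda}=\sum_{Q\in\mathcal{D}}a_{Q}^{q}\chi_{E_{Q}}$; undoing the rescaling identity once more shows $\|\{g_{\nu}\}_{\nu}\|_{M_{p/\lambda}^{t/\lambda,r/\lambda}(\ell^{q/\lambda})}^{1/\lambda}=\big\|\big(\sum_{Q\in\mathcal{D}}a_{Q}^{q}\chi_{E_{Q}}\big)^{1/q}\big\|_{M_{p}^{t,r}}$, which is exactly the right-hand side of the claimed equivalence. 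When $q=\infty$ the same chain works with all $\ell^{q}$-sums replaced by suprema.

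The only real point to watch — and it is bookkeeping rather than a genuine difficulty — is the simultaneous rescaling of the three Morrey exponents together with the inner $\ell^{q}$ exponent by the factor $\lambda$: one must verify that the choice $0<\lambda<\min(1,p,q)$ makes $(p/\lambda,t/\lambda,r/\lambda)$ and $q/\lambda$ satisfy the hypotheses of the vector-valued Hardy--Littlewood maximal bound in Lemma \ref{lem:hardy}. Apart from that, this is the standard Frazier--Jawerth sparse-set argument (cf. \cite[Proposition 2.7]{FJ90}) transplanted to the Bourgain--Morrey scale, with the boundedness of $\mathcal{M}$ on $M_{p}^{t,r}(\ell^{q})$ playing the role it plays on $L^{p}(\ell^{q})$ there.
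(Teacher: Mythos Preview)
Your proof is correct and follows essentially the same approach as the paper's: both use the pointwise bound $\chi_{Q}\le\epsilon^{-1/A}\bigl(\mathcal{M}(\chi_{E_{Q}})\bigr)^{1/A}$ (equivalently, your maximal-function lower bound for $g_{\nu}$), rescale by an exponent $A=\lambda\in(0,\min(1,p,q))$, and invoke the vector-valued maximal inequality on $M_{p/\lambda}^{t/\lambda,r/\lambda}(\ell^{q/\lambda})$ from Lemma~\ref{lem:hardy}. Your write-up is more detailed than the paper's terse two-line argument, but the underlying mechanism is identical.
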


\begin{proof}
	Since $\chi_{E_{Q}}\le\chi_{Q}$, one direction is trivial. For another
	direction, we use the fact that for any $A>0$,
	\[
	\chi_{Q}\le\epsilon^{-1/A}(\mathcal{M}(\chi_{E_{Q}}^{A}))^{1/A}.
	\]
	Let $A\in(0,\min(1,q,p ))$. Then by Lemma \ref{lem:hardy}, we have
	\begin{align*}
		\|\{s_{Q}\}_{Q\in\mathcal{D}}\|_{\dot{f}_{p,t,r}^{s,q}} & \lesssim\epsilon^{-1/A}\bigg\|\Big(\sum_{Q}(|Q|^{-s/n-1/2}|s_{Q}|\chi_{E_{Q}})^{q}\Big)^{1/q}\bigg\|_{M_{p}^{t,r}}.
	\end{align*}
	Thus we finish the proof.
\end{proof}

\begin{thm}\label{thm:A_Q le W discrete}
	Let $0<p<t<r<\infty$ or $0<p\le t<r=\infty$. Let $0<q\le\infty$,
	$s\in\mathbb{R}$. 
	Let $W \in \mathcal A_p$ and  
	$\{A_{Q}\}_{Q\in\mathcal{D}}$ be a family of reducing operators of order $p$ for $W$.
	Then there
	exists a constant $c>0$ such that for all $\vec{s}=\{\vec{s}_{Q}\}_{Q\in\mathcal{D}}$,
	
	\begin{equation}
		\|\vec{s}\|_{\dot{f}_{p,t,r}^{s,q}(A_{Q})}\le c\|\vec{s}\|_{\dot{f}_{p,t,r}^{s,q}(W)}. \label{eq: seq f AQ le W}
	\end{equation}
\end{thm}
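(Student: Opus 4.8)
The plan is to deduce \eqref{eq: seq f AQ le W} from a single pointwise fact: \emph{each $|A_Q\vec s_Q|$ is controlled by a low-power average of $|W^{1/p}(\cdot)\vec s_Q|$ over $Q$}, with the exponent of that average chosen small enough that the associated vector-valued Hardy--Littlewood maximal inequality is available after a rescaling of the underlying Bourgain--Morrey space.

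\emph{Step 1 (lowering the exponent).} I will show that there exist $\sigma\in(0,\min\{p,q\})$ and $C>0$ such that, for all $\vec y\in\mathbb C^{m}$ and $Q\in\mathcal D$,
\begin{equation*}
	|A_Q\vec y|\le C\left(\frac1{|Q|}\int_Q|W^{1/p}(z)\vec y|^{\sigma}\,\d z\right)^{1/\sigma}.
\end{equation*}
When $0<p\le1$ this is immediate from \eqref{eq:p le 1 AQW}: the norm $\|A_QW^{-1/p}(z)\|$ is bounded uniformly in $z\in Q$ and $Q$, so writing $\vec y=W^{-1/p}(z)W^{1/p}(z)\vec y$ gives $|A_Q\vec y|\le C|W^{1/p}(z)\vec y|$ for a.e.\ $z\in Q$, and one takes the essential infimum over $z\in Q$ and bounds it by the $\sigma$-average (any $\sigma>0$ works). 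When $1<p<\infty$, the same pointwise bound $|A_Q\vec y|\le\|A_QW^{-1/p}(z)\|\,|W^{1/p}(z)\vec y|$ holds for a.e.\ $z\in Q$; I raise it to a small power $\rho\in(0,\sigma)$, average over $Q$, and apply H\"older's inequality with exponents $\frac{\sigma}{\sigma-\rho}$ and $\frac{\sigma}{\rho}$ to get
\begin{equation*}
	|A_Q\vec y|^{\rho}\le\left(\frac1{|Q|}\int_Q\|A_QW^{-1/p}(z)\|^{\frac{\rho\sigma}{\sigma-\rho}}\,\d z\right)^{\frac{\sigma-\rho}{\sigma}}\left(\frac1{|Q|}\int_Q|W^{1/p}(z)\vec y|^{\sigma}\,\d z\right)^{\rho/\sigma}.
\end{equation*}
Since $\frac{\rho\sigma}{\sigma-\rho}\to0$ as $\rho\to0^{+}$, I fix $\rho$ small enough that $\frac{\rho\sigma}{\sigma-\rho}<p'+\delta_W$, with $\delta_W$ as in Lemma \ref{lem:W AQ}; then the first factor is bounded by a constant independent of $Q$ and $\vec y$ by \eqref{eq:p>1 AQW ^-1/p}, and the claim follows. (Alternatively, \eqref{eq:Aq W equ} together with Lemma \ref{lem:W AQ} yields a uniform reverse H\"older inequality for the scalar weights $z\mapsto|W^{1/p}(z)\vec y|^{p}$, from which the same conclusion can be read off.)

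\emph{Step 2 (maximal function and rescaling).} With $\sigma$ fixed as above I set $h_Q:=|Q|^{(-s/n-1/2)\sigma}\,|W^{1/p}(\cdot)\vec s_Q|^{\sigma}\chi_Q\ge0$. Since every $x\in Q$ lies in a ball of measure comparable to $|Q|$ that contains $Q$, Step 1 gives $|Q|^{-s/n-1/2}|A_Q\vec s_Q|\chi_Q(x)\lesssim(\mathcal M h_Q(x))^{1/\sigma}$ for every $x\in\rn$, hence
\begin{equation*}
	\|\vec s\|_{\dot f_{p,t,r}^{s,q}(A_Q)}\lesssim\left\|\left(\sum_{Q\in\mathcal D}(\mathcal M h_Q)^{q/\sigma}\right)^{1/q}\right\|_{M_p^{t,r}}.
\end{equation*}
Then I invoke the elementary identity $\|G^{1/\sigma}\|_{M_p^{t,r}}=\|G\|_{M_{p/\sigma}^{t/\sigma,r/\sigma}}^{1/\sigma}$ for $G\ge0$ (immediate from $\|G^{1/\sigma}\chi_R\|_{L^p}=\|G\chi_R\|_{L^{p/\sigma}}^{1/\sigma}$ and the scaling of the $\ell^{r}$-norm, with the usual convention when $r=\infty$) to pass to the space $M_{p/\sigma}^{t/\sigma,r/\sigma}(\ell^{q/\sigma})$. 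Because $\sigma<\min\{p,q\}$, the indices satisfy $1<p/\sigma\le t/\sigma<r/\sigma\le\infty$ and $1<q/\sigma\le\infty$, so Lemma \ref{lem:hardy} (boundedness of $\mathcal M$ on $M_{p/\sigma}^{t/\sigma,r/\sigma}(\ell^{q/\sigma})$) applies to the family $\{h_Q\}_{Q\in\mathcal D}$; undoing the rescaling identity and using $h_Q^{q/\sigma}=(|Q|^{-s/n-1/2}|W^{1/p}(\cdot)\vec s_Q|\chi_Q)^{q}$ turns the resulting bound back into $\|\vec s\|_{\dot f_{p,t,r}^{s,q}(W)}$, which is \eqref{eq: seq f AQ le W}.

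The delicate point is Step 1 in the case $p>1$: one should \emph{not} simply use the $L^{p}$-equivalence \eqref{eq:Aq W equ}, since the attendant maximal function $\mathcal M(|W^{1/p}\vec s_Q|^{p})^{1/p}$ would then have to be handled in an $M_1$-type Bourgain--Morrey space, where $\mathcal M$ is unbounded and Lemma \ref{lem:hardy} fails. The gain that makes the scheme work is that \eqref{eq:p>1 AQW ^-1/p} allows $\|A_QW^{-1/p}(\cdot)\|$ to be integrated to an order \emph{strictly above $p'$}, which is exactly what lets us drive the exponent $\sigma$ below $\min\{p,q\}$ while keeping the ``$A_QW^{-1/p}$'' factor under control. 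The remaining ingredients---the case split $p\le1$ versus $p>1$, the ball-versus-cube comparison, and the behaviour of the norms under rescaling---are routine.
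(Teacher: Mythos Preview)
Your proof is correct, but it takes a different route from the paper's for the case $p>1$. The paper argues as follows: let $C_1$ be the constant from \eqref{eq:p>1 AQW ^-1/p} with $v=1$ and set $E_Q:=\{x\in Q:\|A_QW^{-1/p}(x)\|\le 2C_1\}$; Chebyshev gives $|E_Q|\ge|Q|/2$, and on $E_Q$ one has the pointwise bound $|A_Q\vec s_Q|\le 2C_1|W^{1/p}(x)\vec s_Q|$. The passage from $\chi_{E_Q}$ back to $\chi_Q$ is handled by the sparse-set characterization of $\dot f_{p,t,r}^{s,q}$ (Theorem~\ref{thm:EQ}), itself proved via $\chi_Q\lesssim(\mathcal M\chi_{E_Q})^{1/A}$ and Lemma~\ref{lem:hardy}. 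Your approach instead lowers the exponent in the $A_Q$--$W$ comparison to some $\sigma<\min\{p,q\}$ via H\"older and \eqref{eq:p>1 AQW ^-1/p}, then applies the vector-valued maximal inequality directly after rescaling. Both routes ultimately rely on Lemma~\ref{lem:hardy}; yours is a bit more self-contained in that it does not pass through the auxiliary Theorem~\ref{thm:EQ}, while the paper's good-set argument avoids the explicit $\sigma$-averaging manipulation and gives a cleaner pointwise inequality on a large subset. For $p\le1$ the two proofs coincide.
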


\begin{proof}
	We divide the proof into two cases.
	
	Case $p\in(0,1]$. From (\ref{eq:p le 1 AQW}), we have
	\[
	|A_{Q}\vec{s}_{Q}|\chi_{Q}\le\|A_{Q}W^{-1/p}\||W^{1/p}\vec{s}_{Q}|\chi_{Q}\lesssim|W^{1/p}\vec{s}_{Q}|\chi_{Q},
	\]
	which implies (\ref{eq: seq f AQ le W}).
	
	Case $p\in(1,\infty)$. Let $C_{1}$ be the constant from (\ref{eq:p>1 AQW ^-1/p})
	when $v=1$. For each $Q\in\mathcal{D}$, let
	\[
	E_{Q}:=\{x\in Q:\|A_{Q}W^{-1/p}(x)\|\le2C_{1}\}.
	\]
	By Chebychev's inequality and (\ref{eq:p>1 AQW ^-1/p}),
	\[
	2C_{1}|Q\backslash E_{Q}|\le\int_{Q\backslash E_{Q}}\|A_{Q}W^{-1/p}(x)\|\mathrm{d}x\le\int_{Q}\|A_{Q}W^{-1/p}(x)\|\mathrm{d}x\le C_{1}|Q|.
	\]
	Thus $|Q\backslash E_{Q}|\le|Q|/2$.  This means $|E_{Q}|\ge|Q|/2$. By Theorem
	\ref{thm:EQ} and $|A_{Q}\vec{s}_{Q}|\le\|A_{Q}W^{-1/p}\||W^{1/p}\vec{s}_{Q}|$,
	we have
	\begin{align*}
		\|\vec{s}\|_{\dot{f}_{p,t,r}^{s,q}(A_{Q})} 
		& \lesssim \bigg\|\Big(\sum_{v=-\infty}^{\infty}\sum_{Q\in\mathcal{D}_{v}}\Big[|Q|^{-s/n-1/2}|A_{Q}\vec{s}_{Q}|\chi_{E_{Q}}\Big]^{q}\Big)^{1/q}\bigg\|_{M_{p}^{t,r}}\\
		& \lesssim\bigg\|\Big(\sum_{v=-\infty}^{\infty}\sum_{Q\in\mathcal{D}_{v}}\Big[|Q|^{-s/n-1/2}|W^{1/p}\vec{s}_{Q}|\chi_{E_{Q}}\Big]^{q}\Big)^{1/q}\bigg\|_{M_{p}^{t,r}}\\
		& \lesssim \|\vec{s}\|_{\dot{f}_{p,t,r}^{s,q}(W)}. \qedhere
	\end{align*}
\end{proof}

\begin{thm}\label{thm:Aq le W function}
	Let $0<p<t<r<\infty$ or $0<p\le t<r=\infty$. Let $0<q\le\infty$, $s\in \mathbb{R}$.
	Let $W \in \mathcal A_p$ and  
	$\{A_{Q}\}_{Q\in\mathcal{D}}$ be a family of reducing operators of order $p$ for $W$. Let $d, \tilde d, \Delta$  be the same as in Lemma \ref{AQ AR improved}.
	Then there exists a constant
	$c>0$ such that
	\begin{equation}
		\|\vec{f}\|_{\dot{F}_{p,t,r}^{s,q}(A_{Q})}\le c\|\vec{f}\|_{\dot{F}_{p,t,r}^{s,q}(W)}. \label{eq:F Aq le F W}
	\end{equation}
	
\end{thm}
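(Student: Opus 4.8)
The plan is to follow the scheme of the proof of Theorem~\ref{thm:sup A_Q LE}, inserting one extra step which replaces each reducing operator $A_{Q}$ by the pointwise weight $W^{1/p}$ at the cost of a factor $\|A_{Q}W^{-1/p}\|$. Put $g_{v}:=|W^{1/p}\varphi_{v}*\vec{f}|$, so that $\|\vec{f}\|_{\dot{F}_{p,t,r}^{s,q}(W)}=\big\|(\sum_{v}2^{vsq}g_{v}^{q})^{1/q}\big\|_{M_{p}^{t,r}}$, and for $x\in\mathbb{R}^{n}$, $v\in\mathbb{Z}$ let $Q_{v}(x)\in\mathcal{D}_{v}$ be the cube containing $x$; since the $\chi_{Q}$ with $Q\in\mathcal{D}_{v}$ are pairwise disjoint, $\|\vec{f}\|_{\dot{F}_{p,t,r}^{s,q}(A_{Q})}=\big\|(\sum_{v}2^{vsq}|A_{Q_{v}(\cdot)}\varphi_{v}*\vec{f}|^{q})^{1/q}\big\|_{M_{p}^{t,r}}$. (The pointwise manipulations below are legitimate: $\varphi_{v}*\vec{f}$ is a smooth function and, since $W\in\mathcal A_{p}$, $g_{v}\in L_{\mathrm{loc}}^{p}$.) As $\{A_{Q}\}_{Q\in\mathcal{D}}$ is weakly doubling of order $\Delta$ (Remark after Lemma~\ref{AQ AR improved}), Lemma~\ref{lem:A inequality} with a small exponent $A\in(0,1)$ and $R$ so large that $\eta:=A(R-\Delta)>n$ gives, for $x\in Q_{v,k}=Q_{v}(x)$,
\[
|A_{Q_{v}(x)}\varphi_{v}*\vec{f}(x)|^{A}\lesssim\sum_{l\in\mathbb{Z}^{n}}(1+|k-l|)^{-\eta}\,2^{vn}\int_{Q_{v,l}}|A_{Q_{v,l}}\varphi_{v}*\vec{f}(z)|^{A}\,\mathrm{d}z ,
\]
and on $Q_{v,l}$ one has $|A_{Q_{v,l}}\varphi_{v}*\vec{f}(z)|\le\|A_{Q_{v,l}}W^{-1/p}(z)\|\,g_{v}(z)$.

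I would then split according to Lemma~\ref{lem:W AQ}. If $p\in(0,1]$, then $\|A_{Q_{v,l}}W^{-1/p}(z)\|\lesssim1$ by~(\ref{eq:p le 1 AQW}), so the displayed sum is $\lesssim\sum_{l}(1+|k-l|)^{-\eta}2^{vn}\int_{Q_{v,l}}g_{v}^{A}\,\mathrm{d}z$, which by Lemma~\ref{lem:Littlewood max} is $\lesssim\mathcal{M}(g_{v}^{A})(x)$; imposing also $A<\min\{p,q\}$ and putting $c:=A$ yields $|A_{Q_{v}(x)}\varphi_{v}*\vec{f}(x)|\lesssim(\mathcal{M}(g_{v}^{c})(x))^{1/c}$. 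If $p\in(1,\infty)$, I fix in addition $b\in(1,\infty)$ with conjugate $b'$ such that $Ab<p'+\delta_{W}$ and $Ab'<\min\{p,q\}$ (possible for any fixed $b$ once $A$ is taken small enough), use H\"{o}lder in $z$ on $Q_{v,l}$ together with~(\ref{eq:p>1 AQW ^-1/p}) to obtain $2^{vn}\int_{Q_{v,l}}\|A_{Q_{v,l}}W^{-1/p}\|^{A}g_{v}^{A}\,\mathrm{d}z\lesssim\big(2^{vn}\int_{Q_{v,l}}g_{v}^{Ab'}\,\mathrm{d}z\big)^{1/b'}$, and then H\"{o}lder in $l$ (using $\sum_{l}(1+|k-l|)^{-\eta}\lesssim1$) followed by Lemma~\ref{lem:Littlewood max} to reach $|A_{Q_{v}(x)}\varphi_{v}*\vec{f}(x)|\lesssim(\mathcal{M}(g_{v}^{Ab'})(x))^{1/(Ab')}$; here $c:=Ab'$. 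In either case $c\in(0,\min\{p,q\})$ and
\[
|A_{Q_{v}(x)}\varphi_{v}*\vec{f}(x)|\lesssim\big(\mathcal{M}(g_{v}^{c})(x)\big)^{1/c}\qquad(x\in\mathbb{R}^{n},\ v\in\mathbb{Z}).
\]

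Inserting this in the expression for $\|\vec{f}\|_{\dot{F}_{p,t,r}^{s,q}(A_{Q})}$ above, raising the $M_{p}^{t,r}$-quasi-norm to the power $c$ and rewriting in the rescaled space, I get
\[
\|\vec{f}\|_{\dot{F}_{p,t,r}^{s,q}(A_{Q})}\lesssim\Big\|\Big(\sum_{v}2^{vsq}\big(\mathcal{M}(g_{v}^{c})\big)^{q/c}\Big)^{1/q}\Big\|_{M_{p}^{t,r}}=\big\|\{2^{vsc}\mathcal{M}(g_{v}^{c})\}_{v}\big\|_{M_{p/c}^{t/c,r/c}(\ell^{q/c})}^{1/c}.
\]
Since $p/c>1$ and $q/c>1$, Lemma~\ref{lem:hardy} gives the boundedness of $\mathcal{M}$ on $M_{p/c}^{t/c,r/c}(\ell^{q/c})$, hence this is $\lesssim\big\|\{2^{vsc}g_{v}^{c}\}_{v}\big\|_{M_{p/c}^{t/c,r/c}(\ell^{q/c})}^{1/c}=\big\|(\sum_{v}2^{vsq}g_{v}^{q})^{1/q}\big\|_{M_{p}^{t,r}}=\|\vec{f}\|_{\dot{F}_{p,t,r}^{s,q}(W)}$, which is~(\ref{eq:F Aq le F W}).

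The step I expect to be the main obstacle is the case $p>1$: there $\|A_{Q}W^{-1/p}\|$ is integrable only to powers $\theta<p'+\delta_{W}$, and $p'+\delta_{W}$ may well be smaller than $p$, so this factor cannot be absorbed by a constant; the two H\"{o}lder steps with the balanced exponents $Ab<p'+\delta_{W}$ and $Ab'<\min\{p,q\}$ are exactly what route the estimate through $\mathcal{M}(g_{v}^{Ab'})$ while keeping $Ab'$ in the range where $\mathcal{M}$ acts boundedly on $M_{p/(Ab')}^{t/(Ab'),r/(Ab')}(\ell^{q/(Ab')})$. Checking that such $A,b,b'$ exist, the case $p\le1$, and a standard density argument reducing to $\vec{f}$ for which all quantities are finite, are routine.
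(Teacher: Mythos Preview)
Your argument is correct and, at the level of the key analytic ingredients (Lemma~\ref{lem:A inequality}, the control of $\|A_{Q}W^{-1/p}\|$ from Lemma~\ref{lem:W AQ}, Lemma~\ref{lem:Littlewood max}, and the Fefferman--Stein inequality in Lemma~\ref{lem:hardy}), it coincides with the paper's proof. The differences are purely organisational: the paper first proves the sequence-space estimate $\|\{\langle\vec{f},\tilde{\varphi}_{Q}\rangle\}\|_{\dot{f}_{p,t,r}^{s,q}(A_{Q})}\lesssim\|\vec{f}\|_{\dot{F}_{p,t,r}^{s,q}(W)}$ and then invokes~(\ref{eq:F le f}) to pass back to the function space, whereas you work directly at the function-space level; and for $p>1$ the paper cites \cite[Theorem~3.5]{FraRou19} (which packages exactly your two H\"older steps with $w=p'/(p'-A)$, $w'=p'/A$) instead of carrying out the H\"older argument explicitly. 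Your choice of general $b,b'$ with $Ab<p'+\delta_{W}$ and $Ab'<\min\{p,q\}$ is a slight generalisation of that choice, but the content is the same.
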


\begin{proof}
	Let $\varphi\in\mathcal{A}$ be the test function in the
	definition of $\dot{F}_{p,t,r}^{s,q}(A_{Q})$, $\dot{F}_{p,t,r}^{s,q}(W)$.
	Let $\tilde{\varphi}(x)=\overline{\varphi(-x)}$ as usual. We will
	show
	\begin{equation}
		\Big\|\big\{\langle\vec{f},\tilde{\varphi}_{Q}\rangle\big\}_{Q\in\mathcal{D}}\Big\|_{\dot{f}_{p,t,r}^{s,q}(A_{Q})}\le c\|\vec{f}\|_{\dot{F}_{p,t,r}^{s,q}(W)}.\label{eq:f tilde varphi le F w}
	\end{equation}
	Then (\ref{eq:F Aq le F W}) follows from (\ref{eq:f tilde varphi le F w})
	and (\ref{eq:F le f}) with $\varphi$ replaced by $\tilde{\varphi}\in\mathcal{A}$.
	
	Note that $\{A_{Q}\}_{Q\in\mathcal{D}}$ is strongly doubling of order $(d, \tilde d, \Delta,p)$
	and weakly doubling of order $\Delta$ by Lemma \ref{AQ AR improved}.
	
	Case $0<p\le1$. For $x\in Q_{j, \ell}$, from Lemma \ref{lem:W AQ},
	we have
	\[
	|A_{Q_{j, \ell}}\varphi_{j}*\vec{f}(x)|^{A}\le c|W^{1/p}(x)\varphi_{j}*\vec{f}(x)|^{A}.
	\]
	By Lemma \ref{lem:A inequality}, we obtain
	\[
	\sup_{x\in Q_{j, k}}|A_{Q_{j, k}}\varphi_{j}*\vec{f}(x)|^{A}\lesssim  \sum_{l\in\mathbb{Z}^{n}}(1+|k-l|)^{-A(R-\Delta)}2^{jn}\int_{Q_{j, \ell}}|W^{1/p}(z)\varphi_{j}*\vec{f}(z)|^{A}\mathrm{d}z
	\]
	for any $A\in(0,1]$, $R>0$. Proceeding as in
	the proof of Theorem \ref{thm:sup A_Q LE}, we obtain (\ref{eq:f tilde varphi le F w}).
	
	Case $1<p<\infty$. Let $w=p'/(p'-A)$, $w'=p'/A$, where $p'=p/(p-1)$.
	From \cite[Theorem 3.5]{FraRou19}, we have
	\begin{align*}
	&	\sup_{x\in Q_{j, k}}|A_{Q_{j, k}}\varphi_{j}*\vec{f}(x)|^{Aw}  \\
	& \lesssim  \sum_{l\in\mathbb{Z}^{n}}(1+|k-l|)^{-A(R-\Delta)}2^{jn}\int_{Q_{j, \ell}}|W^{1/p}(z)\varphi_{j}*\vec{f}(z)|^{Aw}\mathrm{d}z
	\end{align*}
	for $R$ sufficiently large. Proceeding as case $0<p\le1$, we obtain
	(\ref{eq:f tilde varphi le F w}).
\end{proof}

We need some preparation for the another direction of Theorems  \ref{thm:A_Q le W discrete} and  \ref{thm:Aq le W function}  with certain restriction on $q$.

Given a quasi-Banach space $Y$  with norm $\|\cdot\|_{Y}$, its dual $Y^\ast$  is the space of all continuous linear functionals $T$ on $Y$ equipped with the norm 
\begin{equation*}
	\|T\|_{Y^\ast } = \sup_{ \|x\|_Y  \le 1}  |T(x)|.
\end{equation*}
Note that the dual of a quasi-Banach space is always a Banach space.
A Banach space $Y$ is called a predual of  a Banach space $X$ if $Y^\ast$  is isometric to $X$.

Let $X$ be a Banach space.
In \cite{BGX25}, the authors of the paper  introduced $X$-valued Bourgain-Morrey spaces and obtain their preduals. In particular, when $X = \ell^{q}, q\in (1,\infty)$, the authors of the paper  obtained the powered Hardy-Littlewood maximal operator $M_\eta $ is bounded on $\mathcal{H}_{p'}^{t',r'} (\ell^{q'}) $.

\begin{defn}
	Let $1\le p\le t\le\infty$ and $X$ be a Banach space such that $  ( ^\ast X )^ \ast = X $. A $^\ast X$-valued function $b$ is a $(p',t',{^\ast X})$-block
	if there exists a cube $Q$ which supports $b$ such that
	\[
	\left\| \|b\|_{^\ast X} \right\|_{L^{p'}}\le |Q|^{1/t-1/p} = |Q|^{1/p' -1/t'}.
	\]
	If we need to indicate $Q$, we  say that $b$ is a $(p',t', {^\ast X})$-block supported on $Q$.
	
	The function space $\mathcal{H}_{p'}^{t',r'} (^\ast X)$
	is the set of  all $^\ast X$-valued measurable functions $f$ such that $f$ is realized
	as the sum
	\begin{equation}\label{eq:block f X}
		f = \sum_{(j,k)\in\mathbb{Z}^{n+1}}\lambda_{j,k}b_{j,k}
	\end{equation}
	with some $\lambda=\{\lambda_{j,k}\}_{(j,k)\in\mathbb{Z}^{n+1}}\in\ell^{r'}(\mathbb{Z}^{n+1})$
	and $b_{j,k}$ is a  $(p',t', {^\ast X})$-block supported on $Q_{j,k}$ where (\ref{eq:block f X}) converges in $^\ast X$ almost everywhere on $\rn$. The norm of $\mathcal{H}_{p'}^{t',r'} (^\ast X)$
	is defined by
	\[
	\|f\|_{\mathcal{H}_{p'}^{t',r'} (^\ast X)} :=\inf_{\lambda}\|\lambda\|_{\ell^{r'}},
	\]
	where the infimum is taken over all admissible sequence $\lambda$
	such that (\ref{eq:block f X}) holds.
\end{defn}

\begin{lem}  [Theorem  3.8, \cite{BGX25}] \label{predual seq BM}
	Let $X$ be a Banach space  such that $  ( ^\ast X )^ \ast = X $.
	Let $^\ast X$ be reflexive or $X$ be separable. Let $1 < p < t < r <\infty$  or $1 <  p \le t < r =\infty $. Then the dual  space of
	$\mathcal{H}_{p'}^{t',r'} (^\ast X)$, denoted by $ \Big( \mathcal{H}_{p'}^{t',r'} (^\ast X) \Big)^* $, is
	$M_{p}^{t,r} (X)$, that is,  
	\begin{equation*}
		\Big( \mathcal{H}_{p'}^{t',r'} (^\ast X) \Big)^*   = M_{p}^{t,r} (X)
	\end{equation*}
	in the following sense:
	
	{\rm (i)} if $f \in M_{p}^{t,r} (X)$, then the linear functional 
	\begin{equation} \label{Jf}
		J_f : f \to  J_f (g) :=   \int_\rn  \langle g (x), f (x) \rangle \d x
	\end{equation}
	is bounded on $ \mathcal{H}_{p'}^{t',r'} (^\ast X) $.
	
	{\rm (ii)} conversely, any continuous linear functional on $\mathcal{H}_{p'}^{t',r'} (^\ast X)$ arises as in  (\ref{Jf}) with a unique $f \in M_{p}^{t,r} (X)$.
	
	Moreover, $\|f\|_{  M_{p}^{t,r} (X)}  = \| J_f \|_{ ( \mathcal{H}_{p'}^{t',r'} (^\ast X) )^*}$ and 
	\begin{equation*} 
		\|g\|_{\mathcal{H}_{p'}^{t',r'} (^\ast X)  } =\max \left\{\left|\int_\rn  \langle g (x), f (x) \rangle \d x    \right|: f\in  M_{p}^{t,r} (X), \|f\|_{  M_{p}^{t,r} (X) } \le 1 \right\}.
	\end{equation*}
\end{lem}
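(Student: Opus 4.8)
The plan is to lift the scalar duality $(\mathcal{H}_{p'}^{t',r'})^{\ast}=M_{p}^{t,r}$ of Lemma \ref{LE:pre-dual} to the $X$-valued level. The only genuinely new ingredient will be the duality of vector-valued Lebesgue spaces, $L^{p'}(Q;{}^{\ast}X)^{\ast}=L^{p}(Q;X)$ for each cube $Q$, which is exactly where the hypothesis ``${}^{\ast}X$ reflexive or $X$ separable'' is spent (it guarantees the Radon--Nikod\'ym property needed for the Dunford--Pettis representation); everything else reduces to H\"older's inequality and bookkeeping. Note that $p>1$ keeps $p'$ finite and $r>1$ makes the $\ell^{r'}$-tails of any block decomposition vanish, which will give density of finite block sums.

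For part (i), let $f\in M_{p}^{t,r}(X)$ and let $g\in\mathcal{H}_{p'}^{t',r'}({}^{\ast}X)$ have a block representation $g=\sum_{(j,k)}\lambda_{j,k}b_{j,k}$ with $b_{j,k}$ a $(p',t',{}^{\ast}X)$-block on $Q_{j,k}$. For a single block $b$ on a cube $Q$, H\"older's inequality in the pair $(p',p)$ gives
\[
\Big|\int_{\rn}\langle b(x),f(x)\rangle\,\mathrm{d}x\Big|\le\big\|\,\|b\|_{{}^{\ast}X}\big\|_{L^{p'}}\big\|\,\|f\|_{X}\chi_{Q}\big\|_{L^{p}}\le|Q|^{1/t-1/p}\big\|\,\|f\|_{X}\chi_{Q}\big\|_{L^{p}}.
\]
Summing against the $\lambda_{j,k}$ (the interchange of sum and integral being justified by absolute convergence) and then applying H\"older's inequality in $\ell^{r'}$--$\ell^{r}$ yields $|\int_{\rn}\langle g,f\rangle\,\mathrm{d}x|\le\|\lambda\|_{\ell^{r'}}\|f\|_{M_{p}^{t,r}(X)}$; taking the infimum over block representations shows $J_{f}$ is bounded with $\|J_{f}\|\le\|f\|_{M_{p}^{t,r}(X)}$.

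For part (ii), let $L$ be a bounded linear functional on $\mathcal{H}_{p'}^{t',r'}({}^{\ast}X)$. First I would fix a dyadic cube $Q$: after rescaling, every element of $L^{p'}(Q;{}^{\ast}X)$ is a scalar multiple of a block supported on $Q$, so $L$ restricts to a bounded functional on $L^{p'}(Q;{}^{\ast}X)$, hence is represented there by a unique $f_{Q}\in L^{p}(Q;X)$. Since dyadic cubes are nested, these local representatives agree on overlaps and patch into a single $f\in L^{p}_{\mathrm{loc}}(\rn;X)$ with $L(g)=\int_{\rn}\langle g,f\rangle\,\mathrm{d}x$ for every finite block sum $g$. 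To estimate $\|f\|_{M_{p}^{t,r}(X)}$ I would use the dual description coming from $L^{p'}(Q;{}^{\ast}X)^{\ast}=L^{p}(Q;X)$,
\[
|Q|^{1/t-1/p}\big\|\,\|f\|_{X}\chi_{Q}\big\|_{L^{p}}=\sup\Big\{\Big|\int_{Q}\langle b,f\rangle\,\mathrm{d}x\Big|:\ b\ \text{a}\ (p',t',{}^{\ast}X)\text{-block on}\ Q\Big\}:
\]
given a finite family of dyadic cubes and nonnegative coefficients $\{c_{Q}\}$ with $\|\{c_{Q}\}\|_{\ell^{r'}}\le1$, choose near-optimal blocks $b_{Q}$ (rotating by unimodular phases so the pairings are real and nonnegative) and form the diagonal test function $g=\sum_{Q}c_{Q}b_{Q}$, which satisfies $\|g\|_{\mathcal{H}_{p'}^{t',r'}({}^{\ast}X)}\le1$; then $L(g)$ is within $\varepsilon$ of $\sum_{Q}c_{Q}|Q|^{1/t-1/p}\|\,\|f\|_{X}\chi_{Q}\|_{L^{p}}$, so taking the supremum over admissible $\{c_{Q}\}$ and exhausting $\mathcal{D}$ gives $\|f\|_{M_{p}^{t,r}(X)}\le\|L\|$. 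Now $J_{f}$ is continuous by part (i), and since finite block sums are dense in $\mathcal{H}_{p'}^{t',r'}({}^{\ast}X)$ the identity $L=J_{f}$ extends to the whole space; this $f$ is unique because $\|f\|_{M_{p}^{t,r}(X)}\le\|J_{f}\|$ forces $J_{f}=0$ to imply $f=0$. Together with part (i) this is the asserted isometry, and the stated formula for $\|g\|_{\mathcal{H}_{p'}^{t',r'}({}^{\ast}X)}$ follows from the Hahn--Banach theorem applied to the identification $(\mathcal{H}_{p'}^{t',r'}({}^{\ast}X))^{\ast}=M_{p}^{t,r}(X)$.

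The two H\"older estimates are routine; I expect the real work to lie in part (ii), specifically (a) the consistent gluing of the local representatives $f_{Q}$ and the passage of $L=J_{f}$ from finite block sums to all of $\mathcal{H}_{p'}^{t',r'}({}^{\ast}X)$, which must be reconciled with the a.e.-in-${}^{\ast}X$ mode of convergence built into the definition of that space, and (b) promoting the single-cube bound $|Q|^{1/t-1/p}\|\,\|f\|_{X}\chi_{Q}\|_{L^{p}}\le\|L\|$ to the full $\ell^{r}$-summed estimate via the diagonal test function and a limiting argument. The structural hypothesis on $X$ enters only to secure $L^{p'}(Q;{}^{\ast}X)^{\ast}=L^{p}(Q;X)$.
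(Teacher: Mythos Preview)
The paper does not contain a proof of this lemma: it is quoted verbatim as Theorem~3.8 of \cite{BGX25} and used as a black box, so there is nothing in the present paper to compare your proposal against.

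That said, your outline is the standard and correct strategy for such vector-valued predual theorems. The two points you flag as ``the real work'' are indeed where the argument lives: the gluing of the local representatives $f_Q$ is routine because dyadic cubes of a fixed generation partition $\mathbb{R}^n$ and larger cubes are unions of smaller ones, while the extension of $L=J_f$ from finite block sums to all of $\mathcal{H}_{p'}^{t',r'}({}^{\ast}X)$ follows from density (which in turn uses $r'<\infty$ so that tails of $\ell^{r'}$ sequences vanish). Your diagonal test-function argument for the $\ell^r$ bound is exactly right. The only remark I would add is that the ``$\max$'' in the final displayed formula (as opposed to a $\sup$) requires you to exhibit an extremizer, which typically comes from a Hahn--Banach norming functional together with the fact that the unit ball of $M_p^{t,r}(X)$, viewed as the dual ball, is weak-$*$ compact; you gesture at Hahn--Banach but should be explicit that attainment is part of the claim.
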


\begin{rem}
	When $X = \mathbb C$, the block spaces $\mathcal{H}_{p'}^{t',r'} (^\ast X)$ are introduced in \cite{M23} and Lemma \ref{predual seq BM} become \cite[Theorem 2.17]{M23}. 
\end{rem}

\begin{lem} [Theorem 5.3, \cite{BGX25}] \label{M eta r < infty = infty}
	Let $1<q < \infty $.  Let $1<p<t<r<\infty$ or let $1<p \le t <r =\infty$.  Let $ 0<\eta < \min\{p' , q' \}$. Then $\M _\eta$ is bounded on $\mathcal{H}_{p'}^{t',r'} (\ell^{q'})$.
\end{lem}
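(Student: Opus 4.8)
The plan is to adapt the proof that the powered Hardy--Littlewood maximal operator is bounded on the \emph{scalar} block space $\mathcal H_{p'}^{t',r'}$ (see \cite[Theorem 385]{SFH20}, \cite[Proposition 6.10]{ZYY24} and the scalar Remark above) to the $\ell^{q'}$-valued setting, the new ingredient being the powered vector-valued Fefferman--Stein maximal inequality. First I would pass to the predual description: by Lemma \ref{predual seq BM} with $X=\ell^{q}$ (so ${}^{\ast}X=\ell^{q'}$, and $\ell^{q}$ is reflexive and separable since $1<q<\infty$), the space $\mathcal H_{p'}^{t',r'}(\ell^{q'})$ is Banach with dual $M_{p}^{t,r}(\ell^{q})$, and
\[
\|\vec h\|_{\mathcal H_{p'}^{t',r'}(\ell^{q'})}
=\sup\Big\{\Big|\int_{\rn}\langle\vec h(x),\vec k(x)\rangle\,\d x\Big|:\vec k\in M_{p}^{t,r}(\ell^{q}),\ \|\vec k\|_{M_{p}^{t,r}(\ell^{q})}\le1\Big\}.
\]
Taking $\vec h=\M_{\eta}\vec g$, using $|\langle\M_{\eta}\vec g(x),\vec k(x)\rangle|\le\|\M_{\eta}\vec g(x)\|_{\ell^{q'}}\|\vec k(x)\|_{\ell^{q}}$, and invoking a block decomposition of $\vec g$ together with the solidity of these spaces, the problem reduces to a single-block estimate: there is $C>0$ such that for every $(p',t',\ell^{q'})$-block $b$ supported on a cube $Q$, the $\ell^{q'}$-valued function $\M_{\eta}b=\{\M_{\eta}b_{m}\}_{m}$ decomposes into $(p',t',\ell^{q'})$-blocks with coefficient sequence of $\ell^{r'}$-norm at most $C$.

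To establish this, I would split $\M_{\eta}b=\M_{\eta}b\cdot\chi_{2Q}+\sum_{i\ge1}\M_{\eta}b\cdot\chi_{2^{i+1}Q\setminus2^{i}Q}$. For the local term the powered vector-valued Fefferman--Stein inequality $\big\|\,\|\{\M_{\eta}f_{m}\}_{m}\|_{\ell^{q'}}\,\big\|_{L^{p'}}\lesssim\big\|\,\|\{f_{m}\}_{m}\|_{\ell^{q'}}\,\big\|_{L^{p'}}$ --- which holds because $\eta<\min\{p',q'\}$, so that raising to the power $\eta$ reduces it to the classical Fefferman--Stein inequality on $L^{p'/\eta}(\ell^{q'/\eta})$ with both exponents $>1$ --- gives $\big\|\,\|\M_{\eta}b\|_{\ell^{q'}}\,\chi_{2Q}\big\|_{L^{p'}}\lesssim\big\|\,\|b\|_{\ell^{q'}}\,\big\|_{L^{p'}}\le|Q|^{1/p'-1/t'}$, so $\M_{\eta}b\cdot\chi_{2Q}$ is, up to a fixed multiple, a $(p',t',\ell^{q'})$-block on $2Q$ (covered by boundedly many dyadic cubes of comparable side length). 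For the global terms, the support of $b$ forces, for $x\in2^{i+1}Q\setminus2^{i}Q$ and every $m$, that each ball competing in $\M(|b_{m}|^{\eta})(x)$ has measure $\gtrsim2^{in}|Q|$, whence $\M_{\eta}b_{m}(x)\lesssim2^{-in/\eta}\big(\tfrac1{|Q|}\int_{Q}|b_{m}|^{\eta}\big)^{1/\eta}$; applying Minkowski's integral inequality on $\ell^{q'/\eta}$ (valid since $\eta<q'$) followed by H\"older's inequality (valid since $\eta<p'$) yields the pointwise decay $\|\M_{\eta}b(x)\|_{\ell^{q'}}\lesssim2^{-in/\eta}\big(\tfrac1{|Q|}\int_{Q}\|b(y)\|_{\ell^{q'}}^{\eta}\,\d y\big)^{1/\eta}\lesssim2^{-in/\eta}|Q|^{-1/t'}$. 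Hence $\big\|\,\|\M_{\eta}b\|_{\ell^{q'}}\,\chi_{2^{i+1}Q\setminus2^{i}Q}\big\|_{L^{p'}}\lesssim2^{-in/\eta}|Q|^{-1/t'}|2^{i+1}Q|^{1/p'}$, so the $i$-th annular piece is a multiple $\theta_{i}$ of a $(p',t',\ell^{q'})$-block on (boundedly many dyadic cubes covering) $2^{i+1}Q$, with $\theta_{i}\lesssim2^{-in/\eta}|Q|^{-1/t'}|2^{i+1}Q|^{1/t'}\approx2^{in(1/t'-1/\eta)}$, and the hypotheses on $\eta$, $t$, $r$ guarantee $\{\theta_{i}\}_{i\ge1}\in\ell^{r'}$ with norm independent of $Q$. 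Assembling the local and annular pieces gives the single-block estimate, and hence the lemma.

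I expect the delicate point to be the global part: turning the pointwise decay estimate on each annulus into an honest block decomposition on dyadic cubes and verifying the $\ell^{r'}$-summability of the coefficients $\{\theta_{i}\}$ uniformly in $Q$ --- this is exactly where the smallness of $\eta$ relative to the exponents $p',t',r'$ is used. A second, more routine, point is the very first reduction: since $\M_{\eta}$ is not linear, one must justify passing from a block decomposition of $\vec g$ to a control of $\M_{\eta}\vec g$, which is handled either directly via the duality pairing against $M_{p}^{t,r}(\ell^{q})$ or via the elementary inequality $\M_{\eta}\big(\sum_{\alpha}f_{\alpha}\big)^{\eta}\le\sum_{\alpha}\M_{\eta}(f_{\alpha})^{\eta}$ (for $\eta\le1$, with an $\ell^{\eta}$-type substitute when $\eta>1$) combined with the solidity of the block spaces.
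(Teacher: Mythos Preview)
The paper does not prove this lemma; it is quoted from \cite[Theorem~5.3]{BGX25}, with only a remark afterwards pointing to the Sawano--Tanaka method. So there is no in-paper argument to compare against.

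Your annular decomposition has a genuine gap in the far piece. You correctly arrive at $\theta_i\approx 2^{in(1/t'-1/\eta)}$, but the assertion that ``the hypotheses on $\eta$, $t$, $r$ guarantee $\{\theta_i\}\in\ell^{r'}$'' is false: summability requires $1/t'<1/\eta$, i.e.\ $\eta<t'$. Since $1<p<t$ forces $t'<p'$, the hypothesis $\eta<\min\{p',q'\}$ does \emph{not} rule out $\eta\ge t'$. Concretely, take $p=q=2$, $t=4$ (with any admissible $r$); then $p'=q'=2$, $t'=4/3$, and $\eta=3/2$ satisfies $\eta<\min\{p',q'\}$ yet $\theta_i\approx 2^{in/12}\to\infty$, so the $\ell^{r'}$-sum diverges. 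Varying the scale of the dyadic cubes used to cover each annulus does not help: covering $2^{i+1}Q\setminus 2^iQ$ by cubes of side $2^{\alpha i}\ell(Q)$, $\alpha\in[0,1]$, produces a total $\ell^{r'}$ contribution $\approx 2^{in((1-\alpha)/r'+\alpha/t'-1/\eta)}$, and since $1/r'>1/t'$ this is minimized at $\alpha=1$, recovering exactly the same exponent.

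Thus your scheme proves the lemma only in the restricted range $\eta<\min\{t',q'\}$. Reaching the full stated range $\eta<\min\{p',q'\}$---which is what the application in Theorem~\ref{thm:gamma j}(ii) actually uses---requires an ingredient beyond the direct block-by-block annular expansion of $\M_\eta b$; the argument in \cite{BGX25} presumably exploits duality against $M_p^{t,r}(\ell^q)$ (where vector-valued $\M$ \emph{is} bounded, Lemma~\ref{lem:hardy}) rather than decomposing $\M_\eta b$ explicitly.
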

\begin{rem}
	In \cite[Theorem 4.3]{ST09}, Sawano and Tanaka proved that the Hardy-Littlewood  maximal operator $\M$ is bounded on vector valued block spaces $ \mathcal{H}_{p'}^{t',1} (\ell^{q'})$; see also \cite[Theorem 2.12]{IST15}. Their method also works for $\M _\eta $ with $1< \eta < \min\{ p' ,q' \}$. 
\end{rem}

Define $E_{j}$, the averaging operator at level $j\in\mathbb{Z}$,
acting on a locally integrable function $f$ on $\mathbb{R}^{n}$,
by
\[
E_{j}(f)=\sum_{Q\in\mathcal{D}_{j}}\bigg(\frac{1}{|Q|}\int_{Q}f(y)\mathrm{d}y\bigg)\chi_{Q}.
\]

The following lemma comes from  \cite[Theorem 3.7]{FraRou19}.
\begin{lem}
	\label{lem:Gamma_j le Lp l q} Suppose that $\{\gamma_{j}\}_{j\in\mathbb{Z}}$
	is a sequence of non-negative measurable functions on $\mathbb{R}^{n}$.
 Suppose that $0<q\le p<\infty$ and $\{\gamma_{j}\}_{j\in\mathbb{Z}}$
	satisfies
	\begin{equation*}
		\sup_{Q\in\mathcal{D}_j}\frac{1}{|Q|}\int_{Q}\gamma_{j}^{p(1+\delta)}(x)\mathrm{d}x\le c
	\end{equation*}
	for some $c,\delta>0,$ independent of $j\in\mathbb{Z}$. Then there
	exists $C>0$ such that for any sequence $\{f_{j}\}_{j\in\mathbb{Z}}$ of measurable
	functions on $\mathbb{R}^{n}$,
	\begin{equation*}
		\|\{\gamma_{j}E_{j}(f_{j})\}\|_{L^{p}(\ell^{q})}\le C\|\{E_{j}(f_{j})\}\|_{L^{p}(\ell^{q})}.
	\end{equation*}
\end{lem}

	The following result extends  Lemma \ref{lem:Gamma_j le Lp l q} to sequence valued Bourgain-Morrey spaces $M_{p}^{t,r}(\ell^{q})$.

	\begin{thm}
		\label{thm:gamma j} Let $0<p<t<r<\infty$ or $0<p\le t<r=\infty$.
		Suppose that $\{\gamma_{j}\}_{j\in\mathbb{Z}}$ is a sequence of non-negative measurable
		functions on $\mathbb{R}^{n} $ such that
		\begin{equation}
			\sup_{Q\in\mathcal{D}_j}\frac{1}{|Q|}\int_{Q}\gamma_{j}^{p +\delta }(x)\mathrm{d}x\le c\label{eq:condition 1}
		\end{equation}
		for some $c,\delta_W>0, \delta \in (0,\delta_W)$ independent of $j\in\mathbb{Z}$.

		{\rm (i)}
		Suppose that $0<q\le p<\infty$. 
		Then there
		exists $C>0$ such that for any sequence $\{f_{j}\}_{j\in\mathbb{Z}}$ of measurable
		functions on $\mathbb{R}^{n}$,
		\begin{equation}
			\|\{\gamma_{j}E_{j}(f_{j})\}\|_{M_{p}^{t,r}(\ell^{q})}\le C\|\{E_{j}(f_{j})\}\|_{M_{p}^{t,r}(\ell^{q})}.\label{eq:Ej 1}
		\end{equation}
		
		{\rm (ii)} 
		Suppose that $0< p< q   <  p + \delta_W  $. Then there
		exists $C>0$ such that for any sequence $\{f_{j}\}_{j\in\mathbb{Z}}$ of measurable
		functions on $\mathbb{R}^{n}$,
		\begin{equation*}
			\|\{\gamma_{j}E_{j}(f_{j})\}\|_{M_{p}^{t,r}(\ell^{q})}\le C\|\{E_{j}(f_{j})\}\|_{M_{p}^{t,r}(\ell^{q})} .
		\end{equation*}
		
	\end{thm}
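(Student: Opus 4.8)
The plan is to deduce both (i) and (ii) from a single duality argument carried out in the predual block space $\mathcal{H}_{p'}^{t',r'}(\ell^{q'})$ of $M_{p}^{t,r}(\ell^{q})$. Two preliminary reductions come first. Since $E_{j}(f_{j})$ is constant on every $P\in\mathcal{D}_{j}$, we may write $h_{j}:=|E_{j}(f_{j})|=\sum_{P\in\mathcal{D}_{j}}b_{j,P}\chi_{P}$, a nonnegative function that is constant on the cubes of $\mathcal{D}_{j}$; because $\|\cdot\|_{M_{p}^{t,r}(\ell^{q})}$ depends only on the moduli of the components, it suffices to prove $\|\{\gamma_{j}h_{j}\}\|_{M_{p}^{t,r}(\ell^{q})}\lesssim\|\{h_{j}\}\|_{M_{p}^{t,r}(\ell^{q})}$ for every such sequence. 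Moreover, we fix the auxiliary exponent $\delta$ so that $\kappa:=p+\delta$ exceeds \emph{both} $p$ and $q$: in case (i) any $\delta\in(0,\delta_{W})$ works (since $q\le p<p+\delta$), while in case (ii) we take $\delta\in(q-p,\delta_{W})$, which is possible precisely because $q<p+\delta_{W}$. With this choice, (\ref{eq:condition 1}) reads $|P|^{-1}\int_{P}\gamma_{j}^{\kappa}\le c$ for all $P\in\mathcal{D}_{j}$ and all $j$.

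Next I would use the homogeneity identity $\|\{g_{j}\}\|_{M_{p}^{t,r}(\ell^{q})}^{\theta}=\|\{|g_{j}|^{\theta}\}\|_{M_{p/\theta}^{t/\theta,r/\theta}(\ell^{q/\theta})}$, valid for any $\theta>0$. Applying it with $\theta\in(0,\min\{p,q\})$ to $g_{j}=\gamma_{j}h_{j}$ and to $g_{j}=h_{j}$, and noting that $h_{j}^{\theta}$ is again constant on the cubes of $\mathcal{D}_{j}$, that $|P|^{-1}\int_{P}(\gamma_{j}^{\theta})^{\kappa/\theta}=|P|^{-1}\int_{P}\gamma_{j}^{\kappa}\le c$, and that $\kappa/\theta>\max\{p/\theta,q/\theta\}$, we reduce to the case $1<p$, $1<q$ (keeping the same letters, and retaining $1<p\le t<r\le\infty$ as well as $\kappa>\max\{p,q\}$), in which Lemmas \ref{LE:pre-dual}, \ref{predual seq BM} and \ref{M eta r < infty = infty} are available. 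By a routine truncation — keeping only finitely many indices $j$ and finitely many cubes $P$, so that $\{\gamma_{j}h_{j}\}$ genuinely lies in $M_{p}^{t,r}(\ell^{q})$ — followed by monotone convergence, it then suffices to estimate the pairing $\sum_{j}\int_{\rn}\gamma_{j}h_{j}\phi_{j}$ for an arbitrary $\{\phi_{j}\}\in\mathcal{H}_{p'}^{t',r'}(\ell^{q'})$ with $\|\{\phi_{j}\}\|_{\mathcal{H}_{p'}^{t',r'}(\ell^{q'})}\le1$. It is at this step, rather than in a cube-by-cube estimate of the $M_{p}^{t,r}$-norm, that one must exploit the $\ell^{r}$-structure of $M_{p}^{t,r}$: the hypothesis (\ref{eq:condition 1}) controls $\gamma_{j}$ only at the single scale $2^{-j}$, so contributions of the coarse scales $2^{-j}>\ell(Q_{0})$ to a fixed small reference cube $Q_{0}$ cannot be absorbed locally.

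Writing $\mathcal{M}_{\rho}g:=(\mathcal{M}(|g|^{\rho}))^{1/\rho}$ and applying H\"older on each $P\in\mathcal{D}_{j}$ with exponents $\kappa,\kappa'$ together with (\ref{eq:condition 1}),
\[
\int_{P}\gamma_{j}\phi_{j}\le\Big(\int_{P}\gamma_{j}^{\kappa}\Big)^{1/\kappa}\Big(\int_{P}|\phi_{j}|^{\kappa'}\Big)^{1/\kappa'}\le c^{1/\kappa}\,|P|\inf_{x\in P}\mathcal{M}_{\kappa'}(\phi_{j})(x)\le c^{1/\kappa}\int_{P}\mathcal{M}_{\kappa'}(\phi_{j}).
\]
Since $h_{j}\equiv b_{j,P}$ on $P$, summing over $P\in\mathcal{D}_{j}$ and then over $j$ yields $\sum_{j}\int_{\rn}\gamma_{j}h_{j}\phi_{j}\lesssim\int_{\rn}\langle\{h_{j}\},\{\mathcal{M}_{\kappa'}(\phi_{j})\}\rangle$, which by the Morrey--block pairing of Lemma \ref{predual seq BM} is $\lesssim\|\{h_{j}\}\|_{M_{p}^{t,r}(\ell^{q})}\|\{\mathcal{M}_{\kappa'}(\phi_{j})\}\|_{\mathcal{H}_{p'}^{t',r'}(\ell^{q'})}$. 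Finally, $\kappa>\max\{p,q\}$ forces $0<\kappa'<\min\{p',q'\}$, so Lemma \ref{M eta r < infty = infty} bounds the last factor by $\|\{\phi_{j}\}\|_{\mathcal{H}_{p'}^{t',r'}(\ell^{q'})}\le1$; taking the supremum over $\{\phi_{j}\}$ and undoing the two reductions completes the proof. I expect the main obstacle to be the a priori finiteness needed in order to represent the $M_{p}^{t,r}(\ell^{q})$-norm by this supremum (handled by the truncation and monotone-convergence step), together with the exponent bookkeeping — in particular the choice $\delta>q-p$ in case (ii), which is exactly what makes $\kappa'<q'$ and hence lets Lemma \ref{M eta r < infty = infty} apply.
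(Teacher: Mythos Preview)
Your argument is correct and uses the same core mechanism as the paper: dualise the $M_p^{t,r}(\ell^q)$-norm against the block space, apply H\"older on each $P\in\mathcal D_j$ to replace $\gamma_j$ by a power-maximal function $\mathcal M_{\kappa'}$ acting on the dual sequence, then invoke boundedness of $\mathcal M_{\kappa'}$ on the block space. The difference is purely organisational. The paper splits the two cases: for (i) it exploits $q\le p$ to pull $\ell^q$ inside via $\|\{g_j\}\|_{M_p^{t,r}(\ell^q)}^q=\big\|\sum_j|g_j|^q\big\|_{M_{p/q}^{t/q,r/q}}$ and then works with the \emph{scalar} predual $\mathcal H_{(p/q)'}^{(t/q)',(r/q)'}$ and Lemma~\ref{HL r<infty scalar}; for (ii) it scales by $A<p$ and uses the vector-valued predual $\mathcal H_{(p/A)'}^{(t/A)',(r/A)'}(\ell^{(q/A)'})$ together with Lemma~\ref{M eta r < infty = infty}. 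Your reduction by $\theta<\min\{p,q\}$ places both cases under the vector-valued duality of Lemmas~\ref{predual seq BM} and~\ref{M eta r < infty = infty}, and your observation that one may take $\delta\in(q-p,\delta_W)$ in case (ii) is exactly the exponent choice the paper makes (there phrased as ``select $w\in(q,p+\delta_W)$''). What your unification buys is a single proof with less case-splitting; what the paper's treatment of (i) buys is that it relies only on the scalar block-space lemma, avoiding the vector-valued Lemma~\ref{M eta r < infty = infty} in the easier regime.
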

	
	\begin{proof}
		Note that $E_j (f_j)$  is constant on each $Q \in \D_j$; we denote that constant value by $(E_j (f_j))_Q $.
		
		(i)
		Observer that if $0<p=t<r=\infty$, $M_{p}^{t,r}=L^{p}$. Then (\ref{eq:Ej 1})
		comes from  Lemma \ref{lem:Gamma_j le Lp l q}.
		Then we consider  the case of $ 0< q \le p <t <r \le \infty $. Note that in this case, we have $t/q >1$.
		By Lemma \ref{predual seq BM} (the scalar version, or \cite[Theorem 2.17]{M23} for $0<q=p <t<r \le \infty$), we have
		\begin{align*}
			\|\{\gamma_{j}E_{j}(f_{j})\}\|_{M_{p}^{t,r}(\ell^{q})}^{q} & =\bigg\|\sum_{j}|\gamma_{j}E_{j}(f_{j})|^{q}\bigg\|_{M_{p/q}^{t/q,r/q}}\\
			& =\sup_{\|g\|_{\mathcal{H}_{(p/q)'}^{(t/q)',(r/q)'}}=1}\bigg|\int_{\mathbb{R}^{n}}\sum_{j}|\gamma_{j}E_{j}(f_{j})|^{q}g\mathrm{d}x\bigg|\\
			& =\sup_{\|g\|_{\mathcal{H}_{(p/q)'}^{(t/q)',(r/q)'}}=1}\sum_{j}\sum_{Q\in\mathcal{D}_{j}}|(E_{j}(f_{j}))_{Q}|^{q}\int_{Q}\gamma_{j}^{q}|g|\mathrm{d}x.
		\end{align*}	
		Let $\delta \in (0,\delta_W)$.
		Using H\"{o}lder's inequality with exponents $w=( p+\delta )/q$, we obtain
		that for $Q\in\mathcal{D}_{j}$,
		\begin{align*}
			\frac{1}{|Q|}\int_{Q}\gamma_{j}^{q}|g|\mathrm{d}x 
			& \le c\frac{1}{|Q|}\int_{Q}\big[\mathcal{M}(|g|^{w'})\big]^{1/w'}\mathrm{d}x.
		\end{align*}
		Substituting above gives
		\begin{align*}
		& \sum_{j}\sum_{Q\in\mathcal{D}_{j}}|(E_{j}(f_{j}))_{Q}|^{q} \int_{Q}\big[\mathcal{M}(|g|^{w'})\big]^{1/w'}\mathrm{d}x \\
			& \lesssim 
			\int_\rn
			\sum_{j}\sum_{Q\in\mathcal{D}_{j}}|(E_{j}(f_{j}))_{Q}|^{q} \big[\mathcal{M}(|g|^{w'})\big]^{1/w'}\mathrm{d}x \\
			&  \lesssim  
			\bigg\| \sum_{j}\sum_{Q\in\mathcal{D}_{j}}|(E_{j}(f_{j}))_{Q}|^{q}
			\bigg\|_{ M_{p/q}^{t/q,r/q} }   
			\left\|  \big[\mathcal{M}(|g|^{w'})\big]^{1/w'}    \right\|_{ \mathcal{H}_{(p/q)'}^{(t/q)',(r/q)'} }.
		\end{align*}
		Note that $ 1< w' < (p/q)'$. Then by  Lemmas \ref{predual seq BM} and \ref{M eta r < infty = infty},
		we obtain
		\begin{align*}
			 \|\{\gamma_{j}E_{j}(f_{j})\}\|_{M_{p}^{t,r}(\ell^{q})}^{q} 
			 \lesssim \|\{E_{j}(f_{j})\}\|_{M_{p}^{t,r}(\ell^{q})}^{q}.
		\end{align*}

		(ii) 
		Let $ 0<A<p<\infty$.
		Then \begin{equation*}
			\|\{\gamma_{j}E_{j}(f_{j})\}  _{j \in \mathbb Z} \|_{M_{p}^{t,r}(\ell^{q})} ^A = \|\{\gamma_{j} ^ A |E_{j}(f_{j})| ^A \}  _{j \in \mathbb Z} \|_{M_{p/A}^{t/A,r/A}(\ell^{q/A})} .
		\end{equation*}
		By the condition (\ref{eq:condition 1}),  the sequence $\{\gamma_j^A \}_{j \in \mathbb Z}$ satisfies
		\begin{equation*}
			\sup_{ Q \in \D} \frac{1}{|Q|}  \int_Q   \left(  \gamma_j^{A}\right) ^{w/A }  <\infty 
		\end{equation*}
		for all $0< w < p +\delta_W$.  From Lemma \ref{predual seq BM}, we have
		\begin{align*}
			& \|\{\gamma_{j} ^ A |E_{j}(f_{j})| ^A \}  _{j \in \mathbb Z} \|_{M_{p/A}^{t/A,r/A}(\ell^{q/A})}  \\
			& = \sup_{  \| \vec g\|_{ \mathcal H_{(p/A)'} ^{(t/A)', (r/A)'} (\ell^{(q/A)'}) } \le 1 } \int_\rn \sum_j \gamma_j^A  | E_j (f_j)|^A g_j \d x \\
			& =  \sup_{  \| \vec g\|_{  \mathcal H_{(p/A)'} ^{(t/A)', (r/A)'} (\ell^{(q/A)'})} \le 1 }  \sum_j \sum_{Q \in \D_j}  ( | E_j (f_j)|^A )_Q \int_Q \gamma_j^A  g_j \d x .
		\end{align*}
		For $1 < w /A < ( p +\delta_W)/A $, we get 
		\begin{align*}
			\frac{1}{|Q|}  \int_Q  \gamma_j^A  g_j \d x 
			& \le  c\frac{1}{|Q|}\int_{Q}\big[\mathcal{M}(|g_j|^{(w/A)'})\big]^{1/(w/A)'}\mathrm{d}x.
		\end{align*}
		Substituting above gives
		\begin{align*}
			&    \sum_j \sum_{Q \in \D_j}  (| E_j (f_j)|^A  )_Q \int_{Q}\big[\mathcal{M}(|g_j|^{(w/A)'})\big]^{1/(w/A)'} \mathrm{d}x \\
			& \lesssim   
			\int_\rn
			\sum_{j}\sum_{Q\in\mathcal{D}_{j}} (| E_j (f_j)|^A  )_Q  \big[\mathcal{M}(|g_j|^{(w/A)'})\big]^{1/(w/A)'}  \mathrm{d}x \\
			& \le  \left\|  \{  | E_{j}(f_{j}))_{Q} |^A \}_{j\in \mathbb Z} \right\|_{M_{p/A}^{t/A,r/A}  (\ell^{q/A}) }   \left\|  \Big\{ \mathcal{M}_{(w/A)' }   (|g_j|) \Big\}_{j\in \mathbb Z} \right\|_{ \mathcal H_{(p/A)'} ^{(t/A)', (r/A)'} (\ell^{(q/A)'}) } .
		\end{align*}
		Since $ p<q < p+\delta_W$,  we  can select $w \in (q, p+\delta_W)$. Then  $1<  (w/A)' < (q/A) ' < (p/A)' $. Using Lemmas \ref{predual seq BM} and \ref{M eta r < infty = infty},
		we obtain 
		\begin{align*}
			\|\{\gamma_{j}E_{j}(f_{j})\}\|_{M_{p}^{t,r}(\ell^{q})} 
			\lesssim  \left\|  \{ E_{j}(f_{j}))_{Q} \}_{j\in \mathbb Z} \right\|_{M_p^{t,r}  (\ell^q) } . 
		\end{align*}
		Thus we prove (ii).
	\end{proof}
	
	\begin{cor}
		\label{cor:gamma le} Let $0<p<t<r<\infty$ or $0<p\le t<r=\infty$.  Let $W\in \mathcal  A_{p}$ and $\{A_{Q}\}_{Q\in\mathcal{D}}$
		be a sequence of reducing operators of order $p$ for $W$. For $j\in\mathbb{Z}$,
		let
		\[
		\gamma_{j}(x)=\sum_{Q\in\mathcal{D}_{j}}\|W^{1/p}(x)A_{Q}^{-1}\|\chi_{Q}(x).
		\]
		Let $q \in  (0,p + \delta_W ) $ where $\delta_W$  is the same as in Lemma \ref{lem:W AQ}. 
		Then there exists $C>0$ such that for any sequence $\{f_{j}\} _{j \in \mathbb Z}$ of
		measurable functions on $\mathbb{R}^{n}$,
		\begin{equation*}
			\|\{\gamma_{j}E_{j}(f_{j})\}  _{j \in \mathbb Z} \|_{M_{p}^{t,r}(\ell^{q})}\le C\|\{E_{j}(f_{j})\}  _{j \in \mathbb Z} \|_{M_{p}^{t,r}(\ell^{q})}. 
		\end{equation*}
	\end{cor}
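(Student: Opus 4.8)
The plan is to obtain Corollary \ref{cor:gamma le} as an immediate consequence of Theorem \ref{thm:gamma j}; thus the only genuine task is to check that the sequence $\{\gamma_{j}\}_{j\in\mathbb{Z}}$ fulfils the hypothesis (\ref{eq:condition 1}). First I would fix $\delta\in(0,\delta_W)$, where $\delta_W>0$ is the exponent supplied by Lemma \ref{lem:W AQ}; such a $\delta$ exists precisely because $\delta_W>0$. Since the dyadic cubes of a fixed generation $\mathcal{D}_{j}$ are pairwise disjoint and tile $\mathbb{R}^{n}$, for every $Q\in\mathcal{D}_{j}$ and every $x\in Q$ one has $\gamma_{j}(x)=\|W^{1/p}(x)A_{Q}^{-1}\|$. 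Consequently, for any $Q\in\mathcal{D}_{j}$,
\[
\frac{1}{|Q|}\int_{Q}\gamma_{j}^{p+\delta}(x)\,\d x=\frac{1}{|Q|}\int_{Q}\|W^{1/p}(x)A_{Q}^{-1}\|^{p+\delta}\,\d x\le C_{p+\delta},
\]
where $C_{p+\delta}$ is the constant from the first estimate in Lemma \ref{lem:W AQ} applied with $v=p+\delta<p+\delta_W$. As $C_{p+\delta}$ depends neither on $j$ nor on the particular cube, condition (\ref{eq:condition 1}) holds with this $\delta$ and $c=C_{p+\delta}$.

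With (\ref{eq:condition 1}) verified, I would split according to the size of $q$. If $0<q\le p$, part (i) of Theorem \ref{thm:gamma j} applies directly and yields
\[
\|\{\gamma_{j}E_{j}(f_{j})\}_{j\in\mathbb{Z}}\|_{M_{p}^{t,r}(\ell^{q})}\le C\|\{E_{j}(f_{j})\}_{j\in\mathbb{Z}}\|_{M_{p}^{t,r}(\ell^{q})}.
\]
If instead $p<q<p+\delta_W$, part (ii) of Theorem \ref{thm:gamma j} applies, using the very same condition (\ref{eq:condition 1}) with the same $\delta$, and gives the identical conclusion. Since the hypothesis $q\in(0,p+\delta_W)$ forces $q$ into one of these two ranges, the corollary follows.

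There is essentially no obstacle here: the argument is a pure reduction to Theorem \ref{thm:gamma j}. The one point that deserves attention is the strict inequality $\delta<\delta_W$: it is exactly what makes the exponent $v=p+\delta$ admissible in Lemma \ref{lem:W AQ}, and at the same time it is consistent with the range $q<p+\delta_W$ required by Theorem \ref{thm:gamma j}(ii), so the two applications dovetail without any loss.
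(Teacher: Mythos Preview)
Your proposal is correct and follows exactly the paper's approach: verify condition (\ref{eq:condition 1}) via Lemma \ref{lem:W AQ} with $v=p+\delta<p+\delta_W$, and then invoke Theorem \ref{thm:gamma j} (splitting into the cases $q\le p$ and $p<q<p+\delta_W$). The paper's proof is simply a one-line compression of precisely this argument.
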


	\begin{proof}
		Indeed, the  condition (\ref{eq:condition 1}) in Theorem  \ref{thm:gamma j} holds  for some $\delta_W >0$  by Lemma \ref{lem:W AQ}. Then the results follow from  Theorem \ref{thm:gamma j}.
	\end{proof}
		
		\begin{cor}\label{cor:W le AQ}
			Let $0<p<t<r<\infty$ or $0<p\le t<r=\infty$. Suppose that $s\in\mathbb{R}$,
			$W\in  \mathcal A_{p}$ and $\{A_{Q}\}_{Q\in\mathcal{D}}$ is a sequence of reducing
			operators of order $p$ for $W$. 
			Let $q \in (0, p+ \delta_W )$ where $\delta_W$  is the same as in Lemma \ref{lem:W AQ}. 
			Then there exists a constant $c>0$
			such that for any sequence $\vec{s}=\{\vec{s}_{Q}\}_{Q\in\mathcal{D}}$,
			\begin{equation}
				\|\vec{s}\|_{\dot{f}_{p,t,r}^{s,q}(W)}\le c\|\vec{s}\|_{\dot{f}_{p,t,r}^{s,q}(A_{Q})},\label{eq:discrete Triebel W le AQ}
			\end{equation}
			and, for any $\vec{f}\in\mathscr{S}'/\mathcal{P}(\mathbb{R}^{n})$,
			\begin{equation}
				\|\vec{f}\|_{\dot{F}_{p,t,r}^{s,q}(W)}\le c\|\vec{f}\|_{\dot{F}_{p,t,r}^{s,q}(A_{Q})} .\label{eq:F W le AQ}
			\end{equation}
		\end{cor}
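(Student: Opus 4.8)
The plan is to derive both inequalities from Corollary \ref{cor:gamma le}, which, for the specific choice $\gamma_j(x)=\sum_{Q\in\mathcal{D}_j}\|W^{1/p}(x)A_Q^{-1}\|\chi_Q(x)$, gives $\|\{\gamma_jE_j(f_j)\}\|_{M_p^{t,r}(\ell^q)}\le C\|\{E_j(f_j)\}\|_{M_p^{t,r}(\ell^q)}$ under exactly the hypothesis $q\in(0,p+\delta_W)$ assumed here. The only other ingredient is the trivial pointwise inequality: for any $\vec y\in\mathbb{C}^m$, any $j\in\mathbb{Z}$, any $Q\in\mathcal{D}_j$, and a.e.\ $x\in Q$,
\begin{equation*}
	|W^{1/p}(x)\vec y|=\big|W^{1/p}(x)A_Q^{-1}(A_Q\vec y)\big|\le\|W^{1/p}(x)A_Q^{-1}\|\,|A_Q\vec y|=\gamma_j(x)\,|A_Q\vec y|.
\end{equation*}

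For the discrete bound (\ref{eq:discrete Triebel W le AQ}), set $h_j:=\sum_{Q\in\mathcal{D}_j}|Q|^{-s/n-1/2}|A_Q\vec s_Q|\chi_Q$. Each $h_j$ is constant on the cubes of $\mathcal{D}_j$, hence $E_j(h_j)=h_j$; and the pointwise inequality above, applied with $\vec y=\vec s_Q$, gives $|Q|^{-s/n-1/2}|W^{1/p}(x)\vec s_Q|\chi_Q(x)\le\gamma_j(x)E_j(h_j)(x)$ for every $Q\in\mathcal{D}_j$. Since, for each $j$ and each $x$, exactly one cube of $\mathcal{D}_j$ contributes, summing over $Q\in\mathcal{D}$ and reorganising by scale yields the pointwise estimate
\begin{equation*}
	\Big(\sum_{Q\in\mathcal{D}}\big[|Q|^{-s/n-1/2}|W^{1/p}\vec s_Q|\chi_Q\big]^q\Big)^{1/q}\le\Big(\sum_{j\in\mathbb{Z}}\big(\gamma_jE_j(h_j)\big)^q\Big)^{1/q}.
\end{equation*}
Taking the $M_p^{t,r}$ quasi-norm of both sides and applying Corollary \ref{cor:gamma le} gives $\|\vec s\|_{\dot f_{p,t,r}^{s,q}(W)}\le c\|\{E_j(h_j)\}\|_{M_p^{t,r}(\ell^q)}=c\|\{h_j\}\|_{M_p^{t,r}(\ell^q)}=c\|\vec s\|_{\dot f_{p,t,r}^{s,q}(A_Q)}$, which is (\ref{eq:discrete Triebel W le AQ}).

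For the function bound (\ref{eq:F W le AQ}) we run the same argument, but since $\varphi_j*\vec f$ is not constant on dyadic cubes we first pass to its supremum over each cube. We may assume $\|\vec f\|_{\dot F_{p,t,r}^{s,q}(A_Q)}<\infty$. Fix $\varphi\in\mathcal{A}$ and put $f_j:=\sum_{Q\in\mathcal{D}_j}2^{js}\big(\sup_{y\in Q}|A_Q\varphi_j*\vec f(y)|\big)\chi_Q$, which is finite (as $\varphi_j*\vec f$ is continuous) and constant on $\mathcal{D}_j$, so $E_j(f_j)=f_j$. For $Q\in\mathcal{D}_j$ and a.e.\ $x\in Q$, the pointwise inequality with $\vec y=\varphi_j*\vec f(x)$ gives $2^{js}|W^{1/p}(x)\varphi_j*\vec f(x)|\le\gamma_j(x)\,2^{js}\sup_{y\in Q}|A_Q\varphi_j*\vec f(y)|=\gamma_j(x)E_j(f_j)(x)$, whence
\begin{equation*}
	\Big(\sum_{j\in\mathbb{Z}}2^{jsq}|W^{1/p}\varphi_j*\vec f|^q\Big)^{1/q}\le\Big(\sum_{j\in\mathbb{Z}}\big(\gamma_jE_j(f_j)\big)^q\Big)^{1/q}.
\end{equation*}
Taking $M_p^{t,r}$ quasi-norms and using Corollary \ref{cor:gamma le} gives $\|\vec f\|_{\dot F_{p,t,r}^{s,q}(W)}\le c\|\{f_j\}\|_{M_p^{t,r}(\ell^q)}$; and, because only one cube of $\mathcal{D}_j$ is active at each point, $\|\{f_j\}\|_{M_p^{t,r}(\ell^q)}$ equals the left-hand side of (\ref{eq:sup Aq le discrete}), which is $\le c\|\vec f\|_{\dot F_{p,t,r}^{s,q}(A_Q)}$ by Theorem \ref{thm:sup A_Q LE}. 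This proves (\ref{eq:F W le AQ}).

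There is no deep obstacle here: the argument is essentially a change of weight via $\gamma_j$, and the only genuine point of care is that replacing $\varphi_j*\vec f$ by its cube-wise supremum is precisely what turns the quantity into an averaging-type function $E_j(f_j)$ to which Corollary \ref{cor:gamma le} applies, while (\ref{eq:sup Aq le discrete}) absorbs the passage to the supremum. One should also keep track of the restriction $q\in(0,p+\delta_W)$ inherited from Lemma \ref{lem:W AQ} through Theorem \ref{thm:gamma j}; the endpoint $r=\infty$ needs no separate treatment, being already incorporated in Theorem \ref{thm:gamma j} and Theorem \ref{thm:sup A_Q LE}. Combined with Theorems \ref{thm:A_Q le W discrete} and \ref{thm:Aq le W function}, this corollary yields the norm equivalences $\|\cdot\|_{\dot F_{p,t,r}^{s,q}(W)}\approx\|\cdot\|_{\dot F_{p,t,r}^{s,q}(A_Q)}$ and $\|\cdot\|_{\dot f_{p,t,r}^{s,q}(W)}\approx\|\cdot\|_{\dot f_{p,t,r}^{s,q}(A_Q)}$ needed to complete the proof of Theorem \ref{thm:equivalence}.
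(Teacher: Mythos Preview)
Your proof is correct and follows essentially the same approach as the paper's own proof: for both inequalities you introduce the multiplier $\gamma_j(x)=\sum_{Q\in\mathcal{D}_j}\|W^{1/p}(x)A_Q^{-1}\|\chi_Q(x)$, pass to a function that is constant on the cubes of $\mathcal{D}_j$ (the discrete sequence itself for (\ref{eq:discrete Triebel W le AQ}), the cube-wise supremum of $|A_Q\varphi_j*\vec f|$ for (\ref{eq:F W le AQ})), apply Corollary~\ref{cor:gamma le}, and in the function case close with Theorem~\ref{thm:sup A_Q LE}. The only differences from the paper are cosmetic (variable names and the order of exposition).
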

		
		\begin{proof}
			We first prove (\ref{eq:discrete Triebel W le AQ}).
			Let
			\[
			\gamma_{j}(x)=\sum_{Q\in\mathcal{D}_{j}}\|W^{1/p}(x)A_{Q}^{-1}\|\chi_{Q}(x).
			\]
			For $\vec{s}=\{\vec{s}_{Q}\}_{Q\in\mathcal{D}}$, define
			\[
			f_{j}=\sum_{Q\in\mathcal{D}_{j}}|Q|^{-s/n-1/2}|A_{Q}\vec{s}_{Q}|\chi_{Q},\ j\in \mathbb{Z}.
			\]
			Note that $f_{j}$ is constant on each $Q\in\mathcal{D}_{j}$, hence, $E_j (f_j) = f_j$ . Let
			\[
			g_{j}=\sum_{Q\in\mathcal{D}_{j}}|Q|^{-s/n-1/2}|W^{1/p}\vec{s}_{Q}|\chi_{Q}.
			\]
			Then $g_{j}\le\gamma_{j}f_{j}=\gamma_{j}E_{j}(f_{j})$. Thus by Corollary
			\ref{cor:gamma le}, we have
			\begin{align*}
				\|\vec{s}\|_{\dot{f}_{p,t,r}^{s,q}(W)} & =\|\{g_{j}\}\|_{M_{p}^{t,r}(\ell^{q})}\le\|\{\gamma_{j}E_{j}(f_{j})\}\|_{M_{p}^{t,r}(\ell^{q})}
				\\
				&\lesssim \|\{E_{j}(f_{j})\}\|_{M_{p}^{t,r}(\ell^{q})}=\|\vec{s}\|_{\dot{f}_{p,t,r}^{s,q}(A_{Q})}.
			\end{align*}
			This proves (\ref{eq:discrete Triebel W le AQ}). Next we  show (\ref{eq:F W le AQ}).

			Define $h_{j}(x)=2^{js}|W^{1/p}(x)\varphi_{j}*\vec{f}(x)|$ and
			\[
			k_{j}=\sum_{Q\in\mathcal{D}_{j}}|Q|^{-s/n}\Big(\sup_{x\in Q}|A_{Q}\varphi_{j}*\vec{f}(x)|\Big)\chi_{Q}
			\]
			for $j\in\mathbb{Z}$. Note that each $k_{j}$ is constant on each
			$Q\in\mathcal{D}_{j}$. Then $h_{j}\le\gamma_{j}k_{j}$. Hence by
			Corollary \ref{cor:gamma le}, we have
			\begin{align*}
				\|\vec{f}\|_{\dot{F}_{p,t,r}^{s,q}(W)} & =\|\{h_{j}\}\|_{M_{p}^{t,r}(\ell^{q})}\le\|\{\gamma_{j}E_{j}(k_{j})\}\|_{M_{p}^{t,r}(\ell^{q})} \\
				& \lesssim \|\{E_{j}(k_{j})\}\|_{M_{p}^{t,r}(\ell^{q})}\le\|\vec{f}\|_{\dot{F}_{p,t,r}^{s,q}(A_{Q})}
			\end{align*}
			where the last step was used Theorem \ref{thm:sup A_Q LE}.
			Hence we complete the proof.
		\end{proof}
		
		Now, Theorems \ref{thm:A_Q le W discrete} and \ref{thm:Aq le W function} and Corollary \ref{cor:W le AQ} yield Theorem \ref{thm:equivalence}.
		
	\subsection{Calder\'{o}n-Zygmund operators}		
		First, we recall the definition of almost diagonal operator.
				
		\begin{defn}
			\label{def:a d ope} Suppose that $T$ is a linear operator mapping
			$\mathscr{S}(\mathbb{R}^{n})$ continuously to $\mathscr{S}'(\mathbb{R}^{n})$.
			Let $0<p<\infty$, $0<q\le\infty$, $s\in\mathbb{R}$. 
			Let $W\in \mathcal A_{p}$  and let $\{A_Q\}_{Q \in \D}$  be a family of reducing operators of order $p$ for $W$. Let $d, \tilde d, \Delta$  be the same as in Lemma \ref{AQ AR improved}.
			We say that $T$ is almost diagonal on $\dot{F}_{p,t,r}^{s,q}(W)$
			and write $T \in \mathbf{AD}_{p}^{s,q}(d, \tilde d, \Delta  )$
			if there exists $\varphi\in\mathcal{A}$
			such that the matrix $A=(a_{QP})_{Q,P\in\mathcal{D}}$ defined by
			$a_{QP}=\langle T\psi_{P},\varphi_{Q}\rangle$ belongs to $\mathbf{ad}_{p}^{s,q}(d, \tilde d, \Delta  )$,
			where $\varphi_{Q}$ and $\psi_{Q}$ are the same as (\ref{eq:varphi  Q}).
		\end{defn}
		
		\begin{rem}
			Let $0 < p<t<r<\infty$. Let 
			$s\in\mathbb{R}$, $W\in\mathcal A_{p}$ with the $\mathcal  A_p$-dimension $d \in [0,n)$. Let $q \in (0, p+\delta_W)$ where  $\delta_W >0$  be the same as in Lemma \ref{lem:W AQ}. Then $\mathscr{S}_{0}(\mathbb{R}^{n})$
			is dense in $\dot{F}_{p,t,r}^{s,q}(W)$. Indeed, let $\vec{f}\in\dot{F}_{p,t,r}^{s,q}(W)$
			have $\varphi$-transform representation $\vec{f}=\sum_{Q}\vec{s}_{Q}\psi_{Q}$
			with $\vec{s}_{Q}=\langle\vec{f},\varphi_{Q}\rangle$. Then $\vec{s}\in\dot{f}_{p,t,r}^{s,q}(A_{Q})$
			by Theorem \ref{thm:equivalence}, where $\{A_{Q}\}_{Q\in\mathcal{D}}$ is a sequence
			of  reducing operators of order $p$ for $W$. For $N\in\mathbb{N},$
			let
			\[
			\mathcal{D}_{N}:=\{Q\in\mathcal{D}:2^{-N}\le\ell(Q)\le2^{N}\;\mathrm{and}\;|x_{Q}|\le N\}.
			\]
			Define sequences $\vec{s}_{N}=((\vec{s}_{N})_{Q})_{Q\in\mathcal{D}}$
			by
			\[
			(\vec{s}_{N})_{Q}=\begin{cases}
				\vec{s}_{Q} & \mathrm{if}\;Q\in\mathcal{D}(N),\\
				0 & \mathrm{if}\;Q\notin\mathcal{D}(N).
			\end{cases}
			\]
			Let $\vec{f}_{N}=\sum_{Q\in\mathcal{D}}(\vec{s}_{N})_{Q}\psi_{Q}$.
			Then $\vec{f}_{N}\in\mathscr{S}_{0}(\mathbb{R}^{n})$ and
			\[
			\|\vec{f}-\vec{f}_{N}\|_{\dot{F}_{p,t,r}^{s,q}(W)}=\Big\|\sum_{Q\in\mathcal{D}\backslash\mathcal{D}(N)}s_{Q}\psi_{Q}\Big\|_{\dot{F}_{p,t,r}^{s,q}(W)}\lesssim\|\vec{s}-\vec{s}_{N}\|_{\dot{f}_{p,t,r}^{s,q}(A_{Q})}\to0,
			\]
			as $N\to\infty$, by Theorems \ref{thm:equivalence} and \ref{thm:varphi transform}
			and the dominated convergence theorem. 
		\end{rem}
		
		\begin{lem}
			\label{lem:ad ope on F} Suppose that $0 <  p<t<r<\infty$.
			Let $W\in \mathcal A_{p}$ and $s\in\mathbb{R}$.
			Let $d, \tilde d, \Delta$  be the same as in Lemma \ref{AQ AR improved}.
			Let $ q \in (0, p + \delta_W)  $ where $\delta_W >0$  is the same as in Lemma \ref{lem:W AQ}.
			If $T  \in \mathbf{AD}_{p}^{s,q}(d, \tilde d, \Delta  )$, 
			then there
			exists a constant $c>0$, depending only $n,s,q,p, d, \tilde d, \Delta$, such that
			\begin{equation}
				\|T\vec{f}\|_{\dot{F}_{p,t,r}^{s,q}(W)}\le c\|\vec{f}\|_{\dot{F}_{p,t,r}^{s,q}(W)}\label{eq:T triebel}
			\end{equation}
			for all $\vec{f}\in  \mathscr{S}_{0}(\mathbb{R}^{n}) $. Hence, $T$
			can be extended to be a continuous linear operator on $\dot{F}_{p,t,r}^{s,q}(W)$.
		\end{lem}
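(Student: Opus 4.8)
The plan is to pass to the $\varphi$-transform and reduce the boundedness of $T$ on $\dot{F}_{p,t,r}^{s,q}(W)$ to the boundedness of its coefficient matrix on the discrete space, then concatenate the norm equivalences already at our disposal. Fix the admissible $\varphi\in\mathcal{A}$ from Definition \ref{def:a d ope} witnessing $T\in\mathbf{AD}_{p}^{s,q}(d,\tilde d,\Delta)$; since every space occurring below is independent of the choice of admissible function (Theorem \ref{thm:equivalence}), this particular $\varphi$ may be used to compute all quasi-norms. Let $\{A_{Q}\}_{Q\in\mathcal{D}}$ be a family of reducing operators of order $p$ for $W$.

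First I would establish the coefficient identity. For $\vec{f}\in\mathscr{S}_{0}(\mathbb{R}^{n})$, the reproducing formula (\ref{eq:converge}) gives $\vec{f}=\sum_{P\in\mathcal{D}}\langle\vec{f},\varphi_{P}\rangle\psi_{P}$, and, exactly as for $\kappa_{Q}$ in the proof that the spaces are independent of $\varphi$, this series converges in $\mathscr{S}(\mathbb{R}^{n})$ because $\vec{f}\in\mathscr{S}_{0}(\mathbb{R}^{n})$. Since $T$ maps $\mathscr{S}(\mathbb{R}^{n})$ continuously into $\mathscr{S}'(\mathbb{R}^{n})$, we may apply $T$ term by term, so that $T\vec{f}=\sum_{P\in\mathcal{D}}\langle\vec{f},\varphi_{P}\rangle T\psi_{P}$ in $\mathscr{S}'(\mathbb{R}^{n})$; in particular $T\vec{f}\in\mathscr{S}'/\mathcal{P}(\mathbb{R}^{n})$ and its $\varphi$-transform is well defined. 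Pairing with $\varphi_{Q}$ and using $a_{QP}=\langle T\psi_{P},\varphi_{Q}\rangle$ yields $\langle T\vec{f},\varphi_{Q}\rangle=\sum_{P\in\mathcal{D}}a_{QP}\langle\vec{f},\varphi_{P}\rangle$. Writing $\vec{s}=\{\vec{s}_{Q}\}_{Q\in\mathcal{D}}$ with $\vec{s}_{Q}=\langle\vec{f},\varphi_{Q}\rangle$ and $A=(a_{QP})_{Q,P\in\mathcal{D}}$, this says $\{\langle T\vec{f},\varphi_{Q}\rangle\}_{Q\in\mathcal{D}}=A\vec{s}$.

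Next I would chain the estimates. By (\ref{eq:F le f}) in Theorem \ref{thm:varphi transform} applied to $T\vec{f}$,
\[
\|T\vec{f}\|_{\dot{F}_{p,t,r}^{s,q}(A_{Q})}\lesssim\big\|\{\langle T\vec{f},\varphi_{Q}\rangle\}_{Q\in\mathcal{D}}\big\|_{\dot{f}_{p,t,r}^{s,q}(A_{Q})}=\|A\vec{s}\|_{\dot{f}_{p,t,r}^{s,q}(A_{Q})}.
\]
Since $T\in\mathbf{AD}_{p}^{s,q}(d,\tilde d,\Delta)$ means $A\in\mathbf{ad}_{p}^{s,q}(d,\tilde d,\Delta)$, Theorem \ref{thm:almost dia bounded on discrete} gives $\|A\vec{s}\|_{\dot{f}_{p,t,r}^{s,q}(A_{Q})}\lesssim\|\vec{s}\|_{\dot{f}_{p,t,r}^{s,q}(A_{Q})}$. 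Finally, Theorem \ref{thm:equivalence} supplies $\|\vec{s}\|_{\dot{f}_{p,t,r}^{s,q}(A_{Q})}\approx\|\vec{f}\|_{\dot{F}_{p,t,r}^{s,q}(W)}$ and $\|T\vec{f}\|_{\dot{F}_{p,t,r}^{s,q}(W)}\approx\|T\vec{f}\|_{\dot{F}_{p,t,r}^{s,q}(A_{Q})}$. Concatenating these inequalities gives (\ref{eq:T triebel}) for every $\vec{f}\in\mathscr{S}_{0}(\mathbb{R}^{n})$. Since $\mathscr{S}_{0}(\mathbb{R}^{n})$ is dense in $\dot{F}_{p,t,r}^{s,q}(W)$ by the Remark preceding the lemma, and $T$ is bounded on this dense subspace, it extends uniquely to a bounded linear operator on $\dot{F}_{p,t,r}^{s,q}(W)$.

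The only delicate point is the bookkeeping in the second paragraph: justifying the termwise action $T\sum_{P}=\sum_{P}T$ and the resulting identity $\langle T\vec{f},\varphi_{Q}\rangle=\sum_{P}a_{QP}\langle\vec{f},\varphi_{P}\rangle$. This rests on two facts already available---the $\varphi$-transform of $\vec{f}\in\mathscr{S}_{0}(\mathbb{R}^{n})$ converges in $\mathscr{S}(\mathbb{R}^{n})$, and $T$ is continuous from $\mathscr{S}(\mathbb{R}^{n})$ to $\mathscr{S}'(\mathbb{R}^{n})$---so once these are invoked the remainder of the argument is a direct concatenation of Theorems \ref{thm:varphi transform}, \ref{thm:almost dia bounded on discrete} and \ref{thm:equivalence}.
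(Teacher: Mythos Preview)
Your proof is correct and follows essentially the same approach as the paper: pass to the $\varphi$-transform, use continuity of $T:\mathscr{S}\to\mathscr{S}'$ to get $\langle T\vec{f},\varphi_{Q}\rangle=(A\vec{s})_{Q}$, apply Theorem \ref{thm:almost dia bounded on discrete} for the discrete boundedness, and invoke Theorem \ref{thm:equivalence} to return to the $W$-weighted norm. Your write-up is slightly more explicit about justifying the termwise action of $T$ and the extension by density, but the argument is the same.
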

		
		\begin{proof}
			Let $\varphi,\psi,a_{QP},A=(a_{QP})_{Q,P\in\mathcal{D}}$ be as in Definition \ref{def:a d ope}.
			Let $\{A_{Q}\}_{Q \in \mathcal D}$ be a  sequence of reducing operators of order $p$
			for $W$. Since $\vec{f}\in\mathscr{S}_{0}(\mathbb{R}^{n})$, the
			$\varphi$-transform representation $\vec{f}=\sum_{P\in\mathcal{D}}\vec{s}_{P}\psi_{P}$
			converges in $\mathscr{S}(\mathbb{R}^{n})$, where $\vec{s}_{P}=\langle\vec{f},\varphi_{P}\rangle$.
			Let $\vec{s}=(\vec{s}_{P})_{P\in\mathcal{D}}$. Since $T$ is a linear
			operator mapping $\mathscr{S}(\mathbb{R}^{n})$ continuously to $\mathscr{S}'(\mathbb{R}^{n})$,
			we have
			\[
			\langle T\vec{f},\varphi_{Q}\rangle=\bigg\langle\sum_{P\in\mathcal{D}}\vec{s}_{P}T\psi_{P},\varphi_{Q}\bigg\rangle=\sum_{P\in\mathcal{D}}a_{QP}\vec{s}_{P}=(A\vec{s})_{Q}.
			\]
			From Theorem \ref{thm:almost dia bounded on discrete}, $A$ is bounded
			on $\dot{f}_{p,t,r}^{s,q}(A_{Q})$.
			Hence, by Theorem \ref{thm:equivalence},
			\begin{align*}
				\|T\vec{f}\|_{\dot{F}_{p,t,r}^{s,q}(W)} & \le c\|\{\langle T\vec{f},\varphi_{Q}\rangle\}_{Q\in\mathcal{D}}\|_{\dot{f}_{p,t,r}^{s,q}(A_{Q})}=c\|A\vec{s}\|_{\dot{f}_{p,t,r}^{s,q}(A_{Q})}\\
				& \le c\|\vec{s}\|_{\dot{f}_{p,t,r}^{s,q}(A_{Q})}\le c\|\vec{f}\|_{\dot{F}_{p,t,r}^{s,q}(W)}.
			\end{align*}
			Thus, we prove (\ref{eq:T triebel}). 
		\end{proof}
		
		For $N\in\mathbb{N}_{0}$, a function $a_{Q}\in\mathcal{D}(\mathbb{R}^{n})$ (the
		space of compactly supported $C^{\infty}$-functions) is a smooth
		$N$-atom for $Q$ if supp $a_{Q}\subset3Q$, $\int x^{\gamma}a_{Q}=0$
		for all $|\gamma|\le N$, and $|\partial^{\gamma}a_{Q}(x)|\le c_{\gamma}\ell(Q)^{-|\gamma|-n/2}$
		for all multi-indices $\gamma$.
		\begin{lem}
			\label{lem:T atom molecule} Suppose that
			$T$ is a linear operator mapping $\mathscr{S}(\mathbb{R}^{n})$ continuously
			to $\mathscr{S}'(\mathbb{R}^{n})$. Suppose that $0<p<\infty$,
			$s\in\mathbb{R}$, and  $W\in \mathcal A_{p}$.  Let $d, \tilde d, \Delta$  be the same as in Lemma \ref{AQ AR improved}. Let $ q \in (0, p + \delta_W)  $ where $\delta_W >0$  is the same as in Lemma \ref{lem:W AQ}.
			Suppose that $N,K\in\mathbb{Z}$,
			$M>0$ and $\delta\in(0,1]$ satisfy $ N > -s+ n/\min(1,q,p) + \tilde d / p'- 1- n$,
			$K +\delta > s  + d/p  $, and $ M >  n / \min(1,q,p)+  \Delta$. 
			Suppose that there exists $N_{0}$
			such that for each smooth $N_{0}$-atom for $Q,$ the molecule $m_{Q}=Ta_{Q}$
			satisfies (M1), (M2), (M3), (M4) in Definition \ref{def:molecules},
			with a constant $c$ independent of $Q$. Then $T\in\mathbf{AD}_{p}^{s,q}(d, \tilde d, \Delta  )$, and hence, extends to be a bounded linear operator on $ \dot{F}_{p,t,r}^{s,q}(W) $.
		\end{lem}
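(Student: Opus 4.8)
The plan is to prove that $T$ is almost diagonal, i.e.\ $T\in\mathbf{AD}_{p}^{s,q}(d,\tilde d,\Delta)$, and then to quote Lemma~\ref{lem:ad ope on F}. Fix $\varphi\in\mathcal{A}$ and let $\psi$ be the associated dual function ($\mathcal{F}\psi=\mathcal{F}\varphi/\sum_{v}|\mathcal{F}\varphi_{v}|^{2}$); since $\operatorname{supp}\mathcal{F}\psi$ is bounded away from the origin, $\psi\in\mathscr{S}_{0}(\mathbb{R}^{n})$, so $\psi$ and all its derivatives decay rapidly and $\int x^{\gamma}\psi\,\d x=0$ for every $\gamma$. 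What must be shown is that the entries $a_{QP}:=\langle T\psi_{P},\varphi_{Q}\rangle$ satisfy $|a_{QP}|\le c\,\omega_{QP}$ with $\omega_{QP}$ as in \eqref{eq:omega QP-1}.

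The first step is a classical atomic decomposition of the bump $\psi_{P}$. Using a telescoping smooth dyadic partition of unity and correcting the moments scale by scale (so that the vanishing-moment property of $\psi$ is redistributed among the pieces; this is the standard construction, cf.\ \cite{FJ90,FJW91}), one writes $\psi_{P}=\sum_{i=0}^{\infty}\lambda_{i}a_{i}^{P}$, convergent in $\mathscr{S}(\mathbb{R}^{n})$, where $a_{i}^{P}$ is a smooth $N_{0}$-atom for the cube $Q_{i}^{P}$ centered at $x_{P}$ with $\ell(Q_{i}^{P})\approx2^{i}\ell(P)$, and for every $L>0$ one has $|\lambda_{i}|\le C_{L}2^{-iL}$ with $C_{L}$ independent of $P$ and $i$.

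Since $T$ is continuous from $\mathscr{S}$ to $\mathscr{S}'$, applying it termwise gives $T\psi_{P}=\sum_{i}\lambda_{i}m_{i}^{P}$ in $\mathscr{S}'$, where by hypothesis $m_{i}^{P}=Ta_{i}^{P}$ is a smooth $(N,K,M,\delta)$-molecule for $Q_{i}^{P}$ with constant uniform in $P,i$. Because $\tilde\varphi(x):=\overline{\varphi(-x)}\in\mathcal{A}$ and a direct computation gives $\langle m_{i}^{P},\varphi_{Q}\rangle=|Q|^{1/2}(\tilde\varphi_{j}*m_{i}^{P})(x_{Q})$ with $\ell(Q)=2^{-j}$, I would apply Lemma~\ref{lem:mQ} to $\tilde\varphi$ and $m_{i}^{P}$; the two resulting estimates are precisely those already computed inside the proof of Theorem~\ref{thm:varphi transform} (with $P$ there replaced by $Q_{i}^{P}$), and under the stated hypotheses on $N$, $K$, $\delta$, $M$ they combine to $|\langle m_{i}^{P},\varphi_{Q}\rangle|\le c\,\omega_{Q,Q_{i}^{P}}$, the quantity \eqref{eq:omega QP-1} with $P$ replaced by $Q_{i}^{P}$. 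Finally, since $x_{Q_{i}^{P}}=x_{P}$ and $\ell(Q_{i}^{P})\approx2^{i}\ell(P)$, an elementary inspection of \eqref{eq:omega QP-1} shows $\omega_{Q,Q_{i}^{P}}\le c\,2^{iC_{0}}\omega_{Q,P}$ for a fixed exponent $C_{0}$ depending only on the given parameters; choosing $L>C_{0}$ in the decomposition and summing the geometric series $\sum_{i}|\lambda_{i}|2^{iC_{0}}<\infty$ yields $|a_{QP}|\le c\,\omega_{Q,P}$. Hence $A=(a_{QP})\in\mathbf{ad}_{p}^{s,q}(d,\tilde d,\Delta)$, so $T\in\mathbf{AD}_{p}^{s,q}(d,\tilde d,\Delta)$, and Lemma~\ref{lem:ad ope on F} shows $T$ extends to a bounded operator on $\dot{F}_{p,t,r}^{s,q}(W)$.

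The main obstacle is the atomic decomposition of $\psi_{P}$ with rapidly decaying coefficients — in particular the moment correction that keeps each piece a genuine smooth $N_{0}$-atom at its dyadic scale — but this part is entirely classical; once it is in place, the rest is bookkeeping with the molecule estimates already established for Theorem~\ref{thm:varphi transform}.
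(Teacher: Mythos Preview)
Your argument is correct, but the route differs from the paper's. Both proofs decompose $\psi_{P}$ into smooth $N_{0}$-atoms, apply the hypothesis termwise, and then invoke the molecule estimates underlying Theorem~\ref{thm:varphi transform}; the difference lies in the decomposition chosen. The paper follows \cite[Lemma~9.2]{R03} and writes $\psi_{P}=\sum_{S\in\mathcal{D}}t_{SP}a_{S}^{(P)}$ with $t_{SP}=|S|^{1/2}\sup_{y\in S}|\varphi_{j}*\psi_{P}(y)|$; because $\mathcal{F}\psi$ is supported on an annulus, only the scales $|j-k|\le1$ survive (where $\ell(P)=2^{-k}$), so this is a \emph{single-scale, spatially-spread} atomic decomposition, and the resulting spatial sum $\sum_{S}\omega_{QS}t_{SP}$ is collapsed to $\omega_{QP}$ via the kernel-composition estimate \cite[Lemma~D.1]{FJ90}. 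You instead take a \emph{multi-scale, single-center} decomposition into atoms at scales $2^{i}\ell(P)$ concentric at $x_{P}$, and replace the composition lemma by the elementary bound $\omega_{Q,Q_{i}^{P}}\le c\,2^{iC_{0}}\omega_{Q,P}$ together with the geometric sum $\sum_{i}|\lambda_{i}|2^{iC_{0}}<\infty$. Your approach is slightly more general in that it only uses $\psi\in\mathscr{S}_{0}$ rather than the annular Fourier support, and it avoids the composition lemma; the price is the moment-correction step you flag, which the paper's frequency-side decomposition sidesteps entirely while staying within the $\varphi$-transform toolbox already in place.
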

		
		\begin{proof}
			We use the idea from \cite[Lemma 5.3]{FraRou19}.
			Following the proof of \cite[Lemma 9.2]{R03}, we have $\psi_P = \sum_{S \in \D}  t_{SP} a_S^{ (P) } $ where 
			\begin{equation*}
				t_{SP} = |S|^{1/2}  \sup_{y \in S}  |\varphi_j *  \psi_P (y) |
			\end{equation*}
			for $\ell (S) = 2^{-j}$, and each $a_S^{ (P) } $ is a smooth $N_0$-atom for $S$. By assumption, $m_S^{(P)} = T a_S^{ (P) } $ satisfies conditions (M1)-(M4)  with $Q$ replaced by $S$, in Definition \ref{def:molecules}. Then for $Q \in \D_j $,
			\begin{equation*}
				\langle m_S^{(P)}, \varphi_Q \rangle = 2^{-jn/2} \tilde{\varphi} _j  *  m_S^{(P)} (x_Q)
			\end{equation*}
			where $\tilde \varphi (x) = \bar \varphi (-x)$. Since $\tilde \varphi \in \mathcal A$, from Lemma \ref{lem:mQ} and the proof of Theorem \ref{thm:varphi transform}, we have
			\begin{equation*}
				| 	\langle m_S^{(P)}, \varphi_Q \rangle  | \lesssim \omega_{QS}
			\end{equation*}
			where  $\omega_{QS}$ defined as in (\ref{eq:omega QP-1}). Hence,
			\begin{equation*}
				|\langle T \psi_P , \varphi_Q \rangle | = \left|  \sum_{S \in \D} \langle t_{SP} T a_S^{ (P)} , \varphi_Q \rangle  \right| \le  \left|  \sum_{S \in \D} \langle  m_S^{(P)} , \varphi_Q \rangle  t_{SP} \right| \lesssim  \sum_{S \in \D} \omega_{QS} t_{SP}.
			\end{equation*}
			From the proof of \cite[Lemma 5.3]{FraRou19}, we have 
			\begin{equation*}
				t_{SP}   \le  C_M \left( 1+ \frac{ |x_S - x_P|  }{ \max(\ell (S) , \ell (P)  )  } \right)^{-M} .
			\end{equation*}
			Using the fact (see \cite[Lemma D.1]{FJ90}) that
			\begin{align*}
				& \sum_{S \in \D_j }  \left( 1+  \frac{ |x_Q - x_S| }{\max (\ell (Q), \ell (S)  )}\right) ^{-R} \left( 1+  \frac{ |x_S - x_P| }{\max (\ell (S), \ell (P)  )}\right) ^{-R}  \\
				& \lesssim  \left( 1+  \frac{ |x_Q - x_P| }{\max (\ell (Q), \ell (P)  )}\right) ^{-R} ,
			\end{align*}
			we obtain, with $\ell (P)  =2^{-k}$,
			\begin{align*}
				&	|\langle T \psi_P , \varphi_Q \rangle | \\
				& \lesssim \sum_{j = k-1}^{k+1}  \sum_{\ell (S) =2^{-j}} \Big(\frac{\ell(Q)}{\ell(S)}\Big)^{s} \\
				& \quad \times \min\bigg\{\Big(\frac{\ell(S)}{\ell(Q)}\Big)^{(n+\epsilon)/2+n/\min(1,q,p)-n + \tilde d / p' },\Big(\frac{\ell(Q)}{\ell(S)}\Big)^{(n+\epsilon)/2 + d/p }\bigg\}\\
				& \quad  \times  \bigg(1+\frac{|x_{Q}-x_{S}|}{\max\{\ell(Q),\ell(S)\}}\bigg)^{-\frac{n}{\min(1,q,p)}-\epsilon - \Delta} \left( 1+ \frac{ |x_S - x_P| }{ \max(\ell (S) , \ell (P)  )  } \right)^{-M} \\
				& \lesssim \Big(\frac{\ell(Q)}{\ell(P)}\Big)^{s}\min\bigg\{\Big(\frac{\ell(P)}{\ell(Q)}\Big)^{(n+\epsilon)/2+n/\min(1,q,p)-n + \tilde d / p' },\Big(\frac{\ell(Q)}{\ell(P)}\Big)^{(n+\epsilon)/2 + d/p }\bigg\}\\
				& \quad \times  \bigg(1+\frac{|x_{Q}-x_{P}|}{\max\{\ell(Q),\ell(P)\}}\bigg)^{-\frac{n}{\min(1,q,p)}-\epsilon - \Delta} = \omega_{QP} .
			\end{align*}
			Hence $T \in \mathbf{AD}_{p}^{s,q}(d, \tilde d, \Delta  )$.
		\end{proof} 
		
		\begin{rem}
			An argument similar to that in the last proof shows that if $	|\langle T \psi_P , \varphi_Q \rangle | \in \mathbf{ad}_{p}^{s,q}(d, \tilde d, \Delta  ) $ for one choice of $\varphi \in \mathcal A$, then the same is true for any other $\varphi \in\mathcal A$, as in the proof of \cite[Remark 5.4]{FraRou19} and \cite[Remark 8.8]{R03}. Hence, the definition of $  \mathbf{AD}_{p}^{s,q}(d, \tilde d, \Delta  )$  is independent of the	choice of  $\varphi \in \mathcal A$.
		\end{rem}

		\begin{defn}
			Let $L\in\mathbb{N}$. We say that $T$ is an $L$-smooth classical
			Calder\'{o}n-Zygmund operator on $\mathbb{R}^{n}$ if
			\[
			Tf(x)=\lim_{\epsilon\to0^{+}}\int_{\mathbb{R}^{n}\backslash B(0,\epsilon)}K(y)f(x-y)\mathrm{d}y,
			\]
			where the kernel $K$ satisfies
			\begin{align*}
				(K1) & \;|K(x)|\le\frac{c}{|x|^{n}},\;\mathrm{for}\;x\in\mathbb{R}^{n}\backslash\{0\},\\
				(K2) & \;|\partial^{\gamma}K(x)|\le\frac{c}{|x|^{n+|\gamma|}}\;\mathrm{for}\;x\in\mathbb{R}^{n}\backslash\{0\},|\gamma|\le L,\\
				(K3) & \;\int_{R_{1}<|x|<R_{2}}K(x)\mathrm{d}x=0,\;\mathrm{for}\;\mathrm{all}\;0<R_{1}<R_{2}<\infty.
			\end{align*}
		\end{defn}
		
		Let $m \in\mathbb{N}$ and $T$ be an $L$-smooth classical Calder\'{o}n-Zygmund
		operator on $\mathbb{R}^{n}$. Similarly, we define $T\vec{f}=(Tf_{1},Tf_{2},\ldots,Tf_{m})^{T}$.

		\begin{lem}
			[Lemma 5.7, \cite{FraRou19}] \label{lem:L s c CZ} Let $N_{0}\in\mathbb{N}_{0}$
			and $L\in\mathbb{N}$. Let $a_{Q}$ be a smooth $N_{0}$-atom for
			$Q\in\mathcal{D}.$ Let $T$ be an $L$-smooth classical Calder\'{o}n-Zygmund
			operator on $\mathbb{R}^{n}$. Then $m_{Q}=Ta_{Q}$ satisfies
			\begin{equation*}
				\int_\rn x^{\gamma}m_{Q}(x) \d x=0,\;\mathrm{for}\;|\gamma|\le N_{0},
			\end{equation*}
			and
			\begin{equation}
				|\partial^{\gamma}m_{Q}(x)|\le c|Q|^{-1/2-|\gamma|/n}\bigg(1+\frac{|x-x_{Q}|}{\ell(Q)}\bigg)^{-n-L},\;\mathrm{for}\;|\gamma|\le L.\label{eq:parital mQ}
			\end{equation}
		\end{lem}
		
		\begin{thm} \label{CZ}
			Let $0<p<t<r<\infty$ and $s\in\mathbb{R}$. Let $W\in \mathcal A_{p}$ with the $\mathcal A_p$-dimension $d \in [0,n)$.  Let $ \tilde d, \Delta$  be the same as in Lemma \ref{AQ AR improved}.
			Let  $q\in (0, p + \delta_W) $  where  $ \delta_W > 0$ is the same as in Lemma \ref{lem:W AQ}.
			Suppose that $L\in\mathbb{N}$ with
			\begin{equation*}
				L > \max \left\{ s  + d/p ,  -s+ n/\min(1,q,p) +\tilde d / p' -n , \frac{n}{\min(1,q,p)}+  \Delta - n  \right\} .
			\end{equation*}
			If $T$ is an $L$-smooth classical
			Calder\'{o}n-Zygmund operator on $\mathbb{R}^{n}$, then $T$ extends
			to be a bounded operator on $\dot{F}_{p,t,r}^{s,q}(W)$.
		\end{thm}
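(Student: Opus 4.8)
The plan is to derive Theorem~\ref{CZ} from Lemma~\ref{lem:T atom molecule}, which reduces everything to checking that an $L$-smooth classical Calder\'on--Zygmund operator carries smooth atoms into smooth molecules with admissible parameters. First I would observe that such a $T$ maps $\mathscr{S}(\mathbb{R}^{n})$ continuously into $\mathscr{S}'(\mathbb{R}^{n})$, so Lemma~\ref{lem:T atom molecule} is applicable once the molecule estimates are in place. It then suffices to produce $N_{0}\in\mathbb{N}_{0}$ and molecule parameters $N,K\in\mathbb{Z}$, $M>0$, $\delta\in(0,1]$ satisfying
\[
N>-s+\tfrac{n}{\min(1,q,p)}+\tfrac{\tilde d}{p'}-1-n,\qquad K+\delta>s+\tfrac{d}{p},\qquad M>\tfrac{n}{\min(1,q,p)}+\Delta ,
\]
for which $m_{Q}=Ta_{Q}$ obeys (M1)--(M4) for every smooth $N_{0}$-atom $a_{Q}$.

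By Lemma~\ref{lem:L s c CZ}, for any $N_{0}\in\mathbb{N}_{0}$ and any smooth $N_{0}$-atom $a_{Q}$ the molecule $m_{Q}=Ta_{Q}$ has vanishing moments up to order $N_{0}$, so (M1) holds with $N:=N_{0}$, and $|\partial^{\gamma}m_{Q}(x)|\lesssim|Q|^{-1/2-|\gamma|/n}(1+|x-x_{Q}|/\ell(Q))^{-n-L}$ for $|\gamma|\le L$. Choosing $\delta:=1$ and $K:=L-1$, the case $\gamma=0$ gives (M2), the cases $1\le|\gamma|\le L-1$ give (M3), and (M4) for $|\gamma|=K=L-1$ follows from the mean value theorem together with the $|\gamma|=L$ derivative bound; moreover, away from $3Q$ the vanishing moments of $a_{Q}$ and the kernel bound (K1) improve the decay of $m_{Q}$ to order $n+N_{0}+1$, which removes any constraint linking $N_{0}$ to $L$ in (M2). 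Hence $\{m_{Q}\}_{Q\in\mathcal{D}}$ is a family of smooth $(N_{0},L-1,M,1)$-molecules for every $M$ with $\tfrac{n}{\min(1,q,p)}+\Delta<M\le n+L$.

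Finally, the three terms inside the maximum in the hypothesis on $L$ are exactly what is needed: $L>s+d/p$ makes $K+\delta=L$ admissible; $L>\tfrac{n}{\min(1,q,p)}+\Delta-n$ keeps the interval $(\tfrac{n}{\min(1,q,p)}+\Delta,\,n+L]$ nonempty so that $M$ may be picked; and $L>-s+\tfrac{n}{\min(1,q,p)}+\tfrac{\tilde d}{p'}-n$ lets us fix a nonnegative integer $N_{0}=N$ with $N_{0}>-s+\tfrac{n}{\min(1,q,p)}+\tfrac{\tilde d}{p'}-1-n$. With these choices Lemma~\ref{lem:T atom molecule} gives $T\in\mathbf{AD}_{p}^{s,q}(d,\tilde d,\Delta)$, and hence --- using that $\mathscr{S}_{0}(\mathbb{R}^{n})$ is dense in $\dot{F}_{p,t,r}^{s,q}(W)$ when $q\in(0,p+\delta_{W})$, as established before Lemma~\ref{lem:ad ope on F} --- that $T$ extends to a bounded operator on $\dot{F}_{p,t,r}^{s,q}(W)$. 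The one genuinely delicate point is the bookkeeping just described, namely matching the atomic order and the kernel smoothness to the almost-diagonal parameters of Lemma~\ref{lem:T atom molecule}; all the analytic substance has already been invested in the boundedness of almost-diagonal operators in Section~\ref{sec:Homogeneous-spaces}, so no new estimate is required here.
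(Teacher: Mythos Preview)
Your overall strategy --- reduce to Lemma~\ref{lem:T atom molecule} by showing that $T$ carries smooth $N_0$-atoms to smooth $(N,K,M,\delta)$-molecules with $\delta=1$, $K=L-1$, $M\in(n/\min(1,q,p)+\Delta,\,n+L]$, using Lemma~\ref{lem:L s c CZ} --- is exactly the paper's approach, and the parameter bookkeeping you describe in the last paragraph is correct.

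There is, however, one genuine error. You assert that ``away from $3Q$ the vanishing moments of $a_Q$ and the kernel bound (K1) improve the decay of $m_Q$ to order $n+N_0+1$, which removes any constraint linking $N_0$ to $L$ in (M2).'' This is false: extracting $n+N_0+1$ decay from the $N_0$ vanishing moments of $a_Q$ requires subtracting the Taylor polynomial of $K(\cdot)$ of degree $N_0$, and estimating the remainder needs $N_0+1$ derivatives of $K$. The bound (K1) provides none, and (K2) provides only $L$; so the best pointwise decay you can prove for $m_Q$ is order $n+L$, precisely what Lemma~\ref{lem:L s c CZ} records. Consequently (M2), which asks for decay of order $\max(M,\,N+1+n+\epsilon)$, forces $N_0\le L-1$ --- the constraint is not removable.

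Fortunately this claim is unnecessary, and your own final paragraph already contains the fix (even if its relation to $L$ is obscured by the incorrect claim). Drop the extra decay argument and simply choose $N_0$ with
\[
-s+\tfrac{n}{\min(1,q,p)}+\tfrac{\tilde d}{p'}-1-n \;<\; N_0 \;\le\; L-1 .
\]
Such an integer exists because the hypothesis $L>-s+n/\min(1,q,p)+\tilde d/p'-n$ gives $L-1>-s+n/\min(1,q,p)+\tilde d/p'-1-n$, so $N_0=L-1$ works. With this $N_0$, the $n+L$ decay from Lemma~\ref{lem:L s c CZ} covers (M2) for some small $\epsilon>0$, and the rest of your verification of (M1), (M3), (M4) is fine. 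This is exactly how the paper handles it.
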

		
		\begin{proof}
			Let $\delta=1$ and $K=L-1$. Let $a_{Q}$ be a smooth $N_{0}$-atom
			for $Q$, where $N_{0}>-s+ n/\min(1,q,p) + \tilde d / p' - 1- n$. Then we
			show that $m_{Q}=Ta_{Q}$ satisfies (M1)-(M4) in Definition \ref{def:molecules}
			for $\delta=1$, $N> -s+ n/\min(1,q,p) +\tilde d / p'- 1- n$, $K+\delta>s  + d/p$,
			and $M> n/ \min(1,q,p) +  \Delta$.
			
			By Lemma \ref{lem:L s c CZ}, $m_{Q}$ satisfies (M1) for $N=N_{0}$
			and (M3) with $M=L+n> n / \min(1,q,p)+  \Delta$ for $|\gamma| \le L$. Inequality (\ref{eq:parital mQ})
			with $|\gamma|=L$, followed by the mean value theorem, gives (M4)
			for $\delta=1$ and $K=L-1$; note that $K+1=L>s  +  d/p $. Inequality
			(\ref{eq:parital mQ}) with $|\gamma|=0$ and $L>\max\{ -s+ n/\min(1,q,p) + \tilde d / p'  -n , n/ \min(1,q,p)+  \Delta  -n \}$
			gives (M2) with $N>-s+ n/\min(1,q,p) +\tilde d / p' - 1- n$. 
			
			The result follows
			by Lemma \ref{lem:T atom molecule}.
		\end{proof}
		
		\begin{rem}
			The Hilbert transform in $\mathbb R$ and the Riesz transforms in $\mathbb R^n, n \ge 2$ (see, for instance, \cite[Definition 5.1.13]{G14}) are $L$-smooth classical Calder\'{o}n-Zygmund operator for all $L \in \mathbb N$, and hence, extend to be bounded on $\dot{F}_{p,t,r}^{s,q}(W)$ with the same conditions in Theorem \ref{CZ}.
		\end{rem}
		
		\section{Inhomogeneous matrix weighted Bourgain-Morrey Triebel-Lizorkin spaces} \label{sec:Inhomogeneous-spaces}
		
		Let $\Phi,\varphi\in\mathscr{S}(\mathbb{R}^{n})$. We say that $(\Phi,\varphi)$
		is admissible, and write $(\Phi,\varphi)\in\mathcal{A}_{+}$, if
		\[
		\mathrm{supp}\;\mathcal{F}(\Phi)\subset\{\xi \in \rn :|\xi|\le2\},
		\]
		\[
		\mathcal{F}(\Phi)(\xi)\ge c>0\quad\mathrm{if}\;|\xi|\le5/3,
		\]
		\[
		\mathrm{supp}\;\mathcal{F}(\varphi)\subset\{\xi\in \rn:1/2\le|\xi|\le2\},
		\]
		\[
		\mathcal{F}(\varphi)(\xi)\ge c>0\quad\mathrm{if}\;3/5\le|\xi|\le5/3.
		\]
		For $j\in\mathbb{Z}$, from \cite[Section 11]{R03}, for $(\Phi,\varphi)\in\mathcal{A}_{+}$,
		we can find $(\Psi,\psi)\in\mathcal{A}_{+}$ such that
		\[
		\overline{\mathcal{F}(\Phi)}\mathcal{F}(\Psi)+\sum_{j=1}^{\infty}\overline{\mathcal{F}(\varphi_{j})}\mathcal{F}(\psi_{j})(\xi)=1,\;\xi\in\mathbb{R}^{n}.
		\]
		Let $\varphi_{0}=\Phi$. Let $\mathcal{D}_{+}:=\{Q\in\mathcal{D}:\ell(Q)\le1\}$.
		
		For the following function spaces, we suppose that $s\in\mathbb{R},$
		$0<q\le\infty$, $W$ is a matrix weight. Let $0<p<t<r<\infty$ or
		$0<p\le t<r=\infty$.
		
		(i) The matrix weighted inhomogeneous Bourgain-Morrey Triebel-Lizorkin
		spaces $F_{p,t,r}^{s,q}(W)$ is the set of all distribution $\vec{f}\in\mathscr{S}'(\mathbb{R}^{n})$
		such that
		\[
		\|\vec{f}\|_{F_{p,t,r}^{s,q}(W)}:=\bigg\|\bigg(\sum_{v=0}^{\infty}2^{vsq}|W^{1/p}\varphi_{v}*\vec{f}|^{q}\bigg)^{1/q}\bigg\|_{M_{p}^{t,r}}<\infty.
		\]

		(ii) The discrete inhomogeneous Bourgain-Morrey Triebel-Lizorkin spaces	$f_{p,t,r}^{s,q}(W)$ is the set of all sequences $\vec{s}=\{\vec{s}_{Q}\}_{Q\in\mathcal{D}_{+}}$
		such that
		\[
		\|\vec{s}\|_{f_{p,t,r}^{s,q}(W)}:=\bigg\|\bigg(\sum_{Q\in\mathcal{D}_{+}}\big[|Q|^{-s/n-1/2}|W^{1/p}\vec{s}_{Q}|\chi_{Q}\big]^{q}\bigg)^{1/q}\bigg\|_{M_{p}^{t,r}}<\infty.
		\]

		Suppose that for each $Q\in\mathcal{D}$, $A_{Q}$ is a $m \times m$ non-negative definite matrix.
		
		(iii) The \{$A_{Q}$\}-inhomogeneous Bourgain-Morrey Triebel-Lizorkin
		spaces $F_{p,t,r}^{s,q}(A_{Q})$ is the set of all distribution $\vec{f}\in\mathscr{S}'(\mathbb{R}^{n})$
		such that
		\[
		\|\vec{f}\|_{F_{p,t,r}^{s,q}(A_{Q})}:=\bigg\|\bigg(\sum_{v=0}^{\infty}\sum_{Q\in\mathcal{D}_{v}}2^{vsq}|A_{Q}\varphi_{v}*\vec{f}|^{q}\chi_{Q}\bigg)^{1/q}\bigg\|_{M_{p}^{t,r}}<\infty.
		\]

		(iv) The \{$A_{Q}$\}-inhomogeneous discrete Bourgain-Morrey Triebel-Lizorkin
		spaces $f_{p,t,r}^{s,q}(A_{Q})$ is the set of all distribution $\vec{s}=\{\vec{s}_{Q}\}_{Q\in\mathcal{D}_{+}}$
		such that
		\[
		\|\vec{s}\|_{f_{p,t,r}^{s,q}(A_{Q})}:=\bigg\|\bigg(\sum_{v=0}^{\infty}\sum_{Q\in\mathcal{D}_{v}}\Big[|Q|^{-s/n-1/2}|A_{Q}\vec{s}_{Q}|\chi_{Q}\Big]^{q}\bigg)^{1/q}\bigg\|_{M_{p}^{t,r}}<\infty.
		\]

		We have the following analogues of Theorem \ref{thm:equivalence} for the inhomogeneous spaces.

		\begin{thm}
			\label{thm:equivalence inhomogeneous} 
			Let $0<p<t<r<\infty$ or $0<p\le t<r=\infty$.
			Let $s\in\mathbb{R}$, $W\in  \mathcal  A_{p}$, and
			$\{A_{Q}\}_{Q\in\mathcal{D}}$ be a sequence of reducing operators of order p for $W$. 
			Let $ q \in (0, p + \delta_W) $ where $\delta_W >0$  is the same as in Lemma \ref{lem:W AQ}. 
			Let $(\Phi,\varphi)\in\mathcal{A}_{+}$ and let $\varphi_{0}=\Phi$. For $\vec{f}\in\mathscr{S}'(\mathbb{R}^{n})$, let $\vec{s}=\{\vec{s}_{Q}\}_{Q\in\mathcal{D}_{+}}$,
			where $\vec{s}_{Q}=\langle\vec{f},\varphi_{Q}\rangle.$ 
			Then
			\[
			\|\vec{f}\|_{F_{p,t,r}^{s,q}(W)}\approx\|\vec{f}\|_{F_{p,t,r}^{s,q}(A_{Q})}\approx\|\vec{s}\|_{f_{p,t,r}^{s,q}(A_{Q})}\approx\|\vec{s}\|_{f_{p,t,r}^{s,q}(W)}.
			\]
			In addition, function spaces $F_{p,t,r}^{s,q}(W)$, $F_{p,t,r}^{s,q}(A_{Q})$
			are independent of
			the choice of $(\Phi,\varphi)\in\mathcal{A}_{+},$ in the sense that
			different choices yield equivalent quasi-norms.
		\end{thm}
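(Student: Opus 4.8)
The plan is to prove the fourfold equivalence by rerunning the argument that gave Theorem~\ref{thm:equivalence}, after recording the inhomogeneous analogues of the auxiliary results of Section~\ref{sec:Homogeneous-spaces}. Throughout one works with $\mathcal{D}_{+}$, with the frequency sums $\sum_{v=-\infty}^{\infty}$ replaced by $\sum_{v=0}^{\infty}$, and with $\varphi_{0}=\Phi$; the role of the $\varphi$-transform identity \eqref{eq:converge} is played by $\vec{f}=\sum_{Q\in\mathcal{D}_{+}}\langle\vec{f},\varphi_{Q}\rangle\psi_{Q}$, convergent in $\mathscr{S}'(\rn)$, where for $\ell(Q)=1$ the functions $\varphi_{Q},\psi_{Q}$ are built from $\Phi,\Psi$ as in \cite[Section~11]{R03}. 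The restriction $q<p+\delta_{W}$ enters exactly where it does in Corollary~\ref{cor:W le AQ}.

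First I would record the inhomogeneous form of Theorem~\ref{thm:sup A_Q LE}: since Lemmas~\ref{lem:A inequality} and \ref{lem:Littlewood max} are local statements about convolution with a fixed Schwartz function, they apply verbatim to $\varphi_{0}=\Phi$, so the proof of Theorem~\ref{thm:sup A_Q LE} gives \eqref{eq:sup Aq le discrete} and \eqref{eq:discrete Aq le F} with the index set $\{Q\in\mathcal{D}:\ell(Q)\le 1\}$ and $j\in\mathbb{Z}_{+}$ in place of $j\in\mathbb{Z}$, for every $\vec{f}\in\mathscr{S}'(\rn)$. Next I would introduce the inhomogeneous almost-diagonal classes: matrices $B=\{b_{QP}\}_{Q,P\in\mathcal{D}_{+}}$ satisfying \eqref{eq:omega QP} (resp.\ \eqref{eq:omega QP-1}), where the factor $(\ell(P)/\ell(Q))^{\cdots}$ is required only when $\ell(Q)\le\ell(P)\le 1$ (so there is no constraint at the coarsest scale), and prove the inhomogeneous counterparts of Theorems~\ref{thm:unweighted discrete almost } and \ref{thm:almost dia bounded on discrete}. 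The proof is the same decomposition $B=B_{0}+B_{1}$ estimated through Lemmas~\ref{lem:discrete HL general} and \ref{lem:hardy}, and is if anything simpler because the scale index is bounded below by $0$. The smooth-molecule synthesis (inhomogeneous form of Theorem~\ref{thm:varphi transform}) then follows from the inhomogeneous analogue of Lemma~\ref{lem:mQ}, in which molecules $m_{Q}$ with $\ell(Q)=1$ carry no moment conditions and $|\varphi_{j}*m_{Q}(x)|$ is only estimated for $j\in\mathbb{Z}_{+}$; note that $\{\psi_{Q}\}_{Q\in\mathcal{D}_{+}}$ is a family of such inhomogeneous molecules of every admissible order.

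With this in hand, the comparisons between $\{A_{Q}\}$ and $W$ transfer without change. Theorems~\ref{thm:A_Q le W discrete} and \ref{thm:Aq le W function} and Corollaries~\ref{cor:gamma le} and \ref{cor:W le AQ} are proved by pointwise or local estimates (via \eqref{eq:p le 1 AQW}, \eqref{eq:p>1 AQW ^-1/p}, Lemma~\ref{lem:W AQ}, Theorem~\ref{thm:EQ}, and Theorem~\ref{thm:gamma j}), all of which remain valid once every $\sum_{Q\in\mathcal{D}}$ is replaced by $\sum_{Q\in\mathcal{D}_{+}}$ and every $\sum_{v=-\infty}^{\infty}$ by $\sum_{v=0}^{\infty}$ (one simply discards the negative levels). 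Combining, one gets
\begin{align*}
	\|\vec{f}\|_{F_{p,t,r}^{s,q}(A_{Q})} &\lesssim \|\vec{f}\|_{F_{p,t,r}^{s,q}(W)}, \qquad
	\|\vec{s}\|_{f_{p,t,r}^{s,q}(A_{Q})} \lesssim \|\vec{s}\|_{f_{p,t,r}^{s,q}(W)}, \\
	\|\vec{s}\|_{f_{p,t,r}^{s,q}(W)} &\lesssim \|\vec{s}\|_{f_{p,t,r}^{s,q}(A_{Q})}, \qquad
	\|\vec{f}\|_{F_{p,t,r}^{s,q}(W)} \lesssim \|\vec{f}\|_{F_{p,t,r}^{s,q}(A_{Q})},
\end{align*}
while $\|\vec{f}\|_{F_{p,t,r}^{s,q}(A_{Q})}\approx\|\vec{s}\|_{f_{p,t,r}^{s,q}(A_{Q})}$ results from the inhomogeneous sup estimate together with the molecule synthesis exactly as in the proof of \eqref{F Aq app f}; chaining these proves the asserted equivalences. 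Independence of the choice of $(\Phi,\varphi)\in\mathcal{A}_{+}$ follows by comparing with a second pair $(\Theta,\kappa)\in\mathcal{A}_{+}$ through the inhomogeneous almost-diagonal argument, just as in the homogeneous case.

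I expect the main obstacle to be the bookkeeping at the coarsest scale $\ell(Q)=1$: one must verify that the absence of vanishing moments on $\Phi$ and on the molecules attached to the coarsest cubes is compatible with all the almost-diagonal decay estimates (the one-sided minimum in the inhomogeneous $\omega_{QP}$ is exactly what accommodates this), and that the reproducing formula converges in $\mathscr{S}'(\rn)$ rather than modulo polynomials, so that no moment cancellation is available and the low-frequency piece $\Phi*\vec{f}$ must be handled directly by the Schwartz-kernel estimates above.
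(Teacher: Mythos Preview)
Your proposal is correct and follows exactly the approach the paper takes: the paper's own proof consists of the single sentence ``The proof of Theorem~\ref{thm:equivalence inhomogeneous} is virtually the same as for the homogeneous spaces and we omit the detail here,'' and what you have written is precisely a careful account of those omitted details, including the necessary bookkeeping at the coarsest scale $\ell(Q)=1$.
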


		The proof of Theorem \ref{thm:equivalence inhomogeneous} 
		is virtually the same as for the homogeneous spaces and we omit the
		detail here.

		\section{Characterizations}\label{sec:Characterizations}
		
		In this section, we study some characterizations of matrix weighted Bourgain-Morrey Triebel-Lizorkin spaces in terms of Peetre type maximal functions, Lusin-area functions, Littlewood-Paley $g_\lambda^*$-functions, wavelets and atoms.
		
		\subsection{Characterizations by Peetre type maximal functions}
		
		Let $p\in(0,\infty)$, $a>0$, and $W\in \mathcal A_{p}$. The matrix-weighted
		Peetre-type maximal functions associated with $\{\varphi_{j}\}_{j\in\mathbb{Z}}$
		are defined by
		\begin{equation*}
			\Big(\varphi_{j}^{*}\vec{f}\Big)_{a}^{(W,p)}(x):=\sup_{y\in\mathbb{R}^{n}}\frac{|W^{1/p}(x)(\varphi_{j}*\vec{f})(y)|}{(1+2^{j}|x-y|)^{a}}
		\end{equation*}
		for any $j\in\mathbb{Z}$ and $x\in\mathbb{R}^{n}$.
		\begin{lem}[(3.10), \cite{WYZ22}] \label{lem:peetre le sum} Let $\varphi\in\mathcal{A}$ and $a>0$.
			Suppose that $\{A_{Q}\}_{Q\in\mathcal{D}}$ is a weakly doubling sequence of order $ \Delta \in(0,\infty)$
			of positive definite matrices.  Then for any $A\in(0,1]$,
			there exists a positive constant $c$, depending
			on both $A,$ and $\{A_{Q}\}_{ Q \in \D} $ such that for any $j\in\mathbb{Z},$
			$k\in\mathbb{Z}^{n}$, $x\in Q_{j, k}$,
			\begin{align*}
			&	\sup_{y\in\mathbb{R}^{n}}\frac{|A_{Q_{j, k}}(\varphi_{j}*\vec{f})(y)|^{^{A}}}{(1+2^{j}|x-y|)^{aA}} \\
			& \leq c\sum_{\ell\in\mathbb{Z}^{n}}(1+|k-\ell|)^{-A(a- \Delta)}2^{jn}\int_{Q_{j\ell}}\Big|A_{Q_{j\ell}}(\varphi_{j}*\vec{f})(z)\Big|^{A}\mathrm{d}z.
			\end{align*}
		\end{lem}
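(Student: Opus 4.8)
The plan is to reduce the inequality to Lemma~\ref{lem:A inequality} by first using weak doubling to transfer the matrix $A_{Q_{j,k}}$ evaluated at the (possibly far) point $y$ to the matrix attached to the dyadic cube that actually contains $y$.

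First I would fix $j\in\mathbb{Z}$, $k\in\mathbb{Z}^{n}$, $x\in Q_{j,k}$, write $g:=\varphi_{j}*\vec{f}$ (a smooth function on $\rn$), and take an arbitrary $y\in\mathbb{R}^{n}$, letting $m\in\mathbb{Z}^{n}$ be the unique index with $y\in Q_{j,m}$. Since each $A_{Q}$ is positive definite, hence invertible, the factorization $A_{Q_{j,k}}g(y)=A_{Q_{j,k}}A_{Q_{j,m}}^{-1}\big(A_{Q_{j,m}}g(y)\big)$, the weak doubling bound $\|A_{Q_{j,k}}A_{Q_{j,m}}^{-1}\|\le c(1+|k-m|)^{\Delta}$, and the trivial estimate $|A_{Q_{j,m}}g(y)|\le\sup_{z\in Q_{j,m}}|A_{Q_{j,m}}g(z)|$ give
\[
|A_{Q_{j,k}}g(y)|\le c\,(1+|k-m|)^{\Delta}\sup_{z\in Q_{j,m}}|A_{Q_{j,m}}g(z)|.
\]
Because $2^{j}|x-y|\ge|k-m|-\sqrt{n}$ for $x\in Q_{j,k}$ and $y\in Q_{j,m}$, one has $1+2^{j}|x-y|\gtrsim 1+|k-m|$; raising to the power $A\le 1$ and dividing by $(1+2^{j}|x-y|)^{aA}$ yields, for all $y\in Q_{j,m}$,
\[
\frac{|A_{Q_{j,k}}g(y)|^{A}}{(1+2^{j}|x-y|)^{aA}}\le c\,(1+|k-m|)^{-A(a-\Delta)}\sup_{z\in Q_{j,m}}|A_{Q_{j,m}}g(z)|^{A}.
\]
The right-hand side does not depend on $y$ within $Q_{j,m}$, so taking the supremum over $y\in\mathbb{R}^{n}$ (that is, over $m\in\mathbb{Z}^{n}$ and over $y\in Q_{j,m}$) and bounding $\sup_{m}$ by $\sum_{m}$ gives
\[
\sup_{y\in\mathbb{R}^{n}}\frac{|A_{Q_{j,k}}g(y)|^{A}}{(1+2^{j}|x-y|)^{aA}}\le c\sum_{m\in\mathbb{Z}^{n}}(1+|k-m|)^{-A(a-\Delta)}\sup_{z\in Q_{j,m}}|A_{Q_{j,m}}g(z)|^{A}.
\]

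Next I would insert Lemma~\ref{lem:A inequality} (with its weak-doubling order $\alpha$ taken to be $\Delta$, with $k$ replaced by $m$, and with a free parameter $R$) into each term on the right:
\[
\sup_{z\in Q_{j,m}}|A_{Q_{j,m}}g(z)|^{A}\le c\sum_{\ell\in\mathbb{Z}^{n}}(1+|m-\ell|)^{-A(R-\Delta)}2^{jn}\int_{Q_{j,\ell}}|A_{Q_{j,\ell}}g(z)|^{A}\,\mathrm{d}z .
\]
Substituting and interchanging the two summations produces the iterated sum $\sum_{m\in\mathbb{Z}^{n}}(1+|k-m|)^{-A(a-\Delta)}(1+|m-\ell|)^{-A(R-\Delta)}$. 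By the standard convolution-type bound $\sum_{m\in\mathbb{Z}^{n}}(1+|k-m|)^{-\mu}(1+|m-\ell|)^{-\nu}\lesssim(1+|k-\ell|)^{-\min(\mu,\nu)}$, valid whenever $\max(\mu,\nu)>n$, this is $\le c(1+|k-\ell|)^{-A(a-\Delta)}$ provided $R$ is chosen so large that $A(R-\Delta)>\max\{\,n,\,A(a-\Delta)\,\}$. This is precisely the asserted right-hand side, and tracking constants shows that $c$ depends only on $A$, on $\{A_{Q}\}_{Q\in\D}$ (through $\Delta$ and the weak-doubling constant), on $a$, on $n$, and on $\varphi$ (through Lemma~\ref{lem:A inequality}).

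The one point requiring care is the bookkeeping of exponents in this last step: Lemma~\ref{lem:A inequality} must be applied with a large enough $R$ so that the double sum collapses to the single power $(1+|k-\ell|)^{-A(a-\Delta)}$ rather than to a slower decay. Everything else is routine: the weak-doubling transfer, the comparison $1+2^{j}|x-y|\approx 1+|k-m|$ on $Q_{j,k}\times Q_{j,m}$, and the sub-additivity of $t\mapsto t^{A}$ for $A\le 1$. In particular, no reproducing formula or band-limitedness argument for $g=\varphi_{j}*\vec{f}$ is needed at this stage, since that input is already encapsulated in Lemma~\ref{lem:A inequality}.
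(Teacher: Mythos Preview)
The paper does not give its own proof of this lemma; it is simply quoted from \cite{WYZ22} (equation~(3.10) there). So there is no in-paper argument to compare against.

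Your argument is correct. You reduce the Peetre-type maximal inequality to the already-available Lemma~\ref{lem:A inequality} by (i) transferring $A_{Q_{j,k}}$ to $A_{Q_{j,m}}$ via the weak-doubling bound, (ii) comparing $1+2^{j}|x-y|$ with $1+|k-m|$, (iii) dominating the supremum by a sum, (iv) applying Lemma~\ref{lem:A inequality} cube-by-cube, and (v) collapsing the resulting double lattice sum by a discrete convolution estimate. This is a clean and legitimate route; no new analytic ingredient beyond Lemma~\ref{lem:A inequality} is needed.

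One small point of bookkeeping: the convolution bound you quote, $\sum_{m}(1+|k-m|)^{-\mu}(1+|m-\ell|)^{-\nu}\lesssim(1+|k-\ell|)^{-\min(\mu,\nu)}$ under the sole hypothesis $\max(\mu,\nu)>n$, is not quite in the standard form and fails as stated in borderline cases. What you actually need (and what the Peetre inequality $(1+|k-m|)\le(1+|k-\ell|)(1+|m-\ell|)$ gives directly) is $\sum_{m}(1+|k-m|)^{-\mu}(1+|m-\ell|)^{-\nu}\lesssim(1+|k-\ell|)^{-\mu}$ whenever $\nu-\mu>n$ in the case $\mu\ge 0$, and $\nu+\mu>n$ in the case $\mu<0$. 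Since $R$ is free in Lemma~\ref{lem:A inequality}, simply choose $R$ so large that $A(R-\Delta)>n+|A(a-\Delta)|$; then both cases are covered and the bracket collapses to $c(1+|k-\ell|)^{-A(a-\Delta)}$ as you claim. With this adjustment the proof is complete.
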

		
		\begin{thm}
			\label{thm:Peetre char} 
			Let $0<p<t<r<\infty$ or $0<p\le t<r=\infty$.
			Let $s\in\mathbb{R}$, $W\in  \mathcal  A_{p}$. 
			Let $ q \in (0, p + \delta_W) $ where $\delta_W >0$  is the same as in Lemma \ref{lem:W AQ}. 
			If $a>n/\min(1,q,p )+\Delta$ where $\Delta$  is the same as in Lemma \ref{AQ AR improved}, then
			\begin{equation*}
				\|\vec{f}\|_{\dot{F}_{p,t,r}^{s,q}(W)}\approx\|\vec{f}\|_{\dot{F}_{p,t,r}^{s,q}(W)}^{\star}
			\end{equation*}
			where
			\begin{equation*}
				\|\vec{f}\|_{\dot{F}_{p,t,r}^{s,q}(W)}^{\star}:=\bigg\|\bigg\{\sum_{j\in\mathbb{Z}}2^{jsq}\bigg[\big(\varphi_{j}^{*}\vec{f}\big)_{a}^{(W,p)}\bigg]^{q}\bigg\}^{1/q}\bigg\|_{M_{p}^{t,r}}.
			\end{equation*}

		\end{thm}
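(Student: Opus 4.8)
The bound $\|\vec{f}\|_{\dot{F}_{p,t,r}^{s,q}(W)}\le\|\vec{f}\|_{\dot{F}_{p,t,r}^{s,q}(W)}^{\star}$ is immediate: choosing $y=x$ in the defining supremum gives $|W^{1/p}(x)(\varphi_{j}*\vec{f})(x)|\le\big(\varphi_{j}^{*}\vec{f}\big)_{a}^{(W,p)}(x)$ for every $j\in\mathbb{Z}$ and $x\in\mathbb{R}^{n}$, and then one only has to take $\ell^{q}(2^{jsq})$-norms in $j$ and the $M_{p}^{t,r}$-quasi-norm. The substance is therefore the reverse inequality, and the plan is to transfer it to the reducing operators $\{A_{Q}\}_{Q\in\mathcal{D}}$ of order $p$ for $W$ and then invoke Theorem \ref{thm:equivalence}.

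Fix $\vec{f}$. Since $a>n/\min(1,q,p)+\Delta$, pick $A\in(0,\min(1,q,p))$ with $A(a-\Delta)>n$. For $x\in Q_{j,k}$ and $y\in\mathbb{R}^{n}$ one has $|W^{1/p}(x)(\varphi_{j}*\vec{f})(y)|\le\|W^{1/p}(x)A_{Q_{j,k}}^{-1}\|\,|A_{Q_{j,k}}(\varphi_{j}*\vec{f})(y)|$, whence
\[
\big(\varphi_{j}^{*}\vec{f}\big)_{a}^{(W,p)}(x)\le\|W^{1/p}(x)A_{Q_{j,k}}^{-1}\|\,\sup_{y\in\mathbb{R}^{n}}\frac{|A_{Q_{j,k}}(\varphi_{j}*\vec{f})(y)|}{(1+2^{j}|x-y|)^{a}}\qquad(x\in Q_{j,k}).
\]
Because $\{A_{Q}\}_{Q\in\mathcal{D}}$ is weakly doubling of order $\Delta$ (the remark following Lemma \ref{AQ AR improved}), Lemma \ref{lem:peetre le sum} applied with this $A$ bounds the last supremum, for $x\in Q_{j,k}$, by a constant multiple of $\big(\sum_{\ell\in\mathbb{Z}^{n}}(1+|k-\ell|)^{-A(a-\Delta)}2^{jn}\int_{Q_{j,\ell}}|A_{Q_{j,\ell}}(\varphi_{j}*\vec{f})(z)|^{A}\mathrm{d}z\big)^{1/A}$, which is constant on $Q_{j,k}$. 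Writing $\gamma_{j}(x):=\sum_{Q\in\mathcal{D}_{j}}\|W^{1/p}(x)A_{Q}^{-1}\|\chi_{Q}(x)$ and letting $H_{j}$ be the step function equal to that constant on each $Q_{j,k}\in\mathcal{D}_{j}$, we obtain $\big(\varphi_{j}^{*}\vec{f}\big)_{a}^{(W,p)}\le c\,\gamma_{j}H_{j}$ pointwise, with $E_{j}(H_{j})=H_{j}$.

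Two steps now finish the argument. First, since $2^{js}H_{j}=E_{j}(2^{js}H_{j})$, Corollary \ref{cor:gamma le} --- which is exactly where the hypothesis $q<p+\delta_{W}$ is used --- gives
\[
\|\vec{f}\|_{\dot{F}_{p,t,r}^{s,q}(W)}^{\star}\le c\big\|\{\gamma_{j}E_{j}(2^{js}H_{j})\}_{j\in\mathbb{Z}}\big\|_{M_{p}^{t,r}(\ell^{q})}\le c\big\|\{2^{js}H_{j}\}_{j\in\mathbb{Z}}\big\|_{M_{p}^{t,r}(\ell^{q})}.
\]
Second, set $g_{j}:=\sum_{Q\in\mathcal{D}_{j}}|A_{Q}(\varphi_{j}*\vec{f})|\chi_{Q}$, so that $g_{j}^{A}=\sum_{Q\in\mathcal{D}_{j}}|A_{Q}(\varphi_{j}*\vec{f})|^{A}\chi_{Q}$; Lemma \ref{lem:Littlewood max} (with $\eta=A(a-\Delta)>n$) yields $H_{j}^{A}\le c\,\mathcal{M}(g_{j}^{A})$, i.e. $H_{j}\le c\,\mathcal{M}_{A}(g_{j})$. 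As $A<\min(1,q,p)$, raising the $M_{p}^{t,r}(\ell^{q})$-quasi-norm to the power $A$ reduces the boundedness of $\mathcal{M}_{A}$ on $M_{p}^{t,r}(\ell^{q})$ to that of $\mathcal{M}$ on $M_{p/A}^{t/A,r/A}(\ell^{q/A})$ with $p/A,q/A>1$, which is Lemma \ref{lem:hardy}. Hence
\[
\big\|\{2^{js}H_{j}\}_{j\in\mathbb{Z}}\big\|_{M_{p}^{t,r}(\ell^{q})}\le c\big\|\{\mathcal{M}_{A}(2^{js}g_{j})\}_{j\in\mathbb{Z}}\big\|_{M_{p}^{t,r}(\ell^{q})}\le c\big\|\{2^{js}g_{j}\}_{j\in\mathbb{Z}}\big\|_{M_{p}^{t,r}(\ell^{q})}=c\,\|\vec{f}\|_{\dot{F}_{p,t,r}^{s,q}(A_{Q})},
\]
and Theorem \ref{thm:equivalence} completes the proof.

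The only genuinely delicate point is the first of the two steps above: the scalar weight $\|W^{1/p}(x)A_{Q}^{-1}\|$ really does vary with $x$ inside $Q$, so it cannot be pulled out of the $M_{p}^{t,r}$-quasi-norm by hand; its effect is absorbed through the averaging estimate of Corollary \ref{cor:gamma le}, and the range of $q$ for which that estimate is available is precisely what forces the assumption $q\in(0,p+\delta_{W})$ in the statement. Everything else is standard manipulation of (powered) Hardy--Littlewood maximal functions on Bourgain--Morrey spaces.
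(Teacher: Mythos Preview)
Your proof is correct and follows essentially the same strategy as the paper: pass to reducing operators $\{A_Q\}$, apply Lemma \ref{lem:peetre le sum} to obtain a step function controlled by the maximal function of $\sum_{Q\in\mathcal{D}_j}|A_Q(\varphi_j*\vec f)|^A\chi_Q$ via Lemma \ref{lem:Littlewood max}, remove the factor $\gamma_j$ through Corollary \ref{cor:gamma le}, and close with Lemma \ref{lem:hardy} and Theorem \ref{thm:equivalence}. The only organizational difference is that the paper introduces an auxiliary quantity $\|\vec f\|_{\dot F_{p,t,r}^{s,q}(A_Q)}^{\star\star}$ (with an extra $\sup_{z\in Q}$) and proves the chain $\|\vec f\|_{\dot F_{p,t,r}^{s,q}(W)}^{\star}\lesssim\|\vec f\|_{\dot F_{p,t,r}^{s,q}(A_Q)}^{\star\star}\approx\|\vec f\|_{\dot F_{p,t,r}^{s,q}(A_Q)}^{\star}\lesssim\|\vec f\|_{\dot F_{p,t,r}^{s,q}(A_Q)}$ in separate steps, whereas you fold these together by working directly with the step function $H_j$; your route is slightly more economical but uses the same ingredients in the same way.
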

		
		\begin{proof}
			By the definition of $\Big(\varphi_{j}^{*}\vec{f}\Big)_{a}^{(W,p)}$, we have $\|\vec{f}\|_{\dot{F}_{p,t,r}^{s,q}(W)}\le\|\vec{f}\|_{\dot{F}_{p,t,r}^{s,q}(W)}^{\star}.$
			It remains to prove that, for any $\vec{f}\in\mathscr{S}'/\mathcal{P}(\mathbb{R}^{n})$,
			\begin{equation}
				\|\vec{f}\|_{\dot{F}_{p,t,r}^{s,q}(W)}\lesssim\|\vec{f}\|_{\dot{F}_{p,t,r}^{s,q}(W)}^{\star}.\label{eq:le W peetre}
			\end{equation}
			Let $\{A_{Q}\}_{Q\in\mathcal{D}}$ be a sequence of reducing operators
			of order $p$ for $W$. 
			By Lemma \ref{AQ AR improved},  $\{A_{Q}\}_{Q\in\mathcal{D}}$ is strongly doubling of order $ (d, \tilde d, \Delta,p)$ where $ d, \tilde d, \Delta,p$  is the same as in Lemma \ref{AQ AR improved}.
			We set
			\[
			\|\vec{f}\|_{\dot{F}_{p,t,r}^{s,q}(A_{Q})}^{\star}:=\bigg\|\bigg\{\sum_{j\in\mathbb{Z}}\sum_{Q\in\mathcal{D}_{j}}2^{jsq}\sup_{y\in\mathbb{R}^{n}}\frac{|A_{Q}(\varphi_{j}*\vec{f})(y)|^{q}}{(1+2^{j}|\cdot-y|)^{aq}}\chi_{Q}\bigg\}^{1/q}\bigg\|_{M_{p}^{t,r}}.
			\]
			To prove (\ref{eq:le W peetre}), we first show that
			\[
			\|\vec{f}\|_{\dot{F}_{p,t,r}^{s,q}(A_{Q})}^{\star}\lesssim\|\vec{f}\|_{\dot{F}_{p,t,r}^{s,q}(A_{Q})}.
			\]
			By Lemma \ref{lem:peetre le sum}, we obtain
			\begin{align*}
			&	\sum_{Q\in\mathcal{D}_{j}}2^{jsq}\Big[\sup_{y\in\mathbb{R}^{n}}\frac{|A_{Q}\varphi_{j}* \vec{f}(y)|}{(1+2^{j}|\cdot-y)^{a}}\chi_{Q}\Big]^{q} \\
				& \lesssim\Bigg[\sum_{k\in\mathbb{Z}^{n}}\sum_{l\in\mathbb{Z}^{n}}(1+|k-l|)^{-A(a-\Delta )}2^{jn}
			\int_{Q_{j,l}}|2^{js}A_{Q_{j,l}}\varphi_{j}* \vec{f}(z)|^{A}\mathrm{d}z\chi_{Q_{j,k}}(\cdot)\Bigg]^{q/A}.
			\end{align*}
			Let $a\in(n/\min(1,q,p )+\Delta,\infty)$. Then we can select $A\in(0,\min(1,p, q))$,
			such that $A(a-\Delta )>n$. Then by Lemmas \ref{lem:hardy} 
			and \ref{lem:Littlewood max}, we have
			\begin{align*}
				& \bigg\|\bigg(\sum_{j=-\infty}^{\infty}\sum_{Q\in\mathcal{D}_{j}}2^{jsq}\sup_{y\in\mathbb{R}^{n}}\frac{|A_{Q}\varphi_{j}*\vec{f}(y)|^{q}}{(1+2^{j}|\cdot-y|)^{aq}}\chi_{Q}\bigg)^{1/q}\bigg\|_{M_{p}^{t,r}}\\
				& \lesssim\bigg\|\bigg\{\sum_{j=-\infty}^{\infty}\Big(\mathcal{M}\Big(\sum_{Q\in\mathcal{D}_{j}}[2^{js}|A_{Q}\varphi_{j}*\vec{f}|\chi_{Q}]^{A}\Big)\Big)^{q/A}\bigg\}^{A/q}\bigg\|_{M_{p/A}^{t/A.r/A}}^{1/A}\\
				& \lesssim\|\vec{f}\|_{\dot{F}_{p,t,r}^{s,q}(A_{Q})}.
			\end{align*}
			From Theorem \ref{thm:equivalence}, we infer that
			\begin{equation}
				\|\vec{f}\|_{\dot{F}_{p,t,r}^{s,q}(A_{Q})}^{\star}\lesssim\|\vec{f}\|_{\dot{F}_{p,t,r}^{s,q}(A_{Q})}\approx\|\vec{f}\|_{\dot{F}_{p,t,r}^{s,q}(W)}.\label{eq:peetre AQ le}
			\end{equation}
			Let
			\begin{equation*}
				\|\vec{f}\|_{\dot{F}_{p,t,r}^{s,q}(\{A_{Q}\})}^{\star\star}:=\bigg\|\bigg(\sum_{j=-\infty}^{\infty}\sum_{Q\in\mathcal{D}_{j}}2^{jsq}\sup_{z\in Q}\sup_{y\in\mathbb{R}^{n}}\frac{|A_{Q}\varphi_{j}*\vec{f}(y)|^{q}}{(1+2^{j}|z-y|)^{aq}}\chi_{Q}\bigg)^{1/q}\bigg\|_{M_{p}^{t,r}}.
			\end{equation*}
			By (\ref{eq:peetre AQ le}), we still need to show that
			\begin{equation}
				\|\vec{f}\|_{\dot{F}_{p,t,r}^{s,q}(W)}^{\star}\lesssim\|\vec{f}\|_{\dot{F}_{p,t,r}^{s,q}(A_{Q})}^{\star\star}\lesssim\|\vec{f}\|_{\dot{F}_{p,t,r}^{s,q}(A_{Q})}^{\star}.\label{eq:star le star star le}
			\end{equation}
			For $j\in\mathbb{Z}$ and $x\in\mathbb{R}^{n}$, let
			\[
			h_{j}(x):=2^{js}\sup_{y\in\mathbb{R}^{n}}\frac{|W^{1/p}(x)\varphi_{j}*\vec{f}(y)|}{(1+2^{j}|x-y|)^{a}},
			\]
			\[
			k_{j}(x):=\sum_{Q\in\mathcal{D}_{j}}|Q|^{-s/n}\sup_{z\in Q}\sup_{y\in\mathbb{R}^{n}}\frac{|A_{Q}\varphi_{j}*\vec{f}(y)|}{(1+2^{j}|z-y|)^{a}}\chi_{Q}(x)
			\]
			and
			\[
			\gamma_{j}(x):=\sum_{Q\in\mathcal{D}_{j}}\|W^{1/p}(x)A_{Q}^{-1}\|\chi_{Q}(x).
			\]
			It is obvious that, for any $j\in\mathbb{N}_{0}$ and $x\in\mathbb{R}^{n}$, $	h_{j}(x) \lesssim \gamma_{j}(x)k_{j}(x).$
			By Corollary \ref{cor:gamma le}, we have
			\begin{align*}
				\|\vec{f}\|_{\dot{F}_{p,t,r}^{s,q}(W)}^{\star} & =\bigg\|\bigg(\sum_{j=-\infty}^{\infty}h_{j}^{q}\bigg)^{1/q}\bigg\|_{M_{p}^{t,r}} \lesssim\|\vec{f}\|_{\dot{F}_{p,t,r}^{s,q}(A_{Q})}^{\star\star},
			\end{align*}
			which is just the first inequality of (\ref{eq:star le star star le}).
			Next, we show the right inequality of (\ref{eq:star le star star le}).
			We use the fact that
			\[
			1+2^{j}|x-y|\approx1+|l-k|\approx1+2^{j}|z-y|
			\]
			for any $x,z\in Q_{j,k}$ and $y\in Q_{j,l}$ to deduce that for any
			$a>0$, $j\in\mathbb{Z}$, $k\in\mathbb{Z}^{n}$, and $x\in Q_{j,k}$,
			\[
			\sup_{z\in Q_{j,k}}\sup_{y\in\mathbb{R}^{n}}\frac{|A_{Q_{j,k}}\varphi_{j}*\vec{f}(y)|}{(1+2^{j}|z-y|)^{a}}\approx\sup_{y\in\mathbb{R}^{n}}\frac{|A_{Q_{j,k}}\varphi_{j}*\vec{f}(y)|}{(1+2^{j}|x-y|)^{a}},
			\]
			which implies that
			\begin{equation*}
				\|\vec{f}\|_{\dot{F}_{p,t,r}^{s,q}(A_{Q})}^{\star\star}\approx\|\vec{f}\|_{\dot{F}_{p,t,r}^{s,q}(A_{Q})}^{\star}.
			\end{equation*}
			Hence the proof is completed.
		\end{proof}
		
		\begin{thm}
			\label{thm:Peetre char inho} 
			Let $0<p<t<r<\infty$ or $0<p\le t<r=\infty$.
			Let $s\in\mathbb{R}$, $W\in  \mathcal  A_{p}$. 
			Let $ q \in (0, p + \delta_W) $ where $\delta_W >0$  is the same as in Lemma \ref{lem:W AQ}. 
			If $a>n/\min(1,q,p )+\Delta$  where $\Delta$  is the same as in Lemma \ref{AQ AR improved}, then
			\begin{equation*}
				\|\vec{f}\|_{{F}_{p,t,r}^{s,q}(W)}\approx\|\vec{f}\|_{{F}_{p,t,r}^{s,q}(W)}^{\star}
			\end{equation*}
			where
			\begin{equation*}
				\|\vec{f}\|_{{F}_{p,t,r}^{s,q}(W)}^{\star}:=\bigg\|\bigg\{\sum_{j \ge 0 }2^{jsq}\bigg[\big(\varphi_{j}^{*}\vec{f}\big)_{a}^{(W,p)}\bigg]^{q}\bigg\}^{1/q}\bigg\|_{M_{p}^{t,r}}.
			\end{equation*}
			
		\end{thm}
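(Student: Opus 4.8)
The plan is to follow exactly the strategy used for the homogeneous case in Theorem \ref{thm:Peetre char}, adapting it to the inhomogeneous setting with the partition of unity $\{\varphi_j\}_{j\in\mathbb N_0}$ from Definition \ref{def:partition of unity inho} and the index set $\mathcal D_+$. The trivial direction $\|\vec f\|_{F_{p,t,r}^{s,q}(W)}\le\|\vec f\|_{F_{p,t,r}^{s,q}(W)}^{\star}$ is immediate from the definition of the Peetre maximal function $(\varphi_j^\ast\vec f)_a^{(W,p)}$, since taking $y=x$ gives $|W^{1/p}(x)(\varphi_j\ast\vec f)(x)|\le(\varphi_j^\ast\vec f)_a^{(W,p)}(x)$. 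So the content is the reverse inequality. Let $\{A_Q\}_{Q\in\mathcal D}$ be a sequence of reducing operators of order $p$ for $W$, which by Lemma \ref{AQ AR improved} is strongly doubling of order $(d,\tilde d,\Delta,p)$ and in particular weakly doubling of order $\Delta$.

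First I would introduce the intermediate quantities $\|\vec f\|_{F_{p,t,r}^{s,q}(A_Q)}^{\star}$ and $\|\vec f\|_{F_{p,t,r}^{s,q}(A_Q)}^{\star\star}$, defined by the same formulas as in the proof of Theorem \ref{thm:Peetre char} but with the sums $\sum_{j\in\mathbb Z}$ replaced by $\sum_{j\ge 0}$ and $Q$ ranging over $\mathcal D_v$ with $v\ge 0$ (equivalently $Q\in\mathcal D_+$). Using Lemma \ref{lem:peetre le sum} for $j\ge 1$ — whose hypotheses require only that $\{A_Q\}$ be weakly doubling of order $\Delta$, with no lower restriction on $j$ — together with Lemmas \ref{lem:hardy} and \ref{lem:Littlewood max}, one obtains, for $A\in(0,\min(1,p,q))$ chosen so that $A(a-\Delta)>n$ (possible since $a>n/\min(1,q,p)+\Delta$), the bound
\[
\|\vec f\|_{F_{p,t,r}^{s,q}(A_Q)}^{\star}\lesssim\|\vec f\|_{F_{p,t,r}^{s,q}(A_Q)}.
\]
For the term $j=0$ one argues separately but identically, since the single band $\varphi_0$ is again handled by Lemma \ref{lem:A inequality} / Lemma \ref{lem:peetre le sum} applied at level $0$; no new phenomenon arises. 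By the inhomogeneous equivalence Theorem \ref{thm:equivalence inhomogeneous} this gives $\|\vec f\|_{F_{p,t,r}^{s,q}(A_Q)}^{\star}\lesssim\|\vec f\|_{F_{p,t,r}^{s,q}(W)}$.

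Next I would prove the two-sided chain $\|\vec f\|_{F_{p,t,r}^{s,q}(W)}^{\star}\lesssim\|\vec f\|_{F_{p,t,r}^{s,q}(A_Q)}^{\star\star}\lesssim\|\vec f\|_{F_{p,t,r}^{s,q}(A_Q)}^{\star}$. For the right inequality, use $1+2^j|x-y|\approx 1+|l-k|\approx 1+2^j|z-y|$ for $x,z\in Q_{j,k}$, $y\in Q_{j,l}$, exactly as before, so that $\|\vec f\|_{F_{p,t,r}^{s,q}(A_Q)}^{\star\star}\approx\|\vec f\|_{F_{p,t,r}^{s,q}(A_Q)}^{\star}$. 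For the left inequality, set $h_j(x):=2^{js}(\varphi_j^\ast\vec f)_a^{(W,p)}(x)$, $\gamma_j(x):=\sum_{Q\in\mathcal D_j}\|W^{1/p}(x)A_Q^{-1}\|\chi_Q(x)$ for $j\ge 0$, and let $k_j$ be the $A_Q$-analogue of $h_j$ as in the homogeneous proof; then $h_j\le\gamma_j E_j(k_j)$ pointwise on $\mathbb R^n$ because $k_j$ is constant on each dyadic cube of generation $j$, and Corollary \ref{cor:gamma le} (which applies to any sequence $\{f_j\}_{j\in\mathbb Z}$, hence a fortiori to one supported on $j\ge 0$) absorbs $\gamma_j$. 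Combining the chain with the previous paragraph and Theorem \ref{thm:equivalence inhomogeneous} yields (\ref{eq:le W peetre}) in the inhomogeneous form, completing the proof.

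**Main obstacle.** The only real point requiring care, rather than a verbatim transcription of the homogeneous argument, is the treatment of the low-frequency band $j=0$, where $\varphi_0$ has support in a ball rather than an annulus: one must check that Lemmas \ref{lem:A inequality}, \ref{lem:peetre le sum} and the maximal-function estimates are still available at the single scale $j=0$ (they are, since those lemmas are stated for all $j\in\mathbb Z$ and place no admissibility condition of the annulus type on $\varphi$ beyond $\varphi\in\mathscr S$, and the relevant reducing-operator estimates in Lemma \ref{lem:W AQ} and Corollary \ref{cor:gamma le} are scale-uniform). Everything else is a routine restriction of the homogeneous proof to $\mathbb N_0$ in place of $\mathbb Z$.
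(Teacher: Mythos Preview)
Your proposal is correct and follows exactly the approach the paper intends: the paper's own proof of Theorem \ref{thm:Peetre char inho} is the single sentence ``The proof is similar to Theorem \ref{thm:Peetre char} and we omit it here.'' Your attention to the $j=0$ band is appropriate, though note that Lemmas \ref{lem:A inequality} and \ref{lem:peetre le sum} as stated do assume $\varphi\in\mathcal A$ (annular Fourier support), so your parenthetical is slightly off; the fix is that the proofs of those lemmas use only the compact Fourier support of $\varphi_j*\vec f$ at scale $2^j$, which $\varphi_0=\Phi$ also provides.
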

		\begin{proof}
			The proof is similar to Theorem \ref{thm:Peetre char} and we omit it here.
		\end{proof}
		
		\subsection{Characterizations by Lusin-area function}
		\begin{thm}
			\label{thm:char lusin homo} Let $0<p<t<r<\infty$ or $0<p\le t<r=\infty$.
			Let $s\in\mathbb{R}$, $W\in  \mathcal  A_{p}$. 
			Let $ q \in (0, p + \delta_W) $ where $\delta_W >0$  is the same as in Lemma \ref{lem:W AQ}. 
			Then
			\begin{equation*}
				\|\vec{f}\|_{\dot{F}_{p,t,r}^{s,q}(W)}\approx\|\vec{f}\|_{\dot{F}_{p,t,r}^{s,q}(W)}^{\bullet},
			\end{equation*}
			where
			\begin{equation*}
				\|\vec{f}\|_{\dot{F}_{p,t,r}^{s,q}(W)}^{\bullet}:=\bigg\|\bigg\{\sum_{j\in\mathbb{Z}}2^{jsq}2^{jn}\int_{B(\cdot,2^{-j})}\Big|W^{1/p}(\cdot)\big(\varphi_{j}*\vec{f}\big)(y)\Big|^{q}\mathrm{d}y\bigg\}^{1/q}\bigg\|_{M_{p}^{t,r}}.
			\end{equation*}
		\end{thm}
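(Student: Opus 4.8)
The plan is to run the argument in parallel with the proof of Theorem~\ref{thm:Peetre char}, reducing everything to the Peetre‑type maximal function characterization and replacing the Lebesgue‑space estimates of \cite{WYZ22} by their $M_{p}^{t,r}(\ell^{q})$‑analogues (Lemmas~\ref{lem:hardy} and~\ref{lem:Littlewood max} and Corollary~\ref{cor:gamma le}). Fix a family $\{A_{Q}\}_{Q\in\mathcal D}$ of reducing operators of order $p$ for $W$, which is weakly doubling of order $\Delta$ by Lemma~\ref{AQ AR improved}. The easy inequality $\|\vec f\|_{\dot F_{p,t,r}^{s,q}(W)}^{\bullet}\lesssim\|\vec f\|_{\dot F_{p,t,r}^{s,q}(W)}$ is immediate: fix $a>n/\min(1,q,p)+\Delta$; if $|x-y|<2^{-j}$ then $1+2^{j}|x-y|<2$, so $|W^{1/p}(x)(\varphi_{j}*\vec f)(y)|\le 2^{a}(\varphi_{j}^{*}\vec f)_{a}^{(W,p)}(x)$, whence $2^{jn}\int_{B(x,2^{-j})}|W^{1/p}(x)(\varphi_{j}*\vec f)(y)|^{q}\,\d y\lesssim[(\varphi_{j}^{*}\vec f)_{a}^{(W,p)}(x)]^{q}$; summing in $j$ and taking $M_{p}^{t,r}$‑quasinorms gives $\|\vec f\|_{\dot F_{p,t,r}^{s,q}(W)}^{\bullet}\lesssim\|\vec f\|_{\dot F_{p,t,r}^{s,q}(W)}^{\star}\approx\|\vec f\|_{\dot F_{p,t,r}^{s,q}(W)}$ by Theorem~\ref{thm:Peetre char}.

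For the converse, since $\|\vec f\|_{\dot F_{p,t,r}^{s,q}(W)}\approx\|\vec f\|_{\dot F_{p,t,r}^{s,q}(A_{Q})}^{\star}$ by Theorems~\ref{thm:equivalence} and~\ref{thm:Peetre char} (where $\|\vec f\|_{\dot F_{p,t,r}^{s,q}(A_{Q})}^{\star}$ is the Peetre‑type quasinorm built from the reducing operators, as in \eqref{eq:peetre AQ le}), it suffices to bound $\|\vec f\|_{\dot F_{p,t,r}^{s,q}(A_{Q})}^{\star}$ by $\|\vec f\|_{\dot F_{p,t,r}^{s,q}(W)}^{\bullet}$. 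Choose $A\in(0,\min(1,p,q))$ and then $a$ so large that $A(a-\Delta)>n$. For $x\in Q_{j,k}$, Lemma~\ref{lem:3.21} gives
\[
\sup_{y\in\rn}\frac{|A_{Q_{j,k}}(\varphi_{j}*\vec f)(y)|^{A}}{(1+2^{j}|x-y|)^{aA}}\lesssim\sum_{\ell\in\mathbb Z^{n}}(1+|k-\ell|)^{-A(a-\Delta)}\sum_{\|\tau\|_{\ell^{\infty}}\le1}I_{j,\ell,\tau},
\]
with $I_{j,\ell,\tau}:=2^{2jn}\int_{Q_{j,\ell+\tau}}\int_{B(0,2^{-j})}|A_{Q_{j,\ell+\tau}}(\varphi_{j}*\vec f)(v+z)|^{A}\,\d v\,\d z$. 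The crux is to convert the reducing operator into the weight \emph{without enlarging the averaging radius beyond $2^{-j}$}: as $|A_{Q'}(\varphi_{j}*\vec f)(v+z)|^{A}$ does not depend on $w$, I would average it over $w\in B(v+z,2^{-j})$ and use $|A_{Q'}\vec u|\le\|A_{Q'}W^{-1/p}(w)\|\,|W^{1/p}(w)\vec u|$; for $p\le1$, \eqref{eq:p le 1 AQW} together with weak doubling of $\{A_{Q}\}$ bounds the first factor pointwise, and for $p>1$ one argues exactly as in the case $1<p<\infty$ of Theorem~\ref{thm:Aq le W function}, inserting a H\"older estimate with exponents $p'/(p'-A)$ and $p'/A$ and using \eqref{eq:p>1 AQW ^-1/p}. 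This yields $|A_{Q_{j,\ell+\tau}}(\varphi_{j}*\vec f)(v+z)|^{A}\lesssim 2^{jn}\int_{B(v+z,2^{-j})}|W^{1/p}(w)(\varphi_{j}*\vec f)(v+z)|^{A}\,\d w$. Substituting $u=v+z$, applying Fubini (the admissible $(z,w)$ satisfy $z\in Q_{j,\ell+\tau}$, $|z-u|<2^{-j}$ and $|w-u|<2^{-j}$, so $w$ ranges over a bounded union of level‑$j$ dyadic cubes near $Q_{j,\ell+\tau}$), one is left with
\[
I_{j,\ell,\tau}\lesssim 2^{jn}\int_{cQ_{j,\ell+\tau}}G_{j}(w)\,\d w,\qquad G_{j}(w):=2^{jn}\int_{B(w,2^{-j})}|W^{1/p}(w)(\varphi_{j}*\vec f)(u)|^{A}\,\d u,
\]
for a dimensional constant $c$, the ball in $G_{j}$ having radius exactly $2^{-j}$.

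Since $w\mapsto 2^{jn}\int_{Q''}G_{j}$ depends only on $Q''\in\mathcal D_{j}$, the $\tau$‑sum and the cubes making up $cQ_{j,\ell+\tau}$ may be reindexed into the sum over $\ell$ (the weight $(1+|k-\ell|)^{-A(a-\Delta)}$ changes only by a bounded factor under bounded shifts), so that for $x\in Q_{j,k}$,
\[
\sup_{y\in\rn}\frac{|A_{Q_{j,k}}(\varphi_{j}*\vec f)(y)|^{A}}{(1+2^{j}|x-y|)^{aA}}\chi_{Q_{j,k}}(x)\lesssim\Big(\sum_{\ell'\in\mathbb Z^{n}}(1+|k-\ell'|)^{-A(a-\Delta)}2^{jn}\!\int_{Q_{j,\ell'}}\!E_{j}(G_{j})\Big)\chi_{Q_{j,k}}(x).
\]
Multiplying by $2^{jsA}$, summing over $k$, and applying Lemma~\ref{lem:Littlewood max} (legitimate as $A(a-\Delta)>n$) gives $\sum_{Q\in\mathcal D_{j}}2^{jsA}\sup_{y}\tfrac{|A_{Q}(\varphi_{j}*\vec f)(y)|^{A}}{(1+2^{j}|\cdot-y|)^{aA}}\chi_{Q}\lesssim\mathcal M(2^{jsA}E_{j}(G_{j}))$. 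Raising to the power $q/A$, summing in $j$, passing to $M_{p/A}^{t/A,r/A}(\ell^{q/A})$‑quasinorms (note $p/A>1$, $q/A>1$) and invoking Lemma~\ref{lem:hardy}, one obtains
\[
\|\vec f\|_{\dot F_{p,t,r}^{s,q}(W)}\approx\|\vec f\|_{\dot F_{p,t,r}^{s,q}(A_{Q})}^{\star}\lesssim\big\|\{2^{js}(E_{j}(G_{j}))^{1/A}\}_{j\in\mathbb Z}\big\|_{M_{p}^{t,r}(\ell^{q})}.
\]
Finally, setting $R_{j}(w):=(2^{jn}\int_{B(w,2^{-j})}|W^{1/p}(w)(\varphi_{j}*\vec f)(u)|^{q}\,\d u)^{1/q}$, since $A\le q$ and $|B(w,2^{-j})|\approx2^{-jn}$ H\"older's inequality gives $G_{j}\le c\,R_{j}^{A}$, hence $(E_{j}(G_{j}))^{1/A}\le c(E_{j}(R_{j}^{A}))^{1/A}\le c\,\mathcal{M}_{A}(R_{j})$ with $\mathcal{M}_{A}g:=(\mathcal M(g^{A}))^{1/A}$; as $A<\min(1,p,q)$, $\mathcal{M}_{A}$ is bounded on $M_{p}^{t,r}(\ell^{q})$ by Lemma~\ref{lem:hardy}, so the right‑hand side is $\lesssim\|\{2^{js}R_{j}\}_{j}\|_{M_{p}^{t,r}(\ell^{q})}=\|\vec f\|_{\dot F_{p,t,r}^{s,q}(W)}^{\bullet}$, finishing the proof.

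The step I expect to be delicate is the conversion in the third paragraph: one must turn the reducing‑operator quantities furnished by Lemma~\ref{lem:3.21} into the weight $W^{1/p}$ while keeping the averaging radius equal to $2^{-j}$ (this is why the auxiliary average is taken over balls \emph{centered at the point of evaluation}), and one must treat the cases $p\le1$ (pointwise bound from \eqref{eq:p le 1 AQW}) and $p>1$ (only the averaged bound \eqref{eq:p>1 AQW ^-1/p}, forcing the H\"older device of Theorem~\ref{thm:Aq le W function}) separately; everything else is a routine $M_{p}^{t,r}(\ell^{q})$‑transcription of the scalar arguments in \cite{WYZ22}.
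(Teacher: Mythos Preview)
Your overall scheme---reduce to the $A_Q$--Peetre quasinorm via Theorem~\ref{thm:Peetre char}, feed in Lemma~\ref{lem:3.21}, then convert $A_Q\to W^{1/p}$ and finish with Hardy--Littlewood plus H\"older---is the paper's strategy, and for $p\le1$ your argument is correct. It is, however, more elaborate than the paper's route in that range: there one simply keeps $A_Q$ through the entire maximal-function reduction (defining $g_j$ with $A_Q$ inside), applies Lemma~\ref{lem:hardy} and H\"older to reach the $q$-th power Lusin integral still with $A_Q$, and only at the last line inserts $|A_Q(\varphi_j*\vec f)(x+z)|\le\|A_QW^{-1/p}(x)\|\,|W^{1/p}(x)(\varphi_j*\vec f)(x+z)|$ with $x\in Q$, so that \eqref{eq:p le 1 AQW} kills the first factor pointwise. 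Your auxiliary $w$-average and the whole $G_j/E_j/\mathcal M_A$ machinery are therefore unnecessary when $p\le1$.

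The genuine gap is the $p>1$ conversion. Applying H\"older with exponents $p'/(p'-A)$ and $p'/A$ to the $w$-average of $\|A_{Q'}W^{-1/p}(w)\|^{A}|W^{1/p}(w)\vec u|^{A}$ yields
\[
|A_{Q'}\vec u|^{A}\lesssim\Bigl(2^{jn}\!\int_{B}|W^{1/p}(w)\vec u|^{Aw}\,\d w\Bigr)^{1/w},\qquad w=\tfrac{p'}{p'-A}>1,
\]
and by Jensen this is \emph{larger} than, not bounded by, the $A$-average on the right of your claimed inequality; equivalently the correct pointwise statement has exponent $Aw$ on both sides, which is exactly what the proof of Theorem~\ref{thm:Aq le W function} you invoke actually uses. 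Your $G_j$ therefore does not arise, and the remaining chain is unjustified. The paper sidesteps this by quoting \cite[(3.28)]{WYZ22} for $p>1$, which is precisely the estimate one gets after carrying the exponent $Aw$ (not $A$) through the Lemma~\ref{lem:3.21}/Lemma~\ref{lem:Littlewood max} argument; the subsequent Fefferman--Stein step then needs $Aw<\min(p,q)$, which is arrangeable since $Aw\to0$ as $A\to0$. Your proof is repairable along exactly these lines, but as written the $p>1$ case is incorrect.
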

		
		\begin{proof}
			From  Theorem \ref{thm:Peetre char}, it is enough to show that
			\[
			\|\vec{f}\|_{\dot{F}_{p,t,r}^{s,q}(W)}^{\bullet}\approx\|\vec{f}\|_{\dot{F}_{p,t,r}^{s,q}(W)}^{\star}.
			\]
			For any $a>0$, we first prove that
			\begin{equation}
				\|\vec{f}\|_{\dot{F}_{p,t,r}^{s,q}(W)}^{\bullet}\lesssim\|\vec{f}\|_{\dot{F}_{p,t,r}^{s,q}(W)}^{\star}.\label{eq:box le star}
			\end{equation}
		From \cite[Proof of Theorem 3.11]{WYZ22}, for any  $a>0$, $j\in\mathbb{Z}$, $x\in\mathbb{R}^{n}$,
		we have
			\begin{equation*}
			2^{jn}\int_{B(x,2^{-j})}\Big|W^{1/p}(x)\big(\varphi_{j}*\vec{f}\big)(y)\Big|^{q}\mathrm{d}y \lesssim\Big(\big(\varphi_{j}^{*}\vec{f}\big)_{a}^{(W,p)}(x)\Big)^{q}
			\end{equation*}
			which implies that (\ref{eq:box le star}) holds true. Next, we show
			that
			\[
			\|\vec{f}\|_{\dot{F}_{p,t,r}^{s,q}(W)}^{\star}\lesssim\|\vec{f}\|_{\dot{F}_{p,t,r}^{s,q}(W)}^{\bullet}
			\]
			if $a$ is sufficiently large. From (\ref{eq:peetre AQ le}), it suffices
			to prove that
			\[
			\|\vec{f}\|_{\dot{F}_{p,t,r}^{s,q}(A_{Q})}^{\star}\lesssim\|\vec{f}\|_{\dot{F}_{p,t,r}^{s,q}(W)}^{\bullet}.
			\]

			Case $p\in(0,1]$. Let
			\[
			g_{j}(z):=\sum_{Q\in\mathcal{D}_{j}}2^{js}2^{jn}\int_{B(0,2^{-j})}|A_{Q}(\varphi_{j}*\vec{f})(v+z)|^{A}\mathrm{d}v\chi_{Q}(z).
			\]
			From \cite[(3.26)]{WYZ22}, if $A(a-\Delta )>2n$, for any $x\in\mathbb{R}^{n}$,
			
			\begin{align}
				\sum_{Q\in\mathcal{D}_{j}}\left[2^{js} \sup_{y\in\mathbb{R}^{n}}\frac{|A_{Q}(\varphi_{j}*\vec{f})(y)| }{(1+2^{j}|x-y|)^{a }}\chi_{Q}(x) \right] ^q 
				& \lesssim\mathcal{M}(g_{j}) ^{q/A}(x).\label{eq:le HL}
			\end{align}
			By (\ref{eq:le HL}), Lemma \ref{lem:hardy}, the H\"{o}lder inequality,
			and Lemma \ref{lem:W AQ}, we conclude 
			\begin{align*}
				& \|\vec{f}\|_{\dot{F}_{p,t,r}^{s,q}(W)}^{\star} \\
				& \lesssim\bigg\|\bigg\{\sum_{j\in\mathbb{Z}}\big(\mathcal{M}(g_{j})\big)^{q/A}\bigg\}^{A/q}\bigg\|_{M_{p/A}^{t/A,r/A}}^{1/A}\\
				& \lesssim\bigg\|\bigg\{\sum_{j\in\mathbb{Z}}\sum_{Q\in\mathcal{D}_{j}}2^{jsq}\Big(2^{jn}\int_{B(0,2^{-j})}|A_{Q}(\varphi_{j}*\vec{f})(\cdot+z)|^{A}\mathrm{d}z\Big)^{q/A}\chi_{Q}\bigg\}^{1/q}\bigg\|_{M_{p}^{t,r}} \\
	 &\lesssim\bigg\|\bigg\{\sum_{j\in\mathbb{Z}}\sum_{Q\in\mathcal{D}_{j}}2^{jsq}2^{jn}\int_{B(0,2^{-j})}|A_{Q}(\varphi_{j}*\vec{f})(\cdot+z)|^{q}\mathrm{d}z\chi_{Q}\bigg\}^{1/q}\bigg\|_{M_{p}^{t,r}}\\
				& \lesssim\|\vec{f}\|_{\dot{F}_{p,t,r}^{s,q}(W)}^{\bullet}.
			\end{align*}
			
			Case $p\in(1,\infty)$.
			The following estimate comes from \cite[(3.28)]{WYZ22}. Let $a>0$
			such that $A(a-\Delta)-(p'-A)n/p'>n$. Let $A\in(0,1)$ sufficiently
			small such that $(p'-A)q/(Ap')>1$, and $q/A>1$. Then we have
			\begin{align}
				& \sum_{k\in\mathbb{Z}^{n}}\sup_{y\in\mathbb{R}^{n}}\frac{|A_{Q_{j,k}}\varphi_{j}* \vec f (y)|^{q}}{(1+2^{j}|x-y)^{q}}\chi_{Q_{j, k}}(x)\nonumber \\
				& \lesssim\bigg\{\mathcal{M}\bigg(\Big[2^{jn}\int_{B(0,2^{-j})}|W^{1/p}(\cdot)(\varphi_{j}*\vec{f})(\cdot+z)|^{A}\mathrm{d}z\bigg]^{\frac{p'}{p'-A}}\bigg)(x)\bigg\}^{\frac{(p'-A)q}{Ap'}}.\label{eq:3.28}
			\end{align}
			By (\ref{eq:3.28}), Lemma \ref{lem:hardy} and the H\"{o}lder inequality,
			we obtain
			\begin{align*}
				& \|\vec{f}\|_{\dot{F}_{p,t,r}^{s,q}(A_{Q})}^{\star}\\
				& \lesssim\bigg\|\bigg\{\sum_{j\in\mathbb{Z}}2^{jsq}\bigg[2^{jn}\int_{B(0,2^{-j})}|W^{1/p}(\cdot)(\varphi_{j}*\vec{f})(\cdot+z)|^{A}\mathrm{d}z\bigg]^{q/A}\bigg\}^{1/q}\bigg\|_{M_{p}^{t,r}}\\
				& \lesssim\bigg\|\bigg\{\sum_{j\in\mathbb{Z}}2^{jsq}2^{jn}\int_{B(0,2^{-j})}|W^{1/p}(\cdot)(\varphi_{j}*\vec{f})(\cdot+z)|^{q}\mathrm{d}z\bigg\}^{1/q}\bigg\|_{M_{p}^{t,r}}\\
				& \approx\|\vec{f}\|_{\dot{F}_{p,t,r}^{s,q}(W)}^{\bullet}.\qedhere
			\end{align*}
		\end{proof}
		Theorem \ref{thm:char lusin inhomo} has an analogue of  the inhomogeneous
		spaces. Since the proof is similar, we omit it here.
		\begin{thm}
			\label{thm:char lusin inhomo} 
			Let $0<p<t<r<\infty$ or $0<p\le t<r=\infty$.
			Let $s\in\mathbb{R}$, $W\in  \mathcal  A_{p}$. 
			Let $ q \in (0, p + \delta_W) $ where $\delta_W >0$  is the same as in Lemma \ref{lem:W AQ}. 
			Then
			\begin{equation*}
				\|\vec{f}\|_{ F_{p,t,r}^{s,q}(W)}\approx\|\vec{f}\|_{F_{p,t,r}^{s,q}(W)}^{\bullet},
			\end{equation*}
			where
			\begin{equation*}
				\|\vec{f}\|_{\dot{F}_{p,t,r}^{s,q}(W)}^{\bullet}:=\bigg\|\bigg\{\sum_{j= 0}^\infty 2^{jsq}2^{jn}\int_{B(\cdot,2^{-j})}\Big|W^{1/p}(\cdot)\big(\varphi_{j}*\vec{f}\big)(y)\Big|^{q}\mathrm{d}y\bigg\}^{1/q}\bigg\|_{M_{p}^{t,r}}.
			\end{equation*}
		\end{thm}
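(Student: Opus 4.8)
The plan is to derive Theorem~\ref{thm:char lusin inhomo} from the inhomogeneous Peetre-type maximal function characterization, Theorem~\ref{thm:Peetre char inho}, following the same route used for the homogeneous case in Theorem~\ref{thm:char lusin homo}. Thus it suffices to prove $\|\vec{f}\|_{F_{p,t,r}^{s,q}(W)}^{\bullet}\approx\|\vec{f}\|_{F_{p,t,r}^{s,q}(W)}^{\star}$, where now the sums run over $j\ge 0$ and $\varphi_{0}=\Phi$ with $(\Phi,\varphi)\in\mathcal{A}_{+}$. The inequality $\|\vec{f}\|_{F_{p,t,r}^{s,q}(W)}^{\bullet}\lesssim\|\vec{f}\|_{F_{p,t,r}^{s,q}(W)}^{\star}$ holds for every $a>0$: for each $j\ge 0$ and $x\in\rn$ one bounds the average of $|W^{1/p}(x)(\varphi_{j}*\vec{f})(y)|^{q}$ over $B(x,2^{-j})$ by its supremum, inserts the harmless factor $(1+2^{j}|y|)^{-aq}\approx 1$ valid on that ball, recognizes $(\varphi_{j}^{*}\vec{f})_{a}^{(W,p)}(x)$, and then takes the $M_{p}^{t,r}$ quasi-norm of the $\ell^{q}$-sum over $j\ge 0$.

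For the converse, fix a sequence $\{A_{Q}\}_{Q\in\mathcal{D}}$ of reducing operators of order $p$ for $W$; by Lemma~\ref{AQ AR improved} it is strongly, hence weakly, doubling. Arguing as for (\ref{eq:peetre AQ le}) but in the inhomogeneous setting and using Theorem~\ref{thm:equivalence inhomogeneous}, it is enough to prove $\|\vec{f}\|_{F_{p,t,r}^{s,q}(A_{Q})}^{\star}\lesssim\|\vec{f}\|_{F_{p,t,r}^{s,q}(W)}^{\bullet}$, where $\|\cdot\|_{F_{p,t,r}^{s,q}(A_{Q})}^{\star}$ denotes the $\{A_{Q}\}$-version of the Peetre norm with the sum over $j\ge 0$. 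Apply Lemma~\ref{lem:3.21} with $j$ restricted to $\mathbb{N}_{0}$, and split into the two usual cases. In the case $p\in(0,1]$, set $g_{j}(z):=\sum_{Q\in\mathcal{D}_{j}}2^{js}2^{jn}\int_{B(0,2^{-j})}|A_{Q}(\varphi_{j}*\vec{f})(v+z)|^{A}\,\d v\,\chi_{Q}(z)$, realize each of the integrals over $Q_{j,\ell+\tau}$ appearing in Lemma~\ref{lem:3.21} as $\int_{Q_{j,\ell+\tau}}g_{j}$, and for $x\in Q_{j,k}$ enclose $x$ together with $Q_{j,\ell+\tau}$ in the smallest ball $B_{x}$, of radius $\approx 2^{-j}(1+|k-\ell|)$; this yields the bound by $2^{-jn}(1+|k-\ell|)^{n}\mathcal{M}(g_{j})(x)$. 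Choosing $A\in(0,\min(1,p,q))$ and $a$ large enough that $A(a-\Delta)>2n$ and summing the factor $(1+|k-\ell|)^{-A(a-\Delta)+n}$ over $\ell$, one obtains the pointwise estimate $2^{js}\sum_{Q\in\mathcal{D}_{j}}\sup_{y}\frac{|A_{Q}(\varphi_{j}*\vec{f})(y)|^{A}}{(1+2^{j}|x-y|)^{aA}}\chi_{Q}(x)\lesssim\mathcal{M}(g_{j})(x)$. Then Lemma~\ref{lem:hardy} on $M_{p/A}^{t/A,r/A}$, H\"{o}lder's inequality in the inner integral to pass from the $A$-th to the $q$-th power (using $A<q$), and the uniform bound $\sup_{Q}\|A_{Q}W^{-1/p}\|_{L^{\infty}(Q)}<\infty$ from (\ref{eq:p le 1 AQW}) to replace $A_{Q}(\varphi_{j}*\vec{f})$ by $W^{1/p}(\varphi_{j}*\vec{f})$, deliver $\|\vec{f}\|_{F_{p,t,r}^{s,q}(A_{Q})}^{\star}\lesssim\|\vec{f}\|_{F_{p,t,r}^{s,q}(W)}^{\bullet}$. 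In the case $p\in(1,\infty)$, use \cite[(3.28)]{WYZ22}, which is valid here with $j\ge 0$: choose $a$ with $A(a-\Delta)-(p'-A)n/p'>n$ and $A\in(0,1)$ small enough that $(p'-A)q/(Ap')>1$ and $q/A>1$, so that $\sum_{k}\sup_{y}\frac{|A_{Q_{j,k}}\varphi_{j}*\vec{f}(y)|^{q}}{(1+2^{j}|x-y|)^{q}}\chi_{Q_{j,k}}(x)$ is dominated by $\{\mathcal{M}([2^{jn}\int_{B(0,2^{-j})}|W^{1/p}(\cdot)(\varphi_{j}*\vec{f})(\cdot+z)|^{A}\,\d z]^{p'/(p'-A)})(x)\}^{(p'-A)q/(Ap')}$; then Lemma~\ref{lem:hardy} with these exponents together with H\"{o}lder's inequality again gives $\|\vec{f}\|_{F_{p,t,r}^{s,q}(A_{Q})}^{\star}\lesssim\|\vec{f}\|_{F_{p,t,r}^{s,q}(W)}^{\bullet}$.

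The only genuine point requiring care, and the main (minor) obstacle, is the treatment of the level-zero term $\varphi_{0}=\Phi$: Lemmas~\ref{lem:A inequality}, \ref{lem:peetre le sum} and \ref{lem:3.21}, as well as \cite[(3.28)]{WYZ22}, are stated for $\varphi\in\mathcal{A}$ whose Fourier transform is supported in an annulus, whereas $\Phi$ has Fourier transform supported in a ball. One checks that these maximal-function estimates survive verbatim with $\Phi$ in place of $\varphi$, since their proofs use only the Schwartz decay of the kernel and of $\Phi*\vec{f}$ and no cancellation; alternatively, one first passes to a convenient admissible pair using the independence of the quasi-norms from the choice of $(\Phi,\varphi)\in\mathcal{A}_{+}$ established in Theorem~\ref{thm:equivalence inhomogeneous}. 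With this bookkeeping in place, the argument is a verbatim transcription of the proof of Theorem~\ref{thm:char lusin homo} with $\sum_{j\in\mathbb{Z}}$ replaced by $\sum_{j\ge 0}$, which is why we omit the details.
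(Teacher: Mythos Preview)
Your proposal is correct and follows exactly the approach the paper intends: the paper omits the proof entirely, stating only that it is similar to the homogeneous case, and your argument is precisely that verbatim transcription of the proof of Theorem~\ref{thm:char lusin homo} with $\sum_{j\in\mathbb{Z}}$ replaced by $\sum_{j\ge 0}$. Your remark on the level-zero term $\varphi_{0}=\Phi$ is a useful piece of bookkeeping that the paper does not make explicit, and your observation that the relevant maximal estimates rely only on the compact Fourier support (and Schwartz decay) rather than the annular support is the right justification.
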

		
		\subsection{Characterizations by Littlewood-Paley $g_{\lambda}^{*}$-function}
		
		\begin{thm}
			\label{thm:char g lambda star homo} Let $0<p<t<r<\infty$ or $0<p\le t<r=\infty$.
			Let $s\in\mathbb{R}$, $W\in  \mathcal  A_{p}$.
			Let $ q \in (0, p + \delta_W) $ where $\delta_W >0$  is the same as in Lemma \ref{lem:W AQ}. 
			Let $\lambda\in(1/\min(1, p,q)+ \Delta / n ,\infty)$ where $\Delta$ is the same as in Lemma \ref{AQ AR improved}.
			Then
			\[
			\|\vec{f}\|_{\dot{F}_{p,t,r}^{s,q}(W)}\approx\|\vec{f}\|_{\dot{F}_{p,t,r}^{s,q}(W)}^{\clubsuit},
			\]
			where
			\[
			\|\vec{f}\|_{\dot{F}_{p,t,r}^{s,q}(W)}^{\clubsuit}:=\bigg\|\bigg\{\sum_{j\in\mathbb{Z}}2^{jsq}2^{jn}\int_{\mathbb{R}^{n}}\frac{|W^{1/p}(\cdot)(\varphi_{j}*\vec{f})(y)|^{q}}{(1+2^{j}|\cdot-y|)^{\lambda nq}}\mathrm{d}y\bigg\}^{1/q}\bigg\|_{M_{p}^{t,r}}.
			\]
		\end{thm}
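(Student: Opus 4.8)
The plan is to prove the two inequalities $\|\vec f\|_{\dot F_{p,t,r}^{s,q}(W)}\lesssim\|\vec f\|_{\dot F_{p,t,r}^{s,q}(W)}^{\clubsuit}$ and $\|\vec f\|_{\dot F_{p,t,r}^{s,q}(W)}^{\clubsuit}\lesssim\|\vec f\|_{\dot F_{p,t,r}^{s,q}(W)}$ separately. For the first one I would compare the $g_{\lambda}^{*}$-function with the Lusin-area function: if $y\in B(x,2^{-j})$ then $1+2^{j}|x-y|\le 2$, so discarding the rest of the integral gives $2^{jn}\int_{B(x,2^{-j})}|W^{1/p}(x)\varphi_{j}*\vec f(y)|^{q}\,\d y\le 2^{\lambda nq}\,2^{jn}\int_{\mathbb R^{n}}|W^{1/p}(x)\varphi_{j}*\vec f(y)|^{q}(1+2^{j}|x-y|)^{-\lambda nq}\,\d y$, i.e. $\|\vec f\|_{\dot F_{p,t,r}^{s,q}(W)}^{\bullet}\lesssim\|\vec f\|_{\dot F_{p,t,r}^{s,q}(W)}^{\clubsuit}$; combined with Theorem \ref{thm:char lusin homo} this direction is done.

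For the converse, let $\{A_{Q}\}_{Q\in\mathcal{D}}$ be a sequence of reducing operators of order $p$ for $W$, which by Lemma \ref{AQ AR improved} is weakly doubling of order $\Delta$. For $x\in Q\in\mathcal{D}_{j}$ one has $|W^{1/p}(x)\varphi_{j}*\vec f(y)|\le\|W^{1/p}(x)A_{Q}^{-1}\|\,|A_{Q}\varphi_{j}*\vec f(y)|$, so the quantity $G_{j}(x):=2^{jn}\int_{\mathbb R^{n}}|W^{1/p}(x)\varphi_{j}*\vec f(y)|^{q}(1+2^{j}|x-y|)^{-\lambda nq}\,\d y$ is dominated by $\gamma_{j}(x)^{q}k_{j}(x)$, where $\gamma_{j}(x)=\sum_{Q\in\mathcal{D}_{j}}\|W^{1/p}(x)A_{Q}^{-1}\|\chi_{Q}(x)$ and $k_{j}(x)=\sum_{Q\in\mathcal{D}_{j}}\big(\sup_{z\in Q}2^{jn}\int_{\mathbb R^{n}}|A_{Q}\varphi_{j}*\vec f(y)|^{q}(1+2^{j}|z-y|)^{-\lambda nq}\,\d y\big)\chi_{Q}(x)$. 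Since $2^{js}k_{j}^{1/q}$ is constant on each $Q\in\mathcal{D}_{j}$, it equals $E_{j}(2^{js}k_{j}^{1/q})$, so Corollary \ref{cor:gamma le} (using $q\in(0,p+\delta_{W})$) yields $\|\vec f\|_{\dot F_{p,t,r}^{s,q}(W)}^{\clubsuit}\le\|\{\gamma_{j}E_{j}(2^{js}k_{j}^{1/q})\}_{j}\|_{M_{p}^{t,r}(\ell^{q})}\lesssim\|\{2^{js}k_{j}^{1/q}\}_{j}\|_{M_{p}^{t,r}(\ell^{q})}$. By Theorem \ref{thm:equivalence} it then suffices to bound the last quantity by $\|\vec f\|_{\dot F_{p,t,r}^{s,q}(A_{Q})}$.

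The heart of the argument is the choice of an auxiliary exponent. Because $\lambda n-\Delta>n/\min(1,p,q)$, one can fix $A$ with $n/(\lambda n-\Delta)<A<\min(1,p,q)$, so that simultaneously $A<q$ and $A(\lambda n-\Delta)>n$. Writing $\frac{|A_{Q}\varphi_{j}*\vec f(y)|^{q}}{(1+2^{j}|z-y|)^{\lambda nq}}=\Big(\frac{|A_{Q}\varphi_{j}*\vec f(y)|}{(1+2^{j}|z-y|)^{\lambda n}}\Big)^{q-A}\cdot\frac{|A_{Q}\varphi_{j}*\vec f(y)|^{A}}{(1+2^{j}|z-y|)^{\lambda nA}}$ and applying Lemma \ref{lem:3.29} to the first factor yields $2^{jn}\int_{\mathbb R^{n}}\frac{|A_{Q}\varphi_{j}*\vec f(y)|^{q}}{(1+2^{j}|z-y|)^{\lambda nq}}\,\d y\lesssim\big(2^{jn}\int_{\mathbb R^{n}}\frac{|A_{Q}\varphi_{j}*\vec f(y)|^{A}}{(1+2^{j}|z-y|)^{\lambda nA}}\,\d y\big)^{q/A}$. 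For $Q=Q_{j,k}$ and $z\in Q_{j,k}$, I would split the integral over the cubes $Q_{j,l}$, use $1+2^{j}|z-y|\approx 1+|k-l|$ on $Q_{j,l}$ and the weak doubling bound $\|A_{Q_{j,k}}A_{Q_{j,l}}^{-1}\|\lesssim(1+|k-l|)^{\Delta}$ to obtain, with $g_{j}:=\sum_{l}|2^{js}A_{Q_{j,l}}\varphi_{j}*\vec f|^{A}\chi_{Q_{j,l}}$, the bound $2^{jsA}\,2^{jn}\int_{\mathbb R^{n}}\frac{|A_{Q_{j,k}}\varphi_{j}*\vec f(y)|^{A}}{(1+2^{j}|z-y|)^{\lambda nA}}\,\d y\lesssim\sum_{l}(1+|k-l|)^{-A(\lambda n-\Delta)}2^{jn}\int_{Q_{j,l}}g_{j}$. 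Since $A(\lambda n-\Delta)>n$, Lemma \ref{lem:Littlewood max} bounds the right-hand side by $\mathcal{M}(g_{j})(x)$ for $x\in Q_{j,k}$; raising to the power $q/A$ gives $2^{jsq}k_{j}(x)\lesssim[\mathcal{M}(g_{j})(x)]^{q/A}$ for every $x$.

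To conclude, I would raise the $M_{p}^{t,r}(\ell^{q})$-norm to the power $A$ and use $2^{jsq}k_{j}\lesssim\mathcal{M}(g_{j})^{q/A}$ to bound $\|\{2^{js}k_{j}^{1/q}\}_{j}\|_{M_{p}^{t,r}(\ell^{q})}^{A}$ by $\|\{\mathcal{M}(g_{j})\}_{j}\|_{M_{p/A}^{t/A,r/A}(\ell^{q/A})}$; since $p/A>1$ and $q/A>1$, Lemma \ref{lem:hardy} removes the maximal operators, and expanding $g_{j}^{q/A}=\sum_{l}(2^{js}|A_{Q_{j,l}}\varphi_{j}*\vec f|\chi_{Q_{j,l}})^{q}$ shows that what remains equals $\|\vec f\|_{\dot F_{p,t,r}^{s,q}(A_{Q})}^{A}$. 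This gives $\|\vec f\|_{\dot F_{p,t,r}^{s,q}(W)}^{\clubsuit}\lesssim\|\vec f\|_{\dot F_{p,t,r}^{s,q}(A_{Q})}\approx\|\vec f\|_{\dot F_{p,t,r}^{s,q}(W)}$, finishing the proof. The main obstacle — and the only place where the hypothesis $\lambda>1/\min(1,p,q)+\Delta/n$ is genuinely used — is securing an exponent $A$ with both $A<\min(1,p,q)$ and $A(\lambda n-\Delta)>n$; once $A$ is fixed, the remaining steps are routine applications of Lemmas \ref{lem:3.29}, \ref{lem:Littlewood max}, \ref{lem:hardy}, Corollary \ref{cor:gamma le}, and Theorem \ref{thm:equivalence}.
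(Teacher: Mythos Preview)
Your proposal is correct and follows essentially the same route as the paper: the easy inequality is obtained by comparing with the Lusin-area functional and invoking Theorem~\ref{thm:char lusin homo}, while the converse passes through the reducing-operator version via Corollary~\ref{cor:gamma le}, then applies Lemma~\ref{lem:3.29}, the weak doubling bound, Lemma~\ref{lem:Littlewood max}, and the Fefferman--Stein estimate (Lemma~\ref{lem:hardy}) to reach $\|\vec f\|_{\dot F_{p,t,r}^{s,q}(A_Q)}$. The only cosmetic difference is that the paper names the intermediate quantity $\|\vec f\|_{\dot F_{p,t,r}^{s,q}(A_Q)}^{\clubsuit}$, which coincides with your $\|\{2^{js}k_j^{1/q}\}_j\|_{M_p^{t,r}(\ell^q)}$.
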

		
		\begin{proof}
			For any $x\in\mathbb{R}^{n},$ $y\in B(x,2^{-j})$, $1+2^{j}|x-y|\approx1$.
			Then we conclude
			\begin{equation*}
				\int_{B(x,2^{-j})}\Big|W^{1/p}(x)\big(\varphi_{j}*\vec{f}\big)(y)\Big|^{q}\mathrm{d}y  \lesssim \int_{\mathbb{R}^{n}}\frac{\Big|W^{1/p}(x)\big(\varphi_{j}*\vec{f}\big)(y)\Big|^{q}}{(1+2^{j}|x-y|)^{\lambda nq}}\mathrm{d}y,
			\end{equation*}
			which implies that $\|\vec{f}\|_{\dot{F}_{p,t,r}^{s,q}(W)}\approx\|\vec{f}\|_{\dot{F}_{p,t,r}^{s,q}(W)}^{\bullet}\lesssim\|\vec{f}\|_{\dot{F}_{p,t,r}^{s,q}(W)}^{\clubsuit}$. 			
			Let $\{A_{Q}\}_{Q\in\mathcal{D}}$ be a sequence of reducing operators of order $p$ for
			$W$. Define
			\[
			\|\vec{f}\|_{\dot{F}_{p,t,r}^{s,q}(A_{Q})}^{\clubsuit}:=\bigg\|\bigg\{\sum_{j\in\mathbb{Z}}\sum_{Q\in\mathcal{D}_{j}}2^{jsq}2^{jn}\sup_{z\in Q}\int_{\mathbb{R}^{n}}\frac{|A_{Q}(\varphi_{j}*\vec{f})(y)|^{q}}{(1+2^{j}|z-y|)^{\lambda nq}}\mathrm{d}y\bigg\}^{1/q}\chi_{Q}\bigg\|_{M_{p}^{t,r}}.
			\]
			We first show that
			\[
			\|\vec{f}\|_{\dot{F}_{p,t,r}^{s,q}(W)}^{\clubsuit}\lesssim\|\vec{f}\|_{\dot{F}_{p,t,r}^{s,q}(A_{Q})}^{\clubsuit}.
			\]
			Let
			\[
			\gamma_{j}(x):=\sum_{Q\in\mathcal{D}_{j}}\|W^{1/p}(x)A_{Q}^{-1}\|\chi_{Q}(x),
			\]
			\[
			h_{j}(x):=2^{js}2^{jn/q}\bigg[\int_{\mathbb{R}^{n}}\frac{|W^{1/p}(x)(\varphi_{j}*\vec{f})(y)|^{q}}{(1+2^{j}|x-y|)^{\lambda nq}}\mathrm{d}y\bigg]^{1/q},
			\]
			and
			\[
			f_{j}(x):=\sum_{Q\in\mathcal{D}_{j}}2^{js}2^{jn/q}\bigg[\sup_{z\in Q}\int_{\mathbb{R}^{n}}\frac{|A_{Q}(\varphi_{j}*\vec{f})(y)|^{q}}{(1+2^{j}|z-y|)^{\lambda nq}}\mathrm{d}y\bigg]^{1/q}\chi_{Q}(x).
			\]
			Then $h_{j}\le\gamma_{j}f_{j}$. Note that $f_{j}$ is a constant
			on each cube $Q\in\mathcal{D}_{j}$. By Corollary \ref{cor:gamma le},
			we have
			\begin{align*}
				\|\vec{f}\|_{\dot{F}_{p,t,r}^{s,q}(W)}^{\clubsuit} & =\|\{h_{j}\}\|_{M_{p}^{t,r}(\ell^{q})}\le\|\{\gamma_{j}f_{j}\}\|_{M_{p}^{t,r}(\ell^{q})}  \\
				& \lesssim\|\{f_{j}\}\|_{M_{p}^{t,r}(\ell^{q})}=\|\vec{f}\|_{\dot{F}_{p,t,r}^{s,q}(A_{Q})}^{\clubsuit}.
			\end{align*}
			Next we prove
			\begin{equation*}
				\|\vec{f}\|_{\dot{F}_{p,t,r}^{s,q}(A_{Q})}^{\clubsuit} \lesssim  \|\vec{f}\|_{\dot{F}_{p,t,r}^{s,q}(W)}.
			\end{equation*}
		From \cite[Proof of Theorem 3.14.]{WYZ22},  for $A \in (0, \min\{p,q\})$,  we have
		\begin{align*}
		& \sum_{j\in\mathbb{Z}} 2^{jsq}\sum_{Q\in\mathcal{D}_{j}}2^{jn}\sup_{z\in Q}\int_{\mathbb{R}^{n}}\frac{|A_{Q}(\varphi_{j}*\vec{f})(y)|^{q}}{(1+2^{j}|z-y|)^{\lambda nq}}\mathrm{d}y\chi_{Q}(x) \\
		& \lesssim  \sum_{j\in\mathbb{Z}} \bigg[\mathcal{M}\bigg(\sum_{Q\in\mathcal{D}_{j}}| 2^{js} A_{Q}(\varphi_{j}*\vec{f})|^{A}\chi_{Q}\bigg)(x)\bigg]^{q/A}.
		\end{align*}
			Then by Lemma \ref{lem:hardy} and Theorem \ref{thm:equivalence}, we have
			\begin{align*}
				\|\vec{f}\|_{\dot{F}_{p,t,r}^{s,q}(A_{Q})}^{\clubsuit} & \lesssim\bigg\|\bigg\{\sum_{j\in\mathbb{Z}} \bigg[\mathcal{M}\bigg(\sum_{Q\in\mathcal{D}_{j}}|2^{js} A_{Q_{j, k}}(\varphi_{j}*\vec{f})|^{A}\chi_{Q}\bigg) \bigg]^{q/A}\Bigg\}^{1/q}\bigg\|_{M_{p}^{t,r}}\\
				& \lesssim\|\vec{f}\|_{\dot{F}_{p,t,r}^{s,q}(A_{Q})}\approx\|\vec{f}\|_{\dot{F}_{p,t,r}^{s,q}(W)}.\qedhere
			\end{align*}
		\end{proof}
		
		Theorem \ref{thm:char g lambda star homo} has an analogue of the
		inhomogeneous spaces. The proof is similar, so we omit it here.
		
		\begin{thm}
			\label{thm:char g lambda star inhomo} 
			Let $0<p<t<r<\infty$ or $0<p\le t<r=\infty$.
			Let $s\in\mathbb{R}$, $W\in  \mathcal  A_{p}$.
			Let $ q \in (0, p + \delta_W) $ where $\delta_W >0$  is the same as in Lemma \ref{lem:W AQ}. 	
			Let $\lambda\in(1/\min(1, p,q)+ \Delta / n ,\infty)$ where $\Delta$ is the same as in Lemma \ref{AQ AR improved}.
			Then
			\[
			\|\vec{f}\|_{F_{p,t,r}^{s,q}(W)}\approx\|\vec{f}\|_{F_{p,t,r}^{s,q}(W)}^{\clubsuit},
			\]
			where
			\[
			\|\vec{f}\|_{\dot{F}_{p,t,r}^{s,q}(W)}^{\clubsuit}:=\bigg\|\bigg\{\sum_{j=0}^\infty 2^{jsq}2^{jn}\int_{\mathbb{R}^{n}}\frac{|W^{1/p}(\cdot)(\varphi_{j}*\vec{f})(y)|^{q}}{(1+2^{j}|\cdot-y|)^{\lambda nq}}\mathrm{d}y\bigg\}^{1/q}\bigg\|_{M_{p}^{t,r}}.
			\]
		\end{thm}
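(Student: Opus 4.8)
The plan is to imitate the proof of Theorem~\ref{thm:char g lambda star homo} essentially line by line, making throughout the substitutions $\sum_{j\in\mathbb{Z}}\rightsquigarrow\sum_{j=0}^{\infty}$, $\mathscr{S}'/\mathcal{P}(\mathbb{R}^{n})\rightsquigarrow\mathscr{S}'(\mathbb{R}^{n})$, and $\{\varphi_{j}\}_{j\in\mathbb{Z}}\rightsquigarrow$ an admissible pair $(\Phi,\varphi)\in\mathcal{A}_{+}$ with $\varphi_{0}=\Phi$, and replacing every invocation of the homogeneous equivalence Theorem~\ref{thm:equivalence} by its inhomogeneous counterpart Theorem~\ref{thm:equivalence inhomogeneous}. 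First I would dispatch the easy direction $\|\vec{f}\|_{F_{p,t,r}^{s,q}(W)}\approx\|\vec{f}\|_{F_{p,t,r}^{s,q}(W)}^{\bullet}\lesssim\|\vec{f}\|_{F_{p,t,r}^{s,q}(W)}^{\clubsuit}$: the equivalence is precisely Theorem~\ref{thm:char lusin inhomo}, and the inequality follows, for each fixed $j\ge0$, from $1+2^{j}|x-y|\approx1$ when $y\in B(x,2^{-j})$, exactly as in the homogeneous argument. This step is insensitive to the index range.

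Next, fixing a family $\{A_{Q}\}_{Q\in\mathcal{D}}$ of reducing operators of order $p$ for $W$, I would introduce the $\{A_{Q}\}$-version
\begin{equation*}
\|\vec{f}\|_{F_{p,t,r}^{s,q}(A_{Q})}^{\clubsuit}:=\bigg\|\bigg\{\sum_{j=0}^{\infty}\sum_{Q\in\mathcal{D}_{j}}2^{jsq}2^{jn}\sup_{z\in Q}\int_{\mathbb{R}^{n}}\frac{|A_{Q}(\varphi_{j}*\vec{f})(y)|^{q}}{(1+2^{j}|z-y|)^{\lambda nq}}\,\mathrm{d}y\bigg\}^{1/q}\chi_{Q}\bigg\|_{M_{p}^{t,r}},
\end{equation*}
and prove $\|\vec{f}\|_{F_{p,t,r}^{s,q}(W)}^{\clubsuit}\lesssim\|\vec{f}\|_{F_{p,t,r}^{s,q}(A_{Q})}^{\clubsuit}$ verbatim as in the homogeneous case: with $\gamma_{j}(x)=\sum_{Q\in\mathcal{D}_{j}}\|W^{1/p}(x)A_{Q}^{-1}\|\chi_{Q}(x)$ and $h_{j},f_{j}$ the weighted, respectively $A_{Q}$-, building blocks of the two $\clubsuit$-norms (extended by $h_{j}=f_{j}=0$ for $j<0$), one has $h_{j}\le\gamma_{j}f_{j}$, each $f_{j}$ is constant on every $Q\in\mathcal{D}_{j}$, and Corollary~\ref{cor:gamma le}—being a statement about $\mathbb{Z}$-indexed sequences—applies directly.

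For the reverse bound $\|\vec{f}\|_{F_{p,t,r}^{s,q}(A_{Q})}^{\clubsuit}\lesssim\|\vec{f}\|_{F_{p,t,r}^{s,q}(W)}$ I would follow the homogeneous computation: via Lemma~\ref{lem:3.29} and the weak doubling of $\{A_{Q}\}$ coming from Lemma~\ref{AQ AR improved}, pass from the $\sup_{z}$ and exponent $q$ to an exponent $A\in(0,\min(1,p,q))$ with $\lambda nA-\Delta A>n$; then, using $1+2^{j}|z-y|\approx1+|k-u|$ for $z\in Q_{j,u}$, $y\in Q_{j,k}$, together with Lemma~\ref{lem:Littlewood max}, dominate the inner sum by $\big[\mathcal{M}\big(\sum_{Q\in\mathcal{D}_{j}}|2^{js}A_{Q}(\varphi_{j}*\vec{f})|^{A}\chi_{Q}\big)\big]^{q/A}$; finally apply the vector-valued boundedness of $\mathcal{M}$ on $M_{p/A}^{t/A,r/A}(\ell^{q/A})$ from Lemma~\ref{lem:hardy} and invoke $\|\vec{f}\|_{F_{p,t,r}^{s,q}(A_{Q})}\approx\|\vec{f}\|_{F_{p,t,r}^{s,q}(W)}$ from Theorem~\ref{thm:equivalence inhomogeneous}. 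Chaining these three inequalities with the easy direction gives the claim.

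The only genuinely new point—and the one I expect to require the most care—is the low-frequency term $j=0$, where $\varphi_{0}=\Phi$ has $\mathcal{F}(\Phi)$ supported in a ball rather than an annulus, so no cancellation of $\varphi_{0}$ is available. Fortunately none of the estimates above uses such cancellation: Lemma~\ref{lem:3.29}, and the pointwise comparisons, rely only on the Schwartz decay of $\varphi_{j}*\vec{f}$ and on the weak doubling of $\{A_{Q}\}$, so the $j=0$ term is controlled by exactly the same maximal-function inequalities as the $j\ge1$ terms. The remaining bookkeeping is harmless: every dyadic cube occurring satisfies $\ell(Q)=2^{-j}\le1$, hence lies in $\mathcal{D}_{+}$, and each homogeneous input (the Lusin-area and Peetre characterizations) is replaced by its inhomogeneous form, Theorems~\ref{thm:char lusin inhomo} and \ref{thm:Peetre char inho}. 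With these adjustments the proof goes through with no further difficulty.
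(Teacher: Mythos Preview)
Your proposal is correct and matches the paper's approach exactly: the paper simply states that ``the proof is similar, so we omit it here,'' and your plan---replacing $\sum_{j\in\mathbb{Z}}$ by $\sum_{j=0}^{\infty}$, invoking the inhomogeneous auxiliary results (Theorems~\ref{thm:equivalence inhomogeneous}, \ref{thm:char lusin inhomo}, \ref{thm:Peetre char inho}) in place of their homogeneous counterparts, and noting that the $j=0$ term requires no moment cancellation---is precisely the routine adaptation the authors have in mind.
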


		\subsection{Characterizations by wavelet}
		In this subsection, we characterize the matrix weighted Bourgain-Morrey Triebel-Lizorkin
		spaces via the Meyer wavelets and the Daubechies wavelets.		
		We first recall the Meyer and Lemari\'{e} construction of a wavelet basis with the generating
		function $\theta \in \mathscr S (\rn) $ (see  \cite{LM87} and \cite{Mer87}):
		\begin{lem} \label{Meyer}
			There exist real-valued functions $\theta^{(i)}  \in \mathscr{S}  (\rn)$,  $i = 1,\ldots, 2^{n} -1$
			such that the collection  $\{ \theta^{(i)} _{\nu,k} \} =\{ 2^{\nu n/2} \theta^{(i)}(2^\nu x - k) \}$ is an orthonormal basis for
			$L^2 (\rn)$. The functions $\theta^{(i)} $ satisfy
			\begin{equation*}
				\operatorname{supp}  \mathcal F \theta^{(i)} \subset  \left\{  [ -8\pi /3, 8\pi /3] ^n  \backslash [ -2\pi /3, 2\pi /3] ^n    \right\}
			\end{equation*}
			and, hence,
			\begin{equation*}
				\int_{\mathbb R ^n } x^\gamma \theta^{(i)} (x) \d x =0 
			\end{equation*}
			for all multi-indices $\gamma $.
		\end{lem}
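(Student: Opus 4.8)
The plan is to recall the classical Meyer--Lemari\'e construction and to verify the two displayed properties directly from it. First I would set up the one-dimensional pieces: one chooses a smooth scaling function $\phi\in\mathscr{S}(\mathbb{R})$ whose Fourier transform is a real, even, $\cos$-type bump that equals $1$ on a small symmetric interval around the origin and is supported on a slightly larger one, arranged so that $\{\phi(\cdot-k)\}_{k\in\mathbb{Z}}$ is orthonormal; the associated Meyer wavelet $\psi\in\mathscr{S}(\mathbb{R})$ then has $\mathcal{F}\psi$ supported in a symmetric annulus that avoids a neighborhood of the origin, and $\{2^{j/2}\psi(2^{j}\cdot-k)\}_{j,k\in\mathbb{Z}}$ is an orthonormal basis of $L^{2}(\mathbb{R})$. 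All of this, including the precise location of the plateau and the annulus, is in \cite{LM87} and \cite{Mer87}.

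Next I would pass to $\rn$ by the standard tensor-product device. Writing $\psi^{(0)}:=\phi$ and $\psi^{(1)}:=\psi$, for each nonzero $i=(i_{1},\dots,i_{n})\in\{0,1\}^{n}$ set $\theta^{(i)}(x):=\prod_{\ell=1}^{n}\psi^{(i_{\ell})}(x_{\ell})$; there are exactly $2^{n}-1$ such $i$, which I relabel $i=1,\dots,2^{n}-1$. Each $\theta^{(i)}$ is a real-valued Schwartz function, and the one-dimensional multiresolution structure together with the orthonormality of $\{\phi(\cdot-k)\}_{k}$ and $\{2^{j/2}\psi(2^{j}\cdot-k)\}_{j,k}$ gives, coordinate by coordinate, that $\{2^{\nu n/2}\theta^{(i)}(2^{\nu}x-k):1\le i\le 2^{n}-1,\ \nu\in\mathbb{Z},\ k\in\mathbb{Z}^{n}\}$ is an orthonormal basis of $L^{2}(\rn)$; this is exactly the $n$-dimensional Meyer--Lemari\'e theorem, for which I would again cite \cite{LM87,Mer87}.

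It then remains to read off the two stated properties. For the support, $\mathcal{F}\theta^{(i)}(\xi)=\prod_{\ell}\mathcal{F}\psi^{(i_{\ell})}(\xi_{\ell})$: each factor is supported in $[-8\pi/3,8\pi/3]$ (the $\phi$-factors in the smaller interval $[-4\pi/3,4\pi/3]$), so $\operatorname{supp}\mathcal{F}\theta^{(i)}\subset[-8\pi/3,8\pi/3]^{n}$; and since $i\ne 0$ some coordinate $\ell$ has $i_{\ell}=1$, while $\mathcal{F}\psi$ vanishes on $[-2\pi/3,2\pi/3]$, so $\mathcal{F}\theta^{(i)}$ vanishes on $[-2\pi/3,2\pi/3]^{n}$; combining the two gives $\operatorname{supp}\mathcal{F}\theta^{(i)}\subset[-8\pi/3,8\pi/3]^{n}\setminus[-2\pi/3,2\pi/3]^{n}$. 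For the vanishing moments, $\mathcal{F}\theta^{(i)}$ is identically $0$ on the neighborhood $[-2\pi/3,2\pi/3]^{n}$ of the origin, hence $\partial^{\gamma}\mathcal{F}\theta^{(i)}(0)=0$ for every multi-index $\gamma$, and since $\partial^{\gamma}\mathcal{F}\theta^{(i)}(0)$ is, up to a nonzero constant depending on $\gamma$ and on the normalization of $\mathcal{F}$, equal to $\int_{\rn}x^{\gamma}\theta^{(i)}(x)\,\mathrm{d}x$, all moments of $\theta^{(i)}$ vanish.

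Since the construction is entirely classical, there is no genuine obstacle here; the only point that needs attention is bookkeeping with constants --- matching the Fourier-transform convention of the sources to the one fixed in Section~\ref{sec:preliminaries}, so that the plateau box $[-2\pi/3,2\pi/3]^{n}$ and the outer box $[-8\pi/3,8\pi/3]^{n}$ come out exactly as written and the moment identity is applied with the correct factor. I would therefore present the proof as a citation of \cite{LM87,Mer87} for the orthonormal-basis claim, supplemented by the two short verifications above.
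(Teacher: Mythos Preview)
Your proposal is correct and aligns with the paper's treatment: the paper does not give its own proof but simply states the lemma as a recall of the classical Meyer--Lemari\'e construction, citing \cite{LM87} and \cite{Mer87}. Your plan to cite the same references while additionally spelling out the tensor-product construction and the two verifications (support and moments) is, if anything, more detailed than what the paper provides.
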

		Thus, we have 
		\begin{equation*}
			f = \sum_{i=1}^{2^n -1} \sum_{Q \in \D}  \langle f , \theta_{Q}^{(i)}  \rangle \theta_{Q}^{(i)} 
		\end{equation*}
		for all $f  \in L^2(\rn)$.
		This identity
		extends to all $f \in \mathscr S ' / \mathcal P (\rn)$ (see, for instance,  \cite[proof of Theorem 7.20]{FJW91}).
		Here 
		\begin{equation*}
			\theta_{Q}^{(i)} =|Q|^{-1/2}\theta ^{(i)} ((x-x_{Q})/\ell(Q)).
		\end{equation*}		
		For any $m \in \mathbb N $, we use $C^{m}$ to denote the set of all $m$ times continuously differentiable functions on $\rn$.
		
		\begin{defn}\label{Def: Db wavelet}
			(Daubechies wavelets)
			Let $\mathcal N \in \mathbb N$. The functions $\{ \theta^{(i)}\}_{i=1}^{2^n -1} $ are Daubechies wavelets of class
			$C ^{\mathcal N  }$ if each $  \theta^{(i)}  \in C ^{\mathcal N  } $ is real-valued with bounded support, and if
			\begin{equation*}
				\left\{  \theta_{Q}^{(i)} : i \in \{1, \ldots, 2^n - 1  \} , Q \in \D             \right\}
			\end{equation*}
			is an orthonormal basis of $L^2 (\rn)$.
		\end{defn}

		The following wavelet basis was constructed by Daubechies (see, for instance,  \cite{Db88}).
		\begin{lem}
			For any  $\mathcal N \in \mathbb N$, there exist Daubechies wavelets of class  $C^\mathcal N $.
		\end{lem}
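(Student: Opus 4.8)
The plan is to follow Daubechies' classical construction, first producing a compactly supported scaling function and mother wavelet on $\mathbb{R}$ and then passing to $\mathbb{R}^{n}$ by tensor products. First I would fix an integer $N$ (the number of vanishing moments, to be chosen large in terms of $\mathcal N$) and form the trigonometric-polynomial low-pass filter
\[
m_{0}(\xi) = \left(\frac{1+e^{-i\xi}}{2}\right)^{N} L(\xi),
\]
where $L$ is a real trigonometric polynomial obtained by Fej\'er--Riesz spectral factorization from $P\big(\sin^{2}(\xi/2)\big)$ with $P(y) = \sum_{k=0}^{N-1}\binom{N-1+k}{k}y^{k}$, so that the quadrature mirror filter identity $|m_{0}(\xi)|^{2} + |m_{0}(\xi+\pi)|^{2} = 1$ and $m_{0}(0) = 1$ hold. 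Define the scaling function on the Fourier side by the (convergent) infinite product $\widehat\phi(\xi) = \prod_{j\ge 1} m_{0}(2^{-j}\xi)$; since $m_{0}$ is a trigonometric polynomial with $m_{0}(0)=1$, $\phi$ is real-valued and compactly supported, with $\operatorname{supp}\phi \subset [0,2N-1]$ and $\int\phi = 1$. The mother wavelet $\psi(x) = \sqrt2\sum_{k} (-1)^{k} \overline{h_{1-k}}\,\phi(2x-k)$, where $\{h_{k}\}$ are the Fourier coefficients of $m_{0}$, is likewise real-valued and compactly supported.

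The second step is the regularity estimate, which is the heart of the matter. One needs that if $N$ is large enough then $\phi \in C^{\mathcal N}(\mathbb{R})$; by Fourier inversion it suffices to prove a decay bound $|\widehat\phi(\xi)| \lesssim (1+|\xi|)^{-\mathcal N - 1 - \varepsilon}$. Here the naive per-factor estimate $\sup_{\xi}|L(\xi)| \le 2^{N-1/2}$ is far too weak, as it only yields $|\widehat\phi(\xi)| \lesssim |\xi|^{-1/2}$. The essential point, due to Daubechies, is that $L$ cannot be simultaneously large at $\xi$ and $2\xi$, so one must bound the products $|L(\xi)L(2\xi)\cdots L(2^{k-1}\xi)|$ collectively; the sharp bound of this kind grows like $3^{kN/2}$, and balancing it against the decay $|\xi|^{-N}$ gained from the $\cos(\xi/2)$ factors produces an effective decay exponent comparable to $N(1-\log_{4}3)\approx 0.2\,N$. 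Choosing $N$ so that $N(1-\log_{4}3) > \mathcal N + 1$ then gives $\phi \in C^{\mathcal N}$, and $\psi$ inherits the same smoothness from the finite refinement relation.

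The third step is orthonormality and tensorization. From the QMF relations one recovers (or cites) that $\{\phi(\cdot - k)\}_{k\in\mathbb Z}$ is orthonormal and generates a multiresolution analysis of $L^{2}(\mathbb R)$, whence $\{2^{\nu/2}\psi(2^{\nu}\cdot - k)\}_{\nu\in\mathbb Z,\,k\in\mathbb Z}$ is an orthonormal basis of $L^{2}(\mathbb R)$. To reach dimension $n$, set $\theta^{(0)}:=\phi$, $\theta^{(1)}:=\psi$, and for each nonzero $\varepsilon\in\{0,1\}^{n}$ put $\theta^{(i)}(x) := \prod_{\ell=1}^{n} \theta^{(\varepsilon_{\ell})}(x_{\ell})$, relabelled by $i=1,\dots,2^{n}-1$. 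Each $\theta^{(i)}$ is real-valued, compactly supported, and of class $C^{\mathcal N}$, and the standard tensor-product argument for multiresolution analyses shows that $\big\{2^{\nu n/2}\theta^{(i)}(2^{\nu}x - k): i\in\{1,\dots,2^{n}-1\},\ \nu\in\mathbb Z,\ k\in\mathbb Z^{n}\big\}$ is an orthonormal basis of $L^{2}(\mathbb R^{n})$; that is, the $\theta^{(i)}$ are Daubechies wavelets of class $C^{\mathcal N}$ in the sense of Definition \ref{Def: Db wavelet}.

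The main obstacle is clearly the regularity estimate of the second step: quantifying how many vanishing moments $N$ suffice to force $C^{\mathcal N}$ smoothness requires the delicate collective bound on the products of $L$-factors in the infinite product defining $\widehat\phi$, trading the decay from the $\cos(\xi/2)$ factors against the growth of the spectral factor $L$. The remaining ingredients (spectral factorization, compact support, the MRA orthonormality, and tensorization) are standard once that estimate is available. Alternatively, since this is a classical fact, one may simply invoke \cite{Db88} directly.
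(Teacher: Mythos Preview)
Your proposal is correct and substantially more detailed than the paper's own treatment: the paper does not prove this lemma at all but simply cites Daubechies' original construction \cite{Db88}. Your sketch of the filter construction, the Fej\'er--Riesz factorization, the regularity estimate via collective bounds on the products $|L(\xi)L(2\xi)\cdots L(2^{k-1}\xi)|$, and the tensor-product passage to $\mathbb{R}^{n}$ is exactly the content one would find by unpacking that citation, and you correctly identify the regularity step as the only nontrivial point. Since the paper treats this as a black-box reference, your final sentence (``one may simply invoke \cite{Db88} directly'') already matches the paper's approach; the rest of your proposal is a faithful outline of what that reference contains.
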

		
		\begin{lem}[Corollary 5.5.2, \cite{Db92}]  \label{Db wavele cancel}	
			If the $\psi_{j,k} (x) = 2^{-j/2} \psi (2^{-j} x -k)  $ constitute an orthonormal set in $L^2 (\mathbb R)$, with $|\psi (x) | \le C (1+|x|) ^{ -m -1 -\epsilon}  $, $\psi \in C^m (\mathbb R)$  and $  \partial ^{\ell}  \psi $ bounded for $ \ell \le m$, then $\int_\mathbb R x^\ell \psi (x) \d x =0   $ for $\ell = 0,1,\ldots,m$.
			
		\end{lem}

		Then we have the following characterizations of the matrix weighted Bourgain-Morrey Triebel-Lizorkin
		spaces via the Meyer wavelets and Daubechies wavelets.
		Examples of wavelets with the properties in Theorem \ref{thm:wavelet homo} are Meyer wavelets  and Daubechies wavelets  of class  $C^\mathcal N $ for sufficiently large $\mathcal N$.
		\begin{thm}\label{thm:wavelet homo}
			Let $0<p<t<r<\infty$ or $0<p\le t<r=\infty$. 
			Let $W \in \mathcal A_p$ and  
			$\{A_{Q}\}_{Q\in\mathcal{D}}$ be a family of reducing operators of order $p$ for $W$. 
			Let $ q \in (0, p + \delta_W) $ where $\delta_W >0$  is the same as in Lemma \ref{lem:W AQ}. 
			Suppose that for some sufficiently large positive integers
			$N_{0},R,S$ (depending on $p,q,s,n,W$), the generators $\{\psi^{(i)}\}_{i=1} ^{2^{n}-1 }$
			of a wavelet basis satisfy
			\[
			\int_{\mathbb{R}^n} x^{\gamma}\psi^{(i)}{\rm d}x=0,\;|\gamma|\le N_{0},
			\]
			and $|\partial^{\gamma}\psi^{(i)}(x)|\le c(1+|x|)^{-R}$ for all $|\gamma|\le S$.
			Then
			\begin{equation}
				\|\vec{f}\|_{\dot{F}_{p,t,r}^{s,q}(A_{Q})}\approx\sum_{i=1}^{2^{n}-1}\|\{\langle\vec{f},\psi_{Q}^{(i)}\rangle\}_{Q\in\mathcal{D}}\|_{\dot{f}_{p,t,r}^{s,q}(A_{Q})},\label{eq:wavelet Aq}
			\end{equation}
			and
			\begin{equation}
				\|\vec{f}\|_{\dot{F}_{p,t,r}^{s,q}(W)}\approx \sum_{i=1}^{2^{n}-1}\|\{\langle\vec{f},\psi_{Q}^{(i)}\rangle\}_{Q\in\mathcal{D}}\|_{\dot{f}_{p,t,r}^{s,q}(W)}. \label{eq:wavelet W}
			\end{equation}

		\end{thm}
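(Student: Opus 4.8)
The plan is to realise each wavelet block $\psi_{Q}^{(i)}$ as a smooth molecule and then feed it into the $\varphi$-transform and almost-diagonal machinery built in Section \ref{sec:Homogeneous-spaces}. First I would fix $\varphi\in\mathcal{A}$ together with its partner $\psi\in\mathcal{A}$ as in (\ref{eq:converge}), and set $\tilde{\varphi}(x)=\overline{\varphi(-x)}$, $\tilde{\psi}(x)=\overline{\psi(-x)}$. The hypotheses on $N_{0},R,S$ are chosen precisely so that, for \emph{every} admissible choice of parameters $(N,K,M,\delta)$ occurring in Theorem \ref{thm:varphi transform} and in Definition \ref{almost diag d delta} (with $d,\tilde{d},\Delta,\delta_{W}$ the constants attached to the fixed weight $W$), each family $\{\psi_{Q}^{(i)}\}_{Q\in\mathcal{D}}$ is a family of smooth $(N,K,M,\delta)$-molecules in the sense of Definition \ref{def:molecules}; this is possible because those thresholds depend only on $p,q,s,n$ and on $d,\tilde{d},\Delta,\delta_{W}$, all of which are fixed once $W$ is, and Meyer wavelets (Schwartz with every moment vanishing) and Daubechies wavelets of class $C^{\mathcal{N}}$, which by Lemma \ref{Db wavele cancel} have vanishing moments up to an order comparable with $\mathcal{N}$, meet these requirements once $\mathcal{N}$ is large.

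To prove (\ref{eq:wavelet Aq}) I would argue both directions separately. For ``$\lesssim$'', since $\vec{f}=\sum_{i=1}^{2^{n}-1}\sum_{Q\in\mathcal{D}}\langle\vec{f},\psi_{Q}^{(i)}\rangle\,\psi_{Q}^{(i)}$ in $\mathscr{S}'/\mathcal{P}(\mathbb{R}^{n})$ and each $\psi_{Q}^{(i)}$ is a smooth molecule, applying Theorem \ref{thm:varphi transform} to the sequence $\{\langle\vec{f},\psi_{Q}^{(i)}\rangle\}_{Q\in\mathcal{D}}$ and the molecules $m_{Q}=\psi_{Q}^{(i)}$ gives $\big\|\sum_{Q}\langle\vec{f},\psi_{Q}^{(i)}\rangle\psi_{Q}^{(i)}\big\|_{\dot{F}_{p,t,r}^{s,q}(A_{Q})}\lesssim\|\{\langle\vec{f},\psi_{Q}^{(i)}\rangle\}_{Q}\|_{\dot{f}_{p,t,r}^{s,q}(A_{Q})}$, and summing the finitely many indices $i$ with the quasi-triangle inequality of $M_{p}^{t,r}$ yields the bound. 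For ``$\gtrsim$'', I would apply the reproducing formula (\ref{eq:converge}) with $(\varphi,\psi,\vec{f})$ replaced by $(\tilde{\psi},\tilde{\varphi},\psi_{Q}^{(i)})$ to get $\psi_{Q}^{(i)}=\sum_{P\in\mathcal{D}}\langle\psi_{Q}^{(i)},\tilde{\psi}_{P}\rangle\tilde{\varphi}_{P}$, with convergence strong enough to be tested against $\vec{f}\in\mathscr{S}'/\mathcal{P}(\mathbb{R}^{n})$, so that $\langle\vec{f},\psi_{Q}^{(i)}\rangle=\sum_{P}\overline{\langle\psi_{Q}^{(i)},\tilde{\psi}_{P}\rangle}\,\langle\vec{f},\tilde{\varphi}_{P}\rangle=(B^{(i)}\vec{t})_{Q}$ with $\vec{t}_{P}=\langle\vec{f},\tilde{\varphi}_{P}\rangle$ and $b_{QP}^{(i)}=\overline{\langle\psi_{Q}^{(i)},\tilde{\psi}_{P}\rangle}$. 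Since for $\ell(P)=2^{-k}$ the number $\langle\psi_{Q}^{(i)},\tilde{\psi}_{P}\rangle$ is, up to normalisation, $|P|^{1/2}\psi_{k}*\psi_{Q}^{(i)}(x_{P})$, Lemma \ref{lem:mQ} applied to the molecule $\psi_{Q}^{(i)}$ and to $\psi\in\mathcal{A}$ shows, just as in the proof of Theorem \ref{thm:varphi transform}, that $B^{(i)}\in\mathbf{ad}_{p}^{s,q}(d,\tilde{d},\Delta)$; then Theorem \ref{thm:almost dia bounded on discrete} gives $\|\{\langle\vec{f},\psi_{Q}^{(i)}\rangle\}_{Q}\|_{\dot{f}_{p,t,r}^{s,q}(A_{Q})}=\|B^{(i)}\vec{t}\|_{\dot{f}_{p,t,r}^{s,q}(A_{Q})}\lesssim\|\vec{t}\|_{\dot{f}_{p,t,r}^{s,q}(A_{Q})}$, and Theorem \ref{thm:sup A_Q LE} (used with $\varphi$ replaced by $\tilde{\varphi}\in\mathcal{A}$) bounds the last quantity by $\|\vec{f}\|_{\dot{F}_{p,t,r}^{s,q}(A_{Q})}$. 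Summing over $i$ finishes (\ref{eq:wavelet Aq}).

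Finally, (\ref{eq:wavelet W}) would follow from (\ref{eq:wavelet Aq}) and Theorem \ref{thm:equivalence}, which gives $\|\vec{f}\|_{\dot{F}_{p,t,r}^{s,q}(W)}\approx\|\vec{f}\|_{\dot{F}_{p,t,r}^{s,q}(A_{Q})}$, together with Theorem \ref{thm:A_Q le W discrete} and Corollary \ref{cor:W le AQ} (valid for arbitrary sequences) applied to each $\vec{s}=\{\langle\vec{f},\psi_{Q}^{(i)}\rangle\}_{Q}$, which give $\|\vec{s}\|_{\dot{f}_{p,t,r}^{s,q}(A_{Q})}\approx\|\vec{s}\|_{\dot{f}_{p,t,r}^{s,q}(W)}$. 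I expect the hard part to be organisational rather than analytic: checking that a single common choice of $N_{0},R,S$ makes $\{\psi_{Q}^{(i)}\}$ qualify simultaneously as molecules for all the parameter constraints appearing in Theorems \ref{thm:varphi transform} and \ref{thm:almost dia bounded on discrete}, and justifying, in the Daubechies case (where $\psi^{(i)}$ lies in $\mathscr{S}_{0}$ only approximately), that $\sum_{P}\langle\psi_{Q}^{(i)},\tilde{\psi}_{P}\rangle\tilde{\varphi}_{P}$ converges in a topology that permits the interchange of summation and the pairing with $\vec{f}$ used above.
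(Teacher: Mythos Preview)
Your proposal is correct and follows essentially the same strategy as the paper: in both directions of (\ref{eq:wavelet Aq}) you use the molecular synthesis Theorem \ref{thm:varphi transform} for ``$\lesssim$'' and, for the converse, expand $\psi_{Q}^{(i)}$ via the $\varphi$-transform identity (\ref{eq:converge}) to express the wavelet coefficients as an almost-diagonal matrix acting on $\varphi$-transform coefficients, then invoke Theorem \ref{thm:almost dia bounded on discrete}; the passage from (\ref{eq:wavelet Aq}) to (\ref{eq:wavelet W}) via Theorem \ref{thm:equivalence}, Theorem \ref{thm:A_Q le W discrete} and Corollary \ref{cor:W le AQ} is also exactly what the paper does. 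The only cosmetic difference is that the paper uses the pair $(\varphi,\psi)$ directly (writing $b_{QP}^{(i)}=\langle\psi_{Q}^{(i)},\varphi_{P}\rangle$ and $\vec{s}_{P}=\langle\vec{f},\psi_{P}\rangle$, then bounding $\|\vec{s}\|$ by Theorem \ref{thm:equivalence}) whereas you use $(\tilde{\psi},\tilde{\varphi})$ and Theorem \ref{thm:sup A_Q LE}; both are equivalent.
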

		
		\begin{rem}\label{Db wavelet satisfy condition}
			By the definition of quasi-norms of $\dot{F}_{p,t,r}^{s,q}(W)$,  $\dot{F}_{p,t,r}^{s,q}(W)$ embed into matrix-weighted  Triebel-Lizorkin-type 	spaces  introduced in \cite{BHYY1}. From \cite[Corollary 4.8]{BHYY3}, $\langle\vec{f},\psi_{Q}^{(i)}\rangle$ is well-defined for $f \in \dot{F}_{p,t,r}^{s,q}(W)$ and Daubechies wavelets $\{\psi^{(i)}\}_{i=1} ^{2^{n}-1 }$.
			
			Let $s\in\mathbb{R}$, $0<p<\infty$, $W\in \mathcal  A_p$. 
			Let $d, \tilde d, \Delta$  be the same as in Lemma \ref{AQ AR improved}.	
			Let $\mathcal{N} >\max \{s  + d/p ,  -s+ n/\min(1,q,p) +\tilde d / p'- 1- n \}  $. Let $\{ {\theta^{(i)}} \}_{i=1}^{2^n-1} $ be Daubechies wavelets of class $C^\mathcal{N}$. By Lemma \ref{Db wavele cancel}, for every $i=1,\ldots,2^n-1$, each multi-index $\gamma$ with $|\gamma| \le \mathcal{N}$, $\int_{\mathbb{R}^n} x^\gamma \theta^{(i)}_Q {\rm d}x=0 $.
			Since $\theta^{(i)} $ has bounded support, the decay conditions are automatic.	
			Hence  $\{ {\theta^{(i)}} \} _{i=1}^{2^n-1} $ satisfy the conditions in Theorem \ref{thm:wavelet homo}.
		\end{rem}
		
		\begin{proof}[\bf Proof of Theorem \ref{thm:wavelet homo}:]			
			Let $W \in \mathcal A_p$ and  
			$\{A_{Q}\}_{Q\in\mathcal{D}}$ be a family of reducing operators of order $p$ for $W$. Let $d, \tilde d, \Delta$  be the same as in Lemma \ref{AQ AR improved}.
			As in Theorem \ref{thm:varphi transform}, let $\delta\in(0,1]$, $ N > -s+ n/\min(1,q,p) + \tilde d / p'  - 1- n$,
			$K +\delta > s  + d/p $, and $ M >  n / \min(1,q,p)+  \Delta$. 
			
			We first  prove (\ref{eq:wavelet Aq}).			
			Set $N_{0}\ge N$, $S\ge K+\delta$, $R>\max(M,N+1+n)$. Thus $\{\psi_{Q}^{(i)}\}_{Q\in\mathcal{D},i=1,\dots,2^{n}-1}$
			is a family of smooth $(N,K,M,\delta)$ molecules for each $i$. Hence
			\[
			\|\vec{f}\|_{\dot{F}_{p,t,r}^{s,q}(A_{Q})}\le c\sum_{i=1}^{2^{n}-1}\|\{\langle\vec{f},\psi_{Q}^{(i)}\rangle\}_{Q\in\mathcal{D}}\|_{\dot{f}_{p,t,r}^{s,q}(A_{Q})}
			\]
			follows from Theorem \ref{thm:varphi transform} and the wavelet expansion
			\[
			\vec{f}= \sum_{i=1}^{2^n -1}\sum_{Q\in \D}\langle\vec{f},\psi_{Q}^{(i)}\rangle\psi_{Q}^{(i)}.
			\]
			
			Next we show the  converse estimate. 
			For each $i$, define $\vec{s}^{(i)}=\{\vec{s}_{Q}^{(i)}\}_{Q\in\mathcal{D}}$
			by $\vec{s}_{Q}^{(i)}=\langle\vec{f},\psi_{Q}^{(i)}\rangle$. Using
			(\ref{eq:converge}), we have
			\[
			\vec{s}_{Q}^{(i)}=\bigg\langle\vec{f},\sum_{P\in\mathcal{D}}\langle\psi_{Q}^{(i)},\varphi_{P}\rangle\psi_{P}\bigg\rangle=\sum_{P\in\mathcal{D}}b_{QP}^{(i)}\vec{s}_{P}=(B^{(i)}\vec{s})_{Q},
			\]
			where $B^{(i)}=\{b_{QP}^{(i)}\}$ is defined by $b_{QP}^{(i)}=\langle\psi_{Q}^{(i)},\varphi_{P}\rangle$
			and $\vec{s}=\{\vec{s}_{Q}\}$ is defined by $\vec{s}_{Q}=\langle\vec{f},\psi_{Q}\rangle$.
			Since $\psi\in\mathcal{A}$, Theorem \ref{thm:equivalence} gives
			$\|\vec{s}\|_{\dot{f}_{p,t,r}^{s,q}(A_{Q})}\approx\|\vec{f}\|_{\dot{F}_{p,t,r}^{s,q}(A_{Q})}.$
			We claim that for $N_{0},R,S$ sufficiently large, $B^{(i)}\in\mathbf{ad}_{p}^{s,q}(d, \tilde d, \Delta )$,
			and thus by Theorem \ref{thm:almost dia bounded on discrete}, we
			have
			\[
			\|\vec{s}^{(i)}\|_{\dot{f}_{p,t,r}^{s,q}(A_{Q})}=\|B^{(i)}\vec{s}\|_{\dot{f}_{p,t,r}^{s,q}(A_{Q})}\le c\|\vec{s}\|_{\dot{f}_{p,t,r}^{s,q}(A_{Q})}\le c\|\vec{f}\|_{\dot{F}_{p,t,r}^{s,q}(A_{Q})}.
			\]
		
			To show $B^{(i)}\in\mathbf{ad}_{p}^{s,q}(d, \tilde d, \Delta ) $, for $P\in\mathcal{D}_{j}$,
			we have $b_{QP}^{(i)}=2^{-jn/2}\tilde{\varphi}_{j}*\psi_{Q}^{(i)}(x_{P}).$
			Applying Lemma \ref{lem:mQ}, we see that $B^{(i)}\in\mathbf{ad}_{p}^{s,q}(d, \tilde d, \Delta ) $
			if $\{\psi_{Q}^{(i)}\}_{Q\in\mathcal{D}}$ is a family of smooth $(N,K,M,\delta$)-molecules
			for
			\[
			N>N_{1}= -s+ n/\min(1,q,p) + \tilde d / p'  - 1- n,
			\]
			\[
			K+\delta>S_{1}= s  + d/p ,
			\]
			\[
			M>M_{1}= \frac{n}{\min(1,q,p)}+  \Delta,
			\]
			which in turn holds if $\psi^{(i)}$ satisfies the condition in the
			theorem for $N_{0}>N_{1}$, $S>S_1$, and $R>R_{1}=\max(M_{1},N_{1}+1+n)$.
			
			Next, we prove (\ref{eq:wavelet W}). By Theorem \ref{thm:equivalence}, it suffices to show
			\begin{equation*}
				\|\{\langle\vec{f},\psi_{Q}^{(i)}\rangle\}_{Q\in\mathcal{D}}\|_{\dot{f}_{p,t,r}^{s,q}(A_{Q})} \approx \|\{\langle\vec{f},\psi_{Q}^{(i)}\rangle\}_{Q\in\mathcal{D}}\|_{\dot{f}_{p,t,r}^{s,q}(W)}.
			\end{equation*}
			By Theorem \ref{thm:A_Q le W discrete}, we have
			\begin{equation*}
				\|\{\langle\vec{f},\psi_{Q}^{(i)}\rangle\}_{Q\in\mathcal{D}}\|_{\dot{f}_{p,t,r}^{s,q}(A_{Q})} \lesssim \|\{\langle\vec{f},\psi_{Q}^{(i)}\rangle\}_{Q\in\mathcal{D}}\|_{\dot{f}_{p,t,r}^{s,q}(W)}.
			\end{equation*}
			By Corollary \ref{cor:W le AQ}, we have
			\begin{equation*}
				\|\{\langle\vec{f},\psi_{Q}^{(i)}\rangle\}_{Q\in\mathcal{D}}\|_{\dot{f}_{p,t,r}^{s,q}(W)}	\lesssim \|\{\langle\vec{f},\psi_{Q}^{(i)}\rangle\}_{Q\in\mathcal{D}}\|_{\dot{f}_{p,t,r}^{s,q}(A_{Q})}  .
			\end{equation*}
			Hence, we complete the proof.
		\end{proof}
		
		\begin{rem}
			We make a few remarks about 	completeness of the spaces by using the idea from \cite[page 513]{FraRou19}.
			
			Let $p,t,r, s,q $ be the same as in Theorem \ref{thm:equivalence}.	
			Suppose that each $A_{Q}$ is an invertible $m \times m$ matrix, then
			$\dot{f}_{p,t,r}^{s,q}(A_{Q})$ is complete. Indeed, let $\vec{s}^{(j)}=\{\vec{s}_{Q}^{(j)}\}_{Q\in\mathcal{D}}$
			be a Cauchy sequence in $\dot{f}_{p,t,r}^{s,q}(A_{Q})$, then $\{A_{Q}\vec{s}_{Q}^{(j)}\}_{Q\in\mathcal{D}}$
			is a Cauchy sequence in $\mathbb{C}^{m}$. Since each $A_{Q}$ is
			invertible, $\vec{s}_{Q}^{(j)}$ is a Cauchy sequence in $\mathbb{C}^{m}$.
			Since $\mathbb{C}^{m}$ is complete, there exists some $\vec{s}_{Q}\in\mathbb{C}^{m}$
			such that $\vec{s}_{Q}^{(j)}\to\vec{s}_{Q}$ as $j\to\infty$. Let
			$\vec{s}=\{\vec{s}_{Q}\}_{Q\in\mathcal{D}}$. By Fatou's lemma, we
			have $\vec{s}\in\dot{f}_{p,t,r}^{s,q}(A_{Q})$ and $\vec{s}^{(j)}$
			converges to $\vec{s}$ in $\dot{f}_{p,t,r}^{s,q}(A_{Q})$.
			Let $W\in \mathcal A_{p}$. The equivalence in Theorem \ref{thm:equivalence}
			shows that  $\dot{f}_{p,t,r}^{s,q}(W)$ is complete.
			It follows then from Theorem \ref{thm:wavelet homo} that for $W \in \mathcal A_p$,  $\dot{F}_{p,t,r}^{s,q}(W)$ is complete. Indeed, a Cauchy sequence $\vec f_j$  in $\dot{F}_{p,t,r}^{s,q}(W)$ has wavelet coefficients, which are Cauchy, and thus, converge in $\dot{f}_{p,t,r}^{s,q}(W)$. If we let $\vec f$ have the wavelet coefficients of the limit sequence, then $\vec f \in \dot{F}_{p,t,r}^{s,q}(W)$   and $\vec f_j  $ converges to $\vec f$ in $\dot{F}_{p,t,r}^{s,q}(W)$.
			Finally, if $\{ A_Q\}$ is  a sequence of reducing operators
			of order $p$ for $W \in \mathcal A_p$, then Theorem \ref{thm:equivalence} implies that $\dot{F}_{p,t,r}^{s,q}(A_{Q}) $  is complete.
			
		\end{rem}
		
		Similar to Theorem \ref{thm:wavelet homo}, we have the following result.

		\begin{thm}
			\label{thm:wavelte inhomogeneous} 	Let $0<p<t<r<\infty$ or $0<p\le t<r=\infty$. 
			Let $W \in \mathcal A_p$ and  
			$\{A_{Q}\}_{Q\in\mathcal{D}}$ be a family of reducing operators of order $p$ for $W$.
			Let $ q \in (0, p + \delta_W) $ where $\delta_W >0$  is the same as in Lemma \ref{lem:W AQ}. 
			Suppose that for some sufficiently large positive integers
			$N_{0},R,S$ (depending on $p,q,s,n,W$), the generators $\{\psi^{(i)}\}_{i=1} ^{ 2^{n}-1}$
			of a wavelet basis satisfy
			\[
			\int_{\mathbb{R}^n} x^{\gamma}\psi^{(i)}{\rm d}x=0,\;|\gamma|\le N_{0},
			\]
			and $|\partial^{\gamma}\psi^{(i)}(x)|\le c(1+|x|)^{-R}$ for all $|\gamma|\le S$.
			Then
			\begin{equation*}
				\|\vec{f}\|_{F_{p,t,r}^{s,q}(A_{Q})}\approx\sum_{i=1}^{2^{n}-1}\|\{\langle\vec{f},\psi_{Q}^{(i)}\rangle\}_{Q\in\mathcal{D}_{+}}\|_{f_{p,t,r}^{s,q}(A_{Q})},
			\end{equation*}
			and
			\begin{equation*}
				\|\vec{f}\|_{F_{p,t,r}^{s,q}(W)}\approx\sum_{i=1}^{2^{n}-1}\|\{\langle\vec{f},\psi_{Q}^{(i)}\rangle\}_{Q\in\mathcal{D}_{+}}\|_{f_{p,t,r}^{s,q}(W)}.
			\end{equation*}
		\end{thm}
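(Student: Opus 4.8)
The plan is to follow the proof of Theorem \ref{thm:wavelet homo} essentially verbatim, replacing each homogeneous ingredient used there by its inhomogeneous counterpart, the latter being obtained by repeating the corresponding homogeneous argument with the dyadic family $\mathcal{D}$ replaced by $\mathcal{D}_{+}=\{Q\in\mathcal{D}:\ell(Q)\le1\}$ and with the coarsest level carrying the low-frequency generator $\varphi_{0}=\Phi$ (the inhomogeneous versions of Theorems \ref{thm:varphi transform}, \ref{thm:almost dia bounded on discrete}, \ref{thm:A_Q le W discrete} and of Corollary \ref{cor:W le AQ}, whose proofs are identical to the homogeneous ones, just as Theorem \ref{thm:equivalence inhomogeneous} is obtained from Theorem \ref{thm:equivalence}). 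First I would fix $\delta\in(0,1]$ and integers $N,K,M$ with $N>-s+n/\min(1,q,p)+\tilde d/p'-1-n$, $K+\delta>s+d/p$ and $M>n/\min(1,q,p)+\Delta$ as in Theorem \ref{thm:varphi transform}, and then choose $N_{0}\ge N$, $S\ge K+\delta$ and $R>\max(M,N+1+n)$, so that for each $i\in\{1,\dots,2^{n}-1\}$ the family $\{\psi_{Q}^{(i)}\}_{Q}$ is a family of smooth $(N,K,M,\delta)$-molecules.

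For the bound $\|\vec{f}\|_{F_{p,t,r}^{s,q}(A_{Q})}\lesssim\sum_{i}\|\{\langle\vec{f},\psi_{Q}^{(i)}\rangle\}_{Q\in\mathcal{D}_{+}}\|_{f_{p,t,r}^{s,q}(A_{Q})}$ I would insert the inhomogeneous wavelet reproducing formula (which converges in $\mathscr{S}'(\mathbb{R}^{n})$) into the inhomogeneous analogue of Theorem \ref{thm:varphi transform}. For the reverse bound, with $(\Phi,\varphi)\in\mathcal{A}_{+}$ and $\varphi_{0}=\Phi$, I would set $\vec{s}=\{\langle\vec{f},\varphi_{P}\rangle\}_{P\in\mathcal{D}_{+}}$ and write $\langle\vec{f},\psi_{Q}^{(i)}\rangle=(B^{(i)}\vec{s})_{Q}$ with $b_{QP}^{(i)}=\langle\psi_{Q}^{(i)},\varphi_{P}\rangle$; since for $P\in\mathcal{D}_{j}$ one has $b_{QP}^{(i)}=2^{-jn/2}\tilde{\varphi}_{j}*\psi_{Q}^{(i)}(x_{P})$ with $\tilde{\varphi}\in\mathcal{A}$, Lemma \ref{lem:mQ} gives $|b_{QP}^{(i)}|\lesssim\omega_{QP}$ as in (\ref{eq:omega QP-1}), i.e. $B^{(i)}\in\mathbf{ad}_{p}^{s,q}(d,\tilde d,\Delta)$ once $N_{0},R,S$ are large enough. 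The inhomogeneous analogue of Theorem \ref{thm:almost dia bounded on discrete} together with Theorem \ref{thm:equivalence inhomogeneous} then yield $\|\{\langle\vec{f},\psi_{Q}^{(i)}\rangle\}\|_{f_{p,t,r}^{s,q}(A_{Q})}\lesssim\|\vec{f}\|_{F_{p,t,r}^{s,q}(A_{Q})}$, proving the first equivalence.

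To pass from $\{A_{Q}\}$ to $W$ it suffices, by Theorem \ref{thm:equivalence inhomogeneous}, to show $\|\{\langle\vec{f},\psi_{Q}^{(i)}\rangle\}\|_{f_{p,t,r}^{s,q}(A_{Q})}\approx\|\{\langle\vec{f},\psi_{Q}^{(i)}\rangle\}\|_{f_{p,t,r}^{s,q}(W)}$: the inequality ``$\lesssim$'' is the inhomogeneous analogue of Theorem \ref{thm:A_Q le W discrete}, and ``$\gtrsim$'' is the inhomogeneous analogue of (\ref{eq:discrete Triebel W le AQ}) in Corollary \ref{cor:W le AQ}, both proved word for word with $\mathcal{D}$ replaced by $\mathcal{D}_{+}$. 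Combining this with the first equivalence gives the stated identity for $F_{p,t,r}^{s,q}(W)$.

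The step I expect to be the main obstacle is the correct treatment of the coarsest level in the inhomogeneous setting: one must reconcile the fact that the wavelet reproducing formula sums over all of $\mathcal{D}$ with the index set $\mathcal{D}_{+}$ appearing in the statement, i.e. show that the contribution of the scales $\ell(Q)>1$ is absorbed into the coarse level (encoded through a scaling-function term) with no loss, and that $B^{(i)}$ retains the required off-diagonal decay uniformly down to the coarsest scale. Along the way one also needs, as in the remark preceding the proof of Theorem \ref{thm:wavelet homo}, an approximation argument to define $\langle\vec{f},\psi_{Q}^{(i)}\rangle$ when $\psi_{Q}^{(i)}$ is only $C^{\mathcal{N}}$ with compact support (the Daubechies case), using a Fourier multiplier bound of the type in Theorem \ref{multiplier} to show convergence of the truncations in $F_{p,t,r}^{s,q}(W)$.
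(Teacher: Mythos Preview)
Your proposal is correct and takes essentially the same approach as the paper: the paper gives no detailed proof of Theorem~\ref{thm:wavelte inhomogeneous}, only the remark ``Similar to Theorem~\ref{thm:wavelet homo}, we have the following result,'' and your plan---transporting the homogeneous argument of Theorem~\ref{thm:wavelet homo} verbatim via the inhomogeneous analogues of Theorems~\ref{thm:varphi transform}, \ref{thm:almost dia bounded on discrete}, \ref{thm:A_Q le W discrete} and Corollary~\ref{cor:W le AQ} (all of which the paper subsumes under Theorem~\ref{thm:equivalence inhomogeneous})---is exactly what is intended. Your caveat about the coarsest scale and the scaling-function contribution is a genuine technical point that the paper leaves implicit as well.
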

		
		\begin{rem}
			The characterization  of matrix weighted Besov spaces ($ 1< p < \infty $) can be seen in \cite[Theorem 10.2, Corollary 10.3]{R03},
			matrix weighted Besov spaces ($0< p \le 1$) in \cite[Theorem 4.4]{FrazierR04}, 
			matrix weighted Triebel-Lizorkin spaces in \cite[Theorem 1.2]{FraRou19},
			matrix weighted Besov type and Triebel-Lizorkin type spaces in  \cite[Theorem 4.10]{BHYY3}	
			via the Meyer wavelets and Daubechies wavelets.
		\end{rem}

		\subsection{Characterizations by atom}
			\begin{defn}
				Let $b,L,N\in(0,\infty)$.
				A function $a_Q $ is called a $(b,L,N)$-atom on a cube $Q$ if supp $a_Q \subset bQ$ for some $b>0$,  $\int_{\mathbb{R}^n} x^{\gamma}a_{Q}{\rm d}x=0$
				for all $|\gamma|\le L$, and $|\partial^{\gamma}a_{Q}(x)|\le c_{\gamma}\ell (Q)^{-|\gamma|-n/2}$, for all $|\gamma| \le N$.
			\end{defn}
	
			\begin{thm}\label{atomic}
				Let $0<p<t<r<\infty$ or $0<p\le t<r=\infty$. 
				Let $W \in \mathcal A_p$ and let $d, \tilde d, \Delta$  be the same as in Lemma \ref{AQ AR improved}.
				Let $\delta_W >0$  be the same as in Lemma \ref{lem:W AQ}. Let $ q \in (0, p + \delta_W)  $ and   $s\in\mathbb{R}$.
				Let $L,N\in \mathbb{N}_{0}$ with $L>-s+ n/\min(1,q,p) +\tilde d / p' - 1- n$, $N>s  +d/p $. If $\vec{f} \in {\dot{F}_{p,t,r}^{s,q}(W)} $, then there exists a sequence $\vec{t} \in {\dot{f}_{p,t,r}^{s,q}(W)} $ and smooth $(b,L,N)$-atoms $\{a_Q\}_{ Q\in\mathcal{D} }$ such that $\vec {f} = \sum_Q \vec{t}_Q a_Q$ and
				\begin{equation*}
					\| \vec{t} \|_{\dot{f}_{p,t,r}^{s,q}(W)} \le C \| \vec{f} \|_{\dot{F}_{p,t,r}^{s,q}(W)}.
				\end{equation*}
				Conversely, if $\{a_Q \}_{ Q\in\mathcal{D} }$ is a family of $(b,L,N)$-atoms. Then for each $\vec{t}:=\{ \vec{t}_Q\}_{Q\in\mathcal{D}} \in {\dot{f}_{p,t,r}^{s,q}(W)} $, there exists an $\vec{f}\in {\dot{F}_{p,t,r}^{s,q}(W)} $, such that $\vec{f} = \sum_{Q\in\mathcal{D}} \vec{t}_Q a_Q$ in $\mathscr{S}'/\mathcal{P} (\rn)$. Moreover, there exists a constant $C>0$, independent of both $\vec {t}$ and $\{a_Q\}_{ Q\in\mathcal{D} }$, such that
				\begin{equation*}
					\| \vec{f} \|_{\dot{F}_{p,t,r}^{s,q}(W)} \le C \| \vec{t} \|_{\dot{f}_{p,t,r}^{s,q}(W)} .
				\end{equation*}
			\end{thm}
			\begin{proof}
				The idea of the proof comes from \cite[Theorem 4.13]{BHYY3}.
				Let $\mathcal{N} >\max(L,N)$. Let $\theta^{(i)}$ be the Daubechies wavelets of class $C^{\mathcal{N}}$. By Remark \ref{Db wavelet satisfy condition} and Theorem \ref{thm:wavelet homo}, we have
				\begin{equation*}
					\vec{f} = \sum_{i=1}^{2^n-1} \sum_{Q\in\mathcal{D}}\langle \vec{f}, \theta^{(i)}_Q \rangle \theta^{(i)}_Q 
				\end{equation*}
				in $\mathscr{S}'/\mathcal{P} (\rn)$  (denote $ t_{Q}^{(i)} = \langle \vec{f}, \theta^{(i)}_Q \rangle $) and
				\begin{equation*}
					\|\vec{f}\|_{\dot{F}_{p,t,r}^{s,q}(W)}\approx\sum_{i=1}^{2^{n}-1}\|\{\langle\vec{f},\theta_{Q}^{(i)}\rangle\}_{Q\in\mathcal{D}}\|_{\dot{f}_{p,t,r}^{s,q}(W)} .
				\end{equation*}
				It remains to rearrange this sum  into a sum over $Q\in\mathcal{D}$ only.
				For each $Q\in\mathcal{D}$, let $Q_i$, $i=1,\ldots,2^n$, be an enumeration of the dyadic child-cubes	of $Q$. By Definition \ref{Def: Db wavelet} and Remark \ref{Db wavelet satisfy condition}, for some constants $C$ and $b$, the functions
				\begin{equation*}
					a_{Q_i}:=
					\begin{cases}
						C\theta_Q^{(i)}, & \mathrm{if} \; i\in\{1,\ldots,2^n-1 \}, \\
						0,  & \mathrm{if} \; i= 2^n, \\
					\end{cases}
				\end{equation*}
				are $(b, \mathcal{N},\mathcal{N})$-atoms, hence also $(b,L,N)$-atoms, over the respective cubes indicated by their subscripts. We also define the coefficient
				\begin{equation*}
					\vec{t}_{Q_i}	:=
					\begin{cases}
						C^{-1}\vec t_Q^{(i)}, & \mathrm{if} \; i\in\{1,\ldots,2^n-1 \}, \\
						\vec{0},  & \mathrm{if} \; i= 2^n. \\
					\end{cases}
				\end{equation*}
				Since every $P\in\mathcal {D}$ is of the form $Q_i$ for some $Q\in\mathcal {D}$ and $i=1,\ldots,2^n-1$, we have
				\begin{equation*}
					\sum_{P\in \mathcal {D}} \vec{t}_P a_P = \sum_{i=1}^{2^n-1} \sum_{Q\in\mathcal{D}} \vec{t}_{Q_i} a_{Q_i} = \sum_{i=1}^{2^n-1} \sum_{Q\in\mathcal{D}} \vec{t}_{Q}^{(i)} \theta_{Q}^{(i)} = \vec{f},
				\end{equation*}
				which is the desired series representation of $\vec{f}$. While $\vec{t}$ has the same set of coefficients as the union of  $\{  \vec{t}^{(i)} \}_{i=1} ^{2^n -1}$, the indexing of these coefficients is shifted by one level. However, it is easy to verify from the definition of the norm $\|\cdot \|_{\dot{f}_{p,t,r}^{s,q}(A_Q)}$ that such a shift changes the norm at most by a positive constant $C$ that is independent of $\vec{t}$. By Theorem \ref{thm:equivalence},  the norm $\|\cdot \|_{\dot{f}_{p,t,r}^{s,q}(W)}$ has the same property as above.
				
				Conversely, since a $(b,L,N)$-atom is a harmless constant multiple of $(L  ,N -1, M,\delta$)-molecule for each $M>0, 0 < \delta\le 1$ by mean value theorem. 
				Since we choose $\mathcal{N} >\max(L,N)$,
				Theorem \ref{thm:varphi transform} and Theorem \ref{thm:equivalence} lead to the result.
			\end{proof}
		
		\subsection{Characterizations by approximation}
		Recall that the Fourier
		multiplier $\varphi(D)$ with symbol $\varphi$ is defined by
		\[
		\varphi(D)\vec{f}(x) :=\mathcal{F}^{-1}[\varphi\mathcal{F}(\vec{f})](x).
		\]
		For  $s\in \mathbb R$, we denote
		\begin{equation*}
			H_2^s : =\left\{
			f\in \mathscr S' : \| f\| _{ H_2^s }  :=\left\|  (1+ |\cdot|^2)^{s/2}  \mathcal F f (\cdot)    \right\|     _{L^2}
			<\infty	\right\}.
		\end{equation*}
		
		\begin{defn}
			Let $ 0<p<t<r <\infty $ or $0<p \le t <r =\infty $.
			Let  $0<q\le \infty$, $W$ be a matrix weight. Then the matrix weighted sequence valued Bourgain-Morrey spaces
			$  M_p^{t,r}(W) (\ell^q , \mathbb Z) $  are the set of all measurable function sequences  $\{\vec f_k \}_{k=-\infty}^\infty $
			such that
			\begin{equation*}
				\left\|  \left \{ \vec f_k \right \} _{k=-\infty}^\infty  \right\|_{ M_p^{t,r}(W) (\ell^q) } : =   \left\|  \left(   \sum_{k=-\infty}^\infty \left| W^{1/p} \vec f_k \right|  ^q  \right)^{1/q} \right\| _{ M_p^{t,r}} <\infty .
			\end{equation*}
			Similarly, 
			$  M_p^{t,r}(W) (\ell^q , \mathbb N_0) $  are the set of all measurable function sequences  $\{\vec f_k \}_{k=0}^\infty $
			such that
			\begin{equation*}
				\left\|  \left \{ \vec f_k \right \} _{k=0}^\infty  \right\|_{ M_p^{t,r}(W) (\ell^q) } : =   \left\|  \left(   \sum_{k=0}^\infty \left| W^{1/p} \vec f_k \right|  ^q  \right)^{1/q} \right\| _{ M_p^{t,r}} <\infty .
			\end{equation*}
			
		\end{defn}
		
		Let $p\in(0,\infty)$, $a>0$, and $W\in \mathcal A_{p}$. Recall that the matrix-weighted
		Peetre-type maximal functions for $\{\vec{f} _k\}$ 
		are defined by
		\begin{equation*}
			\Big( \vec{f} _k \Big)_{a}^{(W,p)}(x):=\sup_{y\in\mathbb{R}^{n}}\frac{|W^{1/p}(x) \vec{f} _k (y)|}{(1+2^{k}|x-y|)^{a}}
		\end{equation*}
		for any $k\in \mathbb Z$ and $x\in\mathbb{R}^{n}$.
		
		\begin{thm} \label{char peetre mix}
			Let $0<p<t<r<\infty$ or $0<p\le t<r=\infty$.
			Let $s\in\mathbb{R}$, $W\in  \mathcal  A_{p}$ and
			$\{A_{Q}\}_{Q\in\mathcal{D}}$ be a sequence of reducing operators of order p for $W$. 
			Let $ q \in (0, p + \delta_W) $ where $\delta_W >0$  is the same as in Lemma \ref{lem:W AQ}. 
			Let
			$a>n/\min(1,q,p )+\Delta$  where $\Delta$  is the same as in Lemma \ref{AQ AR improved}.
			
			{\rm (i)}
			Let $\{  \vec f_k \}_{k=-\infty}^\infty $ be a sequence of function such that for each $k \in \mathbb Z$, supp $\mathcal F \vec f_k \subset  \{\xi : 2^{k-1} \le  |\xi |  \le 2^{k+1}\}$.
			Then
			\begin{equation*}
				\left\|  \left\{ \vec f_k  \right\}_{k=-\infty}^\infty   \right\|_{ M_p^{t,r} (W) (\ell^q)  }  \approx
				\left\| \left\{ 	\Big( \vec{f} _k \Big)_{a}^{(W,p)} \right\} _{k=-\infty}^\infty   \right\|_{  M_p^{t,r} (W) (\ell^q)  }  .
			\end{equation*}
			
			{\rm (ii)}
			Let $\{  \vec f_k \}_{k=0}^\infty $ be a sequence of function such that for each $k \in \mathbb N_0$, supp $\mathcal F \vec f_k \subset  \{\xi : |\xi |  \le 2^{k+1}\}$.
			Then
			\begin{equation*}
				\left\|  \left\{ \vec f_k  \right\}_{k=0}^\infty   \right\|_{ M_p^{t,r} (W) (\ell^q)  }  \approx
				\left\| \left\{ 	\Big( \vec{f} _k \Big)_{a}^{(W,p)} \right\} _{k=0}^\infty   \right\|_{  M_p^{t,r} (W) (\ell^q)  }  .
			\end{equation*}
		\end{thm}
		\begin{proof}
			The proof is similar to that of Theorems 
			\ref{thm:Peetre char} and
			\ref{thm:Peetre char inho}. Since the support of $\mathcal F \vec f_k$ is the same as the support of  $\mathcal F ( 2^{ks} \varphi _k * \vec f ) $ in the proof of  Theorems 
			\ref{thm:Peetre char} and
			\ref{thm:Peetre char inho}, we only need to replace $ 2^{ks} \varphi_k * \vec f$ by $\vec f_k$ in Theorem \ref{thm:Peetre char} (or Theorem \ref{thm:Peetre char inho}).
			We omit the detail here.
		\end{proof}
		Using Theorem \ref{char peetre mix}, we have the following Fourier multiplier theorem.		
		\begin{thm}\label{multiplier}
			Let $0<p<t<r<\infty$ or $0<p\le t<r=\infty$.
			Let $s\in\mathbb{R}$, $W\in  \mathcal  A_{p}$ and
			$\{A_{Q}\}_{Q\in\mathcal{D}}$ be a sequence of reducing operators of order p for $W$. 
			Let $ q \in (0, p + \delta_W) $ where $\delta_W >0$  is the same as in Lemma \ref{lem:W AQ}. 
			If $a>n/\min(1,q,p )+\Delta$ where $\Delta$  is the same as in Lemma \ref{AQ AR improved}, then the following holds.
			
			{\rm (i)}	Let $\{ \vec f_k\} _{k=-\infty}^\infty $ be a sequence of function such that for each $k \in \mathbb Z$, supp $\mathcal F \vec f_k \subset  \{\xi :  2^{k-1}\le  |\xi |  \le 2^{k+1}\}$.
			Then for any $\epsilon>0$, we have
			\begin{equation*}
				\left\| \left \{ m_k (D)\vec f_k  \right \}_{k \in \mathbb Z}  \right\|_{ M_p^{t,r} (W) (\ell^q)  } 	\lesssim 	\left\| \left\{ \vec f_k  \right\}_{k \in \mathbb Z}  \right\|_{ M_p^{t,r} (W) (\ell^q)  }  \sup_{k\in \mathbb Z}  \| m_k (2^k \cdot) \|_{H_2^ {a+n/2+\epsilon} }  .
			\end{equation*}
			
			{\rm (ii)}	Let $\{ \vec f_k\} _{k=0}^\infty $ be a sequence of function such that for each $k \in \mathbb N_0$, supp $\mathcal F \vec f_k \subset  \{\xi : |\xi |  \le 2^{k+1}\}$.
			Then for any $\epsilon>0$, we have
			\begin{equation*}
				\left\| \left \{ m_k (D)\vec f_k  \right \}_{k=0}^\infty   \right\|_{ M_p^{t,r} (W) (\ell^q)  } 	\lesssim 	\left\| \left\{ \vec f_k  \right\}_{k=0}^\infty  \right\|_{ M_p^{t,r} (W) (\ell^q)  }  \sup_{k\ge 0}  \| m_k (2^k \cdot) \|_{H_2^ {a+n/2+\epsilon} }  .
			\end{equation*}
			
		\end{thm}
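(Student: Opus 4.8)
The proof rests on the Peetre-type maximal function characterisation of Theorem~\ref{char peetre mix} together with a pointwise domination of the Peetre maximal function of $m_k(D)\vec f_k$ by that of $\vec f_k$. We may assume $\sup_k\|m_k(2^k\cdot)\|_{H_2^{a+n/2+\epsilon}}<\infty$ and $\|\{\vec f_k\}_k\|_{M_p^{t,r}(W)(\ell^q)}<\infty$, and we treat only part~(i); part~(ii) is identical after replacing $\mathbb Z$ by $\mathbb N_0$ and the dyadic annuli by the balls $\{|\xi|\le2^{k+1}\}$, invoking Theorem~\ref{char peetre mix}(ii) instead of (i).

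First I would fix a cut-off $\Phi_0\in C_c^\infty(\rn)$ with $\Phi_0\equiv1$ on $\{2^{-1}\le|\xi|\le2\}$ and $\operatorname{supp}\Phi_0\subset\{4^{-1}\le|\xi|\le4\}$, and set $\Phi_k(\xi):=\Phi_0(2^{-k}\xi)$. Since $\operatorname{supp}\mathcal F\vec f_k\subset\{2^{k-1}\le|\xi|\le2^{k+1}\}$ we have $\Phi_k\equiv1$ on that set, hence
\[
m_k(D)\vec f_k=(m_k\Phi_k)(D)\vec f_k=K_k*\vec f_k,\qquad K_k:=\mathcal F^{-1}(m_k\Phi_k)\in\mathscr S(\rn),
\]
the Schwartz regularity of $K_k$ coming from the compact support of $m_k\Phi_k$ (the convolution acts componentwise on $\vec f_k$, which is smooth and band-limited since $\mathcal F\vec f_k$ is compactly supported). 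Moreover $\mathcal F(m_k(D)\vec f_k)=m_k\,\mathcal F\vec f_k$ is again supported in $\{2^{k-1}\le|\xi|\le2^{k+1}\}$, so $\{m_k(D)\vec f_k\}_k$ also satisfies the Fourier-support hypothesis of Theorem~\ref{char peetre mix}(i).

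Next I would establish the pointwise bound. For $x,y\in\rn$,
\[
|W^{1/p}(x)(m_k(D)\vec f_k)(y)|\le\int_{\rn}|K_k(z)|\,|W^{1/p}(x)\vec f_k(y-z)|\,\d z,
\]
and from $1+2^k|x-(y-z)|\le(1+2^k|x-y|)(1+2^k|z|)$ we get $|W^{1/p}(x)\vec f_k(y-z)|\le(1+2^k|x-y|)^a(1+2^k|z|)^a(\vec f_k)_a^{(W,p)}(x)$; dividing by $(1+2^k|x-y|)^a$ and taking the supremum over $y$ gives
\[
\big(m_k(D)\vec f_k\big)_a^{(W,p)}(x)\le C_k\,(\vec f_k)_a^{(W,p)}(x),\qquad C_k:=\int_{\rn}|K_k(z)|(1+2^k|z|)^a\,\d z.
\]
To control $C_k$ I would rescale $z=2^{-k}w$: then $K_k(2^{-k}w)=2^{kn}G_k(w)$ with $\mathcal F G_k=(m_k\Phi_k)(2^k\cdot)=m_k(2^k\cdot)\,\Phi_0$, whence $C_k=\int_{\rn}|G_k(w)|(1+|w|)^a\,\d w$. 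Since $\epsilon>0$ the weight $(1+|\cdot|)^{-2(n/2+\epsilon)}=(1+|\cdot|)^{-(n+2\epsilon)}$ is integrable, so Cauchy--Schwarz yields
\[
C_k\lesssim\big\|(1+|\cdot|^2)^{(a+n/2+\epsilon)/2}G_k\big\|_{L^2}\approx\|m_k(2^k\cdot)\,\Phi_0\|_{H_2^{a+n/2+\epsilon}}\lesssim\|m_k(2^k\cdot)\|_{H_2^{a+n/2+\epsilon}},
\]
where the middle identity is Plancherel (multiplication by $(1+|x|^2)^{s/2}$ on the physical side corresponds, via the Bessel potential, to the $H_2^{s}$-norm of the Fourier transform), and the last step is the boundedness on $H_2^{s}$, $s\ge0$, of multiplication by the fixed Schwartz function $\Phi_0$.

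Finally, combining these ingredients, Theorem~\ref{char peetre mix}(i) applied first to $\{m_k(D)\vec f_k\}_k$ and then to $\{\vec f_k\}_k$ gives
\begin{align*}
\|\{m_k(D)\vec f_k\}_k\|_{M_p^{t,r}(W)(\ell^q)}
&\approx\big\|\{(m_k(D)\vec f_k)_a^{(W,p)}\}_k\big\|_{M_p^{t,r}(W)(\ell^q)}\\
&\le\Big(\sup_k C_k\Big)\big\|\{(\vec f_k)_a^{(W,p)}\}_k\big\|_{M_p^{t,r}(W)(\ell^q)}\\
&\approx\Big(\sup_k C_k\Big)\,\|\{\vec f_k\}_k\|_{M_p^{t,r}(W)(\ell^q)},
\end{align*}
and $\sup_kC_k\lesssim\sup_k\|m_k(2^k\cdot)\|_{H_2^{a+n/2+\epsilon}}$ completes the argument. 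The only genuinely technical point is the estimate of $C_k$: pinning down the exact $H_2$-exponent that dominates $\int|G_k|(1+|\cdot|)^a$ — this is where the loss $n/2+\epsilon$ is forced, by the integrability threshold of $(1+|\cdot|)^{-(n+2\epsilon)}$ — and checking that inserting the fixed cut-off $\Phi_0$ costs nothing in $H_2^{s}$; everything else is a direct application of the maximal-function equivalence already proved in Theorem~\ref{char peetre mix}.
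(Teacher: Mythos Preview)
Your proof is correct and follows essentially the same route as the paper's: both arguments reduce to the pointwise domination $|W^{1/p}(x)(m_k(D)\vec f_k)(y)|\lesssim (\vec f_k)_a^{(W,p)}(x)\cdot\int|K_k(z)|(1+2^k|z|)^a\,\d z$, estimate the integral by Cauchy--Schwarz after rescaling to produce the $H_2^{a+n/2+\epsilon}$ norm, and then invoke Theorem~\ref{char peetre mix}. The only cosmetic differences are that the paper works directly with $\mathcal F^{-1}(m_k)$ (without your cut-off $\Phi_0$) and bounds the pointwise value $|W^{1/p}(x)m_k(D)\vec f_k(x)|$ rather than the full Peetre maximal function, so it needs Theorem~\ref{char peetre mix} only in one direction; your localisation by $\Phi_0$ is a harmless (arguably cleaner) extra step and costs exactly the $H_2^s$-multiplier bound you note.
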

		
		\begin{proof}
			We only prove (ii)  since (i)  is similar. From \cite[Proof of Theorem 3.11]{BX24}, we have
			\begin{equation*}
				\left|  W^{1/p} (x) m_ k (D) \vec f_k  ( x)  \right| \lesssim 	\Big( \vec{f} _k \Big)_{a}^{(W,p)}(x)   \| (m_k (2^k \cdot)) \|_{H_2^ {a+n/2+\epsilon} } .
			\end{equation*}
			Since $a>n/\min(1,q,p )+\Delta$,
			we obtain
			\begin{align*}
				&	\left\| \left\{ m_k (D)\vec f_k \right \} _{k=0}^\infty  \right\|_{ M_p^{t,r} (W) (\ell^q)  } \\
				& \lesssim 	\left\| \left\{ 	\Big( \vec{f} _k \Big)_{a}^{(W,p)} \right\} _{k=0}^\infty   \right\|_{ L^p (W) (\ell^q)  }  \sup_{k \ge 0 }  \| m_k (2^k \cdot) \|_{H_2^ {a+n/2+\epsilon} } \\
				&	\lesssim 	\left\| \left\{ \vec f_k \right \} _{k=0}^\infty  \right\|_{ M_p^{t,r} (W) (\ell^q)  }  \sup_{k\ge 0}  \| m_k (2^k \cdot) \|_{H_2^ {a+n/2+\epsilon} } . \qedhere
			\end{align*}
		\end{proof}
			
				Now we begin to show the  characterization of $F_{p,t,r}^{s,q}(W)$  by means of approximation.
			
			Let $0<p<t<r<\infty$ or $0<p\le t<r=\infty$. Define $\mathcal{E}_{p}:=\mathcal{E}_{p}(\mathbb{R}^{n})$
			as the collection of sequences of functions $\vec{u}:=\{\vec{u}_{k}\}_{k\in\mathbb{N}_{0}}$
			with $\vec{u}_{k}\in  M_{p}^{t,r}\cap\mathscr{S}'(\mathbb{R}^{n}) $
			and ${\rm supp}(\mathcal{F}\vec{u}_{k})\subset\{\xi:|\xi|\le2^{k+1}\}$
			for each $k\in\mathbb{N}_{0}$.
			
			Let $W\in \mathcal A_{p}$ and set 
			\begin{equation*}
				\|\vec{f}\|_{F_{p,t,r}^{s,q}(W)}^{\vec{u}}:=\|W^{1/p}\vec{u}_{0}\|_{M_{p}^{t,r}}+\bigg\|\bigg(\sum_{k=0}^{\infty}2^{ksq}|W^{1/p}(\vec{f}-\vec{u}_{k})|^{q}\bigg)^{1/q}\bigg\|_{M_{p}^{t,r}}.
			\end{equation*}

			\begin{thm}
				\label{thm:CHAR by app Tl} Let $0<p<t<r<\infty$ or $0<p\le t<r=\infty$.
				Let $W\in  \mathcal  A_{p}$ and  let $ q \in (0, p + \delta_W) $ where $\delta_W >0$  is the same as in Lemma \ref{lem:W AQ}.  Let  $\Delta$  be the same as in Lemma \ref{AQ AR improved}.
				If $s> n/\min(1,q,p )+\Delta $, then 
				$\vec{f}\in\mathscr{S}'(\mathbb{R}^{n})$ belongs
				to $F_{p,t,r}^{s,q}(W)$ if and only if there exists $\vec{u}:=\{\vec{u}_{k}\}_{k\in\mathbb{N}_{0}}\in\mathcal{E}_{p}$
				such that $\vec{f}=\lim_{k\to\infty}\vec{u}_{k}$ in $\mathscr{S}'(\mathbb{R}^{n})$
				and $\|\vec{f}\|_{F_{p,t,r}^{s,q}(W)}^{\vec{u}}<\infty.$ Moreover,
				\[
				\|\vec{f}\|_{F_{p,t,r}^{s,q}(W)}^{\diamondsuit}:=\inf\|\vec{f}\|_{F_{p,t,r}^{s,q}(W)}^{\vec{u}},
				\]
				where the infimum is taken over all admissible functions $\vec{u}:=\{\vec{u}_{k}\}_{k\in\mathbb{N}_{0}}\in\mathcal{E}_{p}$,
				is an equivalent quasi-norm in $F_{p,t,r}^{s,q}(W)$.

			\end{thm}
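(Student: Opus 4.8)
The plan is to establish the two one-sided estimates $\|\vec f\|_{F_{p,t,r}^{s,q}(W)}\lesssim\|\vec f\|_{F_{p,t,r}^{s,q}(W)}^{\diamondsuit}$ and, for $\vec f\in F_{p,t,r}^{s,q}(W)$, $\|\vec f\|_{F_{p,t,r}^{s,q}(W)}^{\diamondsuit}\lesssim\|\vec f\|_{F_{p,t,r}^{s,q}(W)}$; together with the observation that the Littlewood--Paley partial sums of $\vec f$ form an admissible sequence, these give the ``if and only if'' and the quasi-norm equivalence. For the easy direction, given $\vec f\in F_{p,t,r}^{s,q}(W)$ I would take $\vec u_k:=\sum_{j=0}^k\varphi_j(D)\vec f=\varphi_0(2^{-k}D)\vec f$, with $\{\varphi_j\}_{j\in\mathbb N_0}$ the partition of unity of Definition \ref{def:partition of unity inho}; then $\operatorname{supp}\mathcal F\vec u_k\subset\{|\xi|\le 2^{k+1}\}$, $\vec u_k\to\vec f$ in $\mathscr S'(\rn)$ since $\varphi_0(2^{-k}\cdot)\to1$ boundedly, and $\vec u_k$ lies in $\mathcal E_p$ (the membership being routine from the band-limitedness of $\vec u_k$ and the $\mathcal A_p$ bounds of Lemma \ref{lem:W AQ}). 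Since $\vec f-\vec u_k=\sum_{j>k}\varphi_j(D)\vec f$ one has pointwise $2^{ks}|W^{1/p}(\vec f-\vec u_k)|\le\sum_{j>k}2^{(k-j)s}\,2^{js}|W^{1/p}\varphi_j(D)\vec f|$, and because $s>0$ this is a convolution in $j$ against the summable sequence $(2^{-ls})_{l\ge1}$; the elementary $\ell^q$-convolution inequality (using $\ell^{\min(1,q)}$ when $q<1$) followed by the $M_p^{t,r}$-norm gives $\|(\sum_k2^{ksq}|W^{1/p}(\vec f-\vec u_k)|^q)^{1/q}\|_{M_p^{t,r}}\lesssim\|\vec f\|_{F_{p,t,r}^{s,q}(W)}$, while $\|W^{1/p}\vec u_0\|_{M_p^{t,r}}=\|W^{1/p}\varphi_0(D)\vec f\|_{M_p^{t,r}}\le\|\vec f\|_{F_{p,t,r}^{s,q}(W)}$. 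Hence $\|\vec f\|_{F_{p,t,r}^{s,q}(W)}^{\vec u}\lesssim\|\vec f\|_{F_{p,t,r}^{s,q}(W)}$, and a fortiori $\|\vec f\|_{F_{p,t,r}^{s,q}(W)}^{\diamondsuit}\lesssim\|\vec f\|_{F_{p,t,r}^{s,q}(W)}$.

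For the converse, let $\vec u=\{\vec u_k\}\in\mathcal E_p$ with $\vec f=\lim_k\vec u_k$ in $\mathscr S'$ and $\|\vec f\|_{F_{p,t,r}^{s,q}(W)}^{\vec u}<\infty$; note this forces each $\vec f-\vec u_k$ to be an actual (vector-valued) function. The key structural point is that, for the chosen partition of unity, $\varphi_j$ vanishes on $\{|\xi|\le2^{j-1}\}$, so $\varphi_j(D)\vec u_m=0$ whenever $m\le j-2$; in particular $\varphi_j(D)\vec f=\varphi_j(D)(\vec f-\vec u_{j-2})$ for $j\ge2$, while for $j=0,1$ I would split $\varphi_j(D)\vec f=\varphi_j(D)\vec u_0+\varphi_j(D)(\vec f-\vec u_0)$. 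Writing $\varphi_j(D)$ as convolution with a kernel dominated by $2^{jn}(1+2^j|\cdot|)^{-L}$ ($L$ as large as needed) and pulling $W^{1/p}(x)$ through, one bounds $|W^{1/p}(x)\varphi_j(D)(\vec f-\vec u_k)(x)|$ by the Peetre-type quantity $\sup_z\frac{|W^{1/p}(x)(\vec f-\vec u_k)(z)|}{(1+2^k|x-z|)^a}$ for any fixed $a>0$ (paying a harmless bounded factor for the change of scale $2^j\leftrightarrow2^k$ when $k=j-2$). Now telescope $\vec f-\vec u_k=\sum_{m\ge k}\vec w_m$ with $\vec w_m:=\vec u_{m+1}-\vec u_m$, each band-limited to $\{|\xi|\le2^{m+2}\}$; using $1+2^kt\ge 2^{-(m-k)}(1+2^mt)$ for $m\ge k$ one obtains
\[
\sup_z\frac{|W^{1/p}(x)(\vec f-\vec u_k)(z)|}{(1+2^k|x-z|)^a}\le\sum_{m\ge k}2^{(m-k)a}\big(\vec w_m\big)_a^{(W,p)}(x),
\]
where $(\vec w_m)_a^{(W,p)}$ is the Peetre maximal function at scale $2^m$.

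At this stage I would choose $a$ with $n/\min(1,q,p)+\Delta<a<s$, which is possible precisely because of the hypothesis $s>n/\min(1,q,p)+\Delta$. Then
\[
2^{ks}\sum_{m\ge k}2^{(m-k)a}\big(\vec w_m\big)_a^{(W,p)}(x)=\sum_{m\ge k}2^{-(m-k)(s-a)}\,2^{ms}\big(\vec w_m\big)_a^{(W,p)}(x),
\]
a convolution in $k$ against the summable sequence $(2^{-l(s-a)})_{l\ge0}$; the $\ell^q$-convolution inequality and then the $M_p^{t,r}$-norm (the low-frequency slots $j=0,1$ being handled in the same way and producing the extra term $\|(\vec u_0)_a^{(W,p)}\|_{M_p^{t,r}}$) give
\[
\|\vec f\|_{F_{p,t,r}^{s,q}(W)}\lesssim\big\|\big(\vec u_0\big)_a^{(W,p)}\big\|_{M_p^{t,r}}+\Big\|\big\{2^{ms}\big(\vec w_m\big)_a^{(W,p)}\big\}_{m}\Big\|_{M_p^{t,r}(\ell^q)}.
\]
Since $\vec u_0$ is band-limited to $\{|\xi|\le2\}$ and each $\vec w_m$ to $\{|\xi|\le2^{m+2}\}$, Theorem \ref{char peetre mix}(ii), applied to the suitably reindexed sequence, converts the right-hand side into $\|W^{1/p}\vec u_0\|_{M_p^{t,r}}+\|\{2^{ms}W^{1/p}\vec w_m\}_m\|_{M_p^{t,r}(\ell^q)}$; and $|2^{ms}W^{1/p}\vec w_m|\le2^{ms}|W^{1/p}(\vec f-\vec u_m)|+2^{(m+1)s}|W^{1/p}(\vec f-\vec u_{m+1})|$ bounds this last expression by $\|\vec f\|_{F_{p,t,r}^{s,q}(W)}^{\vec u}$. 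Taking the infimum over $\vec u$ yields $\|\vec f\|_{F_{p,t,r}^{s,q}(W)}\lesssim\|\vec f\|_{F_{p,t,r}^{s,q}(W)}^{\diamondsuit}$, and combining with the first part completes the proof.

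The main obstacle is the converse direction: the differences $\vec f-\vec u_k$ are not band-limited, so one cannot apply the Peetre maximal characterization to them directly, and the remedy — decomposing them into the band-limited blocks $\vec w_m$ and carefully tracking the resulting geometric factors $2^{(m-k)a}$ — is exactly where the restriction $s>n/\min(1,q,p)+\Delta$ is consumed, since it permits the choice $a<s$ that makes the double sum over $k$ and $m$ summable. A secondary, purely routine, point is the verification that the partial sums $\vec u_k=\varphi_0(2^{-k}D)\vec f$ belong to $\mathcal E_p$, which follows from their band-limitedness together with the local properties of $W\in\mathcal A_p$ recorded in Lemma \ref{lem:W AQ}.
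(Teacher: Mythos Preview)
Your argument is correct and follows the same overall plan as the paper: the easy direction via the Littlewood--Paley partial sums is identical, and in the converse direction both you and the paper telescope $\vec f$ into the band-limited blocks $\vec w_m=\vec u_{m+1}-\vec u_m$ and exploit the gap $s>n/\min(1,p,q)+\Delta$ by choosing an intermediate Peetre exponent $a$. The organizational difference is that the paper telescopes \emph{first} and then applies the Fourier multiplier theorem (Theorem~\ref{multiplier}) to the band-limited pieces $\varphi_j(D)(\vec u_{k+j}-\vec u_{k+j-1})$, picking up the factor $\|\varphi_j(2^{k+j}\cdot)\|_{H_2^{a+n/2+\epsilon}}\lesssim 2^{k(a+\epsilon)}$, whereas you first dominate $|W^{1/p}(x)\varphi_j(D)(\vec f-\vec u_k)(x)|$ by a Peetre-type supremum of the non-band-limited quantity $\vec f-\vec u_k$, then telescope inside that supremum, and finally invoke Theorem~\ref{char peetre mix}(ii). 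Since Theorem~\ref{multiplier} is itself proved by reducing to Theorem~\ref{char peetre mix}, the two routes are essentially equivalent; yours is slightly more explicit about where the constraint $a<s$ is consumed, while the paper's has the minor advantage that all objects stay band-limited throughout, so the identity $\varphi_j(D)\vec f=\sum_{k\ge -1}\varphi_j(D)(\vec u_{k+j}-\vec u_{k+j-1})$ is immediate in $\mathscr S'$ and needs no separate pointwise-convergence justification. In your version the step $\vec f-\vec u_k=\sum_{m\ge k}\vec w_m$ pointwise deserves one sentence: from $\|\vec f\|_{F_{p,t,r}^{s,q}(W)}^{\vec u}<\infty$ one gets $2^{ks}|W^{1/p}(\vec f-\vec u_k)|\to0$ a.e., hence $\vec u_k\to\vec f$ a.e., which supplies the needed identity.
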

			
			\begin{proof}
				We first show that
				\begin{equation}
					\|\vec{f}\|_{F_{p,t,r}^{s,q}(W)}^{\diamondsuit}\lesssim\|\vec{f}\|_{F_{p,t,r}^{s,q}(W)}.\label{eq:clubsuit le}
				\end{equation}
				Let $\{\varphi_{j}\}_{j\in\mathbb{N}_{0}}$ be functions in $\mathbb{R}^{n}$
				as in Definition \ref{def:partition of unity inho}, then
				\[
				\vec{u}_{j}:=\sum_{k=0}^{j}\varphi_{k}(D)\vec{f}\to\vec{f}\;\mathrm{in}\;\mathscr{S}'(\mathbb{R}^{n})\;\mathrm{as}\;j\to\infty.
				\]
				Thus $\{\vec{u}_{j}\}_{j\in\mathbb{N}_{0}}\in\mathcal{E}_{p}$ and
				from \cite[page 16]{BX24},
				for $0<q\le\infty$,
				we have
				\[
				\bigg(\ensuremath{\sum_{j=0}^{\infty}2^{jsq}|W^{1/p}(\vec{f}-\vec{u}_{j})|^{q}\bigg)^{1/q}\lesssim\bigg(\ensuremath{\sum_{j=0}^{\infty}2^{jsq}|W^{1/p}(x)\varphi_{j}(D)\vec{f}(x)|^{q}\bigg)^{1/q}.}}
				\]
				Taking the $M_{p}^{t,r}$-quasi-norm on the above inequality, we obtain
				(\ref{eq:clubsuit le}) since
				\[
				\|W^{1/p}\vec{u}_{0}\|_{M_{p}^{t,r}}=\|W^{1/p}\varphi_{0}(D)\vec{f}\|_{M_{p}^{t,r}}\le\|\vec{f}\|_{F_{p,t,r}^{s,q}(W)}.
				\]
				
				Now we prove the opposite inequality of (\ref{eq:clubsuit le}).
				Suppose that $\vec{u}:=\{\vec{u}_{k}\}_{k\in\mathbb{N}_{0}}\in\mathcal{E}_{p}$
				such that $\vec{f}=\lim_{k\to\infty}\vec{u}_{k}$ in $\mathscr{S}' (\rn)$
				and $\|\vec{f}\|_{F_{p,t,r}^{s,q}(W)}^{\vec{u}}<\infty.$ 				
				Then for
				$j\in\mathbb{N}_{0}$,
				we have
				\[
				\left| W^{1/p} (x) \varphi_{j}(D)\vec{f}(x) \right|  = 	\left| \sum_{k=-1}^\infty  W^{1/p} (x) \varphi_{j}(D)    ( (\vec{u}_{k+j}(x)-\vec{u}_{k+j-1}) (x) ) \right|,
				\]
				where  $\vec{u}_{-1}=  \vec u _{-2} = 0.$
				From \cite[page 16]{BX24}, we have
				\begin{equation*}
					\sup_{j\ge 0} 	\| \varphi_j  (2^{k+j} \cdot) \|_{H_2^ {a+n/2+\epsilon} }  \le c 2^{k (a +\epsilon)}.
				\end{equation*}
				Since supp $\mathcal F  (\vec{u}_{k+j}-\vec{u}_{k+j-1}) \subset \{ \xi \in \rn : |\xi|  \le 2^{k+j+1}\}$,
				using Theorem \ref{multiplier}, letting $J = \min(1,p,q)$,  we get that $ \| \vec f \|_{ F^{s,q}_{p,t,r}(W) } ^J$ is bounded by
				\begin{align*}
					& 	\sum_{k=-1}^\infty   2^{-sk J}	\left\|
					\left(
					\sum_{j=0}^\infty 2^{s(k+j) q } \left|  W^{1/p} \varphi_j (D)  ( \vec{u}_{k+j} -\vec{u}_{k+j-1})  ) \right| ^q  \right)^{1/q}  \right\| _{ M_p^{t,r} } ^J	\\
					&\lesssim  \sum_{k=-1}^\infty   2^{-sk J}	 2^{k (a+\epsilon) J}  \left\|
					\left(
					\sum_{j=0}^\infty 2^{s(k+j) q } \left|  W^{1/p}  ( \vec{u}_{k+j} -\vec{u}_{k+j-1})  ) \right| ^q  \right)^{1/q}  \right\| _{ M_p^{t,r} } ^J \\
					&\lesssim \left(  \|W^{1/p}\vec{u}_{0}\|_{ M_p^{t,r}  }+\Big\|\Big(\sum_{k=0}^{\infty}2^{ksq}|W^{1/p}(\vec{f}-\vec{u}_{k})|^{q}\Big)^{1/q}\Big\|_{ M_p^{t,r}  } \right) ^J,
				\end{align*}
				where in the second step, we used the fact that $ s > a+\epsilon > n / \min(1,p,q) +\Delta $, where the $\epsilon$ is sufficiently small and $a $  sufficiently close to $ n / \min(1,p,q) +\Delta   $.
				Taking the infimum over $\vec{u}$, we conclude that $\|\vec{f}\|_{F_{p}^{s,q}(W)}\lesssim\|\vec{f}\|_{F_{p}^{s,q}(  W ) }^{\diamondsuit}$.
			\end{proof}

			\section{Pseudo-differential operators}\label{sec:Pseudo-differential-operators}
			
			In the last section, we will consider the boundedness of pseudo-differential operators on inhomogeneous matrix weighted Bourgain-Morrey Triebel-Lizorkin spaces. 
				Since inhomogeneous spaces are independent of the choice of $(\Phi,\varphi)\in\mathcal{A}_{+}$,
			we use the following partition of unity on $\mathbb{R}^{n}$ when dealing with inhomogeneous spaces.
			\begin{defn}
				\label{def:partition of unity inho} Let $\varphi_{0}$ be a Schwartz
				function such that supp$(\varphi_{0})\subset\{\xi\in\mathbb{R}^{n}:|\xi|\le2\}$
				and $\varphi_{0}(\xi)=1$ for $|\xi|\le1$. Moreover, put $\varphi_{j}(\xi)=\varphi_{0}(2^{-j}\xi)-\varphi_{0}(2^{-j+1}\xi)$
				for $j\in\mathbb{N}$. Then supp$(\varphi_{j})\subset\{\xi \in \rn :2^{j-1}\le|\xi|\le2^{j+1}\}$
				for all $j\in\mathbb{N}$ and
				\[
				\sum_{j=0}^{\infty}\varphi_{j}(\xi)=1
				\]
				for $\xi\in\mathbb{R}^{n}$. Hence $\{\varphi_{j}\}_{j\in\mathbb{N}_{0}}$
				is a partition of unity on $\mathbb{R}^{n}$ subordinated to the dyadic
				rings $\{\xi:2^{j-1}\le|\xi|\le2^{j+1}\}$, $j\in\mathbb{N}$, and
				$\overline{B(0,2)}$.
				
				We also set $\tilde{\varphi}_{0}:=\varphi_{0}+\varphi_{1}$ and $\tilde{\varphi}_{j}:=\varphi_{j-1}+\varphi_{j}+\varphi_{j+1}$
				for $j\in\mathbb{N}$. Note that $\varphi_{j}\tilde{\varphi}_{j}=\varphi_{j}$
				for $j\in\mathbb{N}_{0}$ and
				\[
				\mathrm{supp}(\tilde{\varphi_{j}})\subset\{\xi\in\mathbb{R}^{n}:2^{j-2}\le|\xi|\le2^{j+2}\}\;\mathrm{for}\;j\ge2,
				\]
				\[
				\mathrm{supp}(\tilde{\varphi_{j}})\subset\{\xi\in\mathbb{R}^{n}:|\xi|\le2^{j+2}\}\;\mathrm{for}\;j=0,1.
				\]
			\end{defn}
			
			For a suitable function $f$, for example the Schwartz functions, we define
			\begin{equation*}
				\varphi_j (D) f (x) : = \int_\rn \varphi_j(\xi) \mathcal F f (\xi )  e^{2\pi i  x \cdot \xi } \d \xi 
			\end{equation*}
			and for a vector function $\vec f$, we define $	\varphi_j (D)  \vec f = (\varphi_j (D)   f_1, \dots, \varphi_j (D)  f_d )^T$ where $T$ denotes the transpose of the row vector.
			
			Then 
			the matrix weighted inhomogeneous Bourgain-Morrey Triebel-Lizorkin
			spaces $F_{p,t,r}^{s,q}(W)$ can be defined by the set of all distribution $\vec{f}\in\mathscr{S}'(\mathbb{R}^{n})$
			such that
			\[
			\|\vec{f}\|_{F_{p,t,r}^{s,q}(W)}:=\bigg\|\bigg(\sum_{v=0}^{\infty}2^{vsq}|W^{1/p}\varphi_{v}(D)\vec{f}|^{q}\bigg)^{1/q}\bigg\|_{M_{p}^{t,r}}<\infty.
			\]
			Suppose that for each $Q\in\mathcal{D}$, $A_{Q}$ is a $d\times d$
			non-negative definite matrix.
			The \{$A_{Q}$\}-inhomogeneous Bourgain-Morrey Triebel-Lizorkin
			spaces $F_{p,t,r}^{s,q}(A_{Q})$ is the set of all distribution $\vec{f}\in\mathscr{S}'(\mathbb{R}^{n})$
			such that
			\[
			\|\vec{f}\|_{F_{p,t,r}^{s,q}(A_{Q})}:=\bigg\|\bigg(\sum_{v=0}^{\infty}\sum_{Q\in\mathcal{D}_{v}}2^{vsq}|A_{Q}\varphi_{v}(D)\vec{f}|^{q}\chi_{Q}\bigg)^{1/q}\bigg\|_{M_{p}^{t,r}}<\infty.
			\]

			Next we will show the boundedness of pseudo-differential operator with symbols in H\"{o}rmander classes.
			\subsection{H\"{o}rmander classes}
			\begin{defn}
				\label{def:hormander} Let $m\in\mathbb{R}$, $n\in\mathbb{N}$, $\rho,\delta\in[0,1]$.
				The H\"{o}rmander class $S_{\rho,\delta}^{m}(\mathbb{R}^{n}\times\mathbb{R}^{n})$
				is the set of all smooth functions $\sigma:\mathbb{R}^{n}\times\mathbb{R}^{n}\to\mathbb{C}$
				such that
				\[
				|\partial_{\xi}^{\alpha}\partial_{x}^{\beta}\sigma(x,\xi)|\le C_{\alpha,\beta}(1+|\xi|)^{m-\rho|\alpha|+\delta|\beta|}
				\]
				holds for all $\alpha,\beta\in\mathbb{N}_{0}^{n}$, where $C_{\alpha,\beta}$
				is independent of $x,\xi\in\mathbb{R}^{n}$. The function $\sigma$
				is called a pseudo-differential symbol and $m$ is called the order
				of $\sigma$.
			\end{defn}
			
			A pseudo-differential operator on $\mathscr{S}(\mathbb{R}^n)$ with symbol $\sigma\in S_{1,\delta}^{m}(\mathbb{R}^{n}\times\mathbb{R}^{n})$ is defined by
			\[
			\sigma(x,D)f(x) : =\int_{\mathbb{R}^{n}}\sigma(x,\xi)\mathcal{F}f(\xi)e^{2\pi ix\cdot\xi}\mathrm{d}\xi
			\]
			for $f\in \mathscr{S}(\mathbb{R}^n)$. Then, one can extend $a(\cdot,D)$ to a continuous linear operator on $\mathscr{S}^{\prime}(\mathbb{R}^n)$. Moreover, if $\sigma \in S^m_{1,0}$, then $\sigma(\cdot,D)$ is a bounded operator mapping  the Sobolev space $H_p^{s+m}(\mathbb{R}^n)$ into the Sobolev space $H_p^s(\mathbb{R}^n)$ for
			$s\in (-\infty,\infty)$ and $p\in (1,\infty)$; see \cite{w2}. Similarly, if $\sigma \in S^m_{1,0}$, $s\in (-\infty,\infty)$, $p\in (1,\infty)$ and $w\in A_p$ (Muckenhoupt scalar weight), then $\sigma(\cdot,D)$ is bounded from the weighted Sobolev space $H_p^{s+m}(\mathbb{R}^n, w)$ into the weighted Sobolev space $H_p^s(\mathbb{R}^n,w)$; see \cite{w1}. The continuity of $\sigma(\cdot,D)$ in Besov spaces and Triebel-Lizorkin spaces
			has been developed by Runst \cite{ru}, Torres \cite{tor}, and Johnsen \cite{joh}. 
			Indeed, let $m\in \mathbb{R}$, $0<p,q\leq \infty$ and $\sigma \in S^m_{1,1}.$ Then
			$\sigma(\cdot,D)$ is bounded from $F^{s+m}_{p,q}(\mathbb{R})$ to $F^{s}_{p,q}(\mathbb{R})$ if $p<\infty$ and $s>n/\min(1,p, q)-n$, and from $B^{s+m}_{p,q}(\mathbb{R})$ to $B^{s}_{p,q}(\mathbb{R})$ if $s>n/\min\{1,p, q\}-n$.
			In \cite{BX24}, the first author and the corresponding author of this paper obtained the boundedness of pseudo-differential operators with symbol in H\"ormander's class $S^m_{1,1} $ on matrix weighted Besov and Triebel-Lizorkin spaces.

			In the following, we consider the boundedness of $\sigma(\cdot,D)$ on matrix weighted Bourgain-Morrey Triebel-Lizorkin spaces.
			Denote
			\[ \sigma(x,D)\vec{f}(x) :=\int_{\mathbb{R}^{n}}\sigma(x,\xi)\mathcal{F}(\vec{f})(\xi)e^{2\pi ix\cdot\xi}\mathrm{d}\xi .\]
			To proceed, we decompose $\sigma(\cdot,D).$ Let $\{\varphi_{j}\}_{j\in\mathbb{N}_{0}}$ be a partition of unity
			as in Definition \ref{def:partition of unity inho}. Then
			\begin{align*}
				\sigma(x,D)\vec{f}(x)
				& =\sum_{j=0}\int_{\mathbb{R}^{n}}\sigma(x,\xi)\varphi_{j}(\xi)\mathcal{F}(\vec{f})(\xi)e^{2\pi ix\cdot\xi}\mathrm{d}\xi\\
				& =\sum_{j=0}\int_{\mathbb{R}^{n}}\bigg(\int_{\mathbb{R}^{n}}\hat{\sigma}^{1}(\zeta,\xi)e^{2\pi ix\cdot\zeta}\mathrm{d}\zeta\bigg)\varphi_{j}(\xi)\mathcal{F}(\vec{f})(\xi)e^{2\pi ix\cdot\xi}\mathrm{d}\xi
			\end{align*}
			where $\hat{\sigma}^{1}(\cdot,\cdot)$ denotes the Fourier transform
			of $\sigma(\cdot,\cdot)$ with respect to the first variable. Using
			a partition of unity in the variable $\zeta$ we write
			\[
			\sigma(x,D)\vec{f}(x)=\sum_{j=0}\sum_{l\ge0}\sigma_{j,l}(x,D)\vec{f}(x)
			\]
			with symbols given by
			\[
			\sigma_{j,0}(x,\xi):=\varphi_{j}(\xi)\int_{\mathbb{R}^{n}}\bigg(\sum_{v=0}^{j}\varphi_{v}(\zeta)\bigg)\hat{\sigma}^{1}(\zeta,\xi)e^{2\pi ix\cdot\zeta}\mathrm{d}\zeta,
			\]
			\[
			\sigma_{j,l}(x,\xi):=\varphi_{j}(\xi)\int_{\mathbb{R}^{n}}\varphi_{j+l}(\zeta)\hat{\sigma}^{1}(\zeta,\xi)e^{2\pi ix\cdot\zeta}\mathrm{d}\zeta,\;l\ge1.
			\]

			Put
			\[
			M_{j,l}(x,y):=\mathcal{F}^{2}(\sigma_{j,l}(x,\cdot))(y)\quad x,y\in\mathbb{R}^{n},
			\]
			where $\mathcal{F}^{2}(\sigma_{j,l}(x,\cdot))$ stands for the Fourier
			transform of $\sigma_{j,l}(x,\cdot)$ with respect to the second variable.

			\begin{lem}
				\label{lem:Lplq less F} Let $0<p<t<r<\infty$ or $0<p\le t<r=\infty$.
				Let $m\in\mathbb{R}$, $s\in\mathbb{R}$, $W\in  \mathcal  A_{p}$ and
				$\{A_{Q}\}_{Q\in\mathcal{D}}$ be a sequence of reducing operators of order p for $W$. 
				Let $ q \in (0, p + \delta_W) $ where $\delta_W >0$  is the same as in Lemma \ref{lem:W AQ}. 
				Let $\Delta$  be the same as in Lemma \ref{AQ AR improved}.
				Let $N,b\in\mathbb{N}_{0}$ be even numbers with $b>n/\min(1,p,q)+\Delta +n/2$.
				Then there exists a constant $c>0$ such that for $\sigma\in S_{1,1}^{m}$,
				$l\in\mathbb{N}_{0}$,
				\[
				\bigg\|\Big(2^{js}|W^{1/p}T_{\sigma_{j,l}}(\vec{f})|\Big\}_{j}\bigg\|_{M_{p}^{t,r}(\ell^{q})}\le2^{-lN}\|\sigma\|_{N,b}\|\vec{f}\|_{F_{p,t,r}^{s+m,q}(W)}.
				\]
				
			\end{lem}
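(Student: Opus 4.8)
We sketch the argument; $T_{\sigma_{j,l}}$ denotes $\sigma_{j,l}(x,D)$. The idea is to dominate $\sigma_{j,l}(x,D)\vec f$ pointwise by a Peetre-type maximal function of the Littlewood--Paley pieces $\varphi_k(D)\vec f$ and then appeal to the Peetre characterization of the target space $F^{s+m,q}_{p,t,r}(W)$ (Theorem \ref{char peetre mix}). Since $\sigma_{j,l}(x,\xi)$ carries the factor $\varphi_j(\xi)$ and $\varphi_j\widetilde\varphi_j=\varphi_j$ for the fattened partition of unity of Definition \ref{def:partition of unity inho}, one has $\sigma_{j,l}(x,D)\vec f=\sigma_{j,l}(x,D)\big(\widetilde\varphi_j(D)\vec f\big)$. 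Writing $g_j:=\widetilde\varphi_j(D)\vec f$ (so that ${\rm supp}\,\mathcal F g_j\subset\{|\xi|\le2^{j+2}\}$) and $M_{j,l}(x,y)=\mathcal F^2(\sigma_{j,l}(x,\cdot))(y)$, this becomes
\[
\sigma_{j,l}(x,D)\vec f(x)=\int_{\mathbb R^n}M_{j,l}(x,y)\,g_j(x+y)\,\d y .
\]

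Fix $a$ with $n/\min(1,p,q)+\Delta<a<b-n/2$ (possible since $b>n/\min(1,p,q)+\Delta+n/2$). Moving $W^{1/p}(x)$ into the integral and inserting the weight $(1+2^j|y|)^a$,
\[
|W^{1/p}(x)\sigma_{j,l}(x,D)\vec f(x)|\le\Big(\int_{\mathbb R^n}|M_{j,l}(x,y)|(1+2^j|y|)^a\,\d y\Big)\sup_{y\in\mathbb R^n}\frac{|W^{1/p}(x)g_j(x+y)|}{(1+2^j|y|)^a}.
\]
By Lemma \ref{ru lemma 3} (whose proof, via the derivative estimates of Lemma \ref{ru lemma2}, bounds the integral with $|M_{j,l}|$ inside, at the cost of $b>a+n/2$), the first factor is $\lesssim 2^{jm}2^{-lN}\|\sigma\|_{N,b}$ uniformly in $x$. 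Splitting $g_j=\sum_{i=-1}^{1}\varphi_{j+i}(D)\vec f$ (with $\varphi_{-1}(D)\vec f:=0$) and using $1+2^{j+i}|y|\le2^{|i|}(1+2^j|y|)$, the second factor is $\lesssim\sum_{i=-1}^{1}\big(\varphi_{j+i}(D)\vec f\big)_a^{(W,p)}(x)$. Hence
\[
2^{js}|W^{1/p}(x)\sigma_{j,l}(x,D)\vec f(x)|\lesssim 2^{-lN}\|\sigma\|_{N,b}\sum_{i=-1}^{1}2^{j(s+m)}\big(\varphi_{j+i}(D)\vec f\big)_a^{(W,p)}(x).
\]

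Taking the $\ell^q$-norm in $j$ and the $M_p^{t,r}$-quasi-norm, using the quasi-triangle inequality of $M_p^{t,r}$ and the reindexing $k=j+i$ (which turns $2^{j(s+m)}(\varphi_k(D)\vec f)_a^{(W,p)}$ into $2^{-i(s+m)}(2^{k(s+m)}\varphi_k(D)\vec f)_a^{(W,p)}$, the sums over $k$ being harmlessly extended to $k\ge0$ since $\varphi_{-1}(D)\vec f=0$), we get
\[
\Big\|\big(2^{js}|W^{1/p}T_{\sigma_{j,l}}\vec f|\big)_{j\ge0}\Big\|_{M_p^{t,r}(\ell^q)}\lesssim 2^{-lN}\|\sigma\|_{N,b}\,\Big\|\big\{\big(2^{k(s+m)}\varphi_k(D)\vec f\big)_a^{(W,p)}\big\}_{k\ge0}\Big\|_{M_p^{t,r}(\ell^q)}.
\]
Each $2^{k(s+m)}\varphi_k(D)\vec f$ has Fourier support in $\{|\xi|\le2^{k+1}\}$ and $a>n/\min(1,p,q)+\Delta$, so Theorem \ref{char peetre mix}(ii) identifies the last norm, up to constants, with $\|\vec f\|_{F^{s+m,q}_{p,t,r}(W)}$, which is the assertion.

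The one genuinely delicate point is the pointwise step: one must have $\int_{\mathbb R^n}|M_{j,l}(x,y)|(1+2^j|y|)^a\,\d y\lesssim2^{jm}2^{-lN}\|\sigma\|_{N,b}$ with the absolute value \emph{inside} the integral, not merely the estimate on $\big|\int M_{j,l}(x,y)(1+2^j|y|)^a\,\d y\big|$ as literally displayed in Lemma \ref{ru lemma 3}. The proof of that lemma does produce the stronger form --- from $|\partial_\xi^\alpha\sigma_{j,l}(x,\xi)|\lesssim2^{jm}2^{-lN}2^{-j|\alpha|}$ and the frequency support $|\xi|\lesssim2^j$, Plancherel together with Cauchy--Schwarz yields $|M_{j,l}(x,y)|\lesssim2^{jn}2^{jm}2^{-lN}(1+2^j|y|)^{-b}$ with only $b>a+n/2$ needed --- so one invokes that argument rather than the displayed inequality alone. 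Everything else (the factorization through $\widetilde\varphi_j(D)$, the scale shifts $j\leftrightarrow j\pm1$, and the quasi-triangle inequality in $M_p^{t,r}(\ell^q)$) is routine bookkeeping.
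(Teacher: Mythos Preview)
Your argument is correct and follows essentially the same route as the paper's proof: factor through $\widetilde\varphi_j(D)$, pass to the kernel $M_{j,l}$, insert the Peetre weight $(1+2^j|y|)^a$, use Lemma~\ref{ru lemma 3} for the kernel factor, and finish with the Peetre characterization. The paper is slightly terser (it writes the Peetre maximal function of $\widetilde\varphi_j(D)\vec f$ directly and cites Theorem~\ref{thm:Peetre char} rather than splitting $\widetilde\varphi_j=\sum_{i=-1}^1\varphi_{j+i}$ and invoking Theorem~\ref{char peetre mix}), and it handles the absolute-value-inside-the-integral subtlety you flag by a pointer to \cite[Remark~4.5]{BX24}; but these are cosmetic differences, not a different approach.
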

			
			\begin{proof}
				Let $\vec{f_{j}}:=\tilde{\varphi_{j}}(D_{x})\vec{f}$ where $\tilde{\varphi_{j}}$ is the same as in Definition \ref{def:partition of unity inho}.				
				From \cite[Lemma 4.4]{BX24}, we have
				\begin{equation*}
					 \Big|W^{1/p}(x)\int_{\mathbb{R}^{n}}\sigma_{j,l}(x,\xi)\mathcal{F}\vec{f_{j}}(\xi)e^{2\pi ix\cdot\xi}\mathrm{d}\xi\Big| \lesssim2^{jm}\Big(\varphi_{j}^{*}\vec{f}\Big)_{a}^{(W,p)}(x)2^{-lN}\|\sigma\|_{N,b}.
				\end{equation*}
				Hence by Theorem \ref{thm:Peetre char},
				\[
				\bigg\|\Big(2^{js}|W^{1/p}T_{\sigma_{j,l}}(\vec{f})|\Big\}_{j}\bigg\|_{M_{p}^{t,r}(\ell^{q})}\le2^{-lN}\|\sigma\|_{N,b}\|\vec{f}\|_{F_{p,t,r}^{s+m,q}(W)}.\qedhere
				\]
			\end{proof}

			\begin{lem} \label{har pse}
				Let $0<p<t<r<\infty$ or $0<p\le t<r=\infty$. 
				Let $m\in\mathbb{R}$, $s\in\mathbb{R}$ and $W\in  \mathcal  A_{p}$. 
				Let $ q \in (0, p + \delta_W) $ where $\delta_W >0$  is the same as in Lemma \ref{lem:W AQ}. 
				Let $\Delta$  be the same as in Lemma \ref{AQ AR improved}.
				Let $s> n/ \min(1, p, q) + \Delta .$
				Let $N,b\in\mathbb{N}_{0}$ be even numbers with $b>n/\min(1,p,q)+\Delta +n/2$.
				Then there exists a constant $c>0$ such that for $\sigma\in S_{1,1}^{m}$,
				$l\in\mathbb{N}_{0}$,
				\[
				\bigg\|\sum_{j=0}^{\infty}T_{\sigma_{j,l}}(\vec{f})\bigg\|_{F_{p,t,r}^{s,q}(W)}\le c2^{sl}2^{-lN}\|\sigma\|_{N,b}\|\vec{f}\|_{F_{p,t,r}^{s+m,q}(W)}.
				\]
			\end{lem}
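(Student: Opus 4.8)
The plan is to read off the estimate from the approximation characterization Theorem \ref{thm:CHAR by app Tl}, which is available precisely because $s>n/\min(1,p,q)+\Delta$. Set $\vec g:=\sum_{j=0}^{\infty}T_{\sigma_{j,l}}(\vec f)$; the series converges in $\mathscr S'(\rn)$ by standard arguments together with the decay furnished by Lemma \ref{lem:Lplq less F}, so $\vec g$ is well defined, and the lemma is the statement $\|\vec g\|_{F^{s,q}_{p,t,r}(W)}\le c\,2^{sl}2^{-lN}\|\sigma\|_{N,b}\|\vec f\|_{F^{s+m,q}_{p,t,r}(W)}$. From the formulas for $\sigma_{j,l}$ (the factor $\varphi_j(\xi)$ confines the second-variable frequency to $\{|\xi|\le2^{j+1}\}$ and $\varphi_{j+l}(\zeta)$ confines the first-variable frequency to $\{|\zeta|\le2^{j+l+1}\}$, and similarly for $l=0$), the Fourier transform of $T_{\sigma_{j,l}}(\vec f)$ is supported in $\{\xi:|\xi|\le2^{j+l+2}\}$. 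Hence, with an empty sum understood as $\vec 0$,
\[
\vec u_k:=\sum_{j=0}^{\,k-l-1}T_{\sigma_{j,l}}(\vec f),\qquad k\in\mathbb N_0,
\]
has $\operatorname{supp}\mathcal F\vec u_k\subset\{|\xi|\le2^{k+1}\}$, satisfies $\|W^{1/p}\vec u_k\|_{M_p^{t,r}}<\infty$ by Lemma \ref{lem:Lplq less F}, and converges to $\vec g$ in $\mathscr S'(\rn)$; thus $\vec u:=\{\vec u_k\}_{k\in\mathbb N_0}\in\mathcal E_p$. Since $l\ge0$, we have $\vec u_0=\vec 0$, so $\|W^{1/p}\vec u_0\|_{M_p^{t,r}}=0$.

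It remains to estimate $\big\|\{2^{ks}|W^{1/p}(\vec g-\vec u_k)|\}_k\big\|_{M_p^{t,r}(\ell^q)}$. For each $k$ we have $\vec g-\vec u_k=\sum_{j\ge\max(0,k-l)}T_{\sigma_{j,l}}(\vec f)$, so writing $a_j(x):=2^{js}|W^{1/p}(x)T_{\sigma_{j,l}}(\vec f)(x)|$ for $j\ge0$ and $a_j:=0$ for $j<0$, and substituting $i=k-j$,
\[
2^{ks}\big|W^{1/p}(x)(\vec g-\vec u_k)(x)\big|\le\sum_{i=-\infty}^{l}2^{is}\,a_{k-i}(x).
\]
Because $s>n/\min(1,p,q)+\Delta\ge n>0$, the geometric series $\sum_{i\le l}2^{is}\approx2^{ls}$ (and $\sum_{i\le l}2^{isq}\approx2^{lsq}$ when $0<q<1$). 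Applying Young's inequality to this discrete convolution — using $\ell^1\ast\ell^q\hookrightarrow\ell^q$ if $q\ge1$ and $\ell^q\ast\ell^q\hookrightarrow\ell^q$ if $0<q<1$ — gives, pointwise in $x$,
\[
\Big(\sum_{k=0}^{\infty}\big(2^{ks}|W^{1/p}(x)(\vec g-\vec u_k)(x)|\big)^q\Big)^{1/q}\le c\,2^{ls}\Big(\sum_{j=0}^{\infty}\big(2^{js}|W^{1/p}(x)T_{\sigma_{j,l}}(\vec f)(x)|\big)^q\Big)^{1/q}.
\]
Taking $M_p^{t,r}$ quasi-norms and invoking Lemma \ref{lem:Lplq less F} (whose hypotheses on $N,b$ are exactly those assumed here),
\[
\Big\|\big\{2^{ks}|W^{1/p}(\vec g-\vec u_k)|\big\}_k\Big\|_{M_p^{t,r}(\ell^q)}\le c\,2^{ls}\cdot2^{-lN}\|\sigma\|_{N,b}\|\vec f\|_{F^{s+m,q}_{p,t,r}(W)}.
\]
Together with $\|W^{1/p}\vec u_0\|_{M_p^{t,r}}=0$, this yields $\|\vec g\|^{\vec u}_{F^{s,q}_{p,t,r}(W)}\le c\,2^{sl}2^{-lN}\|\sigma\|_{N,b}\|\vec f\|_{F^{s+m,q}_{p,t,r}(W)}$, and Theorem \ref{thm:CHAR by app Tl} then gives $\|\vec g\|_{F^{s,q}_{p,t,r}(W)}\le c\,\|\vec g\|^{\diamondsuit}_{F^{s,q}_{p,t,r}(W)}\le c\,\|\vec g\|^{\vec u}_{F^{s,q}_{p,t,r}(W)}$, which is the claim.

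The only delicate point is the bookkeeping in the first step: correctly determining the spectrum of $T_{\sigma_{j,l}}(\vec f)$ and shifting the summation index in the definition of $\vec u_k$ so that $\vec u\in\mathcal E_p$, together with the $\mathscr S'$-convergence and $M_p^{t,r}$-membership needed before Theorem \ref{thm:CHAR by app Tl} is applicable. Once the approximation characterization is in force, what is left is a one-line summation that uses only $s>0$ and the already proved Lemma \ref{lem:Lplq less F}; the factor $2^{sl}$ is produced entirely by $\sum_{i\le l}2^{is}$ and the factor $2^{-lN}$ is inherited from Lemma \ref{lem:Lplq less F}.
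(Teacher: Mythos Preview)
Your proof is correct and follows essentially the same route as the paper's: construct an approximating sequence out of partial sums of $\sum_j T_{\sigma_{j,l}}(\vec f)$, verify the spectral support so that Theorem \ref{thm:CHAR by app Tl} applies, and then bound the tail via Lemma \ref{lem:Lplq less F}. The only cosmetic differences are that you shift the upper index by one (so that $\operatorname{supp}\mathcal F\vec u_k\subset\{|\xi|\le2^{k+1}\}$ exactly and $\vec u_0=\vec 0$) and replace the paper's case split $k\le l-1$ versus $k\ge l$ by a single discrete Young's inequality; neither changes the substance of the argument.
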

			
			\begin{proof}
				For each fixed $l\in\mathbb{N}_{0},$
				set $\vec{h}_{l}:=\sum_{j=0}^{\infty}T_{\sigma_{j,l}}(\vec{f})$.
				It suffices to estimate $\|\vec{h_{l}}\|_{F_{p}^{s,q}(W)}^{\vec{u_{l}}}$
				for an appropriate sequence of functions $\vec{u_{l}}$ from Theorem
				\ref{thm:CHAR by app Tl}. Define the sequence $\vec{u_{l}}:=\{\vec{u}_{k,l}\}_{k\in\mathbb{N}_{0}}$
				as follows
				\[
				\vec{u}_{k,l}:=\begin{cases}
					0 & \mathrm{if}\;k\le l-1,\\
					\sum_{v=0}^{k-l}T_{\sigma_{v,l}}(\vec{f}) & \mathrm{if}\;k>l.
				\end{cases}
				\]
				It is easy to see that $\vec{h}_{l}=\lim_{k\to\infty}\vec{u}_{k,l}$
				in $\mathscr{S}'(\mathbb{R}^{n})$. 				
				From \cite[proof of Lemma 4.6]{BX24}, supp $\mathcal{F}\vec{u}_{k,l}\subset\{\xi:|\xi|\le2^{k+3}\}$
				for all $k,l\in\mathbb{N}_{0}$.				
				For $s> n/ \min(1, q) + \Delta >0$, by Lemma \ref{lem:Lplq less F} we have
				\begin{align}
					\|\vec{h}_{l}\|_{M_{p}^{t,r}(W)} & =\bigg\|\sum_{j=0}^{\infty}T_{\sigma_{j,l}}(\vec{f})\bigg\|_{M_{p}^{t,r}(W)}\nonumber \\
					\nonumber
					& \le\|\{2^{js}W^{1/p}T_{\sigma_{j,l}}(\vec{f})\}_{j}\|_{M_{p}^{t,r}(\ell^{q})} \\
					& \lesssim2^{-lN}\|\sigma\|_{N,b}\|\vec{f}\|_{F_{p,t,r}^{s+m,q}(W)}.\label{eq:LPW F}
				\end{align}				
				Note that $\vec{u}_{0,l}=0$ if $l\in\mathbb{N}$, $\vec{u}_{0,0}=T_{\sigma_{0,0}}(\vec{f})$
				and that (\ref{eq:LPW F}) implies that
				\begin{align*}
					\|T_{\sigma_{0,l}}(\vec{f})\|_{M_{p}^{t,r}(W)} & \lesssim2^{-lN}\|\sigma\|_{N,b}\|\vec{f}\|_{F_{p,t,r}^{s+m,q}(W)}
				\end{align*}
				for all $l\in\mathbb{N}_{0}$. We now estimate $\|\{2^{ks}|\vec{h}_{l}-\vec{u}_{k,l}|\}_{k\in\mathbb{N}_{0}}\}\|_{M_{p}^{t,r}(W)(\ell^{q})}$
				by breaking the sum in $k$ into $k\le l-1$ and $k\ge l$. Since
				$\vec{u}_{k,l}=0$ if $k\le l-1$, for the first part we obtain
				\begin{align*}
					\bigg\|\bigg(\sum_{k=0}^{l-1}2^{ksq}|\vec{h}_{l}|^{q}\bigg)^{1/q}\bigg\|_{M_{p}^{t,r}(W)} & =\bigg(\sum_{k=0}^{l-1}2^{ksq}\bigg)^{1/q}\|\vec{h}_{l}\|_{M_{p}^{t,r}(W)}\\
					& \lesssim 2^{sl}2^{-lN}\|\sigma\|_{N,b}\|\vec{f}\|_{F_{p,t,r}^{s+m,q}(W)}.
				\end{align*}
				For the second part (that is, when $k\ge l$) we get
				\begin{align*}
					\vec{h}_{l}-\vec{u}_{k,l} & =\sum_{v=k-l+1}^{\infty}T_{\sigma_{v,l}}(\vec{f})=\sum_{v=1}^{\infty}T_{\sigma_{k-l+v,l}}(\vec{f})
				\end{align*}
				and then
				\begin{align*}
					& \|\{2^{ks}|\vec{h}_{l}-\vec{u}_{k,l}|\}_{k=l}^{\infty}\|_{M_{p}^{t,r}(W)(\ell^{q})}^{\min(p,q,1)}\\
					& \le \bigg\|\bigg\{\sum_{v=1}^{\infty}2^{(l-v)s}2^{(k-l+v)s}T_{\sigma_{k-l+v,l}}(\vec{f})\Big\}_{k=l}^{\infty}\bigg\|_{M_{p}^{t,r}(W)(\ell^{q})}^{\min(p,q,1)} \\
					& \lesssim\sum_{v=1}^{\infty}2^{(l-v)s\min(p,q,1)}\|\{2^{ks}T_{\sigma_{k,l}}(\vec{f})\}_{k=0}^{\infty}\|_{M_{p}^{t,r}(W)(\ell^{q})}^{\min(p,q,1)}\\
					& \lesssim\sum_{v=1}^{\infty}2^{(l-v)s\min(p,q,1)}\Big(2^{-lN}\|\sigma\|_{N,b}\|\vec{f}\|_{F_{p,t,r}^{s+m,q}(W)}\Big)^{\min(p,q,1)}\\
					& \lesssim \Big(2^{(s-N)l}\|\sigma\|_{N,b}\|\vec{f}\|_{F_{p,t,r}^{s,q}(W)}\Big)^{\min(p,q,1)} .
				\end{align*}
				Hence we obtain
				\begin{align*}
					\|\vec{h}_{l}\|_{F_{p,t,r}^{s,q}(W)} & \lesssim\|\vec{u}_{0,l}\|_{M_{p}^{t,r}(W)}+\|\{2^{ks}(\vec{h_{l}}-\vec{u}_{k,l})\}_{k\in\mathbb{N}_{0}}\|_{M_{p}^{t,r}(W)(\ell^{q})}\\
					& \lesssim2^{-lN}\|\sigma\|_{N,b}\|\vec{f}\|_{F_{p,t,r}^{s+m,q}(W)}+2^{sl}2^{-lN}\|\sigma\|_{N,b}\|\vec{f}\|_{F_{p,t,r}^{s+m,q}(W)}\\
					& +2^{(s-N)l}\|\sigma\|_{N,b}\|\vec{f}\|_{F_{p,t,r}^{s+m,q}(W)}\\
					& \lesssim2^{sl}2^{-lN}\|\sigma\|_{N,b}\|\vec{f}\|_{F_{p,t,r}^{s+m,q}(W)}.\qedhere
				\end{align*}
			\end{proof}
			\begin{thm}
				\label{thm:bounded all rho delta} Let $0<p<t<r<\infty$ or $0<p\le t<r=\infty$.
				Let $m\in\mathbb{R}$, $s\in\mathbb{R}$ and $W\in  \mathcal  A_{p}$. 
				Let $ q \in (0, p + \delta_W) $ where $\delta_W >0$  is the same as in Lemma \ref{lem:W AQ}. 
				Let $\Delta$  be the same as in Lemma \ref{AQ AR improved}.
				Let $s> n/ \min(1, p, q) + \Delta .$
				Let $N,b\in\mathbb{N}_{0}$ be even numbers with $b>n/\min(1,p,q)+ \Delta +n/2$,
				$N>s$. For $\sigma\in S_{1,1}^{m}$,
				\begin{align*}
					\|\sigma(\cdot,D)\vec{f}\|_{F_{p,t,r}^{s,q}(W)} & \lesssim\|\sigma\|_{N,b}\|\vec{f}\|_{F_{p,t,r}^{s+m,q}(W)}.
				\end{align*}
			\end{thm}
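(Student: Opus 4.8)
The plan is to combine the Littlewood--Paley type decomposition of $\sigma(\cdot,D)$ recorded above with Lemma \ref{har pse}, and then to sum the resulting geometric series in the parameter $l$ by means of the $\min(1,p,q)$-triangle inequality of the quasi-norm $\|\cdot\|_{F_{p,t,r}^{s,q}(W)}$.

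First I would fix even integers $N,b\in\mathbb{N}_0$ with $b>n/\min(1,p,q)+\Delta+n/2$ and $N>s$; such a choice is clearly possible, and only these finitely many seminorms $\|\sigma\|_{N,b}$ of the symbol will enter the estimate. Since $\sigma\in S_{1,1}^{m}$ acts continuously on $\mathscr{S}'(\mathbb{R}^n)$ and $F_{p,t,r}^{s+m,q}(W)\subset\mathscr{S}'(\mathbb{R}^n)$, the object $\sigma(\cdot,D)\vec f$ is well defined as a tempered distribution, and it admits the decomposition $\sigma(x,D)\vec f=\sum_{l\ge 0}\vec h_l$ with $\vec h_l:=\sum_{j\ge 0}T_{\sigma_{j,l}}(\vec f)$, the symbols $\sigma_{j,l}$ being as introduced before Lemma \ref{ru lemma2}. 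As in the proof of Lemma \ref{har pse}, each $\vec h_l$ is the $\mathscr{S}'$-limit of partial sums belonging to $\mathcal{E}_p$, so $\vec h_l\in F_{p,t,r}^{s,q}(W)$, and Lemma \ref{har pse} (whose hypotheses $s>n/\min(1,p,q)+\Delta$ and the size condition on $b$ are exactly those assumed here) yields
\begin{equation*}
\|\vec h_l\|_{F_{p,t,r}^{s,q}(W)}\le c\,2^{sl}2^{-lN}\|\sigma\|_{N,b}\|\vec f\|_{F_{p,t,r}^{s+m,q}(W)}
\end{equation*}
for every $l\in\mathbb{N}_0$.

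It then remains to sum over $l$. Putting $J:=\min(1,p,q)\in(0,1]$ and using that $M_p^{t,r}(\ell^q)$, and hence $F_{p,t,r}^{s,q}(W)$, obeys the $J$-triangle inequality $\|\sum_l g_l\|^{J}\le\sum_l\|g_l\|^{J}$ (the same convention already used inside the proof of Lemma \ref{har pse}), I would estimate
\begin{align*}
\|\sigma(\cdot,D)\vec f\|_{F_{p,t,r}^{s,q}(W)}^{J}
&\le\sum_{l=0}^{\infty}\|\vec h_l\|_{F_{p,t,r}^{s,q}(W)}^{J}\\
&\lesssim\|\sigma\|_{N,b}^{J}\,\|\vec f\|_{F_{p,t,r}^{s+m,q}(W)}^{J}\sum_{l=0}^{\infty}2^{(s-N)lJ},
\end{align*}
and the last series is finite precisely because $N>s$. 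Raising to the power $1/J$ gives the asserted bound; in particular this computation also shows that $\sum_l\vec h_l$ converges in $F_{p,t,r}^{s,q}(W)$, so the decomposition is legitimate. The genuinely delicate input is Lemma \ref{har pse} itself, which already absorbs the analytic core of the argument (the Peetre maximal function characterization of Theorem \ref{thm:Peetre char}, the kernel bounds of Lemmas \ref{ru lemma2}--\ref{ru lemma 3}, and the approximation characterization of Theorem \ref{thm:CHAR by app Tl}); once that lemma is granted, the present proof is the routine geometric summation described above, and the only point to watch is matching the indices so that the shift $v\mapsto l-v$ and the condition $N>s$ produce a convergent series.
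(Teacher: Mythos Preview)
Your proof is correct and follows essentially the same approach as the paper: decompose $\sigma(\cdot,D)\vec f=\sum_{l\ge0}\vec h_l$, apply Lemma~\ref{har pse} to each $\vec h_l$, and sum via the $\min(1,p,q)$-triangle inequality using $N>s$ to make the geometric series $\sum_l 2^{(s-N)l\min(1,p,q)}$ converge. The paper's proof is the same computation compressed into two lines.
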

			
			\begin{proof}
				By Lemma \ref{har pse} and $N>s$, we obtain
				\begin{align*}
					\|\sigma(\cdot,D)\vec{f}\|_{F_{p,t,r}^{s,q}(W)}^{\min(1,p,q)} & \lesssim\sum_{l\ge0}^{\infty}2^{(s-N)l\min(1,p,q)}\|\sigma\|_{N,b}^{\min(1,p,q)}\|\vec{f}\|_{F_{p,t,r}^{s+m,q}(W)}^{\min(1,p,q)}\\
					& \lesssim\|\sigma\|_{N,b}^{\min(1,p,q)}\|\vec{f}\|_{F_{p,t,r}^{s+m,q}(W)}^{\min(1,p,q)}.\qedhere
				\end{align*}
			\end{proof}
			\begin{lem}[Theorem 5.14, \cite{S18}] \label{lem:decompositon of pseudo}Let $m_{1},m_{2}\in\mathbb{R}$,
				$0\le\delta<\rho\le1$. For $a\in S_{\rho,\delta}^{m_{1}}$, $b\in S_{\rho,\delta}^{m_{2}}$,
				let
				\[
				c(x,\xi):=\int_{\mathbb{R}^{2n}}a(x,\eta)b(y,\xi)e^{2\pi i(x-y)\cdot(\eta-\xi)}\mathrm{d}y\mathrm{d}\eta\quad x,\xi\in\mathbb{R}^{n}.
				\]
				Then $c\in S_{\rho,\delta}^{m_{1}+m_{2}}$ and c satisfies $c(\cdot,D)=a(\cdot,D)\circ b(\cdot,D)$.
			\end{lem}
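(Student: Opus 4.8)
The plan is to establish this via the standard symbolic calculus for pseudo-differential operators, as in \cite{S18}, being careful that the argument works in the full range $0\le\delta<\rho\le1$. After the change of variables $\eta\mapsto\xi+\eta$ and $y\mapsto x-y$, the defining integral takes the form
\[
c(x,\xi)=\int_{\mathbb{R}^{2n}}a(x,\xi+\eta)\,b(x-y,\xi)\,e^{2\pi i y\cdot\eta}\,\mathrm{d}y\,\mathrm{d}\eta ,
\]
which is not absolutely convergent, so the first thing to do is to interpret it as an oscillatory integral. I would insert a Schwartz cutoff $\chi(\varepsilon y,\varepsilon\eta)$ with $\chi(0,0)=1$, and use the two elementary identities $(1-\Delta_y)e^{2\pi i y\cdot\eta}=(1+4\pi^2|\eta|^2)e^{2\pi i y\cdot\eta}$ and $(1-\Delta_\eta)e^{2\pi i y\cdot\eta}=(1+4\pi^2|y|^2)e^{2\pi i y\cdot\eta}$ together with repeated integration by parts; the symbol estimates for $a$ and $b$ then guarantee that, after finitely many such steps, the integrand is absolutely integrable uniformly in $\varepsilon$, so the limit $\varepsilon\to0$ exists and is independent of $\chi$.

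Next I would verify the composition identity $c(\cdot,D)=a(\cdot,D)\circ b(\cdot,D)$ by testing against $f\in\mathscr{S}(\mathbb{R}^n)$. Writing out $a(\cdot,D)\bigl(b(\cdot,D)f\bigr)(x)$ from the definition, substituting the Fourier representation of $b(\cdot,D)f$ and of its Fourier transform, and interchanging the order of integration (legitimate once the inner integrals are interpreted as oscillatory integrals, or after a routine truncation argument), the exponential factors combine into the phase $2\pi i(x-y)\cdot(\eta-\xi)$ and one reads off precisely $c(x,D)f(x)$.

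The main work is to show $c\in S^{m_1+m_2}_{\rho,\delta}$. Fixing multi-indices $\alpha,\beta$ and differentiating under the regularized integral sign, by the Leibniz rule it suffices to estimate integrals of the shape $\int_{\mathbb{R}^{2n}}(\partial_\xi^{\alpha_1}\partial_x^{\beta_1}a)(x,\xi+\eta)\,(\partial_\xi^{\alpha_2}\partial_x^{\beta_2}b)(x-y,\xi)\,e^{2\pi i y\cdot\eta}\,\mathrm{d}y\,\mathrm{d}\eta$. I would split the $\eta$-integration into the region $|\eta|\le\tfrac12\langle\xi\rangle$, where $\langle\xi+\eta\rangle\approx\langle\xi\rangle$, and the complementary region. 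In the first region one integrates by parts $N$ times in $y$ (each step gaining $\langle\eta\rangle^{-1}$ at the cost of an $x$-derivative of $b$, which contributes $\langle\xi\rangle^{\delta}$) and $N'$ times in $\eta$ (each step gaining $\langle y\rangle^{-1}$ at the cost of a $\xi$-derivative of $a$, contributing $\langle\xi\rangle^{-\rho}$); taking $N'=N$ large and using $\delta<\rho$ makes the double integral converge and yields the bound $\langle\xi\rangle^{m_1+m_2-\rho|\alpha|+\delta|\beta|}$. In the second region one invokes the Peetre inequality $\langle\eta\rangle\lesssim\langle\xi\rangle\langle\xi+\eta\rangle$ together with $|\eta|\gtrsim\langle\xi\rangle$, and integrates by parts in $y$ alone many times to extract an arbitrarily large negative power of $\langle\eta\rangle$, which absorbs all the polynomial $\xi$-growth. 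Assembling the two contributions gives the required symbol estimates for $c$.

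The step I expect to be the main obstacle is this last estimate: keeping track of the powers of $\langle\xi\rangle$ uniformly across the two regions and balancing the numbers of integrations by parts so that the double integral is genuinely absolutely convergent — which is precisely where the hypothesis $0\le\delta<\rho\le1$ enters. The oscillatory-integral bookkeeping in the earlier steps (justifying Fubini and differentiation under the integral sign) is standard but must be carried out with some care.
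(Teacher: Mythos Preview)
The paper does not prove this lemma; it is quoted verbatim as Theorem 5.14 of \cite{S18} and used as a black box (specifically, in the proof of Corollary \ref{har m< 1} to write $\sigma(\cdot,D)=G_{-M}\circ\sigma'(\cdot,D)\circ G_M$). There is therefore no ``paper's own proof'' to compare against.

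That said, your sketch is the standard symbolic-calculus argument one finds in textbooks (Kohn--Nirenberg, H\"ormander, Stein, or indeed \cite{S18}), and the outline is correct: regularize the oscillatory integral, verify the composition identity on $\mathscr{S}$, then prove the symbol bounds by splitting $\eta$ into $|\eta|\le\tfrac12\langle\xi\rangle$ and $|\eta|>\tfrac12\langle\xi\rangle$ and integrating by parts in $y$ and $\eta$. Your identification of where $\delta<\rho$ enters --- balancing the $\langle\xi\rangle^{\delta}$ cost of $y$-integration against the $\langle\xi\rangle^{-\rho}$ gain of $\eta$-integration --- is exactly right. One minor point: in the large-$\eta$ region you also need integration by parts in $\eta$ to handle the $y$-integral (or a dyadic decomposition in $y$), not just in $y$ alone, but this is routine and you clearly have the right mechanism in mind.
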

			
			Let $\gamma\in\mathbb{R}.$ Denote the Bessel potential of order $\gamma$
			by $G_{\gamma}\vec{f}(x):=\mathcal{F}^{-1}[(1+|\cdot|^{2})^{-\gamma/2} \mathcal F \vec{f}](x)$ for each $x\in\mathbb{R}^{n}$.
			\begin{thm}
				\label{lem:Bessel T-L} Let $0<p<t<r<\infty$ or $0<p\le t<r=\infty$.
				Let  $s\in\mathbb{R}$, $W\in  \mathcal  A_{p}$. 
				Let $ q \in (0, p + \delta_W) $ where $\delta_W >0$  is the same as in Lemma \ref{lem:W AQ}. 
				Then
				\[
				\|G_{-\gamma}\vec{f}\|_{F_{p,t,r}^{s,q}(W)}\approx\|\vec{f}\|_{F_{p,t,r}^{s+\gamma,q}(W)} .
				\]
			\end{thm}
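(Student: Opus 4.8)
The plan is to realize $G_{-\gamma}$ as a Fourier multiplier acting blockwise in the Littlewood--Paley decomposition and to feed it into the multiplier Theorem~\ref{multiplier}. Write $\langle\xi\rangle:=(1+|\xi|^2)^{1/2}$, so that $\mathcal F(G_{-\gamma}\vec f)=\langle\cdot\rangle^{\gamma}\mathcal F\vec f$; since multiplication by the smooth polynomially bounded function $\langle\cdot\rangle^{\gamma}$ maps $\mathscr S(\rn)$ into itself, $G_{-\gamma}$ is well defined on $\mathscr S'(\rn)$ and $G_{\gamma}G_{-\gamma}=\mathrm{id}$. By this identity it suffices to prove the one-sided bound
\[
\|G_{-\gamma}\vec f\|_{F_{p,t,r}^{s,q}(W)}\lesssim\|\vec f\|_{F_{p,t,r}^{s+\gamma,q}(W)};
\]
applying it with $(\gamma,s)$ replaced by $(-\gamma,s+\gamma)$, and then with $\vec f$ replaced by $G_{-\gamma}\vec f$, gives the reverse inequality. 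I use the description of $F_{p,t,r}^{s,q}(W)$ through the partition of unity $\{\varphi_j\}_{j\in\mathbb N_0}$ of Definition~\ref{def:partition of unity inho}, that is, $\|\vec f\|_{F_{p,t,r}^{s,q}(W)}=\big\|\{2^{js}W^{1/p}\varphi_j(D)\vec f\}_{j\ge0}\big\|_{M_{p}^{t,r}(W)(\ell^q)}$.

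First I would decompose each block of $G_{-\gamma}\vec f$. Using $\varphi_j\widetilde\varphi_j=\varphi_j$ together with $\widetilde\varphi_j=\sum_{|i|\le1}\varphi_{j+i}$ (with the convention $\varphi_{-1}:=0$), the symbol identity yields
\[
\varphi_j(D)G_{-\gamma}\vec f=\sum_{i=-1}^{1}\mu_j(D)\big[\varphi_{j+i}(D)\vec f\big],\qquad \mu_j(\xi):=\varphi_j(\xi)\langle\xi\rangle^{\gamma}.
\]
Setting $J:=\min(1,p,q)$ and using the $J$-quasi-triangle inequality splits $\|G_{-\gamma}\vec f\|_{F_{p,t,r}^{s,q}(W)}^{J}$ into the three sums over $i\in\{-1,0,1\}$. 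For a fixed $i$, reindexing $k:=j+i$ (which only changes the index set by finitely many vanishing terms, since $\varphi_k(D)\vec f=0$ for $k<0$) converts the $i$-th sum, up to the harmless factor $2^{-is}$, into $\big\|\{2^{ks}\mu_{k-i}(D)[\varphi_k(D)\vec f]\}_{k\ge0}\big\|_{M_{p}^{t,r}(W)(\ell^q)}$. Pulling the dyadic factor out of $\mu_{k-i}$, write $\mu_{k-i}(\xi)=2^{k\gamma}\beta_k^{(i)}(\xi)$ with $\beta_k^{(i)}(\xi):=2^{-k\gamma}\varphi_{k-i}(\xi)\langle\xi\rangle^{\gamma}$, so that $2^{ks}\mu_{k-i}(D)[\varphi_k(D)\vec f]=\beta_k^{(i)}(D)\big[2^{k(s+\gamma)}\varphi_k(D)\vec f\big]$.

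Since $\operatorname{supp}\mathcal F\big(\varphi_k(D)\vec f\big)\subset\operatorname{supp}\varphi_k\subset\{\xi\in\rn:|\xi|\le 2^{k+1}\}$ for every $k\ge0$, part (ii) of Theorem~\ref{multiplier} applies to the sequence $\{2^{k(s+\gamma)}\varphi_k(D)\vec f\}_{k\ge0}$ with the multipliers $m_k:=\beta_k^{(i)}$: fixing $a>n/\min(1,q,p)+\Delta$ and any $\epsilon>0$,
\[
\big\|\{\beta_k^{(i)}(D)[2^{k(s+\gamma)}\varphi_k(D)\vec f]\}_{k\ge0}\big\|_{M_{p}^{t,r}(W)(\ell^q)}\lesssim\|\vec f\|_{F_{p,t,r}^{s+\gamma,q}(W)}\ \sup_{k\ge0}\big\|\beta_k^{(i)}(2^k\cdot)\big\|_{H_2^{a+n/2+\epsilon}},
\]
because $\big\|\{2^{k(s+\gamma)}\varphi_k(D)\vec f\}_{k\ge0}\big\|_{M_{p}^{t,r}(W)(\ell^q)}=\|\vec f\|_{F_{p,t,r}^{s+\gamma,q}(W)}$. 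Summing the three contributions over $i$ then completes the one-sided estimate, hence the theorem.

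The step I expect to be the crux is the uniform bound $\sup_{k\ge0}\big\|\beta_k^{(i)}(2^k\cdot)\big\|_{H_2^{a+n/2+\epsilon}}<\infty$ for each $i\in\{-1,0,1\}$. For $k$ large one has $\varphi_{k-i}(2^k\xi)=\varphi^{*}(2^{i}\xi)$, where $\varphi^{*}(\eta):=\varphi_0(\eta)-\varphi_0(2\eta)$ is supported in $\{1/2\le|\eta|\le2\}$, so $\beta_k^{(i)}(2^k\xi)=\varphi^{*}(2^{i}\xi)\,(2^{-2k}+|\xi|^2)^{\gamma/2}$; this is a smooth function supported in a fixed annulus bounded away from the origin and from infinity (uniformly in $k$, since $|i|\le1$), on which $2^{-2k}+|\xi|^2$ stays in a fixed compact subinterval of $(0,\infty)$, and hence all of its derivatives — and therefore its $H_2^{\sigma}$ norm for any fixed $\sigma$ — are bounded uniformly in $k$. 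The finitely many small values of $k$ (including $k=0$, where $\langle\xi\rangle^{\gamma}$ is genuinely used near the origin but remains smooth thanks to the ``$1+$'') give fixed, compactly supported, smooth $\beta_k^{(i)}$ and are handled directly. Beyond this, only the routine bookkeeping of the index shift $k=j+i$ and of the boundary terms remains.
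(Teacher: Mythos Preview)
Your argument is correct. The paper takes a different, shorter route: it writes $\varphi_j(D)G_{-\gamma}\vec f=\mathcal F^{-1}\big(\varphi_j\langle\cdot\rangle^{\gamma}\mathcal F\vec f\big)$, observes that on the support of $\varphi_j$ one has $\varphi_j(\xi)\langle\xi\rangle^{\gamma}\approx 2^{j\gamma}\psi_j(\xi)$ with $\psi_j(\xi)=\psi(2^{-j}\xi)$ for a single admissible $\psi$, and then appeals to the independence of the $F_{p,t,r}^{s,q}(W)$-norm from the choice of admissible resolution of unity (Theorem~\ref{thm:equivalence inhomogeneous}) to conclude $\|G_{-\gamma}\vec f\|_{F_{p,t,r}^{s,q}(W)}\approx\|\vec f\|_{F_{p,t,r}^{s+\gamma,q}(W)}$ in one stroke. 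Your approach instead feeds the blockwise multipliers $\beta_k^{(i)}$ directly into the Fourier multiplier Theorem~\ref{multiplier} and checks the uniform $H_2^{a+n/2+\epsilon}$ bound by hand. The paper's route is quicker once the independence theorem is available, but your route is more explicit and self-contained: it makes precise the multiplier estimate that is only implicit in the paper's ``$\approx$'' at the symbol level, and it avoids having to argue that the $j$-dependent factor $(2^{-2j}+|\eta|^2)^{\gamma/2}$ can be replaced by a $j$-independent one without changing the norm.
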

			
			\begin{proof}
				Let $\{\varphi_{j}\}_{j\in\mathbb{N}_{0}}$ be functions on $\mathbb{R}^{n}$
				as in Definition \ref{def:partition of unity inho}. For $j\in\mathbb{N}_{0}$,
				define $\psi(\xi):=\varphi(\xi)(1+|\xi|)^{\gamma/2}$ and $\psi_{j}(\xi):=\psi(2^{-j}\xi)$.
				Then we have
				\begin{align*}
					\varphi_{j}(D)(G_{-\gamma}\vec{f}) & =\mathcal{F}^{-1}(\varphi_{j}(1+|\cdot|^{2})^{\gamma/2}\mathcal{F}\vec{f})\\
					& \approx2^{j\gamma}\mathcal{F}^{-1}(\varphi_{j}(1+2^{-j}|\cdot|^{2})^{\gamma/2}\mathcal{F}\vec{f})\\
					& =2^{j\gamma}\mathcal{F}^{-1}(\psi_{j} \mathcal{F}\vec{f}) .
				\end{align*}				
				From Theorem \ref{thm:equivalence inhomogeneous}, the function spaces $F_{p,t,r}^{s,q}(W)$
				are independent of the choice of $\{\varphi_{j}\}_{j\in\mathbb{N}_{0}}$
				and $\{\psi_{j}\}_{j\in\mathbb{N}_{0}}$, thus we have
				\[
				\|G_{-\gamma}\vec{f}\|_{F_{p,t,r}^{s,q}(W)}\approx\|\vec{f}\|_{F_{p,t,r}^{s+\gamma,q}(W)} . \qedhere
				\]
			\end{proof}
			\begin{cor} \label{har m< 1}
				Let $0<p<t<r<\infty$ or $0<p\le t<r=\infty$.
				Let $m\in\mathbb{R}$, 	$0\le\delta<1$, $s\in\mathbb{R}$ and $W\in  \mathcal  A_{p}$. 
				Let $ q \in (0, p + \delta_W) $ where $\delta_W >0$  is the same as in Lemma \ref{lem:W AQ}. 
				For $\sigma\in S_{1,\delta}^{m}$,
				\begin{align*}
					\|\sigma(\cdot,D)\vec{f}\|_{F_{p,t,r}^{s,q}(W)} & \lesssim\|\vec{f}\|_{F_{p,t,r}^{s+m,q}(W)}.
				\end{align*}
				
			\end{cor}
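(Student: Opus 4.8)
The plan is to reduce the statement to Theorem \ref{thm:bounded all rho delta}, which treats symbols in $S_{1,1}^{m}$ but only under the smoothness restriction $s>n/\min(1,p,q)+\Delta$. The idea is to conjugate $\sigma(\cdot,D)$ by Bessel potentials so as to shift the smoothness index into this admissible range, using that $S_{1,\delta}^{m}\subset S_{1,1}^{m}$ for every $\delta<1$ (since $(1+|\xi|)^{m-|\alpha|+\delta|\beta|}\le(1+|\xi|)^{m-|\alpha|+|\beta|}$) and that $S_{1,0}^{\cdot}\subset S_{1,\delta}^{\cdot}$.

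First I would fix $\gamma>0$ large enough that $s+\gamma>n/\min(1,p,q)+\Delta$; this is possible for any real $s$. Recall that $G_{\gamma}$ and $G_{-\gamma}$ are the pseudo-differential operators with symbols $(1+|\xi|^{2})^{-\gamma/2}\in S_{1,0}^{-\gamma}$ and $(1+|\xi|^{2})^{\gamma/2}\in S_{1,0}^{\gamma}$, respectively, and that they are mutually inverse. Set $\tau(\cdot,D):=G_{\gamma}\circ\sigma(\cdot,D)\circ G_{-\gamma}$. Two applications of Lemma \ref{lem:decompositon of pseudo} (with parameters $\rho=1$ and $\delta\in[0,1)$, which is legitimate since then $\delta<\rho$) show that $\tau(\cdot,D)$ has a symbol $\tau$ — the one produced by that lemma from the three factors $(1+|\cdot|^{2})^{-\gamma/2}$, $\sigma$, and $(1+|\cdot|^{2})^{\gamma/2}$ — which belongs to $S_{1,\delta}^{-\gamma+m+\gamma}=S_{1,\delta}^{m}\subset S_{1,1}^{m}$, and that $G_{-\gamma}\circ\tau(\cdot,D)\circ G_{\gamma}=\sigma(\cdot,D)$.

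The estimate then follows by chaining three facts already available. By Theorem \ref{lem:Bessel T-L} with exponent $s$,
\[
\|\sigma(\cdot,D)\vec f\|_{F_{p,t,r}^{s,q}(W)}=\|G_{-\gamma}(\tau(\cdot,D)G_{\gamma}\vec f)\|_{F_{p,t,r}^{s,q}(W)}\approx\|\tau(\cdot,D)G_{\gamma}\vec f\|_{F_{p,t,r}^{s+\gamma,q}(W)}.
\]
Since $s+\gamma>n/\min(1,p,q)+\Delta$ and $\tau\in S_{1,1}^{m}$, Theorem \ref{thm:bounded all rho delta} (applied with even integers $b>n/\min(1,p,q)+\Delta+n/2$ and $N>s+\gamma$) gives
\[
\|\tau(\cdot,D)G_{\gamma}\vec f\|_{F_{p,t,r}^{s+\gamma,q}(W)}\lesssim\|G_{\gamma}\vec f\|_{F_{p,t,r}^{s+\gamma+m,q}(W)}\approx\|\vec f\|_{F_{p,t,r}^{s+m,q}(W)},
\]
the last equivalence being Theorem \ref{lem:Bessel T-L} once more, now with exponent $s+\gamma+m$. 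Combining the two displays yields the claim.

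I expect the only delicate point to be the symbol calculus: one must verify that conjugation by $G_{\pm\gamma}$ does not leave the class $S_{1,\delta}^{m}$, which is exactly the content of Lemma \ref{lem:decompositon of pseudo} and is available precisely because $\delta<1$. Everything else is a routine concatenation of the Bessel-potential lifting (Theorem \ref{lem:Bessel T-L}) with the $S_{1,1}^{m}$-boundedness of Theorem \ref{thm:bounded all rho delta}; in particular, no new density or approximation argument is needed beyond what those results already incorporate.
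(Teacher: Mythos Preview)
Your argument is correct and is essentially identical to the paper's proof: both conjugate $\sigma(\cdot,D)$ by Bessel potentials (your $G_{\pm\gamma}$ is the paper's $G_{\pm M}$, and your $\tau$ is the paper's $\sigma'$) to shift the smoothness index into the range where Theorem~\ref{thm:bounded all rho delta} applies, invoking Lemma~\ref{lem:decompositon of pseudo} for the symbol calculus and Theorem~\ref{lem:Bessel T-L} for the lifting.
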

			
			\begin{proof}
				Since $\sigma\in S_{1,\delta}^{m}$,
				use Lemma \ref{lem:decompositon of pseudo}, and we have a decomposition:
				\[
				\sigma(\cdot,D)=G_{-M}\circ\sigma'(x,D)\circ G_{M}
				\]
				for some $\sigma'\in S_{1,\delta}^{m}\subset S_{1,1}^{m}.$ From Lemma
				\ref{lem:Bessel T-L}, we have
				\[
				\|\sigma(\cdot,D)\vec{f}\|_{F_{p,t,r}^{s,q}(W)}\approx\|\sigma'(x,D)\circ G_{M}\vec{f}\|_{F_{p,t,r}^{s+M,q}(W)}.
				\]
				Choose $M$ sufficiently large such that $s+M>  n / \min(1,p,q) +\Delta  $ where 	 $\Delta$  is the same as in Lemma \ref{AQ AR improved}. Hence
				combining Theorem \ref{thm:bounded all rho delta} and Lemma \ref{lem:Bessel T-L}
				we have
				\begin{align*}
					\|\sigma'(\cdot,D)\circ G_{M}\vec{f}\|_{F_{p,t,r}^{s+M,q}(W)} & \lesssim\|G_{M}\vec{f}\|_{F_{p,t,r}^{s+M+m,q}(W)}\\
					& \approx\|\vec{f}\|_{F_{p,t,r}^{s+m,q}(W)}.\qedhere
				\end{align*}
			\end{proof}
			
			\subsection{H\"{o}lder-Zygmund Class $C_{*}^{\ell}S_{1,\delta}^{m}$}
			Finally we show the boundedness of pseudo-differential operators  with symbol in H\"{o}lder-Zygmund Class $C_{*}^{\ell}S_{1,\delta}^{m}$. To do so, we need some preparation.			
			Let $v\in\mathbb{N}_{0}$ and $m>0$. Define
			\[
			\eta_{v,m}(x):=\frac{2^{nv}}{(1+2^{v}|x|)^{m}},\ x\in\mathbb{R}^{n}.
			\]
			Note that $\eta_{v,m}\in L^{1}$ when $m>n$ and $\|\eta_{v,m}\|_{L^{1}}=c_{m}$
			is independent of $v$. 
			If $m>n$,
			\begin{equation}
				\eta_{v,m}*|f|(x)\lesssim\mathcal{M}(f)(x).\label{eq:eta le HL}
			\end{equation}
			Together Lemmas \ref{lem:hardy} and \ref{lem:hardy} and inequality (\ref{eq:eta le HL}),
			we have the following result.
			\begin{cor}
				\label{cor:eta eq Hl}Let $1<p<t<r<\infty$ or $1<p\le t<r=\infty$.
				Let $m>n$ and  $1<q\le\infty$. Then $\|\{\eta_{v,m}*f_v\}^\infty_{v=0}\|_{M_{p}^{t,r}(\ell^q)}\lesssim\|\{f_v\}^\infty_{v=0}\|_{M_{p}^{t,r}(\ell^q)}.$
			\end{cor}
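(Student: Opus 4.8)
The plan is to reduce the claim to the pointwise control of the convolution $\eta_{v,m}*f_v$ by the Hardy--Littlewood maximal function $\mathcal{M}(f_v)$, combined with the vector-valued boundedness of $\mathcal{M}$ on $M_{p}^{t,r}(\ell^{q})$ recorded in Lemma \ref{lem:hardy}. First I would note that, since $m>n$, the kernel $\eta_{v,m}$ lies in $L^{1}(\mathbb{R}^{n})$ with $\|\eta_{v,m}\|_{L^{1}}=c_{m}$ independent of $v$, and hence for every $v\in\mathbb{N}_{0}$ and every $x\in\mathbb{R}^{n}$,
\[
|\eta_{v,m}*f_{v}(x)|\le\eta_{v,m}*|f_{v}|(x)\lesssim\mathcal{M}(f_{v})(x),
\]
where the implicit constant depends only on $m$ and $n$; this is precisely inequality (\ref{eq:eta le HL}). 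The decisive feature is that this estimate is uniform in $v$.

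Next I would take $\ell^{q}$-norms in the index $v$ on both sides. Because the previous inequality holds termwise with a $v$-independent constant, it yields the pointwise bound
\[
\Big\|\{\eta_{v,m}*f_{v}\}_{v=0}^{\infty}\Big\|_{\ell^{q}}\lesssim\Big\|\{\mathcal{M}(f_{v})\}_{v=0}^{\infty}\Big\|_{\ell^{q}}
\]
on $\mathbb{R}^{n}$ (with the usual convention that the $\ell^{\infty}$-norm is a supremum when $q=\infty$). Applying the $M_{p}^{t,r}$-quasi-norm, which is monotone, gives
\[
\big\|\{\eta_{v,m}*f_{v}\}_{v=0}^{\infty}\big\|_{M_{p}^{t,r}(\ell^{q})}\lesssim\big\|\{\mathcal{M}(f_{v})\}_{v=0}^{\infty}\big\|_{M_{p}^{t,r}(\ell^{q})}.
\]

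Finally, since $1<p<t<r<\infty$ or $1<p\le t<r=\infty$ and $1<q\le\infty$, Lemma \ref{lem:hardy} states that $\mathcal{M}$ is bounded on $M_{p}^{t,r}(\ell^{q})$, so the right-hand side above is $\lesssim\|\{f_{v}\}_{v=0}^{\infty}\|_{M_{p}^{t,r}(\ell^{q})}$, which is the assertion. There is no substantial obstacle in this argument; the only points that deserve a word are the uniformity in $v$ of the constant in (\ref{eq:eta le HL}) (which is exactly where $m>n$ is used, ensuring $\eta_{v,m}$ is an $L^{1}$-dilation-type kernel with scale-independent mass) and the fact that one invokes the vector-valued form of the maximal inequality rather than the scalar one, both of which are already at our disposal in the excerpt.
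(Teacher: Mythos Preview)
Your proposal is correct and follows essentially the same approach as the paper: the paper simply states that the corollary follows from combining the pointwise estimate (\ref{eq:eta le HL}) with the vector-valued maximal inequality of Lemma \ref{lem:hardy}, and your write-up spells out exactly these steps.
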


			\begin{lem}
				\label{lem:r-trick}
				Let $A\in(0,1]$, $v\in\mathbb{N}_{0}$,
				$W\in  \mathcal  A_{p}$ and $\{A_{Q}\}_{Q\in\mathcal{D}}$ be a sequence of
				reducing operators of order $p$ for $W$. Let  $\Delta$  be the same as in Lemma \ref{AQ AR improved}. For any $R>0$, there exists a constant $c>0$ such
				that for all $\vec{f}\in\mathscr{S}'(\mathbb{R}^n)$ with supp $ \mathcal{F}\vec{f}\subset\{\xi \in \rn :|\xi|\le2^{v+1}\}$,
				we have
				\[
				\bigg(\sum_{Q\in\mathcal{D}_{v}}|A_{Q}\vec{f}(x)|\chi_{Q}(x)\bigg)^{A}\le c\eta_{v,A(R-\Delta)}*\bigg(\sum_{Q\in\mathcal{D}_{v}}|A_{Q}\vec{f}|^{A}\chi_{Q}\bigg).
				\]
				Furthermore, if $R>\Delta+n/A$,
				\[
				\sum_{Q\in\mathcal{D}_{v}}|A_{Q}\vec{f}(x)|\chi_{Q}(x)\le c\mathcal{M}\bigg(\sum_{Q\in\mathcal{D}_{v}}|A_{Q}\vec{f}|^{A}\chi_{Q}\bigg)^{1/A}(x).
				\]
			\end{lem}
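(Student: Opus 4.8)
The plan is to reduce the first estimate to the band-limited pointwise inequality already available in the annular case (Lemma \ref{lem:A inequality}) and then rewrite the resulting discrete sum as a convolution with $\eta_{v,m}$, after which the second estimate follows from the elementary bound (\ref{eq:eta le HL}). First I would record the reproducing formula: since $\operatorname{supp}\mathcal F\vec f\subset\{\xi\in\rn:|\xi|\le 2^{v+1}\}$, fix a function $\Phi$ with $\mathcal F\Phi\in C_c^\infty(\rn)$, $\mathcal F\Phi\equiv 1$ on $\{|\xi|\le 2\}$ (so $\Phi\in\mathscr S(\rn)$), and put $\Phi_v(x)=2^{vn}\Phi(2^v x)$, whence $\mathcal F\Phi_v(\xi)=\mathcal F\Phi(2^{-v}\xi)\equiv 1$ on $\operatorname{supp}\mathcal F\vec f$ and therefore $\vec f=\Phi_v*\vec f$ in $\mathscr S'(\rn)$. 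Because a tempered distribution with compact Fourier support is an ordinary function of at most polynomial growth, the matrix-weighted Peetre-type quantities $\sup_{z\in\rn}|A_{Q_{v,k}}\vec f(z)|(1+2^v|x-z|)^{-a}$ are finite for $a$ sufficiently large; this is what makes the $A$-power absorption argument below legitimate.

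Next, for $x\in Q_{v,k}$ I would expand
$A_{Q_{v,k}}\vec f(x)=\sum_{\ell\in\mathbb Z^n}\int_{Q_{v,\ell}}\Phi_v(x-z)\,A_{Q_{v,k}}A_{Q_{v,\ell}}^{-1}\,A_{Q_{v,\ell}}\vec f(z)\,\d z$,
and then estimate using the weak doubling bound $\|A_{Q_{v,k}}A_{Q_{v,\ell}}^{-1}\|\lesssim(1+|k-\ell|)^\Delta$ (the Remark following Lemma \ref{AQ AR improved}), the rapid decay $|\Phi_v(x-z)|\le c_M 2^{vn}(1+2^v|x-z|)^{-M}$ for arbitrarily large $M$, and the comparison $1+2^v|x-z|\approx 1+|k-\ell|$ valid for $x\in Q_{v,k}$, $z\in Q_{v,\ell}$. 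Carrying out the $A$-power trick exactly as in the proof of Lemma \ref{lem:A inequality} (equivalently (2.8) of \cite{FraRou19}, or (3.10) of \cite{WYZ22}), one obtains, for any $R>0$ and all $x\in Q_{v,k}$,
\[
|A_{Q_{v,k}}\vec f(x)|^A\le c\sum_{\ell\in\mathbb Z^n}(1+|k-\ell|)^{-A(R-\Delta)}2^{vn}\int_{Q_{v,\ell}}|A_{Q_{v,\ell}}\vec f(z)|^A\,\d z .
\]

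Finally I would translate this into the stated form. Since the cubes of $\mathcal D_v$ are pairwise disjoint, $\big(\sum_{Q\in\mathcal D_v}|A_Q\vec f(x)|\chi_Q(x)\big)^A=\sum_{Q\in\mathcal D_v}|A_Q\vec f(x)|^A\chi_Q(x)$, which equals $|A_{Q_{v,k}}\vec f(x)|^A$ when $x\in Q_{v,k}$; and, using once more $1+2^v|x-z|\approx 1+|k-\ell|$, the right-hand side of the last display is comparable to $c\,2^{vn}\int_{\rn}(1+2^v|x-z|)^{-A(R-\Delta)}\big(\sum_{Q\in\mathcal D_v}|A_Q\vec f(z)|^A\chi_Q(z)\big)\,\d z=c\,\eta_{v,A(R-\Delta)}*\big(\sum_{Q\in\mathcal D_v}|A_Q\vec f|^A\chi_Q\big)(x)$, which is the first inequality. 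For the second, $R>\Delta+n/A$ forces $A(R-\Delta)>n$, so $\eta_{v,A(R-\Delta)}\in L^1$ with norm independent of $v$, and (\ref{eq:eta le HL}) gives $\eta_{v,A(R-\Delta)}*\big(\sum_{Q\in\mathcal D_v}|A_Q\vec f|^A\chi_Q\big)(x)\lesssim\mathcal M\big(\sum_{Q\in\mathcal D_v}|A_Q\vec f|^A\chi_Q\big)(x)$; raising to the power $1/A$ yields the claim. The only genuinely delicate point is the $A$-power absorption step — namely verifying the finiteness of the relevant Peetre-type maximal quantity (via the polynomial growth of $\vec f$) before dividing by it — but this is handled exactly as in \cite{FraRou19}; everything else is bookkeeping on geometric series.
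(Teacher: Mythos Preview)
Your proof is correct and follows essentially the same approach as the paper: reduce to the discrete inequality of Lemma \ref{lem:A inequality}, rewrite the sum as a convolution with $\eta_{v,A(R-\Delta)}$, and then invoke the maximal function bound when $A(R-\Delta)>n$. The paper simply cites Lemma \ref{lem:A inequality} directly (tacitly using that its proof depends only on the compact Fourier support, as noted later in the proof of Lemma \ref{pseudo poi}), whereas you make this explicit by introducing the reproducing kernel $\Phi_v$ and rerunning the $A$-power absorption argument; this extra care is justified but not a genuinely different route.
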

			
			\begin{proof}
				Let $x\in Q_{v,k}$ for some $k\in\mathbb{Z}^{n}$. From Lemma \ref{lem:A inequality},
				we have
				\begin{equation*}
					\bigg(\sum_{Q\in\mathcal{D}_{v}}|A_{Q}\vec{f}(x)|\chi_{Q}(x)\bigg)^{A} 
					 \lesssim \sum_{l\in\mathbb{Z}^{n}}(1+|k-l|)^{-A(R-\Delta)}2^{vn}\int_{Q_{vl}}|A_{Q_{vl}}\vec{f}(z)|^{A}{\rm d}z.
				\end{equation*}
				For $x\in Q_{v,k}$, $z\in Q_{v,k+l}$, we have $(1+2^{v}|x-z|)\approx1+|l|$. From this, we
				conclude 
				\begin{equation*}
				\Big(\sum_{Q\in\mathcal{D}_{v}}|A_{Q}\vec{f}(x)|\chi_{Q}(x)\Big)^{A}  \lesssim \int_{\mathbb{R}^{n}}  
				\frac{2^{vn} }{ (1+2^{v}|x-z|)^{A(R-\Delta)}  }
				\Big(\sum_{Q\in\mathcal{D}_{v}}|A_{Q}\vec{f}|^{A}\chi_{Q}\Big){\rm d}z.
				\end{equation*}
				If we choose $R>\Delta +n/A$, then by Lemma \ref{lem:Littlewood max},
				we obtain
				\[
				\sum_{Q\in\mathcal{D}_{v}}|A_{Q}\vec{f}(x)|\chi_{Q}(x)\lesssim\mathcal{M}\bigg(\sum_{Q\in\mathcal{D}_{v}}|A_{Q}\vec{f}|^{A}\chi_{Q}\bigg)^{1/A}(x).\qedhere
				\]
			\end{proof}

			\begin{lem}[Lemma 3.1, \cite{DH19}]
				\label{lem: hardy type inequality} Let $\alpha>0$
				and $0<q\le\infty$. Then there exists a constant $c$ such that for
				any sequence of positive numbers $\{x_{k}\}_{k}\in\ell^{q}$
				\[
				\|\{\delta_{k}\}_{k}\|_{\ell^{q}}\le c\|\{x_{k}\}_{k}\|_{\ell^{q}}
				\]
				holds, where
				\[
				\delta_{k}:=\sum_{j=0}^{\infty}2^{-\alpha|k-j|}x_{j},\ k\in\mathbb{N}_{0}.
				\]
			\end{lem}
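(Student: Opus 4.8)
The plan is to recognize $\{\delta_k\}_{k\in\mathbb N_0}$ as the one-sided discrete convolution of $\{x_j\}_{j\in\mathbb N_0}$ against the kernel $a_m:=2^{-\alpha|m|}$, $m\in\mathbb Z$, and then to invoke the appropriate convolution estimate on $\ell^q$, distinguishing the ranges $1\le q\le\infty$ and $0<q<1$. Since $\alpha>0$, the kernel is summable, and in fact so is every dilate: for any $\beta>0$,
\[
\sum_{m\in\mathbb Z}2^{-\beta|m|}=1+2\sum_{m=1}^{\infty}2^{-\beta m}=\frac{1+2^{-\beta}}{1-2^{-\beta}}=:c_\beta<\infty .
\]
This finiteness is the only quantitative input; everything else is bookkeeping.

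For $1\le q\le\infty$, I would extend $\{x_j\}$ by zero to a sequence $\{\tilde x_j\}_{j\in\mathbb Z}$ and write $\delta_k=\sum_{m\in\mathbb Z}a_m\tilde x_{k-m}$ for $k\in\mathbb N_0$. Applying Young's inequality for convolutions on $\mathbb Z$ (equivalently, Minkowski's inequality for the $\ell^q$-norm of a sum over $m$) gives
\[
\|\{\delta_k\}_k\|_{\ell^q}\le\sum_{m\in\mathbb Z}a_m\,\big\|\{\tilde x_{k-m}\}_k\big\|_{\ell^q}=\Big(\sum_{m\in\mathbb Z}2^{-\alpha|m|}\Big)\|\{x_j\}_j\|_{\ell^q}=c_\alpha\,\|\{x_j\}_j\|_{\ell^q},
\]
with the case $q=\infty$ being simply $\delta_k\le\|\{x_j\}_j\|_{\ell^\infty}\sum_m a_m=c_\alpha\|\{x_j\}_j\|_{\ell^\infty}$.

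For $0<q<1$ the triangle inequality in $\ell^q$ is unavailable, so instead I would use the elementary concavity bound $\big(\sum_i b_i\big)^q\le\sum_i b_i^q$ for nonnegative $b_i$. Together with Tonelli's theorem, which legitimises interchanging the resulting double sum of nonnegative terms, this yields
\[
\|\{\delta_k\}_k\|_{\ell^q}^q=\sum_{k\ge0}\Big(\sum_{j\ge0}2^{-\alpha|k-j|}x_j\Big)^q\le\sum_{j\ge0}x_j^q\sum_{k\ge0}2^{-\alpha q|k-j|}\le c_{\alpha q}\sum_{j\ge0}x_j^q,
\]
since $\alpha q>0$; taking $q$-th roots gives the claim with $c=c_{\alpha q}^{1/q}$. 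The argument is short in both regimes, and the only point that demands a little care is precisely this $q<1$ case, where one must resist invoking a triangle inequality in $\ell^q$ and instead rely on the power inequality $(\sum b_i)^q\le\sum b_i^q$ and on Tonelli's theorem to rearrange the sums; beyond that I do not expect any genuine obstacle.
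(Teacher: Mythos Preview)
Your proof is correct. The paper does not supply its own proof of this lemma; it is simply quoted from \cite{DH19}, so there is no in-paper argument to compare against. Your approach---Young/Minkowski for $q\ge1$ and the subadditivity inequality $(\sum b_i)^q\le\sum b_i^q$ for $0<q<1$---is exactly the standard way this Hardy-type estimate is established.
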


			\begin{lem}
				\label{pseudo poi}
				Let $p\in(0,\infty)$,
				$W\in  \mathcal  A_{p}$ and $A\in(0,1]$.
				Let  $\Delta$  be the same as in Lemma \ref{AQ AR improved}.
				Let $a:\mathbb{R}^{2n}\to\mathbb{C}$ be a bounded and measurable
				symbol such that $\mathrm{supp}$ $a(x,\cdot)\subset\{\xi \in \rn :|\xi|\le c2^{k}\}$
				for some $c>0$. Suppose that ${\rm supp}$ $\mathcal{F}\vec{f}\subset\{\xi \in \rn :|\xi|\le c2^{k}\}$.
				Let $\{A_{Q}\}_{Q\in\mathcal{D}}$ be a sequence of reducing operators
				of order $p$ for $W$. Then for $R>0$ we have
				\begin{align*}
				&	\bigg|\sum_{l\in\mathbb{Z}^{n}}A_{Q_{k,l}}a(x,D)\vec{f}(x)\chi_{Q_{k,l}}(x)\bigg|  \\
				& 	\lesssim \|a(x,2^{k}\cdot)\|_{B_{1,A}^{R}}  \left(  \eta_{k, R -  A \Delta } *  \left(  \sum_{Q \in \mathcal D _k}  |A_Q \vec  f  |^A \chi_Q  \right)  (x) \right)^{1/A} 
				\end{align*}
				for each $x\in\mathbb{R}^{n}$, where the implicit constant is independent
				of $f$ and $x$.
			\end{lem}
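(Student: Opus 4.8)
The plan is to fix $x\in\mathbb R^n$ and observe that, since the cubes in $\mathcal D_k$ partition $\mathbb R^n$, the left-hand side is simply $|A_{Q_{k,l_0}}a(x,D)\vec f(x)|$, where $Q_{k,l_0}\in\mathcal D_k$ is the unique cube containing $x$. I would then pass to the rescaled amalgam kernel. Setting $b(x,\xi):=a(x,2^k\xi)$, which for each $x$ is supported in a fixed ball $\{|\xi|\le c\}$, and $\tilde k_x:=\mathcal F^{-1}_{\xi}[b(x,\cdot)]$, the band-limitedness of both $a(x,\cdot)$ and $\mathcal F\vec f$ (Paley--Wiener) makes the standard computation
\[
a(x,D)\vec f(x)=\int_{\mathbb R^n}\mathcal F^{-1}_{\xi}[a(x,\cdot)](x-y)\,\vec f(y)\,\mathrm dy=\int_{\mathbb R^n}\tilde k_x(w)\,\vec f(x-2^{-k}w)\,\mathrm dw
\]
rigorous, whence $|A_{Q_{k,l_0}}a(x,D)\vec f(x)|\le\int_{\mathbb R^n}|\tilde k_x(w)|\,|A_{Q_{k,l_0}}\vec f(x-2^{-k}w)|\,\mathrm dw$.

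Next I would localize the reducing operators. If $x-2^{-k}w\in Q_{k,m}$, then $x\in Q_{k,l_0}$ forces $1+|l_0-m|\lesssim 1+|w|$, so by the weak doubling of $\{A_Q\}_{Q\in\mathcal D}$ of order $\Delta$ (Lemma \ref{AQ AR improved} and the remark following it) we get $\|A_{Q_{k,l_0}}A_{Q_{k,m}}^{-1}\|\lesssim(1+|w|)^{\Delta}$, hence, writing $G:=\sum_{Q\in\mathcal D_k}|A_Q\vec f|\chi_Q$,
\[
|A_{Q_{k,l_0}}\vec f(x-2^{-k}w)|\lesssim(1+|w|)^{\Delta}\,G(x-2^{-k}w).
\]
Because $\mathrm{supp}\,\mathcal F\vec f\subset\{|\xi|\le c2^k\}$, Lemma \ref{lem:r-trick} (the $r$-trick), applied with its free parameter chosen equal to $R/A$, gives $G^A\lesssim\eta_{k,R-A\Delta}*G^A$ pointwise (note $\sum_{Q\in\mathcal D_k}|A_Q\vec f|^A\chi_Q=G^A$), and the elementary inequality $\eta_{k,M}(z-2^{-k}w)\le(1+|w|)^M\eta_{k,M}(z)$ moves the base point from $x-2^{-k}w$ to $x$, so that
\[
G(x-2^{-k}w)\lesssim(1+|w|)^{R/A-\Delta}\,\big[\eta_{k,R-A\Delta}*G^A(x)\big]^{1/A}.
\]
Combining the three displays, the polynomial weights $(1+|w|)^{\Delta}$ and $(1+|w|)^{R/A-\Delta}$ multiply to $(1+|w|)^{R/A}$ and one is left with
\[
|A_{Q_{k,l_0}}a(x,D)\vec f(x)|\lesssim\big[\eta_{k,R-A\Delta}*G^A(x)\big]^{1/A}\int_{\mathbb R^n}|\tilde k_x(w)|\,(1+|w|)^{R/A}\,\mathrm dw.
\]

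To finish I would invoke the symbol--kernel estimate $\int_{\mathbb R^n}|\mathcal F^{-1}g(w)|(1+|w|)^{R/A}\,\mathrm dw\lesssim\|g\|_{B^R_{1,A}}$ for $g=b(x,\cdot)=a(x,2^k\cdot)$ supported in a fixed ball, which together with $G^A=\sum_{Q\in\mathcal D_k}|A_Q\vec f|^A\chi_Q$ yields exactly the claimed bound (cf. \cite{ru} and \cite[Lemma 3.10]{BX242}). I expect the main obstacle to be precisely this last estimate, where the Besov norm $B^R_{1,A}$ of the symbol is genuinely needed (mere $L^\infty$ or $L^1$ control of $a(x,\cdot)$ would not give any decay of $\tilde k_x$): it is proved via a Littlewood--Paley decomposition of the frequency-localized function $b(x,\cdot)$ and a weighted $L^1$ bound on $\mathcal F^{-1}$ of each dyadic piece, and keeping the exponent bookkeeping $\Delta+(R/A-\Delta)=R/A$ consistent with the $B^R_{1,A}$ norm is the delicate point. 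The other subtlety is the interchange of the power $A\le1$ with the $w$-integral, which the argument above circumvents by applying the $r$-trick to $G$ rather than discretizing the integral directly.
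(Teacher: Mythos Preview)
Your strategy---apply weak doubling to pass from $A_{Q_{k,l_0}}$ to $A_{Q_{k,m}}$, then invoke Lemma~\ref{lem:r-trick} on $G$, then shift $\eta_{k,M}$---is sound up to the penultimate display, and it is indeed a different route from the paper, which first applies the Plancherel--P\^olya--Nikol'skii inequality to pass to $L^A$ of $K(x,\cdot)\vec f$ and only then decomposes the kernel on dyadic annuli in $|x-y|$.

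The gap is in the final ``symbol--kernel estimate''
\[
\int_{\mathbb R^n}|\mathcal F^{-1}g(w)|(1+|w|)^{R/A}\,\mathrm dw\lesssim\|g\|_{B^R_{1,A}}
\]
for $g$ supported in a fixed ball. This inequality is \emph{false}. In dimension one take $g_N(\xi)=\chi(\xi)\sum_{m=N}^{2N}e^{2\pi im\xi}$ with $\chi$ a smooth bump; then $\hat g_N$ is concentrated in a single dyadic annulus $|\eta|\sim N$, so $\|g_N\|_{B^R_{1,A}}\approx N^{R/A}\|g_N\|_{L^1}\approx N^{R/A}\log N$ (Dirichlet kernel), whereas $\mathcal F^{-1}g_N$ consists of $\sim N$ essentially disjoint bumps near $w=-N,\dots,-2N$, giving $\int|\mathcal F^{-1}g_N|(1+|w|)^{R/A}\,\mathrm dw\approx N^{R/A+1}$. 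The ratio blows up like $N/\log N$. The underlying issue is that your approach leaves the kernel in $L^1(\mathrm dw)$, and the $L^1$ norm of $\tilde k_x$ on the annulus $|w|\sim 2^j$ is \emph{not} controlled by the $j$-th Littlewood--Paley piece $\|g_j\|_{L^1}$ of the Besov norm---only its $L^\infty$ norm is, via $\|\mathcal F^{-1}h\|_{L^\infty}\le\|h\|_{L^1}$.

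This is exactly why the paper's order of operations matters: after the Nikol'skii step one has $|A_{Q_{k,l}}a(x,D)\vec f(x)|^A\lesssim 2^{kn(1-A)}\int|A_{Q_{k,l}}K(x,x-y)\vec f(y)|^A\,\mathrm dy$, so on each annulus the kernel contributes $\sup|K|^A\lesssim\|g_j\|_{L^1}^A$, and summing with weight $2^{jR}$ yields $\|g\|_{B^R_{1,A}}^A$ on the nose. Your $r$-trick handles only the $\vec f$ side of the $A$-subadditivity; the kernel side still needs the Nikol'skii reduction (or you must accept a strictly larger Besov index, roughly $B^{R+nA}_{1,A}$, which is not the stated bound).
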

			
			\begin{proof}
				Let $\mathcal{F}\psi\in\mathscr{S}(\mathbb{R}^n)$ be supported in $\{\xi \in \rn :1/2\le|\xi|\le2\}$
				and such that $\sum_{v\in\mathbb{Z}}\mathcal{F}\psi(2^{-v}\xi)=1$
				for all $\xi\neq0$. For $v\in\mathbb{Z}$, let $\psi_{v}(x)=2^{vn}\psi(2^{v}x)$.
				Then $\mathcal{F}\psi_{v}(\xi)=\mathcal{F}\psi(2^{-v}\xi)$.
				Let
				\[
				K(x,x-y):=\int_{\mathbb{R}^{n}}a(x,\xi)e^{2\pi i(x-y)\cdot\xi}\mathrm{d}\xi
				=\mathcal{F}_{\xi}(a(x,\cdot))(y-x)
				\]
				be the kernel of $a(\cdot,D)$. Observe that
				\begin{align*}
					K(x,x-y)\mathcal{F}\psi_{v}(y-x) & = \left(  \mathcal{F}_{\xi}(a(x,\cdot))\mathcal{F}\psi_{v}   \right)(y-x)
					\\
					&= 
					\mathcal{F}_{\xi} ^{-1} \mathcal{F_{\xi}}^{-1}(\mathcal{F}\varphi_{v}\mathcal{F}_{\xi}a(x,\cdot))(x-y).
				\end{align*}
				Therefore
				\begin{align*}
					|K(x,x-y)\mathcal{F}\psi_{v}(y-x)| & \le\|\mathcal{F_{\xi}}^{-1}(\mathcal{F}\varphi_{v}\mathcal{F}_{\xi}a(x,\cdot))\|_{L^{1}}\\
					& =2^{kn}\|\mathcal{F}_{\xi}^{-1}(\mathcal{F}\varphi_{v+k}\mathcal{F}_{\xi}a(x,2^{k}\cdot))\|_{L^{1}}.
				\end{align*}
				Applying the Plancherel-{P\^{o}lya}-Nikol'skii inequality (see page 18 in
				\cite{T83}), we obtain
				\begin{align*}
					& |A_{Q_{k,l}}a(x,D)\vec{f}(x)|  \lesssim2^{kn(1/A-1)}\Big(\int_{\mathbb{R}^{n}}|A_{Q_{k,l}}K(x,x-y)\vec{f}(y)|^{A}\mathrm{d}y\Big)^{1/A}       \\
					& = 2^{kn(1/A-1)}\Big(  \sum_{v \in \mathbb Z } \int_{\mathbb{R}^{n}}|A_{Q_{k,l}}K(x,y-x)  \mathcal{F}\psi_{v} (y-x) \vec{f}(y)|^{A}\mathrm{d}y\Big)^{1/A}   
				\end{align*}
				for $x\in Q_{k,l}$. Raising the power $A$, we have
				\[
				|A_{Q_{k,l}}a(x,D)\vec{f}(x)|{^A}  \lesssim2^{kn(1-A)}(S_{1}+S_{2}),
				\]
				where
				\[
				S_{1}:=\sum_{v=-\infty}^{-k-1}\sup_{y}|K(x,x-y)\mathcal{F}\varphi_{v}(y-x)|^{A}\int_{\text{\ensuremath{2^{v-1}\le}}|x-y|\le2^{v+1}}|A_{Q_{k,l}}\vec{f}(y)|^{A}\mathrm{d}y,
				\]
				\[
				S_{2}:=\sum_{v=-k}^{\infty}\sup_{y}|K(x,x-y)\mathcal{F}\varphi_{v}(y-x)|^{A}\int_{\text{\ensuremath{2^{v-1}\le}}|x-y|\le2^{v+1}}|A_{Q_{k,l}}\vec{f}(y)|^{A}\mathrm{d}y.
				\]
				We first estimate $2^{kn(1-A)} S_2$. We first consider
				\begin{align*}
					&	\int_{\text{\ensuremath{2^{v-1}\le}}|x-y|\le2^{v+1}}|A_{Q_{k,l}}\vec{f}(y)|^{A}\mathrm{d}y \\
					& \lesssim \sum_{w \in \mathbb Z ^n, |l-w | \le 2^{v+k+1}}  \int_{Q_{k,w}} (1+|l-w|)^{A \Delta } |A_{Q_{k,w}} \vec f(y) |^A \d y \\
					& \approx \int_{B (x, 2^{v+1})} (1+ 2^k |x-y|) ^{A \Delta } \left(  \sum_{Q \in \mathcal D _k}  |A_Q \vec  f (y) |^A \chi_Q (y) \right) \d y \\
					& \lesssim 2^{(k+v)R }\int_{B (x, 2^{v+1})} (1+ 2^k |x-y|) ^{A \Delta-R} \left(  \sum_{Q \in \mathcal D _k}  |A_Q \vec  f (y) |^A \chi_Q (y) \right) \d y \\
					& \lesssim 2^{(k+v)R } 2^{-kn} \eta_{k, R -A \Delta  } * \left(  \sum_{Q \in \mathcal D _k}  |A_Q \vec  f  |^A \chi_Q  \right) (x) .
				\end{align*}
				Then 
				\begin{align*}
					2^{kn(1-A)}	S_2 	& \le 2^{kn(1-A)} 2^{knA}\sum_{v=-k}^{\infty}\|\mathcal{F}_{\xi}^{-1}(\mathcal{F}\varphi_{v+k}\mathcal{F}_{\xi}a(x,2^{k}\cdot))\|_{L^{1}}^{A} \\
					&  \quad \times2^{(k+v)R } 2^{-kn} \eta_{k, R -A \Delta } * \left(  \sum_{Q \in \mathcal D _k}  |A_Q \vec  f  |^A \chi_Q  \right) (x)\\
					& = \sum_{v=-k}^{\infty} 2^{(k+v)R } \|\mathcal{F}_{\xi}^{-1}(\mathcal{F}\varphi_{v+k}\mathcal{F}_{\xi}a(x,2^{k}\cdot))\|_{L^{1}}^{A} \\
					& \quad \times \eta_{k, R -A \Delta } * \left(  \sum_{Q \in \mathcal D _k}  |A_Q \vec  f  |^A \chi_Q  \right) (x) \\
					& \le \|a(x,2^{k}\cdot)\|_{B_{1,A}^{R}}^{A} \eta_{k, R -A \Delta } * \left(  \sum_{Q \in \mathcal D _k}  |A_Q \vec  f  |^A \chi_Q  \right) (x) .
				\end{align*}
				Next we estimate $2^{kn(1-A)}S_{1}$.
				Since supp $\mathcal{F}\vec{f}\subset\{\xi:|\xi|\le c2^{k}\}$, from Lemma \ref{lem:A inequality} (We can use this lemma. The key of proof of Lemma \ref{lem:A inequality} is using the compact support of Fourier transform of $\varphi_j * \vec f$), we obtain for any $N > 0$
				\begin{align*}
					&	\int_{\text{\ensuremath{2^{v-1}\le}}|x-y|\le2^{v+1}}|A_{Q_{k,l}}\vec{f}(y)|^{A}\mathrm{d}y \\
					& \lesssim 	2^{vn}  \sum_{w\in\mathbb{Z}^{n}}(1+|w-l|)^{-A(N-\Delta)}2^{kn}\int_{Q_{k,w}}|A_{Q_{k,w}} \vec{f}(z)|^{A}\mathrm{d}z \\
					& \approx 2^{vn}  \eta_{k, A(N -\Delta) } *  \left(  \sum_{Q \in \mathcal D _k}  |A_Q \vec  f  |^A \chi_Q  \right) (x) .
				\end{align*}
				Let $N = R /A$, then 
				\begin{equation*}
					\int_{\text{\ensuremath{2^{v-1}\le}}|x-y|\le2^{v+1}}|A_{Q_{k,l}}\vec{f}(y)|^{A}\mathrm{d}y  \lesssim  2^{vn}  \eta_{k, R - A \Delta } *  \left(  \sum_{Q \in \mathcal D _k}  |A_Q \vec  f  |^A \chi_Q  \right) (x) .
				\end{equation*}
				Note that
				\begin{align*}
					2^{kn}\|\mathcal{F}_{\xi}^{-1}(\mathcal{F}\varphi_{v+k}\mathcal{F}_{\xi}a(x,2^{k}\cdot))\|_{L^{1}} & \le 2^{kn} \|\varphi_{v+k} \|_{L^1} \| \|a(x,2^{k}\cdot))\|_{L^{1}}  \\
					& =  2^{kn} \| \varphi \|_{L^1}  \| \|a(x,2^{k}\cdot))\|_{L^{1}}  .
				\end{align*}
				Hence $2^{kn(1-A)}S_{1}$ is bounded by
				\begin{align*}
					& 2^{kn(1-A)} \sum_{v=-\infty}^{-k-1} 2^{knA}  \| \varphi \|_{L^1} ^A   \| \|a(x,2^{k}\cdot))\|_{L^{1}} ^A  2^{vn}  \eta_{k, R - A \Delta } *  \left(  \sum_{Q \in \mathcal D _k}  |A_Q \vec  f  |^A \chi_Q  \right) (x) \\
					&\lesssim 2^{kn} \|a(x,2^{k}\cdot))\|_{L^{1}} ^A \eta_{k, R - A \Delta } *  \left(  \sum_{Q \in \mathcal D _k}  |A_Q \vec  f  |^A \chi_Q  \right) (x) \sum_{v=-\infty}^{-k-1}2^{vn} 
			\\
					& \lesssim \|a(x,2^{k}\cdot)\|_{B_{1,A}^{R}}^A \eta_{k, R - A \Delta } *  \left(  \sum_{Q \in \mathcal D _k}  |A_Q \vec  f  |^A \chi_Q  \right) (x) .
				\end{align*}
				Hence we obtain
				\begin{equation*}
					|A_{Q_{k,l}}a(x,D)\vec{f}(x)| \lesssim  \|a(x,2^{k}\cdot)\|_{B_{1,A}^{R}}  \left(  \eta_{k, R -  A \Delta } *  \left(  \sum_{Q \in \mathcal D _k}  |A_Q \vec  f  |^A \chi_Q  \right)  (x)  \right)^{1/A} 
				\end{equation*}
				and the proof is finished.
			\end{proof}

				\begin{lem}
					\label{lem:ring}Let $0<p<t<r<\infty$ or $0<p\le t<r=\infty$.
					Let
					$B_0,B_1,B_2>0$.
					Let $s\in\mathbb{R}$, $W\in  \mathcal  A_{p}$ and
					$\{A_{Q}\}_{Q\in\mathcal{D}}$ be a sequence of reducing operators of order p for $W$. 
					Let $ q \in (0, p + \delta_W) $ where $\delta_W >0$  is the same as in Lemma \ref{lem:W AQ}. 
					Then there exists a constant
					$c>0$ such that
					\begin{equation*}
						\bigg\|\sum_{k=0}^{\infty}\vec{f}_{k}\bigg\|_{F_{p,t,r}^{s,q}(A_{Q})}\le c\|\{2^{ks}\vec{f}_{k}\}_{k \in \mathbb N_0}\|_{M_{p}^{t,r}(A_{Q})(\ell^{q})}
					\end{equation*}
					for any sequence of functions $\{\vec{f}_{k}\}_{k\in\mathbb{N}_{0}}$
					such that
					\[
					\mathrm{supp}\;\mathcal{F}\vec{f}_{0}\subset\{\xi:|\xi|\le B_0\}
					\]
					and
					\[
					\mathrm{supp}\;\mathcal{F}\vec{f}_{k}\subset\{\xi:B_12^{k+1}\le|\xi|\le B_22^{k+1}\},\;\mathrm{for}\;k\ge1.
					\]
				\end{lem}
				
				\begin{proof}
					Let  $\Delta$  be the same as in Lemma \ref{AQ AR improved}.
					Let $\{\varphi_{j}\}_{j\in\mathbb{N}_{0}}$ be a resolution
					of unity as in Definition \ref{def:partition of unity inho}. In the
					view of the support properties of $\mathcal{F}\vec{f}_{k}$ and $\varphi_{j}$,
					then
					\[
					\varphi_{j}(D)\sum_{k=0}^{\infty}\vec{f}_{k}=\sum_{l=-\kappa_{1}}^{\kappa_{2}}\varphi_{j}(D)\vec{f}_{j+l}
					\]
					for some $\kappa_{1},\kappa_{2}\in\mathbb{N}_{0}.$ We suppose simply
					$s=0$.  By Lemma \ref{lem:r-trick}, for $l=-\kappa_{1},\dots,\kappa_{2}$,
					any $A\in(0,1]$, any $R>0$, we have
					\begin{align*}
						& \bigg(\sum_{Q\in\mathcal{D}_{j}}|A_{Q} \varphi_{j}(D)\vec{f}_{j+l} (x)|\chi_{Q}(x)\bigg)^{A}  \\
						& 	\le c \eta_{j,A(R-\Delta )}*\bigg(\sum_{Q\in\mathcal{D}_{j}}|A_{Q}\varphi_{j}(D)\vec{f}_{j+l}|^{A}\chi_{Q}\bigg)(x).
					\end{align*}
					Therefore, with $A\in(0,\min(1,p,q))$, $A(R-\Delta)>n$ and by
					Corollary \ref{cor:eta eq Hl}, we obtain
					\begin{align*}
						& \bigg\|\bigg(\sum_{j=0}^{\infty}\sum_{Q\in\mathcal{D}_{j}}\bigg|A_{Q}\varphi_{j}(D)\sum_{k=0}^{\infty}\vec{f}_{k}\bigg|^{q}\bigg)^{1/q}\bigg\|_{M_{p}^{t,r}}\\
						& =\bigg\|\bigg(\sum_{j=0}^{\infty}\sum_{Q\in\mathcal{D}_{j}}\bigg|\sum_{l=-\kappa_{1}}^{\kappa_{2}}A_{Q}\varphi_{j}(D)\vec{f}_{j+l}
						\bigg|^{q}\bigg)^{1/q}\bigg\|_{M_{p}^{t,r}}\\
						& \lesssim\sum_{l=-\kappa_{1}}^{\kappa_{2}}\bigg\|\bigg(\sum_{j=0}^{\infty}\sum_{Q\in\mathcal{D}_{j}}\big|A_{Q}\varphi_{j}(D)
						\vec{f}_{j+l}\big|^{q}\bigg)^{1/q}\Big\|_{M_{p}^{t,r}}
						\\
						& \lesssim \sum_{l=-\kappa_{1}}^{\kappa_{2}}\bigg\|\bigg(\sum_{j=0}^{\infty}\bigg( 
						\eta_{j ,A(R-\Delta )}*\bigg(\sum_{Q\in\mathcal{D}_{j}}|A_{Q}\varphi_{j}(D)\vec{f}_{j+l}|^{A}\chi_{Q}\bigg)
						\bigg)^{q/A}\bigg)^{1/q}\bigg\|_{M_{p}^{t,r}}\\
						& \lesssim  \sum_{l=-\kappa_{1}}^{\kappa_{2}}\bigg\|\bigg(\sum_{j=0}^{\infty}\bigg( 
						\sum_{Q\in\mathcal{D}_{j}}|A_{Q}\varphi_{j}(D)\vec{f}_{j+l}|^{A}\chi_{Q} 
						\bigg)^{q/A}\bigg)^{1/q}\bigg\|_{M_{p}^{t,r}}\\
						& \lesssim\|\{\vec{f}_{k}\}_{k=0}^{\infty}\|_{M_{p}^{t,r}(A_{Q})(\ell^{q})},
					\end{align*}
					where in the last step we use Theorem \ref{multiplier} and the fact that $M_{p}^{t,r}(A_{Q})(\ell^{q})$ is invariant by shifting finite levels. 
					Hence, the proof is finished.
				\end{proof}
				\begin{lem}
					\label{lem:sum k F le } Let $0<p<t<r<\infty$ or $0<p\le t<r=\infty$.
					Let  $W\in  \mathcal  A_{p}$ and
					$\{A_{Q}\}_{Q\in\mathcal{D}}$ be a sequence of reducing operators of order p for $W$. 
					Let $\tilde d $  be the same as in Lemma \ref{AQ AR improved}.
					Let $ q \in (0, p + \delta_W) $ where $\delta_W >0$  is the same as in Lemma \ref{lem:W AQ}. 
					If $s> \tilde d / p'$, then there exists a constant
					$c$ such that
					\begin{equation*}
						\bigg\|\sum_{k=0}^{\infty}\vec{f}_{k}\bigg\|_{F_{p,t,r}^{s,q}(A_{Q})}\le c\|\{2^{ks}\vec{f}_{k}\}_{k  \in \mathbb N_0}\|_{M_{p}^{t,r}(A_{Q})(\ell^{q})}
					\end{equation*}
					for any sequence of functions $\{\vec{f}_{k}\}_{k\in\mathbb{N}_{0}}$
					such that ${\rm supp}$ $\mathcal{F}\vec{f}_{k}\subset\{\xi:|\xi|\le C2^{k+1}\}$.
				\end{lem}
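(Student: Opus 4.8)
The plan is to follow the proof of Lemma \ref{lem:ring} almost verbatim, the only genuinely new point being that a Fourier support in a ball (rather than in an annulus) turns the finite sum over dyadic blocks there into an infinite one, whose convergence is exactly what the hypothesis $s>\tilde d/p'$ buys. Fix a resolution of unity $\{\varphi_j\}_{j\in\mathbb N_0}$ as in Definition \ref{def:partition of unity inho} and put $\vec f:=\sum_{k\ge0}\vec f_k$. For $j\in\mathbb N_0$, the support conditions on $\mathcal F\vec f_k$ and on $\varphi_j$ give $\varphi_j(D)\vec f=\sum_{l\ge-\kappa_0}\varphi_j(D)\vec f_{j+l}$ for a fixed $\kappa_0\in\mathbb N_0$ (with $\vec f_m:=0$ for $m<0$, and with the sum running over all $l\ge0$ when $j=0$). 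Since $\varphi_j(D)\vec f_{j+l}$ has Fourier support in the ball of radius $2^{j+1}$, Lemma \ref{lem:r-trick} applied at level $j$ yields, for every $A\in(0,\min(1,p,q))$ and every sufficiently large $R_0$,
\[
\Big(\sum_{Q\in\mathcal D_j}|A_Q\varphi_j(D)\vec f_{j+l}|\chi_Q\Big)^{A}\lesssim\eta_{j,A(R_0-\Delta)}*\Big(\sum_{Q\in\mathcal D_j}|A_Q\varphi_j(D)\vec f_{j+l}|^{A}\chi_Q\Big).
\]

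The crucial step is a cross--scale pointwise bound for the right--hand side. For $x\in Q\in\mathcal D_j$ one writes $\varphi_j(D)\vec f_{j+l}(x)=\int\varphi_j(x-y)\vec f_{j+l}(y)\,\d y$, inserts $A_{R(y)}^{-1}A_{R(y)}$ for the cube $R(y)\in\mathcal D_{j+l}$ with $y\in R(y)$, and invokes Lemma \ref{AQ AR improved}: since $\max\{\ell(Q),\ell(R(y))\}=\ell(Q)$ and $|c_Q-c_{R(y)}|\lesssim 2^{-j}+|x-y|$, one gets $\|A_QA_{R(y)}^{-1}\|\lesssim 2^{l_+\tilde d/p'}(1+2^j|x-y|)^{\Delta}$, with $l_+:=\max\{l,0\}$ (for the finitely many $l<0$ this factor is a harmless constant). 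Combining this with the rapid decay of $\varphi_j$ and with the Plancherel--P\'olya type control that Lemma \ref{lem:r-trick}, now applied at level $j+l$, provides for the step function $G_{j+l}:=\sum_{R\in\mathcal D_{j+l}}|A_R\vec f_{j+l}|\chi_R$, one arrives at
\[
\sum_{Q\in\mathcal D_j}|A_Q\varphi_j(D)\vec f_{j+l}|^{A}\chi_Q\lesssim 2^{l_+A\tilde d/p'}\,\mathcal E_{j,l},
\]
where $\mathcal E_{j,l}$ is an $\eta$--convolution (possibly composed with a Hardy--Littlewood maximal operator) applied to $G_{j+l}^{A}$.

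To finish, one multiplies by $2^{jsq}$, uses the quasi-triangle inequality with exponent $J:=\min(1,p,q)$ in both the $\ell^q$-sum and the $M_p^{t,r}$-quasi-norm, and notes that the factor carried by the $l$-th term is $2^{js}\,2^{l_+\tilde d/p'}=2^{-l_+(s-\tilde d/p')}\,2^{(j+l)s}$; hence $\sum_{l\ge-\kappa_0}2^{-l_+(s-\tilde d/p')J}<\infty$ precisely because $s>\tilde d/p'$. For each fixed $l$ one raises everything to the power $A$, passes to $M_{p/A}^{t/A,r/A}(\ell^{q/A})$ (where $p/A>1$ and $q/A>1$, so Corollary \ref{cor:eta eq Hl}, equivalently Lemma \ref{lem:hardy}, applies), discards the $\eta$-convolutions and maximal operators, and re-indexes $k=j+l$ using the invariance of $M_p^{t,r}(A_Q)(\ell^q)$ under a finite shift of levels together with Theorem \ref{multiplier} to absorb the remaining multiplier; this bounds the $l$-th term by $\|\{2^{ks}\vec f_k\}_k\|_{M_p^{t,r}(A_Q)(\ell^q)}$, and summing over $l$ completes the proof.

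The delicate point is the cross--scale estimate: one must pass from the level-$j$ reducing operators naturally attached to $\varphi_j(D)\vec f_{j+l}$ to the level-$(j+l)$ data of $\vec f_{j+l}$ in such a way that the only exponential cost is $2^{l_+\tilde d/p'}$ --- which is exactly what $s>\tilde d/p'$ can defeat --- while keeping the residual expression of maximal-function type, so that after the power reduction it is handled by Corollary \ref{cor:eta eq Hl}. This is where the $\mathcal A_{p'}$-dimension $\tilde d$ of $\widetilde W=W^{-1/(p-1)}$ enters through Lemma \ref{AQ AR improved}; when $p\le 1$ one has $\tilde d=0$ and the estimate requires only $s>0$.
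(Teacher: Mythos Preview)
Your overall strategy---decompose $\varphi_j(D)\sum_k\vec f_k=\sum_{l\ge-\kappa_0}\varphi_j(D)\vec f_{j+l}$, prove a cross-scale pointwise bound carrying the factor $2^{l_+\tilde d/p'}$, and sum over $l$ using $s>\tilde d/p'$---is exactly the paper's. The difference lies in the derivation of the cross-scale bound (equation \eqref{eq:pw AQ} in the paper), and here your sketch is less clean and has a potential gap when $\min(p,q)\le 1$.

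The paper first invokes Lemma \ref{pseudo poi} with symbol $\varphi_j$, which delivers the $A$-th-power estimate
\[
\sum_{Q\in\mathcal D_j}|A_Q\varphi_j(D)\vec f_{j+i}|\chi_Q
\;\lesssim\;
\Big(\eta_{j,R-A\Delta}*\sum_{Q\in\mathcal D_j}|A_Q\vec f_{j+i}|^A\chi_Q\Big)^{1/A}
\]
with level-$j$ reducing operators on \emph{both} sides. Only \emph{after} this does the paper change to level-$(j+i)$ operators, and it does so inside the $\eta$-integral by partitioning each $Q\in\mathcal D_j$ into its dyadic descendants $P\in\mathcal D_{j+i}$, $P\subset Q$. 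Because the cubes are nested, Lemma \ref{AQ AR improved} gives $\|A_QA_P^{-1}\|\lesssim 2^{i\tilde d/p'}$ with \emph{no} distance penalty, and one lands on a single $\mathcal M(G_{j+i}^A)^{1/A}$, handled in $M_{p/A}^{t/A,r/A}(\ell^{q/A})$ for all $0<p,q$.

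You instead insert $A_{R(y)}$ in the convolution integral \emph{before} any $A$-th-power reduction, incurring the factor $(1+2^j|x-y|)^\Delta$; after absorbing it into the kernel you reach a first-power bound $\eta_j*G_{j+l}$, and your subsequent appeal to Lemma \ref{lem:r-trick} at level $j+l$ to pass to $G_{j+l}^A$ produces a nested object of the form $\big(\mathcal M\big((\mathcal M G_{j+l}^A)^{1/A}\big)\big)^A$. Removing the \emph{outer} $\mathcal M$ requires boundedness on $M_p^{t,r}(\ell^q)$, which is not available when $\min(p,q)\le 1$. The fix is to reverse the order of operations: obtain the $A$-th-power integral at level $j$ first (via Lemma \ref{pseudo poi} or its proof), then perform the level change over nested cubes, exactly as in the paper's \eqref{eq:pw AQ}.
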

				
				\begin{proof}
					Let $\{\varphi_{j}\}_{j\in\mathbb{N}_{0}}$ be a resolution of unity
					as in Definition \ref{def:partition of unity inho}. Using the support
					properties of $\{\vec{f}_{k}\}$, we have
					\[
					\sum_{k=0}^{\infty}\varphi_{j}(D)\vec{f}_{k}=\sum_{k=j+\sigma}^{\infty}\varphi_{j}(D)\vec{f}_{k}=\sum_{i=\sigma}^{\infty}\varphi_{j}(D)\vec{f}_{j+i},
					\]
					for some $\sigma\in\mathbb{Z}$. Let $J:= \min(1,p,q)$. Then
					\begin{align*}
						\bigg\|\sum_{k=0}^{\infty}\vec{f}_{k}\bigg\|_{F_{p,t,r}^{s,q}(A_{Q})} & =\bigg\|\bigg\{2^{js}\sum_{i=\sigma}^{\infty}\sum_{Q\in\mathcal{D}_{j}}|A_{Q}\varphi_{j}(D)\vec{f}_{j+i}|\chi_{Q}\bigg\}_{j\in\mathbb{N}_{0}}\bigg\|_{M_{p}^{t,r}(\ell^{q})}\\
						& \le\bigg\{\sum_{i=\sigma}^{\infty}\bigg\|\bigg\{2^{js}\sum_{Q\in\mathcal{D}_{j}}|A_{Q}\varphi_{j}(D)\vec{f}_{j+i}|\chi_{Q}\bigg\}_{j\in\mathbb{N}_{0}}\bigg\|_{M_{p}^{t,r}(\ell^{q})}^{J}\bigg\}^{\frac{1}{J}}
					\\
						& \le\bigg\{\sum_{i=\sigma}^{-1}\bigg\|\bigg\{2^{js}\sum_{Q\in\mathcal{D}_{j}}|A_{Q}\varphi_{j}(D)\vec{f}_{j+i}|\chi_{Q}\bigg\}_{j\in\mathbb{N}_{0}}\bigg\|_{M_{p}^{t,r}(\ell^{q})}^{J}\\
						& \quad +\sum_{i=0}^{\infty}\bigg\|\bigg\{2^{js}\sum_{Q\in\mathcal{D}_{j}}|A_{Q}
						\varphi_{j}(D)\vec{f}_{j+i}|\chi_{Q}\bigg\}_{j\in\mathbb{N}_{0}}\bigg\|_{M_{p}^{t,r}(\ell^{q})}^{J}\bigg\}^{\frac{1}{J}}.
					\end{align*}
					We just consider the second sum in the last inequality since the first
					sum has finite items. From Lemmas \ref{pseudo poi} and \ref{AQ AR improved}, we obtain
					\begin{align*}
						& \sum_{Q\in\mathcal{D}_{j}}|A_{Q}\varphi_{j}*\vec{f}_{j+i}(x)|\chi_{Q}(x)\nonumber \\
						& \lesssim\|\mathcal{F}\varphi_{1}\|_{{B}_{1,A}^{R}}\bigg\{\eta_{j,R-A\Delta }*\bigg|\sum_{Q\in\mathcal{D}_{j}}A_{Q}\vec{f}_{j+i}\bigg|^{A}(x)\bigg\}^{1/A}\nonumber \\
						& \lesssim \bigg\{\sum_{Q\in\mathcal{D}_{j}}\int_{Q}2^{jn}\frac{1}{(1+2^{j}|x-y|)^{R-A\Delta }}|A_{Q}\vec{f}_{j+i}(y)|^{A}\mathrm{d}y\bigg\}^{1/A}\nonumber \\
						& \lesssim \bigg\{\sum_{Q\in\mathcal{D}_{j}} \sum_{P\in\mathcal{D}_{j+i} , P \subset Q }\int_{P}2^{jn}\frac{1}{(1+2^{j}|x-y|)^{R-A\Delta }} 2^{i A \tilde d / p'} |A_{P}\vec{f}_{j+i}(y)|^{A}\mathrm{d}y\bigg\}^{1/A}\nonumber \\
						& \lesssim 2^{i  \tilde d / p'}  \mathcal{M}\bigg(\bigg|\sum_{Q\in\mathcal{D}_{j+i}}A_{Q}\vec{f}_{j+i}\bigg|^{A}\bigg)^{1/A}(x). 
					\end{align*}
					Let $0<A<J$. By Lemma \ref{lem:hardy}, and $s> \tilde d / p'$,
					\begin{align*}
						& \sum_{i=0}^{\infty}\bigg\|\bigg\{2^{js}\sum_{Q\in\mathcal{D}_{j}}|A_{Q}\varphi_{j}(D)\vec{f}_{j+i}|\chi_{Q}\bigg\}_{j\in\mathbb{N}_{0}}\bigg\|_{M_{p}^{t,r}(\ell^{q})}^{J}\\
						& \lesssim\sum_{i=0}^{\infty}\bigg\|\bigg\{2^{js}2^{i \tilde d / p'}\mathcal{M}\bigg(\bigg|\sum_{Q\in\mathcal{D}_{j+i}}A_{Q}\vec{f}_{j+i}\bigg|^{A}\bigg)^{1/A}\bigg\}_{j\in\mathbb{N}_{0}}\bigg\|_{M_{p}^{t,r}(\ell^{q})}^{J}\\
						& \lesssim\sum_{i=0}^{\infty}2^{-i(s- \tilde d / p' )J}\bigg\|\bigg\{2^{(j+i)s}\sum_{Q\in\mathcal{D}_{j+i}}A_{Q}\vec{f}_{j+i}\bigg\}_{j\in\mathbb{N}_{0}}\bigg\|_{M_{p}^{t,r}(\ell^{q})}^{J}\\
						& \lesssim\Big\|\big\{2^{js}\vec{f}_{j}\big\}_{j\in\mathbb{N}_{0}}\Big\|_{M_{p}^{t,r}(A_{Q})(\ell^{q})}^{J}.
					\end{align*}
					Hence, the proof is finished.
				\end{proof}

				We follow the definition of \cite{B88} to define $C_{*}^{\ell}S_{1,\delta}^{m}$
				where $C_{*}^{\ell}$ is the H\"{o}lder-Zygmund spaces which equals the
				homogeneous Besov space $\dot{B}_{\infty}^{\ell,\infty}$.
				\begin{defn}
					Let $0\le\delta\le1$, $\ell>0$, $m\in\mathbb{R}$. We define
					\[
					C_{*}^{\ell}S_{1,\delta}^{m}:=\big\{\sigma:\mathbb{R}^{2n}\to\mathbb{C}:\sigma(x,\cdot)\in C^{\infty},x\in\mathbb{R}^{n}\;\mathrm{and}\;\|\sigma\|_{C_{*}^{\ell}S_{1,\delta}^{m}}<\infty\big\},
					\]
					where
					\begin{equation*}
						\|\sigma\|_{C_{*}^{\ell}S_{1,\delta}^{m}}:=\sup_{\xi\in\mathbb{R}^{n}}\langle\xi\rangle^{-m+|\alpha|-l\delta}\|\partial_{\xi}^{\alpha}\sigma(\cdot,\xi)\|_{C_{*}^{l}}+\sup_{\xi\in\mathbb{R}^{n}}\langle\xi\rangle^{-m+|\alpha|}\|\partial_{\xi}^{\alpha}\sigma(\cdot,\xi)\|_{L^{\infty}},
					\end{equation*}
				and $\langle\xi\rangle : = (1+|\xi|^{2})^{1/2}$.
					
					\begin{rem}
						Let $\ell>0$, $0<\delta\le1$, $m>0$ and $N \gg 1$. Then 	from \cite[Exercise 5.25]{S18}, we have $S^N_{1,0} \subset C_{*}^{\ell}S_{1,\delta}^{m}$.
					\end{rem}

					A pseudo-differential operator on $\mathscr{S}(\mathbb{R}^n)$ with symbol $\sigma\in C_{*}^{\ell}S_{1,\delta}^{m}$ is defined by
					\[
					\sigma(x,D)f(x)=\int_{\mathbb{R}^{n}}\sigma(x,\xi)\mathcal{F}f(\xi)e^{2\pi ix\cdot\xi}\mathrm{d}\xi
					\]
					for $f\in \mathscr{S}(\mathbb{R}^n)$.
					
					For boundedness of non-regular pseudo-differential operators on	Lebesgue spaces, Besov spaces, Triebel-Lizorkin spaces and Sobolev spaces, we refer to \cite{B88}, \cite{CM78}, \cite{Mar88}, \cite{Mar96}.
					In \cite{S09}, Sawano obtained the boundedness of pseudo-differential operators   with Symbol belonging to $S^m_{1,\delta}$ and $C_{*}^{\ell}S_{1,\delta}^{m}$,  $\delta\in[0,1]$ on the Besov-Morrey spaces $\mathcal{N}_{pqr}^s$ and the Triebel-Lizorkin-Morrey spaces $\mathcal{E}_{pqr}^s$. In \cite{CO22}, Congo and Ouedraogo proved the boundedness of pseudo-differential operators   with Symbol belonging to  $C_{*}^{\ell}S_{1,\delta}^{m}$,  $\delta\in[0,1]$ on variable exponent  Triebel-Lizorkin-Morrey spaces $\mathcal{E}_{p(\cdot),u(\cdot), q(\cdot)}^{s(\cdot)}$.

					We call an elementary symbol $\sigma$ in the class $C_{*}^{\ell}S_{1,\delta}^{m}$
					if the symbol $\sigma$ has an expression of the form
					\[
					\sigma(x,\xi)=\sum_{j\ge1}^{\infty}\sigma_{j}(x)\psi_{j}(\xi)
					\]
					where $\psi_{j}(\xi)=\psi_{1}(2^{-j+1}\xi)$ and $\psi_{1}\in C^{\infty}$
					is supported on the ring $\{\xi \in \rn :1\le|\xi|\le2^{2}\}$, while $\sigma_{j}$
					is uniformly bounded sequence such that
					\[
					\sup_{j\in\mathbb{N}_{0}}2^{-jm}\|\sigma_{j}\|_{L^{\infty}}<\infty\quad\text{and}\quad\sup_{j\in\mathbb{N}_{0}}2^{-j(m+l\delta)}\|\sigma_{j}\|_{C_{*}^{\ell}}<\infty.
					\]
				\end{defn}
				
				\begin{lem}
					\cite{M03} Suppose that $f=\sum_{j\in\mathbb{N}_{0}}f_{j}$ in $\mathscr{S}'(\mathbb{R}^{n}),$
					with $\mathrm{supp}(\mathcal{F}f_{j})\subset\{\xi:|\xi|\le C2^{j}\}$ for
					some $C>0$. Then there is a constant $c$ such that for $l>0$,
					\[
					\|f\|_{C_{*}^{l}}\le c\sup_{j\in\mathbb{N}_{0}}\big\{2^{jl}\|f_{j}\|_{L^{\infty}}\big\}.
					\]
				\end{lem}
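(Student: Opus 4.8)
The plan is to estimate the Littlewood--Paley blocks of $f$ one by one. Let $\{\varphi_k\}_{k\in\mathbb{N}_0}$ be a resolution of unity as in Definition \ref{def:partition of unity inho}; recall that $\|f\|_{C_*^l}$ is comparable to $\sup_{k\in\mathbb{N}_0}2^{kl}\|\varphi_k(D)f\|_{L^\infty}$ (with the evident modification if one uses the homogeneous Besov realization of $C_*^l$, where the low-frequency blocks are treated identically since there $2^{kl}\le1$). The goal is therefore to show $2^{kl}\|\varphi_k(D)f\|_{L^\infty}\lesssim\sup_{j\in\mathbb{N}_0}2^{jl}\|f_j\|_{L^\infty}$ uniformly in $k$.

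First I would localize the inner sum. Since $\mathrm{supp}(\mathcal{F}\varphi_k)\subset\{2^{k-1}\le|\xi|\le2^{k+1}\}$ for $k\ge1$ and $\mathrm{supp}(\mathcal{F}f_j)\subset\{|\xi|\le C2^j\}$, the product $\varphi_k\,\mathcal{F}f_j$ vanishes unless $2^{k-1}\le C2^j$, i.e. unless $j\ge k-\kappa$ for a fixed integer $\kappa=\kappa(C)$. Hence $\varphi_k(D)f=\sum_{j\ge\max(0,\,k-\kappa)}\varphi_k(D)f_j$ in $\mathscr{S}'(\rn)$; for $k=0$, where only the one-sided frequency bound on $f_j$ is available, one simply keeps the full sum $\varphi_0(D)f=\sum_{j\ge0}\varphi_0(D)f_j$. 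Then I would apply Young's inequality together with the scale invariance $\mathcal{F}^{-1}\varphi_k(x)=2^{kn}\mathcal{F}^{-1}\varphi_1(2^kx)$ for $k\ge1$, which gives $\|\varphi_k(D)f_j\|_{L^\infty}\le\|\mathcal{F}^{-1}\varphi_k\|_{L^1}\|f_j\|_{L^\infty}=\|\mathcal{F}^{-1}\varphi_1\|_{L^1}\|f_j\|_{L^\infty}$, with the $L^1$-norm independent of $k$ (and similarly for $k=0$ with $\varphi_0$). Summing,
\[
2^{kl}\|\varphi_k(D)f\|_{L^\infty}\lesssim\sum_{j\ge\max(0,\,k-\kappa)}2^{(k-j)l}\bigl(2^{jl}\|f_j\|_{L^\infty}\bigr)\le\Bigl(\sup_{j\in\mathbb{N}_0}2^{jl}\|f_j\|_{L^\infty}\Bigr)\sum_{j\ge\max(0,\,k-\kappa)}2^{(k-j)l}.
\]
Because $l>0$, the last geometric series is bounded by a constant depending only on $l$ and $\kappa$ (for $k=0$ it equals $\sum_{j\ge0}2^{-jl}<\infty$), so the right-hand side is $\lesssim\sup_{j}2^{jl}\|f_j\|_{L^\infty}$. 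Taking the supremum over $k\in\mathbb{N}_0$ completes the proof.

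The main obstacle --- really the only delicate point --- is that the hypothesis controls the Fourier support of $f_j$ only from above, so unlike the classical situation in which each $f_j$ lives on a dyadic ring, one cannot confine the inner sum to a two-sided window in $j$. The estimate consequently rests entirely on the summability of $\sum_{j\ge k-\kappa}2^{(k-j)l}$, which is exactly why the condition $l>0$ is required and why the homogeneous/inhomogeneous distinction for $C_*^l$ is immaterial here; everything else is a routine Young-inequality bound together with the $k$-uniformity of $\|\mathcal{F}^{-1}\varphi_k\|_{L^1}$.
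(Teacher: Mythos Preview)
Your proof is correct and is precisely the standard Littlewood--Paley argument for this estimate. Note, however, that the paper does not supply its own proof of this lemma: it is quoted from \cite{M03} (Mazzucato), so there is no in-paper argument to compare against. Your approach---cutting off the inner sum at $j\ge k-\kappa$ by the one-sided Fourier support condition, bounding $\|\varphi_k(D)f_j\|_{L^\infty}$ via Young's inequality with a $k$-independent $L^1$ bound on $\mathcal{F}^{-1}\varphi_k$, and summing the geometric series $\sum_{j\ge k-\kappa}2^{(k-j)l}$ using $l>0$---is exactly the method one finds in the cited reference and in any standard treatment of Besov spaces.
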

				
				\begin{thm}\label{non-reg}
					Let $0<p<t<r<\infty$ or $0<p\le t<r=\infty$. 
					Let  $W\in  \mathcal  A_{p}$ with the $\mathcal A_p$-dimension $d\in [0,n)$. 
					Let $\tilde d $  be the same as in Lemma \ref{AQ AR improved}.
					Let $\delta_W >0$  be the same as in Lemma \ref{lem:W AQ}. Let $ q \in (0, p + \delta_W)  $ and
					$s> \tilde d / p'$.
					Let $\sigma\in C_{*}^{\ell}S_{1,\delta}^{m}$ with
					$m\in\mathbb{R},$ $\delta\in[0,1]$ and $l-d/p>s$. Then
					$\sigma(\cdot,D)$ is a bounded operator from $F_{p,t,r}^{s+m,q}(W)$
					to $F_{p,t,r}^{s,q}(W)$.

				\end{thm}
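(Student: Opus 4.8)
\textbf{Step 1 (reduction to elementary symbols and paraproduct splitting).} The plan is to follow the classical scheme for non-regular pseudo-differential operators (see Bourdaud \cite{B88} and Sawano \cite{S09}). First I would expand $\sigma(x,\cdot)$ into a Fourier series on each dyadic annulus $\{2^{\nu-1}\le|\xi|\le2^{\nu+1}\}$; this exhibits $\sigma(\cdot,D)$ as a superposition, absolutely convergent once one measures in $\|\cdot\|_{F_{p,t,r}^{s,q}(W)}^{\min(1,p,q)}$, of operators with elementary symbols $\sigma^{\mathrm{el}}(x,\xi)=\sum_{\nu\ge0}\sigma_\nu(x)\psi_\nu(\xi)$, where $\psi_\nu(\xi)=\psi_1(2^{-\nu+1}\xi)$ for $\nu\ge1$ with $\psi_1\in C^\infty$ supported in $\{1\le|\xi|\le4\}$, and $\sup_\nu 2^{-\nu m}\|\sigma_\nu\|_{L^\infty}+\sup_\nu 2^{-\nu(m+\ell\delta)}\|\sigma_\nu\|_{C_{*}^{\ell}}\lesssim\|\sigma\|_{C_{*}^{\ell}S_{1,\delta}^{m}}$; hence it suffices to treat one elementary symbol. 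For such a symbol $\sigma^{\mathrm{el}}(\cdot,D)\vec{f}=\sum_{\nu\ge0}\sigma_\nu\,\psi_\nu(D)\vec{f}$, and, decomposing $\sigma_\nu=\sum_{\mu\ge0}\Delta_\mu\sigma_\nu$ by Littlewood--Paley projections in $x$ and sorting the double sum by $\mu<\nu-C$, $|\mu-\nu|\le C$, or $\mu>\nu+C$ (for a fixed large integer $C$), I write $\sigma^{\mathrm{el}}(\cdot,D)\vec{f}=\Pi_1+\Pi_2+\Pi_3$, with $\Pi_1=\sum_\nu\big(\sum_{\mu<\nu-C}\Delta_\mu\sigma_\nu\big)\psi_\nu(D)\vec{f}$, $\Pi_2=\sum_\nu\big(\sum_{|\mu-\nu|\le C}\Delta_\mu\sigma_\nu\big)\psi_\nu(D)\vec{f}$, and, after reindexing by $\mu$, $\Pi_3=\sum_\mu\sum_{\nu<\mu-C}(\Delta_\mu\sigma_\nu)\psi_\nu(D)\vec{f}$; each summand is a product of a scalar function of $x$ with a frequency-localised vector function. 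Throughout I would work with the $\{A_Q\}_{Q\in\mathcal{D}}$-norms, switching to and from the $W$-norms by Theorem \ref{thm:equivalence inhomogeneous}, and use Theorem \ref{multiplier} (applied to the family $\{\psi_\nu\}$) to identify $\big\|\big(\sum_\nu 2^{\nu(s+m)q}\sum_{Q\in\mathcal{D}_\nu}|A_Q\psi_\nu(D)\vec{f}|^q\chi_Q\big)^{1/q}\big\|_{M_p^{t,r}}$ with $\|\vec{f}\|_{F_{p,t,r}^{s+m,q}(W)}$.

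\textbf{Step 2 (the pieces $\Pi_1$ and $\Pi_2$).} The $\nu$-th term of $\Pi_1$ has Fourier transform supported in an annulus $\{c2^\nu\le|\xi|\le C2^\nu\}$ and that of $\Pi_2$ in a ball $\{|\xi|\le C2^\nu\}$. Since the relevant partial sums of $\{\Delta_\mu\sigma_\nu\}_\mu$ have $L^\infty$-norm $\lesssim2^{\nu m}$, for $Q\in\mathcal{D}_\nu$ the $\{A_Q\}$-size of the $\nu$-th summand at a point $x\in Q$ is $\lesssim2^{\nu m}|A_Q\psi_\nu(D)\vec{f}(x)|$; applying the $r$-trick (Lemma \ref{lem:r-trick}, or Lemma \ref{pseudo poi}) then converts the level-$\nu$ square-function term into $2^{\nu m}\mathcal{M}\big(\sum_{Q\in\mathcal{D}_\nu}|A_Q\psi_\nu(D)\vec{f}|^A\chi_Q\big)^{1/A}$. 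Thus Lemma \ref{lem:ring} controls $\|\Pi_1\|_{F_{p,t,r}^{s,q}(A_Q)}$ for every $s\in\mathbb{R}$, and Lemma \ref{lem:sum k F le } controls $\|\Pi_2\|_{F_{p,t,r}^{s,q}(A_Q)}$ precisely under the standing hypothesis $s>\tilde d/p'$; after Lemma \ref{lem:hardy} and Step 1, both are $\lesssim\|\vec{f}\|_{F_{p,t,r}^{s+m,q}(W)}$.

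\textbf{Step 3 (the main obstacle $\Pi_3$).} The $\mu$-th term $\vec{h}_\mu=\sum_{\nu<\mu-C}(\Delta_\mu\sigma_\nu)\psi_\nu(D)\vec{f}$ has Fourier transform supported in an annulus $\{c2^\mu\le|\xi|\le C2^\mu\}$, since the symbol's $x$-frequency $\sim2^\mu$ dominates the signal frequency $\lesssim2^{\mu-C}$; so Lemma \ref{lem:ring} reduces matters to bounding $\|\{2^{\mu s}\vec{h}_\mu\}_\mu\|_{M_p^{t,r}(A_Q)(\ell^q)}$. For $x\in Q\in\mathcal{D}_\mu$ and $\nu<\mu-C$, writing $Q''\in\mathcal{D}_\nu$ for the dyadic ancestor of $Q$,
\[
|A_Q(\Delta_\mu\sigma_\nu)\psi_\nu(D)\vec{f}(x)|\le\|\Delta_\mu\sigma_\nu\|_{L^\infty}\,\|A_Q A_{Q''}^{-1}\|\,|A_{Q''}\psi_\nu(D)\vec{f}(x)|,
\]
where $\|\Delta_\mu\sigma_\nu\|_{L^\infty}\lesssim2^{-\mu\ell}\|\sigma_\nu\|_{C_{*}^{\ell}}\lesssim2^{-\mu\ell}2^{\nu(m+\ell\delta)}$ furnishes the H\"older--Zygmund gain, while $\|A_Q A_{Q''}^{-1}\|\lesssim2^{(\mu-\nu)d/p}$ by Lemma \ref{AQ AR improved}. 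Setting $G_\nu:=\mathcal{M}\big(\sum_{P\in\mathcal{D}_\nu}(2^{\nu(s+m)}|A_P\psi_\nu(D)\vec{f}|)^A\chi_P\big)^{1/A}$ and using the $r$-trick together with $\nu\ge0$ and $\delta\le1$, these combine to
\[
2^{\mu s}\sum_{Q\in\mathcal{D}_\mu}|A_Q\vec{h}_\mu|\chi_Q\lesssim\sum_{0\le\nu<\mu-C}2^{-(\mu-\nu)(\ell-d/p-s)}\,G_\nu,
\]
the exponent $\ell-d/p-s$ being strictly positive by hypothesis. Lemma \ref{lem: hardy type inequality} then sums this geometric kernel in $\ell^q_\mu$ pointwise in $x$, Lemma \ref{lem:hardy} passes to the $M_p^{t,r}$ quasi-norm, and Step 1 yields $\|\Pi_3\|_{F_{p,t,r}^{s,q}(A_Q)}\lesssim\|\vec{f}\|_{F_{p,t,r}^{s+m,q}(W)}$. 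Combining the three contributions and applying Theorem \ref{thm:equivalence inhomogeneous} once more finishes the argument.

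I expect Step 3 to carry the real difficulty: one must simultaneously exploit the gain $2^{-\mu\ell}$ coming from the rough symbol, pay back the $\mathcal{A}_p$-dimensional loss $2^{(\mu-\nu)d/p}$ that comparing reducing operators at mismatched dyadic scales forces (Lemma \ref{AQ AR improved}), and carry out the summation inside the Bourgain--Morrey scale --- where the maximal-function and duality machinery of Section \ref{sec:Homogeneous-spaces} (Lemmas \ref{lem:hardy}, \ref{lem:ring}, \ref{lem:sum k F le }, \ref{lem: hardy type inequality}) must replace the classical $L^p(\ell^q)$ estimates --- so that the two competing exponents balance exactly under the two-sided condition $\tilde d/p'<s<\ell-d/p$.
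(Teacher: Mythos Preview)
Your proposal is correct and follows essentially the same route as the paper: reduction to elementary symbols, the three-fold paraproduct splitting $\sigma^{(1)}+\sigma^{(2)}+\sigma^{(3)}$ (your $\Pi_1,\Pi_2,\Pi_3$), and then Lemma~\ref{lem:ring} for the annular piece, Lemma~\ref{lem:sum k F le } for the diagonal piece (this is where $s>\tilde d/p'$ enters), and Lemma~\ref{AQ AR improved} together with Lemma~\ref{lem: hardy type inequality} for the high--low piece (this is where $s<\ell-d/p$ enters). The only cosmetic difference is that for $\Pi_1$ you bound the low-frequency partial sum $\sum_{\mu<\nu-C}\Delta_\mu\sigma_\nu$ directly by $\|\sigma_\nu\|_{L^\infty}\lesssim 2^{\nu m}$, which is in fact cleaner than the paper's term-by-term estimate via~\eqref{eq:sigma jk infty}.
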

				
				\begin{proof}
					Since any symbol in $C_{*}^{\ell}S_{1,\delta}^{m}$
					can be approximated by elementary symbols, it is enough to examine
					the case when $\sigma$ is an elementary symbol while $\sigma(x,\xi)=\sum_{j\ge1}^{\infty}\sigma_{j}(x)\varphi_{j}(\xi)$.
					Define $\sigma_{j,k}=\varphi_{k}(D)\sigma_{j}$ on $\rn $. Then
					\begin{align*}
						\|\sigma_{j,k}\|_{C_{*}^{l}} & =\sup_{x\in\mathbb{R}^{n}}\sup_{v\in\mathbb{N}_{0}}2^{vl}\varphi_{v}(D)\varphi_{k}(D)\sigma_{j}(x)\\
						& \approx\sup_{x\in\mathbb{R}^{n}}2^{kl}\varphi_{k}(D)\sigma_{j}(x)
					\end{align*}
					and we have
					\begin{equation}
						\|\sigma_{j,k}\|_{L^{\infty}}\lesssim2^{-kl}\|\sigma_{j,k}\|_{C_{*}^{l}}\lesssim2^{-kl}2^{j(m+l\delta)}\lesssim2^{-kl+j(m+l)}.\label{eq:sigma jk infty}
					\end{equation}
					Now we decompose the symbol into three parts:
					\begin{align*}
						\sigma(x,\xi) & =\sum_{j\ge1}^{\infty}\sum_{k=0}^{\infty}\sigma_{j,k}(x)\varphi_{j}(\xi) 
					 =\sum^3_{i=1}\sigma^{(i)}(x,\xi),
					\end{align*}
					where
					\[
					\sigma^{(1)}(x,\xi):=\sum_{j=4}^{\infty}\sum_{k=0}^{j-4}\sigma_{j,k}(x)\varphi_{j}(\xi),
					\]
					\[
					\sigma^{(2)}(x,\xi):=\sum_{j=0}^{\infty}\sum_{k=\max(j-3,0)}^{j+3}\sigma_{j,k}(x)\varphi_{j}(\xi),
					\]
					and
					\[
					\sigma^{(3)}(x,\xi):=\sum_{j=0}^{\infty}\sum_{k=j+4}^{j-4}\sigma_{j,k}(x)\varphi_{j}(\xi).
					\]
					It is understanding that $\sigma_{0,k}=0$. Let $\vec{f_{j}} =\varphi_{j}(D)\vec{f}$.					
					Let $\{A_{Q}\}_{Q \in \mathcal{D}}$ be a sequence of reducing operators of order $p$
					for $W$. Since $F_{p,t,r}^{s,q}(W)=F_{p,t,r}^{s,q}(A_{Q})$, it is
					enough to prove that
					\[
					\|\sigma(\cdot,D)\vec{f}\|_{F_{p,t,r}^{s,q}(A_{Q})}\lesssim\|\vec{f}\|_{F_{p,t,r}^{s+m,q}(A_{Q})}.
					\]
					
					Step 1. We will prove that
					\[
					\|\sigma^{(1)}(\cdot,D)\vec{f}\|_{F_{p,t,r}^{s,q}(A_{Q})}\lesssim\|\vec{f}\|_{F_{p,t,r}^{s+m,q}(A_{Q})}
					\]
					for $\vec{f}\in\mathscr{S}'(\mathbb{R}^{n})$. Indeed, $\sum_{k=0}^{j-4}\sigma_{j,k}(\cdot,D)\vec{f}_{j}$
					has its spectrum in $\{\xi:c_{1}2^{j}\le|\xi|\le c_{2}2^{j}\}$ where
					$c_{1},c_{2}>0$ are independent of $j.$ Then by Lemma \ref{lem:ring}
					we have
					\begin{align*}
						& \|\sigma^{(1)}(\cdot,D)\vec{f}\|_{F_{p,t,r}^{s,q}(A_{Q})} \\
						 & =\bigg\|\sum_{j=4}^{\infty}\sum_{k=0}^{j-4}\sigma_{j,k}\varphi_{j}(D)\vec{f}\bigg\|_{F_{p,t,r}^{s,q}(A_{Q})}						 
						\end{align*}
					\begin{align*}
						& \lesssim\bigg\|\bigg\{2^{js}\sum_{k=0}^{j-4}\sigma_{j,k}\varphi_{j}(D)\vec{f}\bigg\}_{j}\bigg\|_{M_{p}^{t,r}(A_{Q})(\ell^{q})}\\
						& \le\bigg\|\bigg\{\sum_{j=0}^{\infty}2^{jsq}\sum_{Q\in\mathcal{D}_{j}}\bigg|A_{Q}\sum_{k=0}^{j-4}\sigma_{j,k}\varphi_{j}(D)\vec{f}\bigg|^{q}\chi_{Q}\bigg\}^{1/q}\bigg\|_{M_{p}^{t,r}}\\
						& \lesssim\bigg\|\bigg\{\sum_{j=0}^{\infty}2^{j(s+m)q}\sum_{Q\in\mathcal{D}_{j}}|A_{Q}\varphi_{j}(D)\vec{f}|^{q}\chi_{Q}\bigg\}^{1/q}\bigg\|_{M_{p}^{t,r}}\\
						& =\|\vec{f}\|_{F_{p,t,r}^{s+m,q}(A_{Q})}.
					\end{align*}
					
					Step 2. We will show that
					\[
					\|\sigma^{(2)}(\cdot,D)\vec{f}\|_{F_{p,t,r}^{s,q}(A_{Q})}\lesssim\|\vec{f}\|_{F_{p,t,r}^{s+m,q}(A_{Q})}
					\]
					for $\vec{f}\in\mathscr{S}'(\mathbb{R}^{n})$. Since $\sum_{k=j-3}^{j+3}\sigma_{j,k}\vec{f}_{j}$
					has its spectrum in $\{\xi:|\xi|\le c_{2}2^{j}\}$ where $c_{2}>0$
					is independent of $j$, then by Lemma \ref{lem:sum k F le } and (\ref{eq:sigma jk infty}),
					(note that $\sigma_{j,k}$ is a function on $\mathbb{R}^{n}$, not
					on $\mathbb{R}^{2n}$),
					\begin{align*}
						\|\sigma^{(2)}(\cdot,D)\vec{f}\|_{F_{p,t,r}^{s,q}(A_{Q})} & \lesssim\bigg\|\bigg\{2^{js}\sum_{k=j-3}^{j+3}\sigma_{jk}\vec{f}_{j}\bigg\}_{j}\bigg\|_{M_{p}^{t,r}(A_{Q})(\ell^{q})}\\
						& \lesssim\bigg\|\bigg\{2^{js}\sum_{k=j-3}^{j+3}\|\sigma_{jk}\|_{L^{\infty}}\vec{f}_{j}\bigg\}_{j}\bigg\|_{M_{p}^{t,r}(A_{Q})(\ell^{q})} \\
						& \lesssim\bigg\|\bigg\{2^{js}\sum_{k=j-3}^{j+3}2^{-kl+j(m+l)}\vec{f}_{j}\bigg\}_{j}\bigg\|_{M_{p}^{t,r}(A_{Q})(\ell^{q})}\\
						& \approx\big\|\big\{2^{j(s+m)}\vec{f}_{j}\big\}_{j}\big\|_{M_{p}^{t,r}(A_{Q})(\ell^{q})}.
					\end{align*}
					
					Step 3. In $\mathscr{S}'(\mathbb{R}^{n})$, we can write
					\[
					\sum_{j=0}^{\infty}\sum_{k=j+4}^{j-4}\sigma_{j,k}(x)\vec{f}_{j}(x)=\sum_{k=4}^{\infty}\sum_{j=0}^{k-4}\sigma_{j,k}(x)\vec{f}_{j}(x).
					\]
					Thus by Lemmas \ref{lem:ring} and \ref{AQ AR improved},
					we have
					\begin{align*}
						\| & \sigma^{(3)}(\cdot,D)\vec{f}\|_{F_{p,t,r}^{s,q}(A_{Q})}\\
						& =\bigg\|\sum_{k=4}^{\infty}\bigg(\sum_{j=0}^{k-4}\sigma_{j,k}\vec{f}_{j}\bigg)\bigg\|_{F_{p,t,r}^{s,q}(A_{Q})}\\
						& \lesssim\bigg\|\bigg(\sum_{k=0}^{\infty}2^{ksq}\sum_{Q\in\mathcal{D}_{k}}\bigg|A_{Q}\sum_{j=0}^{k-4}\sigma_{j,k}\vec{f}_{j}\bigg|^{q}\chi_{Q}\bigg)^{1/q}\bigg\|_{M_{p}^{t,r}}\\
						& \lesssim\bigg\|\bigg(\sum_{k=0}^{\infty}2^{ksq} \bigg|\sum_{j=0}^{k-4}\sum_{Q\in\mathcal{D}_{k}}A_{Q}\sigma_{j,k}\vec{f}_{j} \bigg|^{q}\chi_{Q}\bigg)^{1/q}\bigg\|_{M_{p}^{t,r}}
							\end{align*}
					\begin{align*}
						& \lesssim\bigg\|\bigg(\sum_{k=0}^{\infty}2^{ksq} \bigg|\sum_{j=0}^{k-4}\sum_{Q\in\mathcal{D}_{k}}A_{Q}2^{-kl+j(m+l)}\vec{f}_{j} \bigg|^{q}\chi_{Q}\bigg)^{1/q}\bigg\|_{M_{p}^{t,r}}\\
						& =\bigg\|\bigg(\sum_{k=0}^{\infty} \bigg|\sum_{j=0}^{k-4}2^{(k-j)(s-l)}2^{j(s+m)}\sum_{Q\in\mathcal{D}_{k}}A_{Q}\vec{f}_{j}\bigg|^{q}\chi_{Q}\bigg)^{1/q}\bigg\|_{M_{p}^{t,r}}.
					\end{align*}
					Note that $j \le k-4$. By Lemma \ref{AQ AR improved}, for a fixed $x \in \rn$, there is only one cube $Q' \in \D_k $ such that $x\in Q$.  For this $Q'$, let $P' \in \D_j$ and $Q' \subset P' $. Then by Lemma \ref{AQ AR improved}, we get 
					\begin{align*}
						\sum_{Q\in\mathcal{D}_{k}} | A_{Q}\vec{f}_{j} (x)|^{q}\chi_{Q} (x)  & =| A_{Q'}\vec{f}_{j} (x)|^{q}\chi_{Q ' } (x)    \\
						& \le \|A_{Q'} A_{P'}^{-1} \|^{q}  | A_{P'}\vec{f}_{j} (x)|^{q}\chi_{Q ' } (x)  \chi_{P'} (x) \\
						& \lesssim 2^{(k-j) q d/p}| A_{P'}\vec{f}_{j} (x)|^{q}\chi_{Q ' } (x)  \chi_{P'} (x) \\
						& = 2^{(k-j) q d/p}   \sum _{P \in \D_j  } | A_{P}\vec{f}_{j} (x)|^{q}\chi_{Q ' } (x)  \chi_{P} (x) \\
						& \le  2^{(k-j) q d/p}   \sum _{P \in \D_j  } | A_{P}\vec{f}_{j} (x)|^{q}  \chi_{P} (x),
					\end{align*}
					where $d \in [0,n)$ is the $\mathcal A_p$-dimension of $W$. Using this estimate, we obtain
					\begin{align*}
						& 	\bigg\|\bigg(\sum_{k=0}^{\infty} \Big |\sum_{j=0}^{k-4}2^{(k-j)(s-l)}2^{j(s+m)}\sum_{Q\in\mathcal{D}_{k}}A_{Q}\vec{f}_{j}  \Big |^{q}\chi_{Q}\bigg)^{1/q}\bigg\|_{M_{p}^{t,r}} \\
						& 
						\lesssim\bigg\|\bigg(\sum_{k=0}^{\infty} \Big|\sum_{j=0}^{k-4}2^{(k-j)(s-l)}2^{j(s+m)}2^{(k-j)  d/p}  \sum_{Q\in\mathcal{D}_{j}}A_{Q}\vec{f}_{j}  \Big |^{q}\chi_{Q}\bigg)^{1/q}\bigg\|_{M_{p}^{t,r}}\\
						& \lesssim\bigg\|\bigg(\sum_{k=0}^{\infty}  \Big |\sum_{j=0}^{k-4}2^{(k-j)(s-l+d/p)}2^{j(s+m)}\sum_{Q\in\mathcal{D}_{j}}A_{Q}\vec{f}_{j}  \Big |^{q}\chi_{Q}\bigg)^{1/q}\bigg\|_{M_{p}^{t,r}}\\
						& \le\bigg\|\bigg(\sum_{k=0}^{\infty} \Big|\sum_{j=0}^{\infty}2^{|k-j|(s-l+d/p)}2^{j(s+m)}\sum_{Q\in\mathcal{D}_{j}}A_{Q}\vec{f}_{j}  \Big |^{q}\chi_{Q}\bigg)^{1/q}\bigg\|_{M_{p}^{t,r}}.
					\end{align*}
					Since $0<s<l-d/p<\infty$, by Lemma \ref{lem: hardy type inequality}, we have
					\begin{align*}
						\bigg\|\bigg( & \sum_{k=0}^{\infty}\bigg|\sum_{j=0}^{k-4}2^{|k-j|(s-l+n/p)}2^{j(s+m)}\sum_{Q\in\mathcal{D}_{j}}A_{Q}\vec{f}_{j}\bigg|^{q}\chi_{Q}\bigg)^{1/q}\bigg\|_{M_{p}^{t,r}}\\
						& \lesssim\bigg\|\bigg(\sum_{j=0}^{\infty}2^{j(s+m)q}\bigg|\sum_{Q\in\mathcal{D}_{j}}A_{Q}\vec{f}_{j}\bigg|^{q}\chi_{Q}\bigg)^{1/q}\bigg\|_{M_{p}^{t,r}}.
					\end{align*}
					Hence, we finish the proof.
				\end{proof}
				\begin{rem}
					Let $W \equiv 1 $. Let  $1\le p\le t <r=\infty$. ($M_p^{t,r} (W) = L^{p,p/t}$ classical Morrey space.)  Since $ W \equiv 1 $, $ d=\tilde d = 0 $. Let $s>\tilde d / p'  = 0$,
					$1\le q\le\infty$. Let $\sigma\in C_{*}^{\ell}S_{1,\delta}^{m}$ with
					$m\in\mathbb{R},$ $\delta\in[0,1]$ and $l>s$. Then Theorem \ref{non-reg} coincides with \cite[Theorem 3.7]{S09}.
				\end{rem}



\end{document}